\def\mapright#1{\smash{
\mathop{\rg}\limits^{#1}}}
\def\mapdown#1{\bigg\downarrow
\rlap{$\vcenter{\hbox{$\scriptstyle#1$}}$}}
\def\rg{\hbox to 30pt{\rightarrowfill}}
\def\lg{\hbox to 30pt{\leftarrowfill}}
          \newtheorem{theorem}{Theorem}[section]
      \newtheorem{proposition}[theorem]{Proposition}
      \newtheorem{corollary}[theorem]{Corollary}
      \newtheorem{lemma}[theorem]{Lemma}
      \newtheorem{remark}[theorem]{Remark}
      \newcommand{\BB}{{\mathbb B}}
      \newcommand{\CC}{{\mathbb C}}
      \newcommand{\NN}{{\mathbb N}}
      \newcommand{\DD}{{\mathbb D}}
      \newcommand{\RR}{{\mathbb R}}
      \newcommand{\FF}{{\mathbb F}}
      \newcommand{\cA}{{\mathcal A}}
      \newcommand{\cD}{{\mathcal D}}
      \newcommand{\cE}{{\mathcal E}}
      \newcommand{\cG}{{\mathcal G}}
      \newcommand{\cH}{{\mathcal H}}
      \newcommand{\cK}{{\mathcal K}}
      \newcommand{\cM}{{\mathcal M}}
      \newcommand{\cN}{{\mathcal N}}
      \newcommand{\cR}{{\mathcal R}}
      \newcommand{\cX}{{\mathcal X}}
      \newcommand{\rank}{\hbox{\rm{rank}}\,}
      \newdimen\expt
      \def\boxit#1{\setbox0\hbox{$\displaystyle{#1}$}
            \hbox{\lower.4\expt
       \hbox{\lower3\expt\hbox{\lower\dp0
            \hbox{\vbox{\hrule height.4\expt
       \hbox{\vrule width.4\expt\hskip3\expt
            \vbox{\vskip3\expt\box0\vskip2\expt}%
       \hskip3\expt\vrule width.4\expt}\hrule height.4\expt}}}}}}
\begin{document}
       \pagestyle{myheadings}
      \markboth{ Gelu Popescu}{  Free holomorphic automorphisms of the unit ball of $B(\cH)^n$   }

      \title [      Free holomorphic automorphisms of the unit ball of $B(\cH)^n$ ]
      {            Free holomorphic automorphisms of the unit ball of $B(\cH)^n$
      }
        \author{Gelu Popescu}
\date{March 8, 2008}
      \thanks{Research supported in part by an NSF grant}
      \subjclass[2000]{Primary:  46L52;  47A56;  Secondary: 47A48; 47A60}
      \keywords{Multivariable operator theory; noncommutative function theory; free holomorphic
      functions; automorphism group; characteristic function;
      Poisson transform; Cuntz-Toeplitz algebra; row contraction;
      Fock space.
}

      \address{Department of Mathematics, The University of Texas
      at San Antonio \\ San Antonio, TX 78249, USA}
      \email{\tt gelu.popescu@utsa.edu}

\begin{abstract}
In this paper we continue the study of free holomorphic functions on
the  noncommutative ball
$$
[B(\cH)^n]_1:=\left\{ (X_1,\ldots, X_n)\in B(\cH)^n: \
\|X_1X_1^*+\cdots + X_nX_n^*\|^{1/2}<1\right\},
$$
where $B(\cH)$ is the algebra of all bounded linear operators on a
Hilbert space $\cH$, and $n=1,2,\ldots$ or $n=\infty$.  These are
noncommutative multivariable analogues of the analytic functions on
the open unit disc $\DD:=\{z \in \CC:\ |z|<1\}$.

The theory of  characteristic functions for row contractions
(elements in $[B(\cH)^n]_1^-$) is used to determine the group
$Aut(B(\cH)^n_1)$ of all free holomorphic automorphisms of
$[B(\cH)^n]_1$. It is shown that $Aut(B(\cH)^n_1)\simeq Aut(\BB_n)$,
the Moebius group of the open unit ball $\BB_n:=\{\lambda\in \CC^n:\
\|\lambda\|_2<1\}$.

We show that the noncommutative Poisson transform commutes with the
action of the  automorphism group $Aut(B(\cH)^n_1)$. This leads to a
characterization
 of the unitarily implemented automorphisms  of the
 Cuntz-Toeplitz algebra $C^*(S_1,\ldots, S_n)$,  which leave invariant
 the noncommutative disc algebra $\cA_n$. This result provides new insight
  into   Voiculescu's group of automorphisms   of the Cuntz-Toeplitz
 algebra and  reveals new  connections  with    noncommutative  multivariable
 operator theory, especially, the theory of characteristic functions  for row contractions
 and  the noncommutative Poisson
 transforms.

We  show that   the unitarily implemented automorphisms of the
noncommutative  disc algebra $\cA_n$   and the noncommutative
analytic Toeplitz algebra $F_n^\infty$, respectively, are determined
by the free holomorphic automorphisms of $[B(\cH)^n]_1$, via the
noncommutative Poisson transform. Moreover, we prove that
$$
 Aut(B(\cH)^n_1)\simeq  Aut_u(\cA_n)\simeq Aut_u(F_n^\infty).
 $$
We also prove that any completely isometric automorphism of the
noncommutative disc algebra
  $A(B(\cH)^n_1)$  has the form
 $$
 \Phi(f)=f\circ \Psi,\qquad f\in A(B(\cH)^n_1),
 $$
 where   $\Psi\in
 Aut(B(\cH)^n_1)$.
We deduce a similar result for the noncommutative Hardy algebra
$H^\infty(B(\cH)^n_1)$, which, due to Davidson-Pitts results on the
automorphisms of $F_n^\infty$, implies that $Aut(B(\cH)^n_1)$ is
isomorphic to the group of contractive automorphisms of
$H^\infty(B(\cH)^n_1)$.

We study   the isometric dilations and the characteristic functions
of row contractions under the action of the automorphism group
$Aut(B(\cH)^n_1)$.
  This enables us to  obtain some results concerning the behavior of  the curvature  and the
Euler characteristic of  a  row contraction under $Aut(B(\cH)^n_1)$.

Finally, in the last section,  we  prove   a   maximum principle for
free holomorphic functions on the noncommutative ball $[B(\cH)^n]_1$
and provide some extensions of the classical Schwarz lemma to  our
noncommutative setting.

\end{abstract}

      \maketitle

\section*{Contents}
{\it

\quad Introduction

\begin{enumerate}
   \item[1.]   Free holomorphic   functions on $[B(\cH)^n]_1$ and
   Cartan type results
 \item[2.]  The group  of  free  holomorphic   automorphisms of  $[B(\cH)^n]_1$
\item[ 3.]  The noncommutative Poisson transform under the action of
$Aut(B(\cH)^n_1)$
\item[ 4.]  Model theory   for row contractions  and  the automorphism group
$Aut(B(\cH)^n_1)$
 \item[ 5.]   Maximum principle and Schwartz type results   for free  holomorphic   functions
   \end{enumerate}

\quad References

}

\bigskip

\bigskip

\section*{Introduction}

In \cite{Po-holomorphic}, \cite{Po-free-hol-interp}, and
\cite{Po-pluriharmonic} we developed a theory of  free holomorphic
(resp. pluriharmonic) functions and provide a framework for the
study of arbitrary
 $n$-tuples of operators on a Hilbert space $\cH$. Several classical
 results from complex analysis have
 free analogues in
 this  noncommutative multivariable setting.
We introduced    a notion of {\it radius of convergence} for formal
power series in $n$ noncommuting indeterminates $Z_1,\ldots, Z_n$
and proved noncommutative multivariable analogues of Abel theorem
and Hadamard formula from complex analysis (\cite{Co}, \cite{R}).
This enabled us to define the algebra $Hol(B(\cX)^n_\gamma)$~ of
free holomorphic functions on the open operatorial  $n$-ball of
radius $\gamma>0$, as the set of all power series $\sum_{\alpha\in
\FF_n^+}a_\alpha Z_\alpha$ with radius of convergence $\geq \gamma$,
i.e.,
 $\{a_\alpha\}_{\alpha\in \FF_n^+}$ are complex numbers  with
$$\limsup\limits_{k\to\infty} \left(\sum\limits_{|\alpha|=k}
|a_\alpha|^2\right)^{1/2k}\leq \frac{1}{\gamma}, $$
 where $\FF_n^+$ is the
free semigroup with $n$ generators.   The algebra of free
holomorphic functions ~$Hol(B(\cX)^n_\gamma)$~ has the following
universal property.

{\it Any   representation  $\pi: \CC[Z_1,\ldots, Z_n]\to B(\cH)$
with $\|[\pi(Z_1),\ldots, \pi(Z_n)]\|^{1/2}<\gamma$  extends
uniquely to a representation of $Hol(B(\cX)^n_\gamma)$.}

For simplicity, throughout this paper, $[X_1,\ldots, X_n]$ denotes
either the $n$-tuple $(X_1,\ldots, X_n)\in B(\cH)^n$ or the operator
row matrix $[ X_1\, \cdots \, X_n]$ acting from $\cH^{(n)}$, the
direct sum  of $n$ copies of  a Hilbert space $\cH$, to $\cH$.
  A
free holomorphic function on the open operatorial ball of radius
$\gamma$,
$$
[B(\cH)^n]_\gamma:=\left\{ [X_1,\ldots, X_n]\in B(\cH)^n: \
\|X_1X_n^*+\cdots + X_nX_n^*\|^{1/2}<\gamma\right\},
$$
 is
the representation of an element $F\in Hol(B(\cX)^n_\gamma)$ on the
Hilbert space $\cH$, that is,  the mapping
$$
[B(\cH)^n]_\gamma\ni (X_1,\ldots, X_n)\mapsto F(X_1,\ldots,
X_n)=\sum_{k=0}^\infty \sum_{|\alpha|=k}
 a_\alpha X_\alpha\in
B(\cH),
$$
where  the convergence is in the operator norm topology.
  We mention that the results from \cite{Po-holomorphic}, \cite{Po-free-hol-interp},
and \cite{Po-pluriharmonic}  hold true in the context of free
holomorphic (resp. pluriharmonic)  functions with operator-valued
coefficients. Due to the fact that a free holomorphic function is
uniquely determined by its representation on an infinite dimensional
Hilbert space, we assume, throughout this paper, that  $\cH$ is a
separable infinite dimensional Hilbert space.

 We prove in \cite{Po-holomorphic} that   the Hausdorff derivations (\cite{MKS})
 $\frac{\partial}{\partial
Z_i}$, \, $i=1,\ldots, n$, on the algebra of noncommutative
polynomials $\CC[Z_1,\ldots, Z_n]$  can be  extended to the algebra
of free holomorphic functions.
  Let $F_1,\ldots, F_n$ be free
  holomorphic functions  on $[B(\cH)^n]_\gamma$  with
 scalar coefficients. Then the map $F:[B(\cH)^n]_\gamma\to B(\cH)^n$ defined by $F:=(F_1,\ldots,
 F_n)$ is a free holomorphic function.  We define $F'(0)$ as the
 linear operator on $\CC^n$ having the matrix $\left[\left( \frac{\partial F_i}
{\partial Z_j}\right)(0)\right]_{i,j=1,\ldots, n}$.

 In Section 1, we
show that, under natural conditions,  the composition of free
holomorphic functions is a free holomorphic function.  We obtain
  a noncommutative version of Cartan's uniqueness theorem (see \cite{Ca}), which
  states that  if $F:[B(\cH)^n]_\gamma\to [B(\cH)^n]_\gamma$ is  a free
 holomorphic function such that $F(0)=0$ and $F'(0)=I_n$,
 then
 $$F(X_1,\ldots, X_n)=(X_1,\ldots, X_n),\qquad  (X_1,\ldots,
 X_n)\in[B(\cH)^n]_\gamma.
 $$
This is used to characterize the free biholomorphic functions
$F:[B(\cH)^n]_{\gamma_1}\to [B(\cH)^n]_{\gamma_2}$ with $F(0)=0$. As
a consequence, we show that any free holomorphic automorphism $\Psi$
of the unit ball $[B(\cH)^n]_1$ which fixes the origin is
implemented by a unitary operator on $\CC^n$, i.e., there is a
unitary operator $U$ on $\CC^n$  such that
\begin{equation*}
 \Psi(X_1,\ldots X_n)= \Phi_U(X_1,\ldots
X_n):=[X_1,\ldots, X_n]U , \qquad (X_1,\ldots X_n)\in  [B(\cH)^n]_1.
\end{equation*}

In Section 2, we use the  theory of noncommutative characteristic
functions for row contractions (see \cite{Po-charact},
\cite{Po-varieties}) to find all the involutive free holomorphic
automorphisms of $[B(\cH)^n]_1$, which turn out to be of the form
\begin{equation*}
 \Psi_\lambda=- \Theta_\lambda(X_1,\ldots, X_n):={
\lambda}-\Delta_{ \lambda}\left(I_\cK-\sum_{i=1}^n \bar{{
\lambda}}_i X_i\right)^{-1} [X_1,\ldots, X_n] \Delta_{{\lambda}^*},
\end{equation*}
for some $\lambda=[\lambda_1,\ldots, \lambda_n]\in \BB_n$, where
$\Theta_\lambda$ is the characteristic function  of the row
contraction $\lambda$, and $\Delta_{ \lambda}$,
$\Delta_{{\lambda}^*}$ are certain defect operators. Combining this
result  with the results of Section 1, we determine all the free
holomorphic automorphisms of the noncommutative ball $[B(\cH)^n]_1$.
We show that if $\Psi\in Aut(B(\cH)^n_1)$ and
$\lambda:=\Psi^{-1}(0)$, then there is a unitary operator $U$ on
$\CC^n$ such that
$$
\Psi=\Phi_U\circ \Psi_\lambda.
$$
Moreover, we prove  that the automorphism group $Aut(B(\cH)^n_1)$ is
isomorphic to $Aut(\BB_n)$, the Moebius group of  the open unit ball
$\BB_n$ (see \cite{Ru}), via the noncommutative Poisson transform.
More precisely, we show that the map
 $\Gamma :Aut(B(\cH)^n_1)\to  Aut(\BB_n)$, defined by
 $$[\Gamma(\Psi)](z):=( P_z\otimes \text{\rm id})[\hat \Psi], \qquad
 z\in \BB_n,
 $$
 is a group isomorphism, where $\hat \Psi$ is the boundary
 function  of $\Psi$ with respect to the left creation operators on the full Fock space
  and $P_z$ is the noncommutative Poisson transform at $z$.

We recall that a free holomorphic function $F$ on the open
operatorial $n$-ball of radius $1$ is bounded if
$$
\|F\|_\infty:=\sup  \|F(X_1,\ldots, X_n)\|<\infty,
$$
where the supremum is taken over all $n$-tuples  of operators
$[X_1,\ldots, X_n]\in [B(\cH)^n]_1$. Let $H^\infty(B(\cH)^n_1)$ be
the set of all bounded free holomorphic functions and  let
$A(B(\cH)^n_1)$ be the set of all  elements $F$   such that the
mapping
$$[B(\cH)^n]_1\ni (X_1,\ldots, X_n)\mapsto F(X_1,\ldots, X_n)\in B(\cH)$$
 has a continuous extension to the closed unit ball $[B(\cH)^n]^-_1$.
We  showed in \cite{Po-holomorphic} that $H^\infty(B(\cH)^n_1)$  and
$A(B(\cH)^n_1)$ are Banach algebras under pointwise multiplication
and the norm $\|\cdot \|_\infty$, which can be identified with the
noncommutative analytic Toeplitz algebra $F_n^\infty$ and the
noncommutative disc algebra $\cA_n$, respectively. We recall that
the algebra $F_n^\infty$ (resp. $\cA_n$) is the weakly (resp. norm)
closed algebra generated by the left creation  operators
$S_1,\ldots, S_n$ on the full Fock space with $n$ generators, and
the identity. These algebras have been intensively studied in recent
years (\cite{Po-charact}, \cite{Po-von}, \cite{Po-funct},
\cite{Po-analytic}, \cite{Po-poisson}, \cite{Po-curvature},
\cite{ArPo},  \cite{DP1}, \cite{DP2}, \cite{DKP},
\cite{Po-unitary}).

In Section 3, we show that the noncommutative Poisson transform
commutes with the action of $Aut(B(\cH)^n_1)$. This leads to a
characterization
 of the unitarily implemented automorphisms  of the
 Cuntz-Toeplitz algebra $C^*(S_1,\ldots, S_n)$ (see \cite{Cu}), which leave invariant
 the noncommutative disc algebra $\cA_n$. We remark that this result provides new insight
  into   Voiculescu's group of automorphisms (see \cite{Vo})  of the Cuntz-Toeplitz
 algebra.
More precisely, we show that if $\Psi\in Aut([B(\cH)^n]_1)$ and
$\hat\Psi$ is its boundary function,
 then the noncommutative Poisson
 transform $P_{\hat\Psi}$   is a  unitarily implemented automorphism
 of the Cuntz-Toeplitz algebra, which leaves invariant
 the noncommutative disc algebra $\cA_n$. These  are precisely
 Voiculescu's automorphisms  considered in \cite{Vo}.
 Conversely, we prove that  if $\Phi\in Aut_u(C^*(S_1,\ldots, S_n))$ and
 $\Phi(\cA_n)\subset \cA_n$, then there is  $\Psi\in
 Aut([B(\cH)^n]_1)$ such that $\Phi=P_{\hat\Psi}$.
Moreover, in this case
 $$\Phi(g)=K_{\hat \Psi}^* ( I_{\cD_{ {\hat\Psi}}}\otimes g) K_{\hat \Psi},
  \qquad g\in C^*(S_1,\ldots,
 S_n),
 $$
  where the noncommutative Poisson kernel $K_{\hat \Psi}$ is a
  unitary operator.

In \cite{DP2}, Davidson and Pitts showed that the automorphisms of
the analytic Toeplitz algebra $F_n^\infty$ are norm and WOT
continuous. They proved that there is a natural homomorphism from
$Aut(F_n^\infty)$ onto $Aut(\BB_n)$, the group of conformal
automorphisms of the unit ball $\BB_n$. Moreover, using Voiculescu's
group of automorphisms of the Cuntz-Toeplitz algebra,
 they showed that the subgroup $Aut_u(F_n^\infty)$ of  unitarily implemented
automorphisms of $F_n^\infty$  is isomorphic with $Aut(\BB_n)$. In
Section 3, we obtain a new proof of their result, using
 noncommutative Poisson transforms.
We show that   the unitarily implemented automorphisms of the
noncommutative  disc algebra $\cA_n$   and the noncommutative
analytic Toeplitz algebra $F_n^\infty$, respectively, are determined
by the free holomorphic automorphisms of $[B(\cH)^n]_1$, via the
noncommutative Poisson transform. Moreover, we deduce that
$$
 Aut(B(\cH)^n_1)\simeq  Aut_u(\cA_n)\simeq Aut_u(F_n^\infty).
 $$

 In Section 3, we also prove that   $\Phi:A(B(\cH)^n_1)\to
A(B(\cH)^n_1)$ is a completely isometric automorphism of the
noncommutative disc algebra
  $A(B(\cH)^n_1)$ if and only if
 there is a   $\Psi\in
 Aut(B(\cH)^n_1)$ such that
 $$
 \Phi(f)=f\circ \Psi,\qquad f\in A(B(\cH)^n_1),
 $$
 which extends the  characterization of  the conformal automorphisms
  of the disc
 algebra (see \cite{H}).
We deduce a similar result for the noncommutative Hardy algebra
$H^\infty(B(\cH)^n_1)$ which, due to Davidson-Pitts results on the
automorphisms of $F_n^\infty$ (see \cite{DP2}),  implies that
$Aut(B(\cH)^n_1)$ is isomorphic to the group of contractive
automorphisms of $H^\infty(B(\cH)^n_1)$. We remark  here that the
conformal automorphisms of $\BB_n$ also occur in the   work  of
Muhly and Solel  (\cite{MuSo3})
 concerning the automorphisms of Hardy algebras associated with
 $W^*$-correspondence over von Neumann algebras (\cite{MuSo1},
 \cite{MuSo2}), and  the work  of Power and Solel (\cite{PS}) in a
 related context.

In Section 4, we deal with the dilation and model theory of row
contractions (\cite{F}, \cite{Bu}, \cite{Po-isometric},
\cite{Po-charact}) under the action of the the free holomorphic
automorphisms of $[B(\cH)^n]_1$.  We show that if $T$ is a row
contraction  and $\Psi \in Aut(B(\cH)^n_1)$,
  then  the    characteristic function   $\Theta_{\Psi(T)}$
  {\it coincides} with  $\Theta_T\circ\Psi^{-1}$.
  This enables us to obtain   some results concerning the behavior of
  the curvature  and the
Euler characteristic of  a  row contraction (see
\cite{Po-curvature}, \cite{Kr}) under the automorphism group
$Aut(B(\cH)^n_1)$. In particular, when $T:=[T_1,\ldots, T_n]$ is a
commutative  row contraction  with $\rank \Delta_T<\infty$, we show
that  Arveson's curvature \cite{Arv2} satisfies the equation
$$K(\Psi(T))= \int_{\partial \BB_n}\lim_{r\to 1}\text{\rm trace}\,
 [I_{\cD_T}-\Theta_{T}(\Psi^{-1}(r\xi))\Theta_{T}(\Psi^{-1}(r\xi))^*] d\sigma(\xi)
$$
for any  $\Psi\in Aut(B(\cH)^n_1)$, where  the constrained
characteristic function  (also denoted by $\Theta_T$)  is given by
$$
\Theta_{T}(z):= -T+\Delta_T(I-z_1T_1^*-\cdots -z_nT_n^*)^{-1}
[z_1I_\cH,\ldots, z_nI_\cH]\Delta_{T^*},\quad z\in \BB_n.
$$
It will be interesting to know if  $ \text{\rm curv}\,(T)=\text{\rm
curv}\,(\Psi(T))$  for any pure row contraction $T$ and any free
holomorphic  automorphism of $[B(\cH)^n]_1$, where  $ \text{\rm
curv}\,(T)$ donotes the curvature of an arbitrary  row contraction
$T$. The answer is positive if $n=1$  and also when $n\geq 2$ and
$T$ is a pure row isometry. We mention that Benhida and Timotin
\cite{BeTi2} used Redheffer products to study the behavior of the
characteristic function of a row contraction under Voiculescu's
group of automorphisms \cite{Vo}.

In Section 5, we  prove  the  following  maximum principle for free
holomorphic functions on the noncommutative ball $[B(\cH)^n]_1$.  If
$F:[B(\cH)^n]_1\to B(\cH)$ is a free holomorphic function  and there
exists $X_0\in [B(\cH)^n]_1$ such that
$$\|F(X_0)\|\geq \|F(X)\| \quad \text{ for all }\ X\in [B(\cH)^n]_1,
$$
 then $F$ must be a constant.

The classical Schwarz's lemma (see \cite{Co}, \cite{R})  states that
if $f:\DD\to \CC$ is a bounded analytic function with $f(0)=0$ and
$|f(z)|\leq 1$ for $z\in \DD$, then $|f'(0)|\leq 1$ and $|f(z)|\leq
|z|$ for $z\in \DD$. Moreover, if $|f'(0)|= 1$ or if $|f(z)|=|z|$
for some $z\neq 0$, then there is a constant $c$ with $|c|=1$ such
that $f(w)=cw$ for any $w\in \DD$.
  We proved in \cite{Po-holomorphic} that if  $F$  is  a
free holomorphic function on $[B(\cH)^n]_1$ with $F(0)=0$, then
$\|F(X)\|\leq \|X\|$ for any $X\in [B(\cH)^n]_1$.

In Section 5, we complete this noncommutative version of Schwarz
lemma, by adding  new results concerning the uniqueness.  If
$F:[B(\cH)^n]_{1}\to [B(\cH)^m]_{1}$ is a free holomorphic function,
we show that
$$\varphi(X_1,\ldots, X_n)= [X_1,\ldots, X_n] F'(0)^t, \quad (X_1,\ldots,
X_n)\in [B(\cH)^n]_{1},
$$
 maps $[B(\cH)^n]_{1}$ into
$[B(\cH)^m]_{1}$, where ${}^t$ denotes the transpose. In particular,
$\|F'(0)\|\leq 1$. Moreover,  we show that if
 $F:[B(\cH)^n]_{1}\to [B(\cH)^n]_{1}$ is  a free holomorphic
function such that $F'(0)$ is a unitary operator on $\CC^n$, then
$F$ is a  free holomorphic automorphism  of $[B(\cH)^n]_{1}$ and
$F(X)=X[F'(0)]^t$ for  $X\in [B(\cH)^n]_{1}$.

 Using the free holomorphic automorphisms of $[B(\cH)^n]_1$,
we obtain another  extension of Schwarz  lemma for bounded free
holomorphic functions, which states that if
  $F:[B(\cH)^n]_1\to [B(\cH)^m]_1$ is a free
holomorphic function, $a\in \BB_n$, and $b:=F(a)\in \BB_m$, then
$$
\left\|\Psi_b(F(X))\right\|\leq \|\Psi_a(X)\|,\quad X\in
[B(\cH)^n]_1,
$$
where $\Psi_a$ and $\Psi_b$ are the corresponding free holomorphic
automorphisms.

We remark that all the results of this paper are valid  even when
$n=\infty$. We also mention that many  of the above-mentioned
results have commutative counterparts.
 We defer this discussion for a future paper, where we determine
and study the group of all commutative free holomorphic
automorphisms of the commutative open ball
$$
[B(\cH)^n]_{1,c}:=\left\{ (X_1,\ldots, X_n)\in [B(\cH)^n]_1:\ X_i
X_j=X_j X_i, \quad i,j=1,\ldots,n \right\}.
$$
Finally, we should emphasize that the present paper makes new
connections between noncommutative  multivariable operator theory
(\cite{Po-isometric}, \cite{Po-charact}, \cite{Po-poisson},
\cite{Po-holomorphic}, \cite{Po-unitary}), the classical theory of
analytic functions (\cite{R}, \cite{Co}, \cite{H}, \cite{Ru}), and
Voiculescu's group of automorphisms \cite{Vo} of the Cuntz-Toeplitz
algebra \cite{Cu}.

 \bigskip

\section{  Free holomorphic   functions on $[B(\cH)^n]_1$  and Cartan type results}

In this section we show that, under  natural conditions, the
composition of free holomorphic functions is a free holomorphic
function. We obtain a noncommutative version of Cartan's uniqueness
theorem, and use it to characterize the free biholomorphic functions
$F$ with $F(0)=0$.  As a consequence, we show that any free
holomorphic automorphism   of the noncommutative ball $[B(\cH)^n]_1$
with fixes the origin is implemented by a unitary operator on
$\CC^n$.

Let $H_n$ be an $n$-dimensional complex  Hilbert space with
orthonormal
      basis
      $e_1$, $e_2$, $\dots,e_n$, where $n=1,2,\dots$, or $n=\infty$.
       We consider the full Fock space  of $H_n$ defined by
      $$F^2(H_n):=\CC1\oplus \bigoplus_{k\geq 1} H_n^{\otimes k},$$
      where  $H_n^{\otimes k}$ is the (Hilbert)
      tensor product of $k$ copies of $H_n$.
      Define the left  (resp.~right) creation
      operators  $S_i$ (resp.~$R_i$), $i=1,\ldots,n$, acting on $F^2(H_n)$  by
      setting
      $$
       S_i\varphi:=e_i\otimes\varphi, \quad  \varphi\in F^2(H_n),
      $$
       (resp.~$
       R_i\varphi:=\varphi\otimes e_i, \quad  \varphi\in F^2(H_n).
      $)
The noncommutative disc algebra $\cA_n$ (resp.~$\cR_n$) is the norm
closed algebra generated by the left (resp.~right) creation
operators and the identity. The   noncommutative analytic Toeplitz
algebra $F_n^\infty$ (resp.~$\cR_n^\infty$)
 is the  weakly
closed version of $\cA_n$ (resp.~$\cR_n$). These algebras were
introduced in \cite{Po-von} in connection with a noncommutative von
Neumann  type inequality \cite{von}.

 Let $\FF_n^+$ be the unital free semigroup on $n$ generators
$g_1,\ldots, g_n$ and the identity $g_0$.  The length of $\alpha\in
\FF_n^+$ is defined by $|\alpha|:=0$ if $\alpha=g_0$ and
$|\alpha|:=k$ if
 $\alpha=g_{i_1}\cdots g_{i_k}$, where $i_1,\ldots, i_k\in \{1,\ldots, n\}$.
If $(X_1,\ldots, X_n)\in B(\cH)^n$, where $B(\cH)$ is the algebra of
all bounded linear operators on the Hilbert space $\cH$,    we set
$X_\alpha:= X_{i_1}\cdots X_{i_k}$  and $X_{g_0}:=I_\cH$. We denote
$e_\alpha:= e_{i_1}\otimes\cdots \otimes  e_{i_k}$ and $e_{g_0}:=1$.
Note that $\{e_\alpha\}_{\alpha\in \FF_n^+}$ is an orthonormal basis
for $F^2(H_n)$.

  We recall   (\cite{Po-charact},
       \cite{Po-von},  \cite{Po-funct},
      \cite{Po-analytic})
       a few facts
       concerning multi-analytic   operators on Fock
      spaces.
         We say that
       a bounded linear
        operator
      $M$ acting from $F^2(H_n)\otimes \cK$ to $ F^2(H_n)\otimes \cK'$ is
       multi-analytic with respect to $S_1,\ldots, S_n$
      if
      \begin{equation*}
      M(S_i\otimes I_\cK)= (S_i\otimes I_{\cK'}) M\quad
      \text{\rm for any }\ i=1,\dots, n.
      \end{equation*}
       We can associate with $M$ a unique formal Fourier expansion
      \begin{equation*}       M(R_1,\ldots, R_n):= \sum_{\alpha \in \FF_n^+}
      R_\alpha \otimes \theta_{(\alpha)}, \end{equation*}
where $\theta_{(\alpha)}\in B(\cK, \cK')$.
       We  know  that
        $$M =\text{\rm SOT-}\lim_{r\to 1}\sum_{k=0}^\infty
      \sum_{|\alpha|=k}
         r^{|\alpha|} R_\alpha\otimes \theta_{(\alpha)},
         $$
         where, for each $r\in [0,1)$, the series converges in the uniform norm.
      Moreover, the set of  all multi-analytic operators in
      $B(F^2(H_n)\otimes \cK,
      F^2(H_n)\otimes \cK')$  coincides  with
      $R_n^\infty\bar \otimes B(\cK,\cK')$,
      the WOT-closed operator space generated by the spatial tensor
      product.
A multi-analytic operator is called inner if it is an isometry. We
remark that  similar results are valid  for  multi-analytic
operators with respect to the right creation operators $R_1,\ldots,
R_n$.

 According to \cite{Po-holomorphic}, a map $F:[B(\cH)^n]_{\gamma}\to
  B( \cH)\bar\otimes_{min} B(\cE, \cG)$ is
  {\it free
holomorphic function} on  $[B(\cH)^n]_{\gamma}$  with coefficients
in $B(\cE, \cG)$ if there exist $A_{(\alpha)}\in B(\cE, \cG)$,
$\alpha\in \FF_n^+$, such that
$$
F(X_1,\ldots, X_n)=\sum\limits_{k=0}^\infty \sum\limits_{|\alpha|=k}
X_\alpha\otimes  A_{(\alpha)},
$$
where the series converges in the operator  norm topology  for any
$(X_1,\ldots, X_n)\in [B(\cH)^n]_{\gamma}$. A power series
$F:=\sum\limits_{\alpha\in \FF_n^+}
 Z_\alpha\otimes A_{(\alpha)}$ represents a free holomorphic function
on the open  operatorial $n$-ball of radius $\gamma$, with
coefficients in $B(\cE, \cG)$,      if and only if the series
$$
\sum\limits_{k=0}^\infty  \sum\limits_{|\alpha|=k} r^{|\alpha|}
 S_\alpha\otimes A_{(\alpha)}
$$
is convergent in the operator norm topology for any $r\in
[0,\gamma)$, where $S_1,\ldots, S_n$ are the left creation operators
on the Fock space $F^2(H_n)$.
 Moreover, in this case, we have
\begin{equation*}
\sum\limits_{k=0}^\infty\left\| \sum\limits_{|\alpha|=k}
r^{|\alpha|}   S_\alpha\otimes A_{(\alpha)}
\right\|=\sum_{k=0}^\infty r^k\left\| \sum_{|\alpha|=k}
A_{(\alpha)}^*A_{(\alpha)}\right\|^{1/2},
\end{equation*}
where the series are convergent for any $r\in [0,\gamma)$.

We remark  that  the coefficients  of a  free holomorphic function
are uniquely determined by its representation on
   an infinite dimensional   Hilbert space. Indeed,  let $0<r<\gamma$ and assume
   $F(rS_1,\ldots, rS_n)=0$.  Taking into account that
     $S_i^* S_j=\delta_{ij} I$ for $i,j=1,\ldots, n$, we have
\begin{equation*}
\left< F(rS_1,\ldots, rS_n)(1\otimes x), ( S_\alpha\otimes
I_\cG)(1\otimes y)\right>=r^{|\alpha|}\left<A_{(\alpha)}x,y\right>=0
\end{equation*}
for any $x\in \cE$, $y\in \cG$,  and $\alpha\in \FF_n^+$. Therefore
$A_{(\alpha)}=0$ for any $\alpha\in \FF_n^+$.

Due to this reason, throughout  this paper, we assume  that $\cH$ is
a separable infinite dimensional Hilbert space.

\begin{lemma}
\label{range} If $\varphi:[B(\cH)^n]_{\gamma_1}\to B(\cH)^n$ is a
free holomorphic function, then $\text{\rm range}\, \varphi\subseteq
[B(\cH)^n]_{\gamma_2}$ if and  only if
$$
\|\varphi(rS_1,\ldots, rS_n)\|<\gamma_2 \quad \text{ for any } \
r\in[0,\gamma_1),
$$
where $\gamma_1>0$ and $\gamma_2>0$.
\end{lemma}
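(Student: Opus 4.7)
The plan is to split the proof into the two implications, treating the forward one as essentially a one-line check and the converse via a partial-sum comparison based on Popescu's noncommutative von Neumann inequality. For the forward direction, I would identify $\cH$ with the separable infinite dimensional Fock space $F^2(H_n)$ and note that $[S_1,\ldots,S_n]$ is a row isometry of operator norm one, so $\|[rS_1,\ldots,rS_n]\|=r<\gamma_1$ for $r\in[0,\gamma_1)$ and hence $(rS_1,\ldots,rS_n)\in[B(\cH)^n]_{\gamma_1}$. The range hypothesis then gives $\|\varphi(rS_1,\ldots,rS_n)\|<\gamma_2$ at once.

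For the converse I would write $\varphi=(\varphi_1,\ldots,\varphi_n)$ with $\varphi_i=\sum_{\alpha\in\FF_n^+} a^{(i)}_\alpha Z_\alpha$ and assemble it into a single row-coefficient power series
$$\varphi(Z_1,\ldots,Z_n)=\sum_{k=0}^\infty\sum_{|\alpha|=k} Z_\alpha\otimes A_{(\alpha)},\qquad A_{(\alpha)}:=[a^{(1)}_\alpha,\ldots,a^{(n)}_\alpha].$$
Given any $(X_1,\ldots,X_n)\in[B(\cH)^n]_{\gamma_1}$ I would pick $r$ with $\|[X_1,\ldots,X_n]\|<r<\gamma_1$, so that $(X_1/r,\ldots,X_n/r)$ is a strict row contraction. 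To the polynomial partial sums
$$\varphi^{[N]}(Z):=\sum_{k=0}^N\sum_{|\alpha|=k} Z_\alpha\otimes A_{(\alpha)}=\sum_{k=0}^N\sum_{|\alpha|=k} r^{|\alpha|}(Z/r)_\alpha\otimes A_{(\alpha)},$$
I would apply Popescu's noncommutative von Neumann inequality in its row-coefficient form (cf.\ \cite{Po-von}, \cite{Po-charact}) at the strict row contraction $(X_1/r,\ldots,X_n/r)$ to obtain
$$\|\varphi^{[N]}(X_1,\ldots,X_n)\|\leq \|\varphi^{[N]}(rS_1,\ldots,rS_n)\|.$$

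To conclude, I would let $N\to\infty$. Because $\varphi$ is free holomorphic on $[B(\cH)^n]_{\gamma_1}$ and both $(X_1,\ldots,X_n)$ and $(rS_1,\ldots,rS_n)$ lie in that ball, the partial sums on each side converge in operator norm to $\varphi(X_1,\ldots,X_n)$ and $\varphi(rS_1,\ldots,rS_n)$, respectively. The inequality therefore passes to the limit, yielding $\|\varphi(X_1,\ldots,X_n)\|\leq \|\varphi(rS_1,\ldots,rS_n)\|<\gamma_2$ by hypothesis, which is exactly $\varphi(X_1,\ldots,X_n)\in[B(\cH)^n]_{\gamma_2}$. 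The only point requiring care is the row-valued version of the von Neumann inequality, but this is a standard amplification of the scalar case and is recorded in the cited references; I do not foresee any serious obstacle beyond that.
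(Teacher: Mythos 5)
Your proof is correct and follows essentially the same route as the paper: both directions reduce to Popescu's noncommutative von Neumann inequality, yielding the key comparison $\|\varphi(X)\|\leq\|\varphi(rS_1,\ldots,rS_n)\|$ for $r$ slightly above $\|X\|$. The only (immaterial) difference is that you apply the inequality to polynomial partial sums and pass to the operator-norm limit, whereas the paper rescales $\varphi$ to a function on the closed ball so that its boundary value lies in $M_{1\times n}\otimes\cA_n$ and applies the inequality to that disc-algebra element directly.
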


\begin{proof}
Since $\cH$ is infinite dimensional, one implication is obvious.
Conversely, assume that, for $r\in(0,\gamma_1)$,
$\|\varphi(rS_1,\ldots, rS_n)\|<\gamma_2$. If $X:=(X_1,\ldots, X_n)$
is in $[B(\cH)^n]_{\gamma_1}$, then there exists $\gamma \in (0,1)$
such that $\|\frac{1}{\gamma} X\|<\gamma_1$.  Since $\varphi$ is a
free holomorphic function on $[B(\cH)^n]_{\gamma_1}$,  the map
$\varphi_\gamma$ defined by
$$\varphi_\gamma(X_1,\ldots, X_n):=\varphi(\gamma X_1,\ldots, \gamma
X_n), \qquad (X_1,\ldots, X_n)\in [B(\cH)^n]_{\gamma_1}^-,$$ is free
holomorphic on $[B(\cH)^n]_{\gamma_1}^-$. Consequently,
$\varphi_\gamma(\gamma_1 S_1,\ldots, \gamma_1 S_n)$ is in
$M_{1\times n}\otimes \cA_n$, i.e., a row matrix with entries in
$\cA_n$. Using the noncommutative von Neumann inequality
\cite{Po-von} (see \cite{von} for the classical case), we deduce
that
$$
\|\varphi (X_1,\ldots,
X_n)\|=\left\|\varphi_\gamma\left(\frac{1}{\gamma}X_1,\ldots,\frac{1}{\gamma}X_n\right)
\right\|\leq \|\varphi_\gamma(\gamma_1 S_1,\ldots, \gamma_1
S_n)\|=\|\varphi(\gamma \gamma_1S_1,\ldots, \gamma
\gamma_1S_n)\|<\gamma_2.
$$ This completes the proof.
\end{proof}

The next result shows that the composition of free holomorphic
functions is a free holomorphic function.

\begin{theorem}
\label{compo} Let  $F:[B(\cH)^n]_{\gamma_2}\to B(\cH)\bar \otimes
B(\cE,\cG)$ and $\varphi: [B(\cH)^n]_{\gamma_1}\to B(\cH)^n$ be free
holomorphic functions such that $\text{\rm range}\, \varphi
\subseteq [B(\cH)^n]_{\gamma_2}$.  Then $F\circ \varphi$ is a free
holomorphic function on $[B(\cH)^n]_{\gamma_1}$.
\end{theorem}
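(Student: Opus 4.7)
The plan is to construct the power series of $F\circ\varphi$ by formal substitution and then use Lemma \ref{range}, together with the characterization of free holomorphic functions via evaluation at $rS_1,\dots,rS_n$, to verify norm convergence on $[B(\cH)^n]_{\gamma_1}$. Writing $F(Y)=\sum_{\alpha}Y_\alpha\otimes A_{(\alpha)}$ and $\varphi_i(X)=\sum_\beta b^{(i)}_{(\beta)}X_\beta$, I first set $\varphi_\alpha(X):=\varphi_{i_1}(X)\cdots\varphi_{i_k}(X)=\sum_\beta c^{(\alpha)}_{\beta}X_\beta$ for $\alpha=g_{i_1}\cdots g_{i_k}$, and propose the candidate coefficients $B_{(\beta)}:=\sum_\alpha c^{(\alpha)}_\beta A_{(\alpha)}$ of $F\circ\varphi$, whose convergence must be established.

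For $r\in[0,\gamma_1)$, Lemma \ref{range} gives $\delta:=\|\varphi(rS_1,\dots,rS_n)\|<\gamma_2$, so $F(\varphi(rS))=\sum_\alpha\varphi_\alpha(rS)\otimes A_{(\alpha)}$ converges absolutely in operator norm and lies in $F_n^\infty\bar\otimes B(\cE,\cG)$. Extracting the Fourier coefficient of $S_\beta$ by pairing against $e_\beta\otimes y$ on the vacuum $1\otimes x$, and using $\varphi_\alpha(rS)\cdot 1=\sum_\gamma c^{(\alpha)}_\gamma r^{|\gamma|}e_\gamma$, I obtain
\[
\langle F(\varphi(rS))(1\otimes x),\,e_\beta\otimes y\rangle=r^{|\beta|}\sum_\alpha c^{(\alpha)}_\beta\langle A_{(\alpha)}x,y\rangle.
\]
Since the $S_\beta$-Fourier coefficient of any element of $F_n^\infty\bar\otimes B(\cE,\cG)$ has norm at most that of the element, the partial sums $\sum_{|\alpha|\leq N}c^{(\alpha)}_\beta A_{(\alpha)}$ are Cauchy in $B(\cE,\cG)$, so $B_{(\beta)}$ is well-defined and the Fourier coefficient of $S_\beta$ in $F(\varphi(rS))$ equals $r^{|\beta|}B_{(\beta)}$.

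To show $\sum_\beta B_{(\beta)}Z_\beta$ has radius of convergence at least $\gamma_1$, I invoke the gauge action $U_\theta$ on $F^2(H_n)$ defined by $e_\beta\mapsto e^{i|\beta|\theta}e_\beta$: averaging $e^{ik\theta}(U_\theta^*\otimes I)(\cdot)(U_\theta\otimes I)$ yields the contractive degree-$k$ projection, which applied to $F(\varphi(rS))$ produces
\[
\left\|\sum_{|\beta|=k}r^k\,S_\beta\otimes B_{(\beta)}\right\|\leq\|F(\varphi(rS))\|\qquad\text{for every }k\geq 0.
\]
For any $0\leq r'<r<\gamma_1$, factoring $(r'/r)^k$ out of the homogeneous part yields the geometric bound $\sum_k\|\sum_{|\beta|=k}(r')^k S_\beta\otimes B_{(\beta)}\|<\infty$. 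By the criterion recalled after Lemma \ref{range}, the series $\sum_\beta B_{(\beta)}Z_\beta$ therefore defines a free holomorphic function on $[B(\cH)^n]_{\gamma_1}$; uniqueness of the power-series coefficients then identifies it with $F\circ\varphi$ on all of $[B(\cH)^n]_{\gamma_1}$.

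The main obstacle is bridging the gap between the pointwise norm convergence of $F(\varphi(rS))$ and the shell-summability $\sum_k\|\sum_{|\beta|=k}r^k S_\beta\otimes B_{(\beta)}\|<\infty$ required by the definition of a free holomorphic function. The gauge-invariance argument is the crucial ingredient, exploiting the fact that each $\varphi_\alpha(rS)$ lies in $\cA_n$ and hence in an algebra carrying a contractive $\TT$-grading, in order to control the homogeneous shells of $F(\varphi(rS))$ by its full operator norm.
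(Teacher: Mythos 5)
Your argument is essentially sound and follows the same overall skeleton as the paper's proof (form $M_r=F(\varphi(rS_1,\ldots,rS_n))$, observe it lies in $\cA_n\otimes B(\cE,\cG)$, extract Fourier coefficients $r^{|\beta|}B_{(\beta)}$, show the $B_{(\beta)}$ assemble into a free holomorphic function, and identify it with $F\circ\varphi$), but the mechanism you use for the crucial convergence step is genuinely different. The paper gets the norm convergence of $\sum_k\sum_{|\beta|=k}(r\gamma)^{|\beta|}S_\beta\otimes B_{(\beta)}$ for $\gamma<1$ from the general Abel-type summability of the Fourier series of a multi-analytic operator (its relation (1.4)), after a separate computation showing $B_{(\beta)}(r)$ is independent of $r$. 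You instead invoke the gauge action to show each homogeneous shell of $M_r$ is norm-dominated by $\|M_r\|$, and then trade $r$ for $r'<r$ to get geometric summability of the shells; the $r$-independence of the coefficients comes for free from your explicit formula $B_{(\beta)}=\sum_\alpha c^{(\alpha)}_\beta A_{(\alpha)}$. This is a clean and correct alternative that buys you absolute (shell-summable) convergence directly, which is exactly the criterion the paper states for membership in $Hol(B(\cX)^n_{\gamma_1})$.

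The one step you should tighten is the final identification. ``Uniqueness of the power-series coefficients'' lets you conclude that two \emph{free holomorphic} functions agreeing at $rS_1,\ldots,rS_n$ coincide; but at this stage $F\circ\varphi$ is precisely the object not yet known to be free holomorphic, so you cannot apply that uniqueness to it. What you actually have is the identity $\sum_\alpha\varphi_\alpha(rS)\otimes A_{(\alpha)}=\sum_\beta r^{|\beta|}S_\beta\otimes B_{(\beta)}$ as elements of $\cA_n\otimes B(\cE,\cG)$, and you must transport it to an arbitrary $X\in[B(\cH)^n]_{\gamma_1}$. The paper does this by applying the noncommutative Poisson transform $P_{(X_1/\gamma,\ldots,X_n/\gamma)}$ with $\gamma=\|X\|=r$ to both sides: it is a norm-continuous homomorphism on $\cA_n$, so it carries each convergent series termwise, sending $\varphi_\alpha(rS)$ to $\varphi_\alpha(X)$ and $r^{|\beta|}S_\beta$ to $X_\beta$. (Equivalently, one can argue that the partial sums $\sum_{|\alpha|\le N}\varphi_\alpha\otimes A_{(\alpha)}$ converge uniformly on each closed ball of radius $r<\gamma_1$ and pass the coefficients to the limit.) With that step supplied, your proof is complete.
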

\begin{proof}
Let $F$ have the representation
\begin{equation}
\label{rep} F(Y_1,\ldots, Y_n)=\sum_{k=0}^\infty \sum_{|\alpha|=k}
 Y_\alpha\otimes A_{(\alpha)}, \qquad (Y_1,\ldots, Y_n)\in
[B(\cH)^n]_{\gamma_2},
\end{equation}
where the series is convergent in the operator norm topology.
Suppose  that $\varphi=(\varphi_1,\ldots, \varphi_n)$, where
$\varphi_1,\ldots, \varphi_n$ are free holomorphic functions on
$[B(\cH)^n]_{\gamma_1}$ with scalar coefficients, and  that
$\text{\rm range}\, \varphi \subseteq [B(\cH)^n]_{\gamma_2}$.
Since $\|[\varphi_1(X),\ldots, \varphi_n(X)]\|<\gamma_2$ for
$X:=(X_1,\ldots, X_n)\in [B(\cH)^n]_{\gamma_1}$, relation
\eqref{rep} implies
$$
(F\circ \varphi)(X_1,\ldots, X_n)=\sum_{k=0}^\infty
\sum_{|\alpha|=k}  \varphi_\alpha(X_1,\ldots, X_n)\otimes
A_{(\alpha)},
$$
where   $\varphi_\alpha:=\varphi_{i_1}\cdots \varphi_{i_k}$ if
$\alpha=g_{i_1}\cdots g_{i_k}\in \FF_n^+$ and
 the series converges in the operator norm topology for any
$(X_1,\ldots, X_n)\in [B(\cH)^n]_{\gamma_1}$. According to Lemma
\ref{range}, we have $\|\varphi(rS_1,\ldots, rS_n)\|<\gamma_2$,
$r\in [0,\gamma_1)$. Since $F$ is free holomorphic on
$[B(\cH)^n]_{\gamma_2}$, for each $r\in [0,\gamma_1)$,
\begin{equation}
\label{Mr}
 M_r:=\sum_{k=0}^\infty \sum_{|\alpha|=k}
 \varphi_\alpha(rS_1,\ldots, rS_n)\otimes A_{(\alpha)}
\end{equation}
is convergent in the operator norm topology. Due to the fact that
$\varphi_i(rS_1,\ldots, rS_n)$ is in the noncommutative disc algebra
$\cA_n$ for each $i=1,\ldots, n$, it is clear that $M_r$ is in the
operator space $\cA_n\otimes B(\cE, \cG)\subset F_n^\infty\bar
\otimes B(\cE, \cG)$. Therefore, for each $r\in [0,\gamma_1)$, the
operator $M_r$ has a unique ``Fourier representation''
$\sum_{k=0}^\infty \sum_{|\alpha|=k} r^{|\alpha|} S_\alpha\otimes
B_{(\alpha)}(r)$ and
\begin{equation}
\label{rep2} M_r=\text{\rm SOT-}\lim_{\gamma\to 1}\sum_{k=0}^\infty
\sum_{|\alpha|=k} r^{|\alpha|}\gamma^{|\alpha|} S_\alpha\otimes
B_{(\alpha)}(r),
\end{equation}
where the series $\sum_{k=0}^\infty \sum_{|\alpha|=k}
 r^{|\alpha|}\gamma^{|\alpha|} S_\alpha\otimes B_{(\alpha)}(r)$
converges in the operator norm topology.

The next step is to show that the coefficients $B_{(\alpha)}(r)\in
B(\cE, \cG)$, $\alpha\in \FF_n^+$,  do not depend on $r\in
[0,\gamma_1)$. Using relations \eqref{Mr} and \eqref{rep2}, we
deduce that
\begin{equation*}
\begin{split}
\left< B_{\alpha}(r)x,y\right>&= \left<( S_\alpha^*\otimes
I)\frac{1}{r^{|\alpha|}} M_r(1\otimes x), 1\otimes y\right>\\
&=\lim_{p\to\infty}\sum_{k=0}^p\sum_{|\beta|=k}\left< A_{(\beta)}
x,y\right>\left<\frac{1}{r^{|\alpha|}}
S_\alpha^*\varphi_\beta(rS_1,\ldots, rS_n)1,1\right>
\end{split}
\end{equation*}
for any $x\in \cE$, $y\in \cG$, and $\alpha\in \FF_n^+$. On the
other hand, for each $\beta\in \FF_n^+$, $\varphi_\beta$  is a free
holomorphic function on $[B(\cH)^n]_{\gamma_1}$  with scalar
coefficients and has a representation $\varphi_\beta(X_1,\ldots,
X_n)=\sum_{k=0}^\infty \sum_{|\alpha|=k} d_\alpha X_\alpha$  for any
$(X_1,\ldots, X_n)\in [B(\cH)^n]_{\gamma_1}$,  where $d_\alpha\in
\CC$. Consequently, we have
$$
\left<\frac{1}{r^{|\alpha|}} S_\alpha^*\varphi_\beta(rS_1,\ldots,
rS_n)1,1\right>=d_\alpha
$$
for any $r\in [0,\gamma_1)$, and $\alpha,\beta\in \FF_n^+$. Now, it
is clear that $B_{(\alpha)}:=B_{(\alpha)}(r)$ does not depend on
$r\in [0,\gamma_1)$. Going back to   relation \eqref{rep2},  we
deduce that
\begin{equation}\label{Mr2}
 M_r=\text{\rm SOT-}\lim_{\gamma\to 1}\sum_{k=0}^\infty
\sum_{|\alpha|=k}  r^{|\alpha|}\gamma^{|\alpha|} S_\alpha \otimes
B_{(\alpha)},
\end{equation}
where the series $\sum_{k=0}^\infty \sum_{|\alpha|=k}
r^{|\alpha|}\gamma^{|\alpha|} S_\alpha\otimes B_{(\alpha)}$
converges in the operator norm topology for any $r\in[0,\gamma_1)$
and $ \gamma\in [0,1)$. This shows that
$$
G(X_1,\ldots, X_n):=\sum_{k=0}^\infty \sum_{|\alpha|=k}
X_\alpha\otimes B_{(\alpha)},\qquad (X_1,\ldots, X_n)\in
[B(\cH)^n]_{\gamma_1},
$$
is a free holomorphic function on $[B(\cH)^n]_{\gamma_1}$.
Consequently, using the continuity of $G$ in the norm operator
topology and relations \eqref{Mr} and \eqref{Mr2}, we deduce that
\begin{equation}
\label{AB} M_r:=\sum_{k=0}^\infty \sum_{|\alpha|=k}
 \varphi_\alpha(rS_1,\ldots, rS_n)\otimes A_{(\alpha)}=
\sum_{k=0}^\infty \sum_{|\alpha|=k} r^{|\alpha|} S_\alpha\otimes
B_{(\alpha)}
\end{equation}
for any $r\in [0,\gamma_1)$.

Now, let  $X:=(X_1,\ldots, X_n)\in [B(\cH)^n]_{\gamma_1}$ and set
$\gamma:=\|X\|<\gamma_1$.  Applying  the noncommutative Poisson
transform (see \cite{Po-poisson}) at $(\frac{1}{\gamma} X_1,\ldots,
\frac{1}{\gamma} X_n)$ to relation \eqref{AB}, when $r=\gamma$,  we
deduce that
$$
(F\circ \varphi)(X_1,\ldots, X_n)=\sum_{k=0}^\infty
\sum_{|\alpha|=k} \varphi_\alpha(X_1,\ldots, X_n)\otimes
A_{(\alpha)}=\sum_{k=0}^\infty \sum_{|\alpha|=k}  X_\alpha\otimes
B_{(\alpha)}
$$
for any $(X_1,\ldots, X_n)\in [B(\cH)^n]_{\gamma_1}$. This completes
the proof.
\end{proof}

 For each $i=1,\ldots, n$,  we define the free  partial derivation  $\frac{\partial } {\partial Z_i}$  on $\CC[Z_1,\ldots, Z_n]$ as the unique linear operator  on this algebra, satisfying the conditions
$$
\frac{\partial I} {\partial Z_i}=0, \quad  \frac{\partial Z_i}
{\partial Z_i}=I, \quad  \frac{\partial Z_j} {\partial Z_i}=0\
\text{ if }  \ i\neq j,
$$
and
$$
\frac{\partial (fg)} {\partial Z_i}=\frac{\partial f} {\partial Z_i}
g +f\frac{\partial g} {\partial Z_i}
$$
for any  $f,g\in \CC[Z_1,\ldots, Z_n]$ and $i,j=1,\ldots n$.  Notice
that if $\alpha=g_{i_1}\cdots g_{i_p}$, $|\alpha|=p$, and $q$ of the
$g_{i_1},\ldots, g_{i_p}$ are equal to $g_j$, then $\frac{\partial
Z_\alpha} {\partial Z_j}$ is the sum of the $q$ words obtained by
deleting each occurence of $Z_j$ in $Z_\alpha:=Z_{i_1}\cdots
Z_{i_p}$. The same definition extends to formal power series in the
noncommuting indeterminates $Z_1,\ldots, Z_n$.
  If $F:=\sum\limits_{\alpha\in \FF_n^+} Z_\alpha\otimes A_{(\alpha)}$ is a
  power series with  operator-valued coefficients, then   the    free partial derivative
    of $F$ with respect to $Z_{i} $ is the power series
$$
\frac {\partial F}{\partial Z_{i}} := \sum_{\alpha\in \FF_n^+}
 \frac {\partial Z_\alpha}{\partial Z_{i} }\otimes A_{(\alpha)}.
$$
In \cite{Po-holomorphic}, we showed that if $F$ is a free
holomorphic function  on $[B(\cH)^n]_{\gamma}$ then so is
 $\frac {\partial F}{\partial Z_{i}}$, $i=1,\ldots, n$.

\bigskip

 Let $F_1,\ldots, F_n$ be free  holomorphic functions  on $[B(\cH)^n]_\gamma$  with
 scalar coefficients. Then the map $F:[B(\cH)^n]_\gamma\to B(\cH)^n$ defined by $F:=(F_1,\ldots,
 F_n)$ is a free holomorphic function.  We define $F'(0)$ as the
 linear operator on $\CC^n$ having the matrix $\left[\left( \frac{\partial F_i}
{\partial Z_j}\right)(0)\right]_{i,j=1,\ldots, n}$.

 Now, we can prove the following    noncommutative version of
 Cartan's  uniqueness theorem \cite{Ca}, for free holomorphic functions.

 \begin{theorem} \label{cartan1}
  Let $F:[B(\cH)^n]_\gamma\to [B(\cH)^n]_\gamma$ be a free
 holomorphic function such that $F(0)=0$ and $F'(0)=I_n$. Then
 $$F(X_1,\ldots, X_n)=(X_1,\ldots, X_n),\qquad   (X_1,\ldots,
 X_n)\in[B(\cH)^n]_\gamma.
 $$
\end{theorem}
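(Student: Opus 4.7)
The plan is to mimic the classical Cartan argument, replacing the Cauchy estimate on the disc by a norm estimate on the Fock space coming from Lemma~\ref{range}. Suppose toward a contradiction that $F\neq \mathrm{id}$. Since $F(0)=0$ and $F'(0)=I_n$, expanding each coordinate as a formal power series in $Z_1,\ldots,Z_n$ gives
$$F(Z)=Z+P_2(Z)+P_3(Z)+\cdots,$$
where $P_k$ is the vector of degree-$k$ homogeneous parts of $F_1,\ldots,F_n$. Let $m\geq 2$ be the smallest index with $P:=P_m\neq 0$, and write $F=\mathrm{id}+P+R$ with $R$ a free holomorphic map whose homogeneous expansion lives only in degrees $\geq m+1$.

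Next I would iterate. By Theorem~\ref{compo}, each composition $F^{(k)}:=F\circ F\circ\cdots\circ F$ ($k$-fold) is a free holomorphic self-map of $[B(\cH)^n]_\gamma$. An induction on $k$ yields
$$F^{(k)}(Z)=Z+kP(Z)+(\text{homogeneous terms of degree}\geq m+1):$$
indeed, if $W=Z+Q$ with $Q$ having homogeneous parts only in degrees $\geq m$, then the degree-$m$ homogeneous $P$ satisfies $P(W)=P(Z)+(\text{degree}\geq 2m-1)$, and $R(W)$ contributes only degrees $\geq m+1$. Since $F^{(k)}$ maps $[B(\cH)^n]_\gamma$ into itself, Lemma~\ref{range} yields $\|F^{(k)}(\rho e^{it}S)\|<\gamma$ for every $\rho\in(0,\gamma)$, $t\in\RR$, $k\in\NN$, where $S:=(S_1,\ldots,S_n)$ is the row of left creation operators (note $\|\rho e^{it}S\|=\rho<\gamma$).

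Finally I would extract the degree-$m$ component by a Fourier integral in $t$. The expansion
$$F^{(k)}(\rho e^{it}S)=\sum_{j\geq 1}\rho^j e^{ijt}Q_j^{(k)}(S),\qquad Q_m^{(k)}=kP,$$
converges in the operator norm uniformly in $t$, so
$$\rho^m\,k\,P(S)=\frac{1}{2\pi}\int_0^{2\pi}e^{-imt}F^{(k)}(\rho e^{it}S)\,dt,$$
an operator-valued integral whose norm is therefore at most $\gamma$. Hence $k\rho^m\|P(S)\|\leq \gamma$ for every $k$, forcing $\|P(S)\|=0$. But the uniqueness of coefficients for free holomorphic functions noted earlier in Section~1 (via $S_i^*S_j=\delta_{ij}I$) gives $P(S)\neq 0$ whenever $P\neq 0$, a contradiction. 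The main bookkeeping point is the inductive identity $F^{(k)}=\mathrm{id}+kP+(\text{degree}\geq m+1)$; everything else follows from Theorem~\ref{compo}, Lemma~\ref{range}, and the orthonormality of $\{e_\alpha\}_{\alpha\in\FF_n^+}$.
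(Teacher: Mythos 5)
Your proof is correct and follows essentially the same route as the paper: the same reduction to the first nonzero homogeneous part $P=P_m$, the same inductive identity $F^{(k)}=\mathrm{id}+kP+(\deg\geq m+1)$ via Theorem~\ref{compo}, and the same use of Lemma~\ref{range} to bound the iterates. The only local difference is in how the degree-$m$ component is isolated: you average $F^{(k)}(\rho e^{it}S)$ against $e^{-imt}$ over the circle action, whereas the paper applies $F^{(N)}(rS)^{*}$ to a basis vector $e_{\alpha_0}$ with $|\alpha_0|=m$ and uses that the higher-degree terms annihilate it; both extractions are valid and lead to the same linear-growth contradiction.
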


\begin{proof} First notice that, due to the hypothesis,  $F$ has the form $F=(F_1,\ldots,
F_n)$, where each  $F_i$, $i=1,\ldots, n$,  is a free holomorphic
function on $[B(\cH)^n]_\gamma$ with scalar coefficients, of the
form
$$
F_i(X_1,\ldots, X_n)=X_i+\sum_{k=2}^\infty \sum_{|\alpha|=k}
a_\alpha^{(i)} X_\alpha,\qquad (X_1,\ldots, X_n)\in
[B(\cH)^n]_\gamma.
$$
Assume that there exists $\alpha\in \FF_n^+$, $|\alpha|\geq2$, and
$i\in \{1,\ldots, n\}$ such that $a_\alpha^{(i)}\neq 0$ . Let $m\geq
2$ be the smallest natural number such that there exists
$\alpha_0\in \FF_n^+$, $|\alpha_0|=m$, and $i_0\in \{1,\ldots, n\}$
such that $a_{\alpha_0}^{(i_0)}\neq 0$. Then we have
$$
F_i(X_1,\ldots, X_n)=X_i+ H_i(X_1,\ldots, X_n) \quad \text{ and }
\quad H_i(X_1,\ldots, X_n)= \sum_{k=m}^\infty G_k^{(i)}(X_1,\ldots,
X_n)
$$
for any $ (X_1,\ldots, X_n)\in [B(\cH)^n]_\gamma,$ where
$G_k^{(i)}(X_1,\ldots, X_n):= \sum_{|\alpha|=k} a_\alpha^{(i)}
X_\alpha$ for each $k\geq m$ and $i\in \{1,\ldots, n\}$.
Due to Theorem \ref{compo} and the fact that $Hol(B(\cH)^n_\gamma)$
is an algebra, we deduce that, for each $i\in \{1,\ldots, n\}$,
$G_m^{(i)}\circ F$ is a free holomorphic function and
$$
(G_m^{(i)}\circ F)(X_1,\ldots, X_n)=\sum_{|\alpha|=m} a_\alpha^{(i)}
F_\alpha (X_1,\ldots, X_n)=G_m^{(i)}(X_1,\ldots, X_n)+
K_{m+1}^{(i)}(X_1,\ldots, X_n),
$$
where $K_{m+1}^{(i)}$ is a free holomorphic function containing only
monomials of degree $\geq m+1$  in its representation. Theorem
\ref{compo} implies that $F\circ F$ is a free holomorphic function
and, due to the considerations  above, we have
\begin{equation*}
\begin{split}
(F\circ F)(X_1,\ldots, X_n) &=[X_1,\ldots,
X_n]+\left[2G_m^{(1)}(X_1,\ldots, X_n),\ldots,2G_m^{(n)}(X_1,\ldots, X_n)\right]\\
&\qquad+ \left[K_{m+1}^{(1)}(X_1,\ldots, X_n),\ldots,
K_{m+1}^{(n)}(X_1,\ldots, X_n)\right].
\end{split}
\end{equation*}
Continuing this process  and setting $F^N:=\underbrace{(F\circ\cdots
\circ F)}_{\text{$N$}\ times}$,  $N\in \NN$, one can see that
\begin{equation*}
\begin{split}
 F^N(X_1,\ldots, X_n) &=[X_1,\ldots,
X_n]+\left[NG_m^{(1)}(X_1,\ldots, X_n),\ldots,NG_m^{(n)}(X_1,\ldots, X_n)\right]\\
&\qquad+ \left[E_{m+1}^{(1)}(X_1,\ldots, X_n),\ldots,
E_{m+1}^{(n)}(X_1,\ldots, X_n)\right],
\end{split}
\end{equation*}
where, for each $i=1,\ldots, n$, $E_{m+1}^{(i)}$  is a free
holomorphic function containing only monomials of degree $\geq m+1$
in its representation. Hence, and using the fact that $S_1,\ldots,
S_n$ are isometries with orthogonal ranges, we deduce that
\begin{equation}
\label{Fk} F^N(rS_1,\ldots, rS_n)^* e_\alpha = r\left[\begin{matrix}
S_1^*\\\vdots \\S_n^*\end{matrix}\right]e_\alpha+ r^m
N\left[\begin{matrix} G_m^{(1)}(S_1,\ldots, S_n)^*\\\vdots\\
G_m^{(n)}(S_1,\ldots, S_n)^*\end{matrix}\right]e_\alpha
\end{equation}
for any $\alpha\in \FF_n^+$ with $|\alpha|=2$, $r\in (0,\gamma)$,
and $N=1,2, \ldots$.  We recall that $\{e_\alpha\}_{\alpha\in
\FF_n^+}$ is the standard orthonormal basis for $F^2(H_n)$. Since
$G_m^{(i)}$, $i=1,\ldots, n$, are homogeneous  noncommutative
polynomials of degree $m$ and $|\alpha_0|=m$, we have
$$
C:=\left\|\left[\begin{matrix} G_m^{(1)}(S_1,\ldots, S_n)^*\\\vdots\\
G_m^{(n)}(S_1,\ldots,
S_n)^*\end{matrix}\right]e_{\alpha_0}\right\|\geq
|a_{\alpha_0}^{i_0}|>0.
$$
Now, relation  \eqref{Fk} implies
\begin{equation}
\label{NC}
 r^m N C< r+\|F^N(rS_1,\ldots, rS_n)^*e_{\alpha_0}\|\quad
\text{ for any } \ N\in \NN.
\end{equation}
On the other hand, since $\text{\rm range}\, F^N \subseteq
[B(\cH)^n]_{\gamma}$, Lemma \ref{range} implies  $\|F^N(rS_1,\ldots,
rS_n)\|<\gamma$ for any $N\in \NN$. Hence and using \eqref{NC}, we
deduce that $r^m N C<r+\gamma$ for any $N\in \NN$, which is a
contradiction. This completes the proof.
\end{proof}

If $L:=[a_{ij}]_{n\times n}$ is a  bounded linear operator on
$\CC^n$, it generates a free holomorphic function on
$[B(\cH)^n]_\gamma$ by setting
$$
\Phi_{L}(X_1,\ldots, X_n):=[X_1,\ldots, X_n]{\bf
L}=\left[\sum_{i=1}^n a_{i1} X_i,\cdots,\sum_{i=1}^n a_{in}
X_i\right]
$$
where ${\bf L}:=[a_{ij}I_\cH]_{n\times n}$. By abuse of notation, we
also write  $\Phi_L(X)=XL$.

A map $F:[B(\cH)^n]_{\gamma_1}\to [B(\cH)^n]_{\gamma_2}$ is called
free biholomorphic  if $F$  is  free homolorphic, one-to-one and
onto, and  has  free holomorphic inverse. The automorphism group of
$[B(\cH)^n]_{\gamma}$, denoted by $Aut([B(\cH)^n_\gamma)$, consists
of all free biholomorphic functions  of $[B(\cH)^n]_{\gamma}$. It is
clear that $Aut([B(\cH)^n_\gamma)$ is  a group with respect to the
composition of free holomorphic functions.

In what follows, we characterize the free biholomorphic functions
with $F(0)=0$.

\begin{theorem}
\label{Cartan2} Let $F:[B(\cH)^n]_{\gamma_1}\to
[B(\cH)^n]_{\gamma_2}$  be a free biholomorphic function with
$F(0)=0$. Then there is an invertible bounded linear operator $L$ on
$\CC^n$ such that
$$ F(X_1,\ldots, X_n)=\Phi_L(X_1,\ldots, X_n), \qquad (X_1,\ldots, X_n)\in [B(\cH)^n]_{\gamma_1}.
$$
\end{theorem}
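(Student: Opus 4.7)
The plan is to adapt the classical proof of Cartan's linearization theorem for biholomorphic self-maps of bounded circular domains, using Theorem~\ref{cartan1} as the key rigidity input. For each $\theta\in\RR$ I introduce the rotation
\[
h_\theta(X_1,\ldots,X_n):=(e^{-i\theta}X_1,\ldots,e^{-i\theta}X_n),
\]
which is the linear free holomorphic function $\Phi_{e^{-i\theta}I_n}$; its inverse is $h_{-\theta}$, and since $\|h_\theta(X)\|=\|X\|$, it is a free biholomorphic automorphism of $[B(\cH)^n]_\gamma$ for every $\gamma>0$. Bijectivity of $F$ together with $F(0)=0$ forces $F^{-1}(0)=0$, and the chain-rule computation of how linear parts of free holomorphic functions compose (the same computation carried out in the proof of Theorem~\ref{cartan1}) applied to $F^{-1}\circ F=\mathrm{id}$ shows that $F'(0)$ is invertible with $(F^{-1})'(0)=F'(0)^{-1}$.

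Next I form the four-fold composition
\[
\phi_\theta := F^{-1}\circ h_\theta\circ F\circ h_{-\theta}.
\]
The ranges match correctly: $h_{-\theta}$ preserves $[B(\cH)^n]_{\gamma_1}$, $F$ sends it into $[B(\cH)^n]_{\gamma_2}$, $h_\theta$ preserves $[B(\cH)^n]_{\gamma_2}$, and $F^{-1}$ brings it back to $[B(\cH)^n]_{\gamma_1}$. By Theorem~\ref{compo}, $\phi_\theta$ is therefore a free holomorphic self-map of $[B(\cH)^n]_{\gamma_1}$. A direct chain-rule computation gives $\phi_\theta(0)=0$ and
\[
\phi_\theta'(0) \;=\; F'(0)^{-1}\,(e^{-i\theta}I_n)\,F'(0)\,(e^{i\theta}I_n)\;=\;I_n,
\]
so Theorem~\ref{cartan1} forces $\phi_\theta=\mathrm{id}$. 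Unwinding this identity yields
\[
F(e^{i\theta}X_1,\ldots,e^{i\theta}X_n)\;=\;e^{i\theta}F(X_1,\ldots,X_n),\qquad X\in[B(\cH)^n]_{\gamma_1},\ \theta\in\RR.
\]

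Writing $F=(F_1,\ldots,F_n)$ with $F_i(X)=\sum_{k\geq 1}\sum_{|\alpha|=k}a_\alpha^{(i)}X_\alpha$ and evaluating the homogeneity identity, the coefficient of each monomial $X_\alpha$ with $|\alpha|=k$ is multiplied by $e^{ik\theta}$ on the left and by $e^{i\theta}$ on the right; the uniqueness of coefficients of free holomorphic functions (established earlier in Section~1) forces $a_\alpha^{(i)}=0$ whenever $|\alpha|\neq 1$. Hence $F_i(X)=\sum_{j=1}^n a_{g_j}^{(i)}X_j$, which is precisely $\Phi_L(X)$ for $L=[a_{ij}]_{n\times n}$ with $a_{ij}:=a_{g_i}^{(j)}$; $L$ is invertible because $F'(0)=L^t$ is.

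The step requiring the most care is checking that every intermediate value in $\phi_\theta$ genuinely lands inside the correct operatorial ball so that Theorem~\ref{compo} applies and Theorem~\ref{cartan1} can be invoked on a bona fide self-map of $[B(\cH)^n]_{\gamma_1}$; after that, the Fourier-type comparison of coefficients in $\theta$ and the concluding linearity argument are routine.
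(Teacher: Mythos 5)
Your proof is correct and follows essentially the same route as the paper: the map $\phi_\theta=F^{-1}\circ h_\theta\circ F\circ h_{-\theta}$ is exactly the map $H(X)=G\bigl(e^{-i\theta}F(e^{i\theta}X)\bigr)$ that the paper builds, the invertibility of $F'(0)$ is obtained there by the same comparison of linear terms in $G\circ F=\mathrm{id}$, and both arguments conclude via Theorem~\ref{cartan1} and the homogeneity identity $F(e^{i\theta}X)=e^{i\theta}F(X)$ together with uniqueness of coefficients. No gaps.
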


\begin{proof}
Since $F(0)=0$, $F$ has the representation $F=(F_1,\ldots, F_n)$,
where $F_j$ is a free holomorphic function on
$[B(\cH)^n]_{\gamma_1}$ with scalar coefficients, having the form
\begin{equation}
\label{Fj} F_j(X_1,\ldots, X_n)=\sum_{k=1}^n a_{kj}X_k+
\Psi_2^{(j)}(X_1,\ldots, X_n),
\end{equation}
and $\Psi_2^{(j)}$ is a free holomorphic function of the form
$\Psi_2^{(j)}(X_1,\ldots, X_n)=\sum_{m=2}^\infty \sum_{|\alpha|=m}
a_\alpha^{(j)} X_\alpha$.
Since $F$ is a  free biholomorphic function with $F(0)=0$, its
inverse $G:=F^{-1}:[B(\cH)^n]_{\gamma_2}\to [B(\cH)^n]_{\gamma_1}$
is a free holomorphic function with $G(0)=0$. Therefore, we have
$G=(G_1,\ldots, G_n)$, where $G_j$ is a free holomorphic function on
$[B(\cH)^n]_{\gamma_2}$ with scalar coefficients, having the form
\begin{equation}
\label{Gj} G_j(X_1,\ldots, X_n)=\sum_{k=1}^n b_{kj}X_k+
\Gamma_2^{(j)}(X_1,\ldots, X_n),
\end{equation}
and $\Gamma_2^{(j)}$ is a free holomorphic function of the form
$\Gamma_2^{(j)}(X_1,\ldots, X_n)=\sum_{m=2}^\infty \sum_{|\alpha|=m}
b_\alpha^{(j)} X_\alpha$. Consider the matrices
$L:=[a_{ij}]_{n\times n}$ and $B:=[b_{ij}]_{n\times n}$.
 Using the representations \eqref{Fj} and
\eqref{Gj}, we have
\begin{equation*}
\begin{split}
&(G\circ F)(X_1,\ldots, X_n)\\
 & =\left[\sum_{j=1}^n b_{j1}
F_j+\Gamma_2^{(1)}(F_1,\ldots, F_n),\ldots, \sum_{j=1}^n b_{jn}
F_j+\Gamma_2^{(n)}(F_1,\ldots, F_n)\right]\\
&=\left[\sum_{j=1}^n b_{j1} \left(\sum_{k=1}^n a_{kj}
X_k\right),\ldots, \sum_{j=1}^n b_{jn} \left(\sum_{k=1}^n a_{kj}
X_k\right)\right] + \left[\Gamma_2^{(1)}(F_1,\ldots,
F_n),\ldots,\Gamma_2^{(n)}(F_1,\ldots, F_n)\right]\\
&=[X_1,\ldots, X_n]BL + \left[\Psi_2^{(1)}(X_1,\ldots,
X_n),\ldots,\Psi_2^{(n)}(X_1,\ldots, X_n)\right],
\end{split}
\end{equation*}
where $\Psi_2^{(j)}$, $j=1,\ldots, n$, are free holomorphic
functions containing only monomials of degree $\geq 2$ in their
representations. Since $(G\circ F)(X_1,\ldots,
X_n)=(X_1,\ldots,X_n)$ for any $(X_1,\ldots, X_n)\in [B(\cH)^n]_1$,
we deduce that $BL=I_n$ and $\Psi_2^{(j)}(X_1,\ldots, X_n)=0$ for
$j=1,\ldots, n$. Similarly, we can prove that $LB=I_n$. Therefore
$L$ is an invertible operator on $\CC^n$.
Note that, for each $\theta\in\RR$, the map
$$
(X_1,\ldots, X_n)\mapsto e^{-i\theta}F(e^{i\theta} X_1, \ldots,
e^{i\theta} X_n)
$$
is a free holomorphic function on $[B(\cH)^n]_{\gamma_1}$ with
values in $[B(\cH)^n]_{\gamma_2}$. Moreover, due to  \eqref{Fj}, we
have
\begin{equation*}
\begin{split}
e^{-i\theta}F(e^{i\theta} X_1,& \ldots, e^{i\theta}
X_n)\\
&=\left[\sum_{k=1}^n a_{k1}X_k+
e^{-i\theta}\Psi_2^{(1)}(e^{i\theta}X_1,\ldots,e^{i\theta}
X_n),\ldots, \sum_{k=1}^n a_{kn}X_k+
e^{-i\theta}\Psi_2^{(n)}(e^{i\theta}X_1,\ldots,
e^{i\theta}X_n)\right]
\end{split}
\end{equation*}
According to Theorem \ref{compo}, the map
$H:[B(\cH)^n]_{\gamma_1}\to [B(\cH)^n]_{\gamma_1}$ defined by
\begin{equation}
\label{HGF} H(X_1,\ldots, X_n):=G\left(e^{-i\theta}F(e^{i\theta}
X_1, \ldots, e^{i\theta} X_n)\right), \qquad (X_1,\ldots, X_n)\in
[B(\cH)^n]_{\gamma_1},
\end{equation}
is a free holomorphic function with $H(0)=0$. On the other hand,
taking into account the representations of the functions involved in
the definition of $H$,  calculations as above reveal that
$$
H(X_1,\ldots, X_n)=[X_1,\ldots, X_n]BL +
\left[\Phi_2^{(1)}(X_1,\ldots, X_n),\ldots,\Phi_2^{(n)}(X_1,\ldots,
X_n)\right],
$$
where $\Phi_2^{(j)}$, $j=1,\ldots, n$, are free holomorphic
functions containing only monomials of degree $\geq 2$ in their
representations. Since $BL=I_n$, we deduce that $H'(0)=I_n$.
Applying Theorem \ref{cartan1} to the free holomorphic function $H$,
we conclude that $H(X_1,\ldots, X_n)=(X_1,\ldots, X_n)$ for any
$(X_1,\ldots, X_n)\in [B(\cH)^n]_{\gamma_1}$. Hence, and due to
relation  \eqref{HGF}, we obtain
$$
e^{i\theta} F(X_1,\ldots, X_n)=F(e^{i\theta} X_1, \ldots,
e^{i\theta} X_n)
$$
for any $(X_1,\ldots, X_n)\in [B(\cH)^n]_{\gamma_1}$ and $\theta\in
\RR$.  Using the representations given by \eqref{Fj} and the
uniqueness of the coefficients of a  free holomorphic function, the
latter equality implies
$$
a_{\alpha}^{(j)} e^{i\theta|\alpha|}=e^{i\theta} a_\alpha^{(j)}\quad
\text{ for any }\ \theta\in \RR,
$$
where $\alpha\in \FF_n^+$ with $|\alpha|\geq 2$, and $j=1,\ldots,n$.
Hence,  $a_\alpha^{(j)}=0$ and, consequently,
$$
F(X_1,\ldots, X_n)=\left[\sum_{k=1}^n a_{k1} X_k,\ldots,
\sum_{k=1}^n a_{kn} X_k\right]=[X_1,\ldots, X_n]L,
$$
which completes the proof.
\end{proof}

 Now we are ready to prove that any free holomorphic automorphism of
 $[B(\cH)^n]_1$  that fixes the origin is implemented by a unitary
 operator on $\CC^n$.
\begin{theorem}
\label{aut1} Let $\Psi: [B(\cH)^n]_1\to [B(\cH)^n]_1$ be a free
holomorphic function with $\Psi(0)=0$. Then $\Psi$ is a  free
holomorphic automorphism of $[B(\cH)^n]_1$ if and only if there is a
unitary operator $U$ on $\CC^n$  such that
\begin{equation}
\label{phi=U}\Psi(X_1,\ldots X_n)= \Phi_U(X_1,\ldots
X_n):=[X_1,\ldots, X_n]U , \qquad (X_1,\ldots, X_n)\in [B(\cH)^n]_1.
\end{equation}
\end{theorem}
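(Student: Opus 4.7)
The sufficiency direction is the easy one, so I would dispose of it first. If $U$ is unitary on $\CC^n$ and $\mathbf{U}:=[a_{ij}I_\cH]$, then for any $X=[X_1,\ldots,X_n]\in[B(\cH)^n]_1$ a block-matrix computation gives $(X\mathbf{U})(X\mathbf{U})^* = X\mathbf{U}\mathbf{U}^*X^* = XX^*$, since $\mathbf{U}\mathbf{U}^*=I_{\cH^{(n)}}$. Hence $\|\Phi_U(X)\|=\|X\|$, so $\Phi_U$ maps $[B(\cH)^n]_1$ into itself, and evidently $\Phi_U\circ\Phi_{U^*}=\Phi_{U^*}\circ\Phi_U=\text{id}$, so $\Phi_U\in Aut([B(\cH)^n]_1)$.

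For the necessity, I would begin by applying Theorem \ref{Cartan2} (with $\gamma_1=\gamma_2=1$) to the free biholomorphic function $\Psi$. This produces an invertible bounded linear operator $L$ on $\CC^n$ with $\Psi=\Phi_L$, i.e.\ $\Psi(X_1,\ldots,X_n)=[X_1,\ldots,X_n]L$ on all of $[B(\cH)^n]_1$. The remaining task is to upgrade the invertibility of $L$ to unitarity, which is purely a matter of operator norm.

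The key norm computation is the following. Put $\mathbf{L}:=[a_{ij}I_\cH]_{n\times n}$ where $L=[a_{ij}]$, and consider $[S_1,\ldots,S_n]\mathbf{L}$, where $S_1,\ldots,S_n$ are the left creation operators on $F^2(H_n)$. The $(j,k)$ block entry of $([S_1,\ldots,S_n]\mathbf{L})^*([S_1,\ldots,S_n]\mathbf{L})$ equals $\sum_{i,\ell}\overline{a_{ij}}a_{\ell k}S_i^*S_\ell=\sum_i\overline{a_{ij}}a_{ik}I$, because $S_i^*S_\ell=\delta_{i\ell}I$. Thus $([S_1,\ldots,S_n]\mathbf{L})^*([S_1,\ldots,S_n]\mathbf{L})=[(L^*L)_{jk}I_\cH]$, giving the identity
\begin{equation*}
\|[S_1,\ldots,S_n]\mathbf{L}\|=\|L\|.
\end{equation*}
Since $\Psi$ takes values in $[B(\cH)^n]_1$, Lemma \ref{range} yields $\|\Psi(rS_1,\ldots,rS_n)\|=r\|L\|<1$ for every $r\in[0,1)$, so letting $r\to 1$ we obtain $\|L\|\leq 1$.

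Applying exactly the same argument to the free holomorphic automorphism $\Psi^{-1}$, which by Theorem \ref{Cartan2} must be of the form $\Phi_M$ for some invertible $M$, and comparing $\Phi_M\circ\Phi_L=\text{id}$ forces $M=L^{-1}$; the same norm identity then gives $\|L^{-1}\|\leq 1$. Combining $L^*L\leq I$ (from $\|L\|\leq 1$) with $\|x\|=\|L^{-1}Lx\|\leq\|Lx\|$ (from $\|L^{-1}\|\leq 1$), which says $L^*L\geq I$, one concludes $L^*L=I$; since $L$ is invertible on the finite-dimensional space $\CC^n$, it is unitary, proving \eqref{phi=U}. No step strikes me as genuinely difficult, as the machinery (Theorem \ref{Cartan2}, Lemma \ref{range}, and the orthogonality of the ranges of the $S_i$) has already been set up; the only point requiring mild care is the clean passage from $\|[S_1,\ldots,S_n]\mathbf{L}\|\leq 1$ to $\|L\|\leq 1$, which the block-matrix identity above handles.
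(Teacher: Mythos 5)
Your proof is correct and follows essentially the same route as the paper: both reduce to Theorem \ref{Cartan2} to get $\Psi=\Phi_L$ with $L$ invertible, then show $L$ and $L^{-1}$ are contractions and conclude unitarity. The only cosmetic difference is that the paper deduces $\|L\|\leq 1$ by evaluating at scalar points $(\lambda_1 I_\cH,\ldots,\lambda_n I_\cH)$, $\lambda\in\BB_n$, whereas you evaluate at $(rS_1,\ldots,rS_n)$ and use the identity $\|[S_1,\ldots,S_n]\mathbf{L}\|=\|L\|$; both work.
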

\begin{proof}
One implication is obvious. Assume that $\Psi$ is a  free
holomorphic automorphism of $[B(\cH)^n]_1$. Applying Theorem
\ref{Cartan2} to $\Psi$, we find an invertible operator $U$ on
$\CC^n$ such that \eqref{phi=U} holds.  In particular, taking
$(X_1,\ldots, X_n)=(\lambda_1 I_\cH, \ldots, \lambda_n I_\cH)$ with
$(\lambda_1,\ldots, \lambda_n)\in \BB_n$, we deduce that $U$  is a
contraction on $\CC^n$. On the other hand, since $\Psi$ is a  free
holomorphic automorphism of $[B(\cH)^n]_1$ and
$$\Psi^{-1}(X_1,\ldots X_n)=[X_1,\ldots, X_n]U^{-1}, \qquad (X_1,\ldots X_n)\in  [B(\cH)^n]_1,
$$
one can similarly deduce that $U^{-1}$ is also a contraction.
Consequently, for any $x\in \CC^n$, we have
$$
\|x\|=\|U^{-1} Ux\|\leq \|U^{-1}\|\|U\|\|x\|\leq  \|x\|.
$$
Hence $U$ is an isometry which is invertible, and therefore unitary.
The proof is complete.
\end{proof}

 \bigskip

\section{The group  of  free  holomorphic   automorphisms of  $[B(\cH)^n]_1$ }

The theory of noncommutative characteristic functions for row
contractions is used  to find all the involutive free holomorphic
automorphisms of $[B(\cH)^n]_1$. Combining this  result with those
from  Section 1, we determine all  free holomorphic automorphisms of
the noncommutative ball $[B(\cH)^n]_1$. We show that any $\Psi\in
Aut(B(\cH)^n_1)$  has the form
$$
\Psi=\Phi_U \circ \Psi_\lambda,
$$
where $\Phi_U$ is an automorphism implemented by a unitary operator
$U$ on $\CC^n$ and $\Psi_\lambda$ is an involutive free holomorphic
automorphism associated with $\lambda:=\Psi^{-1} (0)\in \BB_n$.
Moreover, we show that the automorphism group $Aut(B(\cH)^n_1)$ is
isomorphic to $Aut(\BB_n)$, the Moebius group of $\BB_n$, via the
noncommutative Poisson transform.

To begin this section, we  recall from \cite{Po-poisson} a few facts
about noncommutative Poisson transforms associated with row
contractions $T:=[T_1,\ldots, T_n]$, \ $T_i\in B(\cK)$,  where $\cK$
is a Hilbert space. Let $\FF_n^+$ be the unital free semigroup on
$n$ generators
      $g_1,\dots,g_n$, and the identity $g_0$.
       We recall that $e_\alpha:=
e_{i_1}\otimes\cdots \otimes  e_{i_k}$ and $e_{g_0}:=1$. Note that
$\{e_\alpha\}_{\alpha\in \FF_n^+}$ is an orthonormal basis for
$F^2(H_n)$. For  each $0<r\leq 1$, define the defect operator
$\Delta_{T,r}:=(I_\cK-r^2T_1T_1^*-\cdots -r^2 T_nT_n^*)^{1/2}$.
The noncommutative Poisson  kernel associated with $T$ is the family
of operators
$$
K_{T,r} :\cK\to  \overline{\Delta_{T,r}\cH} \otimes  F^2(H_n), \quad
0<r\leq 1,
$$
defined by

\begin{equation*}
K_{T,r}h:= \sum_{k=0}^\infty \sum_{|\alpha|=k} r^{|\alpha|}
\Delta_{T,r} T_\alpha^*h\otimes  e_\alpha,\quad h\in \cK.
\end{equation*}
When $r=1$, we denote $\Delta_T:=\Delta_{T,1}$ and $K_T:=K_{T,1}$.
The operators $K_{T,r}$ are isometries if $0<r<1$, and
$$
K_T^*K_T=I_\cK- \text{\rm SOT-}\lim_{k\to\infty} \sum_{|\alpha|=k}
T_\alpha T_\alpha^*.
$$
Thus $K_T$ is an isometry if and only if $T$ is a {\it pure} row
 contraction,
i.e., $ \text{\rm SOT-}\lim\limits_{k\to\infty} \sum_{|\alpha|=k}
T_\alpha T_\alpha^*=0.$ The noncommutative Poisson transform at
 $T:=[T_1,\ldots, T_n]$ is the unital completely contractive  linear map
 $P_T:C^*(S_1,\ldots, S_n)\to B(\cK)$ defined by
 \begin{equation*}
 P_T[f]:=\lim_{r\to 1} K_{T,r}^* (I_\cK \otimes f)K_{T,r}, \qquad f\in C^*(S_1,\ldots,
 S_n),
\end{equation*}
 where the limit exists in the norm topology of $B(\cK)$. Moreover, we have
 $$
 P_T(S_\alpha S_\beta^*)=T_\alpha T_\beta^*, \qquad \alpha,\beta\in \FF_n^+.
 $$
 When $T:=[T_1,\ldots, T_n]$  is a pure row contraction,
   we have $$P_T(f)=K_T^*(I_{\cD_{T}}\otimes f)K_T,
   $$
   where $\cD_T=\overline{\Delta_T \cK}$.
We refer to \cite{Po-poisson}, \cite{Po-curvature},  and
\cite{Po-unitary} for more on noncommutative Poisson transforms on
$C^*$-algebras generated by isometries. When $T$ is a completely
non-coisometric (c.n.c.) row contraction, i.e.,
 there is no $h\in \cK$, $h\neq 0$, such that
 $$
 \sum_{|\alpha|=k}\|T_\alpha^* h\|^2=\|h\|^2
 \quad \text{\rm for any } \ k=1,2,\ldots,
 $$
an  $F_n^\infty$-functional calculus was developed  in
\cite{Po-funct}.
 We showed that if $f=\sum\limits_{\alpha\in \FF_n^+} a_\alpha S_\alpha$ is
 in $F_n^\infty$, then
 \begin{equation*}
 \Gamma_T(f)=f(T_1,\ldots, T_n):=
 \text{\rm SOT-}\lim_{r\to 1}\sum_{k=0}^\infty
  \sum_{|\alpha|=k} r^{|\alpha|} a_\alpha T_\alpha
\end{equation*}
exists and $\Gamma_T:F_n^\infty\to B(\cK)$ is a completely
contractive homomorphism and WOT-continuous (resp. SOT-continuous)
on bounded sets. Moreover, we showed (see \cite{Po-unitary})  that
$\Gamma_T(f)=P_T[f]$, $f\in F_n^\infty$, where
 \begin{equation}
 \label{pois-sot}
  P_T[f]:= \text{\rm
SOT-}\lim_{r\to 1}K_{T,r} ( I_\cK\otimes f) K_{T,r},\qquad f\in
F_n^\infty,
\end{equation} is the extension of the noncommutative Poisson
transform  to $F_n^\infty$.

The characteristic  function associated with an arbitrary row
contraction $T:=[T_1,\ldots, T_n]$, \ $T_i\in B(\cK)$, was
introduced in \cite{Po-charact} (see \cite{SzF-book} for the
classical case $n=1$) and it was proved to be  a complete unitary
invariant for completely non-coisometric  row contractions. The
characteristic  function  of $T$ is  a   multi-analytic operator
with respect to $S_1,\ldots, S_n$,
$$
\tilde{\Theta}_T:F^2(H_n)\otimes \cD_{T^*}\to F^2(H_n)\otimes \cD_T,
$$
with the formal Fourier representation
\begin{equation*}
\begin{split}
 \Theta_T(R_1,\ldots, R_n):= -I_{F^2(H_n)}\otimes T+
\left(I_{F^2(H_n)}\otimes \Delta_T\right)&\left(I_{F^2(H_n)\otimes \cK}
-\sum_{i=1}^n R_i\otimes T_i^*\right)^{-1}\\
&\left[R_1\otimes I_\cK,\ldots, R_n\otimes I_\cK \right]
\left(I_{F^2(H_n)}\otimes \Delta_{T^*}\right),
\end{split}
\end{equation*}
where $R_1,\ldots, R_n$ are the right creation operators on the full
Fock space $F^2(H_n)$.
 Here,  we need to clarify some notations since some of them are different
 from those considered in \cite{Po-charact}.
The defect operators  associated with a row contraction
$T:=[T_1,\ldots, T_n]$ are
\begin{equation}
\label{DelDel}
 \Delta_T:=\left( I_\cK-\sum_{i=1}^n
T_iT_i^*\right)^{1/2}\in B(\cK) \quad \text{ and }\quad
\Delta_{T^*}:=(I-T^*T)^{1/2}\in B(\cK^{(n)}),
\end{equation}
while the defect spaces are $\cD_T:=\overline{\Delta_T\cK}$ and
$\cD_{T^*}:=\overline{\Delta_{T^*}\cK^{(n)}}$, where $\cK^{(n)}$
denotes the direct sum of $n$ copies of $\cK$.
Due to the $F_n^\infty$-functional calculus  for row contractions,
one can define
$$
\Theta_T(X_1,\ldots, X_n):=\text{\rm SOT-}\lim_{r\to 1}\Theta_T
(rX_1,\ldots, rX_n)
$$
for any c.n.c.  row contraction $(X_1,\ldots, X_n)\in B(\cG)^n$,
where $\cG$ is a Hilbert space. Depending of $T$, the characteristic
function $\Theta_T$ may be well-defined on a larger subset of
$B(\cG)^n$. For example, if $\|T\|<1$, then $X\mapsto \Theta_T (X)$
is  a free holomorphic function on the open ball
$[B(\cG)^n]_{\gamma}$, where $\gamma:=\frac{1}{\|T\|}$.

In particular, the characteristic function  $\tilde \Theta_T$
generates a bounded free holomorphic function $\Theta_T$ (also
called characteristic function) with operator-valued coefficients
in $B(\cD_{T^*}, \cD_T)$. Notice also that
\begin{equation*}
\begin{split}
 \Theta_T(X_1,\ldots, X_n)= -I_{\cG}\otimes T+
\left(I_\cG \otimes \Delta_T\right)&\left(I_{\cG\otimes
\cK}-\sum_{i=1}^n X_i\otimes
T_i^*\right)^{-1}\\
&\left[X_1\otimes I_\cK,\ldots, X_n\otimes I_\cK \right]
\left(I_\cG\otimes \Delta_{T^*}\right)
\end{split}
\end{equation*}
for any $(X_1,\ldots, X_n)\in [B(\cG)^n]_1$.  The characteristic
function $\tilde{\Theta}_T$ is the boundary function of $\Theta_T$
with respect to $R_1,\ldots, R_n$ in the sense that
$$\tilde{\Theta}_T=\text{\rm SOT-}\lim_{r\to 1}\Theta_T(rR_1,\ldots,
rR_n),$$
 where  $\Theta(rR_1,\ldots, rR_n)$ is
in $\cR_n\otimes B(\cK)$.
 In \cite{Po-varieties} (see Theorem 3.2 and   Corollary 3.3), we proved that
\begin{equation}\label{fa}
I-\tilde{\Theta}_T \tilde{\Theta}_T^*=K_TK_T^*,
\end{equation}
where  $K_T$ is the corresponding Poisson kernel, and
$$
I_{\cG\otimes \cD_T}-\Theta_T(X )\Theta_T(X )^*= \Delta_{\tilde
T}(I-\hat X\tilde T^*)^{-1}(I-\hat X \hat X^*)(I-\tilde T \hat
X^*)^{-1} \Delta_{\tilde T}
$$
for any $X:=(X_1,\ldots, X_n)\in [B(\cG)^n]_1$. Here we use the
notations $\hat X:=[X_1\otimes I_\cK,\ldots, X_n\otimes I_\cK]$ and
$\tilde T:=[I_\cG\otimes T_1,\ldots, I_\cG\otimes T_n]$. A closer
look at the proofs of the above-mentioned  results (see
\cite{Po-varieties})  reveals that one can   prove, in a similar
manner,  a little bit more.
\begin{proposition}
\label{fact-formula} Let $T:=[T_1,\ldots, T_n]$, $T_i\in B(\cK)$, be
a row contraction  and let $\Theta_T$ be its characteristic
function.  Then
 \begin{equation}
 \label{VY}
I_{\cG\otimes \cD_T}-\Theta_T(X)\Theta_T(Y)^*= \Delta_{\tilde
T}(I-\hat X\tilde T^*)^{-1}(I-\hat X \hat Y^*)(I-\tilde T \hat
Y^*)^{-1} \Delta_{\tilde T}
\end{equation}
and
\begin{equation}
 \label{VY*}
I_{\cG\otimes \cD_{T^*}}-\Theta_T(X )^*\Theta_T(Y)= \Delta_{{\tilde
T}^*}(I-{\hat X}^*\tilde T)^{-1}(I-{\hat X}^* \hat Y)(I-{\tilde T}^*
\hat Y)^{-1} \Delta_{{\tilde T}^*}
\end{equation}
for any  $X:=[X_1,\ldots, X_n]$ and $Y:=[Y_1,\ldots, Y_n]$ in
$[B(\cG)^n]_1$. Moreover, if $\|T\|<1$, then the relations hold true
for any  $X, Y\in [B(\cG)^n]_{\gamma}$, where
$\gamma:=\frac{1}{\|T\|}$.
\end{proposition}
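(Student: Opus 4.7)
The plan is to extend the proof of the diagonal case ($X = Y$) from \cite{Po-varieties} (Theorem 3.2 and Corollary 3.3) by keeping track of two independent arguments throughout the calculation. Abbreviating $D := \Delta_{\tilde T}$, $D_* := \Delta_{\tilde T^*}$, $\Lambda_X := (I - \hat X\tilde T^*)^{-1}$, and $\Lambda_Y^* := (I - \tilde T\hat Y^*)^{-1}$, the explicit formula for the characteristic function reads
$$\Theta_T(X) = -\tilde T + D\,\Lambda_X\,\hat X\,D_*, \qquad \Theta_T(Y)^* = -\tilde T^* + D_*\,\hat Y^*\,\Lambda_Y^*\,D.$$

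First I would multiply these out and apply the standard intertwining $T\Delta_{T^*} = \Delta_T T$, which lifts to $\tilde T D_* = D\tilde T$ and $D_*\tilde T^* = \tilde T^* D$, together with the defect identities $D_*^{2} = I - \tilde T^*\tilde T$ and $I = \tilde T\tilde T^* + D^2$. After these substitutions the $\tilde T\tilde T^*$ terms cancel and a common factor $D$ emerges on either side, so that $I - \Theta_T(X)\Theta_T(Y)^*$ equals $D\,M\,D$, where
$$M = I + \Lambda_X\hat X\tilde T^* + \tilde T\hat Y^*\Lambda_Y^* - \Lambda_X\hat X\hat Y^*\Lambda_Y^* + \Lambda_X\hat X\tilde T^*\tilde T\hat Y^*\Lambda_Y^*.$$

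The remaining algebraic task is to verify that $M = \Lambda_X(I - \hat X\hat Y^*)\Lambda_Y^*$. For this I would invoke the two resolvent identities $\Lambda_X = I + \Lambda_X\hat X\tilde T^*$ and $\Lambda_Y^* = I + \tilde T\hat Y^*\Lambda_Y^*$, both immediate from their definitions. Substituting the first into $\Lambda_X\Lambda_Y^* = \Lambda_X\bigl(I + \tilde T\hat Y^*\Lambda_Y^*\bigr)$ and expanding gives
$$\Lambda_X\Lambda_Y^* = I + \Lambda_X\hat X\tilde T^* + \tilde T\hat Y^*\Lambda_Y^* + \Lambda_X\hat X\tilde T^*\tilde T\hat Y^*\Lambda_Y^*,$$
so that $\Lambda_X(I - \hat X\hat Y^*)\Lambda_Y^* = \Lambda_X\Lambda_Y^* - \Lambda_X\hat X\hat Y^*\Lambda_Y^* = M$, establishing \eqref{VY}. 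The formula \eqref{VY*} follows from a completely parallel computation in which the roles of $D$ and $D_*$, and of $\tilde T$ and $\tilde T^*$, are interchanged throughout.

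The only point requiring care is convergence of the Neumann series defining $\Lambda_X$ and $\Lambda_Y^*$. Since $\|\tilde T\| = \|T\| \le 1$ and $\|\hat X\| = \|X\| < 1$ for $X \in [B(\cG)^n]_1$, one has $\|\hat X\tilde T^*\| < 1$, so the series converge in operator norm and every manipulation above is a genuine identity of bounded operators; when $\|T\| < 1$ the same estimate extends the identities to $X, Y \in [B(\cG)^n]_\gamma$ with $\gamma = 1/\|T\|$, and no SOT-limiting argument via the $F_n^\infty$-functional calculus is needed. The main anticipated obstacle is purely bookkeeping—matching the signs and ordering of the five terms in $M$ against the four-term expansion of $\Lambda_X\Lambda_Y^*$ after removing $\Lambda_X\hat X\hat Y^*\Lambda_Y^*$.
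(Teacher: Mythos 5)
Your computation is correct and is exactly the argument the paper intends: the paper itself gives no new proof of Proposition \ref{fact-formula}, stating only that "a closer look at the proofs" of Theorem 3.2 and Corollary 3.3 of \cite{Po-varieties} (the diagonal case $X=Y$) yields the polarized identities, and your proposal carries out precisely that polarization — expanding the explicit formula for $\Theta_T$, using $\tilde T\Delta_{\tilde T^*}=\Delta_{\tilde T}\tilde T$ and the resolvent identities to collapse the five-term expression to $\Delta_{\tilde T}\Lambda_X(I-\hat X\hat Y^*)\Lambda_Y^*\Delta_{\tilde T}$. The convergence remarks for $X,Y\in[B(\cG)^n]_\gamma$ when $\|T\|<1$ are also as in the paper.
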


 Now, we consider an important particular case. Let
$T=\lambda:=(\lambda_1,\ldots,\lambda_n)\in \BB_n$  and think of
$\lambda$  as a row contraction acting from $\CC^n$  to $\CC$. In
this case, due to \eqref{DelDel}, we have
$\Delta_\lambda=(1-\|\lambda\|_2^2)^{1/2} I_\CC$ and
$\Delta_{\lambda^*}=(I_\cK-\lambda^*\lambda)^{1/2}$. Since
$\|\lambda\|_2<1$, it is clear that $\cD_\lambda=\CC$ and
$\cD_{\lambda^*}=\CC^n$.
 For simplicity, we
also use the notation $ {\lambda}:=[ {\lambda}_1 I_\cG,\ldots,
{\lambda}_n I_\cG]$ for the row contraction acting from $\cG^{(n)}$
to $\cG$, where $\cG$ is a Hilbert space.

The characteristic function of the row contraction
$\lambda:=(\lambda_1,\ldots,\lambda_n)\in \BB_n$ is the boundary
function $\tilde{\Theta}_\lambda$, with respect to $R_1,\ldots,
R_n$, of the free holomorphic function
$\Theta_\lambda:[B(\cH)^n]_1\to [B(\cH)^n]_1$ given by
\begin{equation}
\label{Theta-lambda} \Theta_\lambda(X_1,\ldots, X_n):=-{
\lambda}+\Delta_{ \lambda}\left(I_\cH-\sum_{i=1}^n \bar{{
\lambda}}_i X_i\right)^{-1} [X_1,\ldots, X_n] \Delta_{{\lambda}^*}
\end{equation}
for $(X_1,\ldots, X_n)\in [B(\cH)^n]_1$.  Note that, when
$\lambda=0$, we have $\Theta_0(X)=X$.

\begin{proposition}
\label{prop-Tl} Let $\lambda:=(\lambda_1,\ldots,\lambda_n)\in
\BB_n$, $\lambda\neq 0$,  and let  $\tilde \Theta_\lambda$ be its
characteristic function. Then
\begin{enumerate}
\item[(i)] the map $\Theta_\lambda$, defined  by
\eqref{Theta-lambda}, is a free holomorphic function on the open
ball $[B(\cH)^n]_\gamma$, where $\gamma:=\frac{1}{\|\lambda\|_2}$;
\item[(ii)]
$\tilde \Theta_\lambda=\Theta_\lambda(R_1,\ldots, R_n)
 =-{ \lambda}+\Delta_{
\lambda}\left(I_{F^2(H_n)}-\sum_{i=1}^n \bar{{ \lambda}}_i
R_i\right)^{-1} [R_1,\ldots, R_n] \Delta_{{\lambda}^*}$;
\item[(iii)] $\tilde \Theta_\lambda$ is an inner multi-analytic
operator in the noncommutative disc algebra $\cR_n$;
\item[(iv)] $\tilde \Theta_\lambda$ is a pure row contraction;

\item[(v)] $\text{\rm rank}\,(I-\tilde \Theta_\lambda\tilde \Theta_\lambda^*)=1$
and $\tilde \Theta_\lambda$ is unitarily equivalent  to
$[R_1,\ldots, R_n]$.
\end{enumerate}
\end{proposition}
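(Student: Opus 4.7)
\textbf{Proof plan for Proposition~\ref{prop-Tl}.}

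For part (i), the plan is to show that $\Theta_\lambda$ is given by a norm-convergent power series on $[B(\cH)^n]_\gamma$ with $\gamma=1/\|\lambda\|_2$. The only non-polynomial ingredient is $(I_\cH-\sum_{i=1}^n\bar\lambda_i X_i)^{-1}$; writing $\sum \bar\lambda_i X_i = [X_1,\ldots,X_n]\,\bar\lambda^T$ and applying the operator-norm bound for row-times-column gives $\|\sum\bar\lambda_i X_i\|\le \|\lambda\|_2\|[X_1,\ldots,X_n]\|$, so the Neumann series $\sum_k(\sum\bar\lambda_iX_i)^k$ converges in norm whenever $\|X\|<\gamma$. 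Substituting this expansion into the definition of $\Theta_\lambda$ exhibits it as a power series with scalar coefficients converging in norm on $[B(\cH)^n]_\gamma$, establishing (i). For (ii), since $\|[R_1,\ldots,R_n]\|=1<\gamma$, the point $(R_1,\ldots,R_n)$ lies inside $[B(F^2(H_n))^n]_\gamma$, so by (i) the formula for $\Theta_\lambda(R_1,\ldots,R_n)$ is norm-convergent; because $\Theta_\lambda$ is norm-continuous on a neighborhood of the closed unit ball, the SOT-limit defining $\tilde\Theta_\lambda$ equals $\Theta_\lambda(R_1,\ldots,R_n)$ in norm, giving the explicit formula.

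For part (iii), norm-convergence from (ii) places $\tilde\Theta_\lambda$ in $\cR_n\otimes B(\CC^n,\CC)$. To show $\tilde\Theta_\lambda$ is an isometry, I will apply the identity \eqref{VY*} from Proposition~\ref{fact-formula} with $T=\lambda$ (so $\|T\|<1$, allowing evaluation on the larger ball $[B(F^2(H_n))^n]_\gamma$) and $X=Y=(R_1,\ldots,R_n)$. The crucial algebraic input is $R_i^*R_j=\delta_{ij}I$, which gives $\hat X^*\hat X=I$ on $F^2(H_n)^{(n)}$, so the middle factor $I-\hat X^*\hat X$ vanishes and the right-hand side of \eqref{VY*} collapses to zero. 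Hence $I-\tilde\Theta_\lambda^*\tilde\Theta_\lambda=0$, so $\tilde\Theta_\lambda$ is inner.

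For parts (iv) and (v), I will start from the factorization identity \eqref{fa} of Popescu, $I-\tilde\Theta_\lambda\tilde\Theta_\lambda^*=K_\lambda K_\lambda^*$, and compute $K_\lambda$ by its series definition: $K_\lambda(c)=c\,(1-\|\lambda\|^2)^{1/2}\sum_\beta \bar\lambda_\beta e_\beta$. A direct computation using the geometric-series sum $\sum_\beta|\lambda_\beta|^2=(1-\|\lambda\|^2)^{-1}$ shows $K_\lambda^*K_\lambda=1$, so $K_\lambda:\CC\to F^2(H_n)$ is an isometry and $K_\lambda K_\lambda^*$ is a rank-one projection; this yields $\rank(I-\tilde\Theta_\lambda\tilde\Theta_\lambda^*)=1$. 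For the unitary equivalence to $[R_1,\ldots,R_n]$, I will invoke Popescu's noncommutative Wold decomposition for row isometries: any pure row isometry $[\Theta_1,\ldots,\Theta_n]$ with defect space of dimension $k$ is unitarily equivalent to $[R_1\otimes I_{\CC^k},\ldots,R_n\otimes I_{\CC^k}]$; specializing to $k=1$ yields $(v)$, and $(iv)$ follows since the model $[R_1,\ldots,R_n]$ is manifestly pure (as $\sum_{|\alpha|=k}R_\alpha R_\alpha^*$ is the projection onto vectors of length $\geq k$, which tends to $0$ in SOT).

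The principal obstacle is verifying the hypothesis of the Wold decomposition, namely that $\tilde\Theta_\lambda$ is a \emph{pure} row isometry. To handle this, I plan to show that the wandering space $K_\lambda(\CC)$ is cyclic under the iterated action of $(\Theta_1,\ldots,\Theta_n)$, i.e.\ that $\overline{\bigvee}_\alpha \Theta_\alpha K_\lambda 1=F^2(H_n)$. The key computation is $\Theta_j^*\mathbf 1=-\bar\lambda_j\mathbf 1$ (following from $\Theta_j=-\lambda_j I+\tilde R_j$ with $\tilde R_j$ of $R$-degree $\geq 1$ and hence $\tilde R_j^*\mathbf 1=0$), which iterates to $\Theta_\alpha^*\mathbf 1=(-1)^{|\alpha|}\bar\lambda_\alpha\mathbf 1$; combined with the explicit formula for $K_\lambda 1$ and the fact that $\tilde\Theta_\lambda\in\cR_n\otimes B(\CC^n,\CC)$ is purely ``analytic'' in the $R_i$'s, a Parseval-style identity for $\{\Theta_\alpha K_\lambda 1\}_\alpha$ should reduce totality to the non-vanishing of certain geometric sums depending on $\lambda\ne 0$, completing the purity argument.
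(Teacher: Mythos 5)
Your treatment of (i)--(iii) and of the rank-one statement in (v) is sound and essentially parallels the paper's: for (i) the paper likewise expands $\left(I-\sum_i\bar\lambda_iX_i\right)^{-1}$ as a geometric series (routing the norm estimate through the noncommutative von Neumann inequality rather than your direct row-times-column bound, but with the same content); for (iii) the paper cites the general fact that the characteristic function of a pure row contraction is inner, but explicitly notes that one can instead use Proposition~\ref{fact-formula} with $X=Y=(R_1,\ldots,R_n)$, which is exactly your argument; and your identification of $I-\tilde\Theta_\lambda\tilde\Theta_\lambda^*$ with the rank-one projection $K_\lambda K_\lambda^*$ via \eqref{fa}, followed by the Wold decomposition, is the paper's route to (v).

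The gap is in (iv), which you yourself flag as the load-bearing step. Reformulating purity as totality of $\{\tilde\Theta_\alpha u_\lambda\}_{\alpha\in\FF_n^+}$ in $F^2(H_n)$ is legitimate, but your sketch only verifies the Parseval identity at the vacuum vector: the computation $\Theta_\alpha^*\mathbf 1=(-1)^{|\alpha|}\bar\lambda_\alpha\mathbf 1$ yields $\sum_\alpha|\langle\mathbf 1,\Theta_\alpha u_\lambda\rangle|^2=\|\mathbf 1\|^2$, hence $\mathbf 1\in\bigvee_\alpha\Theta_\alpha u_\lambda$, but it says nothing about $e_\beta$ with $|\beta|\ge 1$, where $\Theta_\alpha^*e_\beta$ is no longer a scalar multiple of a single basis vector and the ``geometric sums'' you allude to are never identified. (Your remark that totality should hinge on $\lambda\neq 0$ is itself a warning sign: purity holds for every $\lambda\in\BB_n$, including $\lambda=0$; the operative hypothesis is $\|\lambda\|_2<1$.) The paper closes precisely this gap by a direct estimate rather than an exact Parseval identity: writing the components as $\Theta_j=-\lambda_j+F_j$ with $\tilde F_j\in\cR_n$, $F_j(0)=0$, $\|\tilde F_j\|\le 2$, and $\tilde F_\alpha^*e_\beta=0$ whenever $|\alpha|>|\beta|$, it expands $\Theta_\omega^*e_\beta$ for $|\omega|=k>|\beta|=q$ and bounds
$\sum_{|\omega|=k}\|\Theta_\omega^*e_\beta\|^2$ by $\bigl[\sum_{j=0}^{q}\binom{k}{j}\|\lambda\|_2^{2(k-j)}\bigr]\,(1+4+\cdots+4^q)$, which tends to $0$ as $k\to\infty$; together with $\sum_{|\omega|=k}\Theta_\omega\Theta_\omega^*\le I$ this gives purity. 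Some argument of this kind --- controlling $\Theta_\omega^*e_\beta$ for \emph{all} $\beta$, not just $\beta=g_0$ --- must be supplied before you may invoke the Wold decomposition in (v).
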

\begin{proof}
Let $\lambda:=(\lambda_1,\ldots,\lambda_n)\in \BB_n$, $\lambda\neq
0$, set $\gamma:=\frac{1}{\|\lambda\|}$, and let $r<\gamma$. Denote
  $\lambda_{\alpha}:=\lambda_{i_1}\lambda_{i_2}\ldots
\lambda_{i_m}$ if $\alpha=g_{i_1}g_{i_2}\dots g_{i_m}\in \FF_n^+$,
and $\lambda_{g_0}:=1$. Since the right creation operators
$R_1,\ldots, R_n$  are isometries with orthogonal ranges, we have
$\left\|\sum_{i=1}^n r\bar \lambda_i R_i\right\|=r\|\lambda\|_2<1$.
Consequently,
$$
\left(I_{F^2(H_n)}-\sum_{i=1}^n r\bar{{ \lambda}}_i R_i\right)^{-1}=
\sum_{k=0}^\infty\sum_{|\alpha|=k}r^{|\alpha|} \bar \lambda_\alpha
R_\alpha
$$
is convergent in the operator norm topology  for any $r\in
[0,\gamma)$ and, therefore, is an element of the noncommutative disc
algebra $\cR_n$. Using the noncommutative von Neumann inequality for
row contractions \cite{Po-von}, we deduce that
$\sum_{k=0}^\infty\sum_{|\alpha|=k} \bar \lambda_\alpha X_\alpha
=\left(I-\sum_{i=1}^n \bar \lambda_i X_i\right)^{-1}$ is a free
holomorphic function on the open ball $[B(\cH)^n]_\gamma$, and this
implies part (i) of the proposition. To prove (ii), note that
$\Theta_\lambda$ is continuous on $[B(\cH)^n]_\gamma$ in the
operator norm topology, which implies
$$\tilde{\Theta}_\lambda=\lim_{r\to 1}\Theta_\lambda(rR_1,\ldots,
rR_n)=\Theta_\lambda(R_1,\ldots, R_n)$$
 and $\tilde{\Theta}_\lambda$
is in $\cR_n$.
 Since
$\lambda=(\lambda_1,\ldots, \lambda_n)$ is a strict row contraction,
it is pure and, consequently, due to \cite{Po-charact},  the
characterisic function $\tilde \Theta_\lambda$ is an inner
multi-analytic operator, i.e. $\tilde\Theta_\lambda^*
\tilde\Theta_\lambda=I$.
  One can also  obtain this fact using
Proposition \ref{fact-formula} in our particular case.

Now, let us  prove that $\tilde \Theta_\lambda$ is a pure row
contraction. Due to  relation \eqref{Theta-lambda}, it is clear that
$\Theta_\lambda=(-\lambda_1+ F_1,\ldots, -\lambda_n+F_n)$, where
$F_i $ is  a bounded free holomorphic function  with $F_i(0)=0$ and
the boundary function $\tilde F_i$  is in the noncommutative disc
algebra $\cR_n$, for each $i=1,\ldots, n$. Moreover, we have $\tilde
F_\alpha^* e_\beta =0$ if $\alpha, \beta\in \FF_n^+$ with
$|\alpha|>|\beta|$.

Let $k,q\in \NN$ be  such that $k>q$,  and let $\beta\in \FF_n^+$
$|\beta|=q$. Setting   $(T_1,\ldots, T_n):=\tilde \Theta_\lambda$,
we have
$$
T_\omega ^* e_\beta =\left(-\bar{\lambda}_{j_1}+\tilde
F_{j_1}^*\right)\cdots \left(-\bar{\lambda}_{j_k}+\tilde
F_{j_k}^*\right)e_\beta
$$
for each $\omega= g_{j_1}\cdots g_{j_k}\in \FF_n^+$.  Since
$\|\tilde \Theta_\lambda\|\leq 1$, it is clear that
 $\|\tilde F_j\|\leq 2$. Now, multiply the
right hand side of the equality above, apply the triangle inequality
followed by Cauchy-Schwarz inequality.
Summing up the resulting inequalities over all $\omega\in \FF_n^+$
with $|\omega|=k$, we obtain
$$
\sum_{|\omega|=k}\|T_\omega^* e_\beta\|^2 \leq \left[
\|\lambda\|_2^{2k}+ \left(\begin{matrix} k\\1\end{matrix}\right)
   \|\lambda\|_2^{2(k-1)}
+\cdots \right . + \left. \left(\begin{matrix}
k\\q\end{matrix}\right)   \|\lambda\|_2^{2(k-q)}
\right][1+2^2+\cdots+ (2^2)^q].
$$
Since $\|\lambda\|_2<1$ , it is easy to see that $\lim_{k\to
\infty}\sum_{|\omega|=k}\|T_\omega^* e_\beta\|^2=0. $
Since $\sum_{|\omega|=k}T_\omega T_\omega^*\leq I$ for any
$k=1,2,\dots$, we infer that $ \text{\rm WOT-}\lim_{k\to
\infty}\sum_{|\omega|=k} T_\omega T_\omega^*=0,$
 which shows that
 $ [T_1,\dots,  T_n]$ is a   pure row contraction.

 To prove (v), note that
  the
noncommutative Poisson kernel $K_\lambda:\CC\to F^2(H_n)$ is given
by
$$
K_\lambda(1)=\sum_{\alpha\in \FF_n^+}
(1-\|\lambda\|_2^2)^{1/2}\bar\lambda_\alpha \otimes e_\alpha.
$$
Due to  relation \eqref{fa}, we have $I-\tilde\Theta_\lambda
\tilde\Theta_\lambda^*=K_\lambda K_\lambda^*$.
  Since $\tilde\Theta_\lambda \tilde\Theta_\lambda^*$ is an
orthogonal projection, so is $K_\lambda K_\lambda^*$. Hence, we
deduce that $I-\tilde \Theta_\lambda\tilde \Theta_\lambda^*$ is a
rank one projection.  On the other hand, since $\tilde \Theta$ is a
pure row isometry,
 the noncommutative Wold type decomposition theorem (see
\cite{Po-isometric})  implies that
 $\tilde \Theta_\lambda$ is unitarily equivalent  to
$[R_1,\ldots, R_n]$. This  completes the proof.
\end{proof}

\begin{theorem}
\label{prop-cara} Let  $\lambda:=(\lambda_1,\ldots, \lambda_n)\in
\BB_n\backslash \{0\}$ and let $\gamma:=\frac{1}{\|\lambda\|_2}$.
Then $\Psi_\lambda:=-\Theta_\lambda$ is a free holomorphic function
on $[B(\cH)^n]_\gamma$ which has the following properties:
\begin{enumerate}
\item[(i)]
$\Psi_\lambda (0)=\lambda$ and $\Psi_\lambda(\lambda)=0$;
\item[(ii)] The identities
\begin{equation*}
I_{\cH}-\Psi_\lambda(X)\Psi_\lambda(Y)^*= \Delta_{\lambda}(I- X
\lambda^*)^{-1}(I-  X   Y^*)(I-  \lambda  Y^*)^{-1} \Delta_{\lambda}
\end{equation*}
and
\begin{equation*}
I_{\cH\otimes \CC^n}-\Psi_\lambda(X)^*\Psi_\lambda(Y)=
\Delta_{\lambda^*}(I-{X}^*\lambda)^{-1}(I-{X}^* Y)(I-{\lambda}^*
Y)^{-1} \Delta_{\lambda^*}
\end{equation*}

hold  for all  $X$ and $Y$ in  $[B(\cH)^n]_\gamma$;

\item[(iii)] $\Psi_\lambda$ is an involution, i.e., $\Psi_\lambda(\Psi_\lambda(X))=X$
for any $X\in [B(\cH)^n]_\gamma$;
\item[(iv)] $\Psi_\lambda$ is a free holomorphic automorphism of the
noncommutative unit ball $[B(\cH)^n]_1$;
\item[(v)] $\Psi_\lambda$ is a homeomorphism of $[B(\cH)^n]_1^-$
onto $[B(\cH)^n]_1^-$.
\end{enumerate}
\end{theorem}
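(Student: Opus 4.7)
The plan is to dispatch the five claims in order, using Proposition~\ref{fact-formula} and the Cartan uniqueness theorem (Theorem~\ref{cartan1}) as the main tools; the preliminary statement that $\Psi_\lambda$ is free holomorphic on $[B(\cH)^n]_\gamma$ is already contained in Proposition~\ref{prop-Tl}(i). Part (i) would be a direct substitution. For $\Psi_\lambda(\lambda)$, the only fact needed is the functional-calculus identity $\lambda\Delta_{\lambda^*}=\Delta_\lambda\lambda$, which follows from $\lambda(I-\lambda^*\lambda)=(1-\|\lambda\|_2^2)\lambda$ and polynomial functional calculus; combined with $(I_\cH-\lambda\lambda^*)^{-1}=(1-\|\lambda\|_2^2)^{-1}$, the formula collapses to $\lambda-\lambda=0$.

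Part (ii) would be proved by specializing Proposition~\ref{fact-formula} to the row contraction $T=\lambda\in\BB_n$, viewed as an element of $B(\CC^n,\CC)$. Since $\cK=\CC$, we have $\tilde T=\lambda$, $\hat X=X$, $\cD_T=\CC$, and $\cD_{T^*}=\CC^n$, so $\Delta_{\tilde T}$ becomes the scalar $\Delta_\lambda=(1-\|\lambda\|_2^2)^{1/2}$ and $\Delta_{\tilde T^*}$ becomes $\Delta_{\lambda^*}$. The two identities of Proposition~\ref{fact-formula} for $\Theta_\lambda$ then translate verbatim into the stated identities for $\Psi_\lambda$, since $\Psi_\lambda\Psi_\lambda^*=\Theta_\lambda\Theta_\lambda^*$ and $\Psi_\lambda^*\Psi_\lambda=\Theta_\lambda^*\Theta_\lambda$. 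Because $\|\lambda\|_2<1$, the final clause of Proposition~\ref{fact-formula} extends validity to all $X,Y\in[B(\cH)^n]_\gamma$.

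For (iii), I would apply Cartan's theorem to $G:=\Psi_\lambda\circ\Psi_\lambda$. Setting $Y=X$ in the first identity of (ii) and using that $I-XX^*$ is strictly positive for $\|X\|<1$ makes the right-hand side strictly positive, so $\|\Psi_\lambda(X)\|<1$; hence $\Psi_\lambda$ maps $[B(\cH)^n]_1$ into itself, and by Theorem~\ref{compo}, $G$ is a free holomorphic self-map of $[B(\cH)^n]_1$ with $G(0)=\Psi_\lambda(\lambda)=0$. The core computation is $G'(0)=I_n$: expanding $\Psi_\lambda$ to first order at $0$ gives $\Psi_\lambda'(0)X=-\Delta_\lambda X\Delta_{\lambda^*}$, while expanding $\Psi_\lambda(\lambda+H)$ and applying the rank-one Sherman--Morrison identity $I+(1-\|\lambda\|_2^2)^{-1}\lambda^*\lambda=(I-\lambda^*\lambda)^{-1}=\Delta_{\lambda^*}^{-2}$ yields $\Psi_\lambda'(\lambda)H=-\Delta_\lambda^{-1}H\Delta_{\lambda^*}^{-1}$. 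The chain rule then gives $G'(0)=\Psi_\lambda'(\lambda)\circ\Psi_\lambda'(0)=\mathrm{id}$, so Theorem~\ref{cartan1} forces $G=\mathrm{id}$.

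Parts (iv) and (v) follow quickly. For (iv), $\Psi_\lambda$ is free holomorphic on $[B(\cH)^n]_\gamma\supset[B(\cH)^n]_1$, carries $[B(\cH)^n]_1$ into itself, and is its own two-sided inverse by (iii), hence lies in $Aut([B(\cH)^n]_1)$. For (v), since $\|X\lambda^*\|\leq\|\lambda\|_2<1$ for every $X\in[B(\cH)^n]_1^-$, the resolvent $(I-X\lambda^*)^{-1}$ and all other operators in the defining formula remain norm-continuous on the closed ball, so $\Psi_\lambda$ extends continuously; the extension is its own continuous inverse by (iii), giving the homeomorphism. I expect the main obstacle to be the derivative calculation at $\lambda$: one must recognize $(I-\lambda^*\lambda)^{-1}=\Delta_{\lambda^*}^{-2}$ from the rank-one structure of $\lambda^*\lambda$ in order to see that $\Psi_\lambda'(\lambda)$ is exactly the inverse of $\Psi_\lambda'(0)$, which is what allows Cartan's theorem to finish the involution.
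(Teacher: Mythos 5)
Your treatment of (i), (ii), (iv) and (v) matches the paper's, but for the involution (iii) you take a genuinely different route. The paper proves $\Psi_\lambda(\Psi_\lambda(X))=X$ by direct algebra: it first shows $I-\Psi_\lambda(X)\lambda^*=\Delta_\lambda(I-X\lambda^*)^{-1}\Delta_\lambda$ and $\Psi_\lambda(X)\Delta_{\lambda^*}=\Delta_\lambda[\lambda-(I-X\lambda^*)^{-1}X\Delta_{\lambda^*}^2]$, then substitutes these into the defining formula for $\Psi_\lambda(\Psi_\lambda(X))$ and simplifies. You instead invoke Cartan's uniqueness theorem for $G=\Psi_\lambda\circ\Psi_\lambda$, and your derivative computation is correct: $\Psi_\lambda'(0)H=-\Delta_\lambda H\Delta_{\lambda^*}$, and the rank-one identity $I+(1-\|\lambda\|_2^2)^{-1}\lambda^*\lambda=\Delta_{\lambda^*}^{-2}$ does give the first-order term of $\Psi_\lambda(\lambda+H)$ as $-\Delta_\lambda^{-1}H\Delta_{\lambda^*}^{-1}$, whence $G'(0)=I_n$. (Strictly, the paper only defines $F'(0)$; you should say that what you are computing is the degree-one coefficient of the composed power series, which by Theorem \ref{compo} is obtained by formal substitution and reduces to the scalar chain rule for the representation on $\BB_n$.) Your approach buys conceptual economy --- the involution becomes a two-line consequence of Cartan once the self-map property is known from (ii) --- at the cost of an explicit computation the paper needs anyway elsewhere (the identity $I-\Psi_\lambda(X)\lambda^*=\Delta_\lambda(I-X\lambda^*)^{-1}\Delta_\lambda$ reappears in the proof of Theorem \ref{cara}).

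One caveat you should address: Cartan's theorem only yields $\Psi_\lambda\circ\Psi_\lambda=\mathrm{id}$ on the open ball $[B(\cH)^n]_1$, whereas the statement asserts it on all of $[B(\cH)^n]_\gamma$, and part (v) already uses it on the closed ball $[B(\cH)^n]_1^-$, which is strictly larger than $[B(\cH)^n]_1$. You cannot simply rerun Cartan on the $\gamma$-ball, since (ii) does not show that $\Psi_\lambda$ maps $[B(\cH)^n]_\gamma$ into itself. The fix is cheap --- for (v) a limiting argument $X=\lim_r rX$ together with the continuity of $\Psi_\lambda$ on $[B(\cH)^n]_\gamma$ suffices, and for the full claim on $[B(\cH)^n]_\gamma$ one notes that $I-\Psi_\lambda(X)\lambda^*$ is invertible there so the composite formula is defined and agrees with $X$ by uniqueness of power-series coefficients --- but as written the proof of (iii) does not cover the domain on which (v) uses it. The paper's direct computation avoids this issue because it is valid verbatim for every $X\in[B(\cH)^n]_\gamma$.
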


\begin{proof} Using the identities
\begin{equation}
\label{defect} \Delta_\lambda \lambda=\lambda
\Delta_{\lambda^*}\quad \text{ and }\quad \lambda^*\Delta_\lambda=
\Delta_{\lambda^*} \lambda^*,
\end{equation}
one can easily see that $\Psi_\lambda(\lambda)=0$. Part (ii) follows
from Proposition  \ref{fact-formula}, in the particular case when
$T=\lambda=(\lambda_1,\ldots, \lambda_n)$. We remark that for a
fixed $\lambda\in \BB_n$,  $\Psi_\lambda$ is a free holomorphic
function  on the open ball $[B(\cH)^n]_\gamma$, where
$\gamma:=\frac{1}{\|\lambda\|_2}$, and the relations in part (ii)
hold true  on $[B(\cH)^n]_\gamma\supset [B(\cH)^n]_1^-$. Hence,
$\Psi_\lambda$ is continuous in the operator norm topology on
$[B(\cH)^n]_\gamma$.

To prove part (iii), note first that the operator
$I-\Psi_\lambda(X)\lambda^*$ is invertible for any $X\in
[B(\cH)^n]_\gamma$. Indeed, using relation \eqref{defect},  we have
\begin{equation*}
\begin{split}
I-\Psi_\lambda(X)\lambda^*&=I+\left[ -\lambda
+\Delta_\lambda(I-X\lambda^*)^{-1} X \Delta_{\lambda^*}\right]
\lambda^*\\
&=I-\lambda \lambda^* +\Delta_\lambda(I-X\lambda^*)^{-1} X \lambda^*
\Delta_\lambda\\
&=\Delta_\lambda\left[I+(I-X\lambda^*)^{-1} X \lambda^*\right] \Delta_\lambda\\
&=\Delta_\lambda (I-X\lambda^*)^{-1} \Delta_\lambda.
\end{split}
\end{equation*}
Note also that
\begin{equation*}
\begin{split}
\Psi_\lambda(X)\Delta_{\lambda^*} &=\left[ \lambda
-\Delta_\lambda(I-X\lambda^*)^{-1} X
\Delta_{\lambda^*}\right]\Delta_{\lambda^*}\\
&=\Delta_\lambda \left[\lambda-(I-X\lambda^*)^{-1} X
\Delta_{\lambda^*}^2\right].
\end{split}
\end{equation*}
 Due to  the relations above, we have
\begin{equation*}
\begin{split}
\Psi_\lambda (\Psi_\lambda (X))&= \lambda-\Delta_\lambda
(I-\Psi_\lambda(X) \lambda^*)^{-1} \Psi_\lambda(X)
\Delta_{\lambda^*}\\
&= \lambda -\Delta_\lambda\left[\Delta_\lambda (I-X\lambda^*)^{-1}
\Delta_\lambda\right]^{-1} \Delta_\lambda
\left[\lambda-(I-X\lambda^*)^{-1} X \Delta_{\lambda^*}^2\right]\\
&= \lambda -(I-X\lambda^*)\left[\lambda-(I-X\lambda^*)^{-1} X
\Delta_{\lambda^*}^2\right]\\
&= \lambda-(I-X\lambda^*)\lambda +X \Delta_{\lambda^*}^2\\
&=X.
\end{split}
\end{equation*}
Therefore, $\Psi_\lambda (\Psi_\lambda (X))=X$ for  any $X\in
[B(\cH)^n]_\gamma$, which proves (iii).

Fix $X\in [B(\cH)^n]_\gamma$. Since $\Delta_\lambda$ is invertible,
one can use part (ii) to show that $\|X\|\leq 1$ if and only if
$\|\Psi_\lambda (X)\|\leq 1$.  Similarly,  we can deduce  that
$\|X\|< 1$ if and only if $\|\Psi_\lambda (X)\|< 1$. Consequently,
due Theorem \ref{compo}, $\Psi_\lambda\circ \Psi_\lambda$  is a free
holomorphic function  on $[B(\cH)^n]_1$ and, taking into account
that $\Psi_\lambda (\Psi_\lambda (X))=X$ for any $X\in
[B(\cH)^n]_1^-$, we deduce that $\Psi_\lambda$ is a homeomorphism of
$[B(\cH)^n]_1^-$ onto $[B(\cH)^n]_1^-$ with
$\Psi_\lambda^{-1}=\Psi_\lambda$. Now, it is clear that
$\Psi_\lambda$ is a free holomorphic  automorphism of
$[B(\cH)^n]_1$.
 This
completes the proof.
\end{proof}

\begin{corollary}
\begin{enumerate}
\item[(i)]
 If $\lambda, \mu\in \BB_n$, then $\Psi_\mu\circ
\Psi_\lambda$ is a a free holomorphic  automorphism  of
$[B(\cH)^n]_1$ that takes $\lambda$ to $\mu$.
\item[(i)] If $\lambda\in \BB_n$, then the characteristic function
$\Theta_\lambda$ is a free holomorphic automorphism of
$[B(\cH)^n]_1$ and $\Theta_\lambda^{-1}(X)=-\Theta_\lambda(-X)$ for
$X\in [B(\cH)^n]_1$.
\end{enumerate}
\end{corollary}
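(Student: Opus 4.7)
The plan is to deduce both assertions directly from Theorem \ref{prop-cara}, together with the composition result (Theorem \ref{compo}) and the characterization of unitary-implemented automorphisms (Theorem \ref{aut1}). No serious obstacle is expected here: the corollary is essentially bookkeeping on properties of $\Psi_\lambda$ already collected in Theorem \ref{prop-cara}.

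For part (i), I would first invoke Theorem \ref{prop-cara}(iv) to see that both $\Psi_\lambda$ and $\Psi_\mu$ lie in $Aut(B(\cH)^n_1)$. Since this is a group under composition (as noted in Section 1), $\Psi_\mu\circ \Psi_\lambda$ is again a free holomorphic automorphism of $[B(\cH)^n]_1$. To check that it sends $\lambda$ to $\mu$, I would simply combine the two defining equalities from Theorem \ref{prop-cara}(i):
$$
(\Psi_\mu\circ \Psi_\lambda)(\lambda)=\Psi_\mu(\Psi_\lambda(\lambda))=\Psi_\mu(0)=\mu.
$$

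For part (ii), the key observation is that $\Theta_\lambda=-\Psi_\lambda$ can be written as
$$
\Theta_\lambda=\Phi_{-I_n}\circ \Psi_\lambda,
$$
where $\Phi_{-I_n}(X)=[X_1,\ldots,X_n](-I_n)=-X$ is the free holomorphic automorphism of $[B(\cH)^n]_1$ implemented by the unitary $-I_n$ on $\CC^n$, as in Theorem \ref{aut1}. Since $\Psi_\lambda\in Aut(B(\cH)^n_1)$ by Theorem \ref{prop-cara}(iv), the composition $\Theta_\lambda=\Phi_{-I_n}\circ \Psi_\lambda$ is an automorphism as well.

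For the inverse formula, I would use that both factors are involutions: trivially $\Phi_{-I_n}^{-1}=\Phi_{-I_n}$, and $\Psi_\lambda^{-1}=\Psi_\lambda$ by Theorem \ref{prop-cara}(iii). Hence
$$
\Theta_\lambda^{-1}=(\Phi_{-I_n}\circ \Psi_\lambda)^{-1}=\Psi_\lambda^{-1}\circ \Phi_{-I_n}^{-1}=\Psi_\lambda\circ \Phi_{-I_n},
$$
and evaluating on $X\in[B(\cH)^n]_1$ gives
$$
\Theta_\lambda^{-1}(X)=\Psi_\lambda(-X)=-\Theta_\lambda(-X),
$$
which is the claimed identity.
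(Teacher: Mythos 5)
Your proof is correct and follows exactly the route the paper intends: the paper states this corollary without proof as an immediate consequence of Theorem \ref{prop-cara} (parts (i), (iii), (iv)) together with the group structure of $Aut(B(\cH)^n_1)$ and the unitary automorphism $\Phi_{-I_n}$, which is precisely what you spell out. The only pedantic point worth noting is that Theorem \ref{prop-cara} is stated for $\lambda\neq 0$, so the case $\lambda=0$ (where $\Psi_0(X)=-X=\Phi_{-I_n}(X)$) should be checked separately, but it is trivial.
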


We remark that a  formula for $\Psi_\mu\circ \Psi_\lambda$ is
presented in Section 4 (see Corollary \ref{formula}).

The next theorem characterizes  the free holomorphic  automorphisms
of $[B(\cH)^n]_1$.

\begin{theorem}
\label{automorph} If $\Psi\in Aut(B(\cH)^n_1)$ and
$\lambda:=\Psi^{-1}(0)$, then there is a unique unitary operator $U$
on $\CC^n$ such that
$$
\Psi=\Phi_U\circ \Psi_\lambda.
$$
The identity
\begin{equation*}
I_{\cH}-\Psi(X)\Psi(Y)^*= \Delta_{\lambda}(I- X \lambda^*)^{-1}(I- X
Y^*)(I-  \lambda  Y^*)^{-1} \Delta_{\lambda}
\end{equation*}
holds  for all $X$ and $Y$ in
 $[B(\cH)^n]_1^-$.
\end{theorem}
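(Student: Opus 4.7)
The plan is to reduce the general case to the two cases already handled: automorphisms fixing the origin (Theorem \ref{aut1}) and the involutive automorphisms $\Psi_\lambda$ (Theorem \ref{prop-cara}).

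First I would form the composition $\Psi\circ\Psi_\lambda$. By Theorem \ref{prop-cara}(i), $\Psi_\lambda(0)=\lambda$, and by hypothesis $\Psi(\lambda)=0$, so $(\Psi\circ\Psi_\lambda)(0)=0$. Since $\Psi_\lambda$ is a free holomorphic automorphism of $[B(\cH)^n]_1$ (Theorem \ref{prop-cara}(iv)) and $\Psi\in Aut(B(\cH)^n_1)$, the composition $\Psi\circ\Psi_\lambda$ is itself a free holomorphic automorphism of $[B(\cH)^n]_1$ (using that $Aut(B(\cH)^n_1)$ is a group, together with Theorem \ref{compo} to legitimize the composition). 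Applying Theorem \ref{aut1} to $\Psi\circ\Psi_\lambda$, I obtain a unique unitary $U$ on $\CC^n$ such that $\Psi\circ\Psi_\lambda=\Phi_U$. Composing on the right with $\Psi_\lambda$ and using the involution property $\Psi_\lambda\circ\Psi_\lambda=\text{id}$ from Theorem \ref{prop-cara}(iii) gives $\Psi=\Phi_U\circ\Psi_\lambda$. Uniqueness of $U$ follows because $\Phi_U\circ\Psi_\lambda=\Phi_{U'}\circ\Psi_\lambda$ implies $\Phi_U=\Phi_{U'}$ (compose with $\Psi_\lambda$ on the right), and $\Phi_U=\Phi_{U'}$ forces $U=U'$ by evaluating at $(\lambda_1 I_\cH,\ldots,\lambda_n I_\cH)$ with $(\lambda_1,\ldots,\lambda_n)\in\BB_n$.

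For the identity, I would use the explicit form $\Psi(X)=\Phi_U(\Psi_\lambda(X))=[\Psi_\lambda(X)]\,U$ to compute
\begin{equation*}
\Psi(X)\Psi(Y)^*=\Psi_\lambda(X)\,UU^*\,\Psi_\lambda(Y)^*=\Psi_\lambda(X)\Psi_\lambda(Y)^*,
\end{equation*}
since $U$ is unitary. Hence $I_\cH-\Psi(X)\Psi(Y)^*=I_\cH-\Psi_\lambda(X)\Psi_\lambda(Y)^*$, and the first identity of Theorem \ref{prop-cara}(ii) gives exactly the claimed formula for all $X,Y\in[B(\cH)^n]_\gamma$ with $\gamma=1/\|\lambda\|_2>1$ (when $\lambda\neq 0$; the case $\lambda=0$ is trivial since then $\Psi_\lambda(X)=X$ and both sides equal $I-XY^*$). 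In particular the identity holds on $[B(\cH)^n]_1^-\subset[B(\cH)^n]_\gamma$. If one prefers to argue by continuity, both sides are continuous in $(X,Y)$ on $[B(\cH)^n]_1^-\times[B(\cH)^n]_1^-$ in the operator norm topology (by Theorem \ref{prop-cara}(v) for the left-hand side and by the convergent Neumann series on the right-hand side), so the equality on $[B(\cH)^n]_1$ extends to the closed ball.

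I do not anticipate a genuine obstacle here, as all the heavy lifting has been done in Theorems \ref{aut1} and \ref{prop-cara}. The only thing to be slightly careful about is treating the degenerate case $\lambda=0$ separately (where $\Psi_\lambda$ is the identity) and verifying that the factorization identity from Theorem \ref{prop-cara}(ii) is indeed valid on the closed unit ball, which I would address by invoking that $\Psi_\lambda$ is defined and free holomorphic on the strictly larger ball $[B(\cH)^n]_\gamma$.
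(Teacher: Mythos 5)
Your proposal is correct and follows essentially the same route as the paper: compose with $\Psi_\lambda$ to reduce to the origin-fixing case, invoke Theorem \ref{aut1}, undo the composition via $\Psi_\lambda\circ\Psi_\lambda=\mathrm{id}$, and derive the identity from Theorem \ref{prop-cara}(ii) using that $\Phi_U$ is implemented by a unitary. The only difference is that you spell out details the paper leaves implicit (the cancellation $\Psi(X)\Psi(Y)^*=\Psi_\lambda(X)\Psi_\lambda(Y)^*$, the uniqueness of $U$, the validity on the closed ball via the larger ball $[B(\cH)^n]_\gamma$), all of which are handled correctly.
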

\begin{proof} Note that, due to Theorem \ref{compo} and Theorem
\ref{prop-cara},
   $\Psi\circ \Psi_\lambda$ is a a free holomorphic automorphism of
   $[B(\cH)^n]_1$ such that $(\Psi\circ \Psi_\lambda)(0)=0$.
   Applying Theorem \ref{aut1} to $\Psi\circ \Psi_\lambda$, we find a  unitary operator $U$ on
   $\CC^n$ such that $\Psi\circ \Psi_\lambda=\Phi_U$. Hence, and using the fact that
   $\Psi_\lambda\circ \Psi_\lambda=\text{\rm id}$, we deduce that
    $\Psi=\Phi_U\circ \Psi_\lambda.$  Now, the identity above  follows from
    part (ii) of Theorem \ref{prop-cara}. The proof is complete.
\end{proof}

As in  the proof of Theorem \ref{automorph}, but using the fact
 that $( \Psi_\lambda\circ\Psi)(0)=0$, one can also obtain  the following
 result.
\begin{remark}
If $\Psi\in Aut(B(\cH)^n_1)$ and $\lambda:=\Psi^{-1}(0)$, then there
is a  unitary operator $W$ on $\CC^n$ such that
$$
\Psi= \Psi_\lambda\circ \Phi_W.
$$
\end{remark}

Now we can prove the following extension theorem for
$Aut(B(\cH)^n_1)$.

\begin{proposition}
\label{auto-extens} Let $1\leq n <N$. Then every $\Psi\in
Aut(B(\cH)^n_1)$ extends to ${\bf \Psi}\in Aut(B(\cH)^N_1)$.
\end{proposition}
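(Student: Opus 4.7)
The plan is to use the structural decomposition in Theorem \ref{automorph} and extend each factor separately. By Theorem \ref{automorph}, given $\Psi \in Aut(B(\cH)^n_1)$ with $\lambda := \Psi^{-1}(0) \in \BB_n$, there is a unitary $U$ on $\CC^n$ with $\Psi = \Phi_U \circ \Psi_\lambda$. I would define the extension by
$$
{\bf \Psi} := \Phi_{\tilde U} \circ \Psi_{\tilde \lambda},
$$
where $\tilde U := U \oplus I_{\CC^{N-n}}$ is a unitary on $\CC^N$ and $\tilde \lambda := (\lambda_1, \ldots, \lambda_n, 0, \ldots, 0) \in \BB_N$. Since $\Phi_{\tilde U}$ is implemented by a unitary and $\Psi_{\tilde\lambda}$ is a free holomorphic automorphism of $[B(\cH)^N]_1$ by Theorem \ref{prop-cara}, their composition ${\bf \Psi}$ lies in $Aut(B(\cH)^N_1)$.

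The content of the proof is then to verify that ${\bf \Psi}$ really extends $\Psi$, in the sense that
$$
{\bf \Psi}(X_1, \ldots, X_n, 0, \ldots, 0) = (\Psi(X_1,\ldots,X_n),\, 0, \ldots, 0), \qquad (X_1, \ldots, X_n) \in [B(\cH)^n]_1.
$$
This reduces to two computations. First, writing $\lambda$ as a row contraction $\CC^n \to \CC$ and using that the last $N-n$ entries of $\tilde\lambda$ vanish, one sees $\|\tilde\lambda\|_2 = \|\lambda\|_2$, so $\Delta_{\tilde\lambda} = \Delta_\lambda$, while $\tilde\lambda^* \tilde\lambda$ is block-diagonal with $\lambda^*\lambda$ in the top left and zeros elsewhere, giving $\Delta_{\tilde\lambda^*} = \Delta_{\lambda^*} \oplus I_{\CC^{N-n}}$. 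Plugging $(X_1, \ldots, X_n, 0, \ldots, 0)$ into the formula \eqref{Theta-lambda} for $\Psi_{\tilde\lambda}$, the sum $\sum_{i=1}^N \bar{\tilde\lambda}_i X_i$ reduces to $\sum_{i=1}^n \bar\lambda_i X_i$, and the row vector $[X_1, \ldots, X_n, 0, \ldots, 0]\,\Delta_{\tilde\lambda^*}$ equals $[[X_1, \ldots, X_n]\Delta_{\lambda^*},\, 0, \ldots, 0]$. Combined with subtracting $\tilde\lambda$, this yields
$$
\Psi_{\tilde\lambda}(X_1,\ldots,X_n, 0, \ldots, 0) = (\Psi_\lambda(X_1,\ldots,X_n),\, 0, \ldots, 0).
$$
Second, $\Phi_{\tilde U}$ acts on a row with trailing zeros as $[Y_1,\ldots,Y_n,0,\ldots,0]\,(U \oplus I_{\CC^{N-n}}) = [[Y_1,\ldots,Y_n]U,\, 0,\ldots,0]$, which preserves the zero block and applies $\Phi_U$ to the first $n$ coordinates.

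Composing the two computations gives the desired extension property, and I expect no significant obstacle: the only nontrivial point is unpacking the formula \eqref{Theta-lambda} for $\Psi_{\tilde\lambda}$ and checking that the block-diagonal structure of $\Delta_{\tilde\lambda^*}$ ensures the last $N-n$ coordinates stay zero. This is a direct bookkeeping argument once the decomposition $\Psi = \Phi_U \circ \Psi_\lambda$ from Theorem \ref{automorph} is invoked.
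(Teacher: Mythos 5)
Your proposal is correct and follows essentially the same route as the paper: both extend $\lambda$ to $(\lambda,0,\ldots,0)\in\BB_N$ and the unitary $U$ to a unitary on $\CC^N$, then invoke the decomposition $\Psi=\Phi_U\circ\Psi_\lambda$ of Theorem \ref{automorph}. The only cosmetic difference is that the paper computes $\Psi_{(\lambda,0)}(X,X')$ for arbitrary $X'$ (obtaining the explicit second block $-\Delta_\lambda(I-X\lambda^*)^{-1}X'$), whereas you only evaluate at $X'=0$, which suffices for the stated extension property.
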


\begin{proof}
Let $\lambda=(\lambda_1,\ldots, \lambda_n)\in \BB_n$  and denote $
\lambda_\circ:=(\lambda, 0)\in \BB_N$. Define ${\bf
\Psi}:[B(\cH)^N]_1\to [B(\cH)^N]_1$ by setting
$$
{\bf  \Psi} (X_1,\ldots, X_n, X_{n+1},\ldots, X_N):=
\left(\Psi_\lambda(X_1,\ldots, X_n),
-\Delta_\lambda\left(I-\sum_{i=1}^n \bar \lambda_i
X_i\right)^{-1}[X_{n+1},\ldots, X_n]\right)
$$
for $(X_1,\ldots, X_n, X_{n+1},\ldots, X_N)\in [B(\cH)^N]_1$. Denote
$\tilde X:=(X, X')=(X_1,\ldots, X_n, X_{n+1},\ldots, X_N)$ and note
that
\begin{equation*}
\begin{split}
\Psi_{ \lambda_\circ}(\tilde X)&= \lambda_\circ-\Delta_{
\lambda_\circ}(I-\tilde X  \lambda_\circ^*)^{-1} \tilde X \Delta_{
\lambda_\circ^*}\\
&=(\lambda, 0)-\Delta_\lambda (I-X\lambda^*)^{-1} [X,
X']\left[\begin{matrix}\Delta_{\lambda^*}&0\\
0&I\end{matrix}\right]\\
&=(\lambda-\Delta_\lambda (I-X\lambda^*)^{-1} X\Delta_{\lambda^*},
-\Delta_\lambda (I-X\lambda^*)^{-1}X')\\
&=(\Psi_\lambda(X), -\Delta_\lambda (I-X\lambda^*)^{-1}X')\\
&={\bf \Psi}(\tilde X).
\end{split}
\end{equation*}
This proves that ${\bf \Psi}=\Psi_{ \lambda_\circ}\in
Aut(B(\cH)^N_1)$ and it is an extension of $\Psi_\lambda\in
Aut(B(\cH)^n_1)$. Since any unitary operator on $\CC^n$ extends to a
unitary operator on $\CC^N$, one can use Theorem \ref{automorph}, to
complete the proof.
\end{proof}

We need to recall (see  \cite{Ru}) a few facts concerning the
automorphisms of the unit ball  $\BB_n$. Let $a\in \BB_n$ and
consider $\varphi_a\in
 Aut(\BB_n)$, the automorphism of the unit ball, defined by
 \begin{equation}
 \label{auto}
 \varphi_a(z):= \frac{a-Q_az-s_a (I-Q_a)z}{1-\left<z,a\right>}, \qquad z\in
 \BB_n,
 \end{equation}
where   $Q_0=0$, \, $Q_a
z:=\frac{\left<z,a\right>}{\left<a,a\right>} a$ if $ a\neq 0$, and
$s_a:=(1- \left<a,a\right>)^{1/2}$. The general form of an
automorphism of $\BB_n$ is $\varphi=\omega\circ \varphi_a$ for $a\in
\BB_n$ and a unitary map $\omega$ on $\CC^n$.

If $\Psi\in Aut(B(\cH)^n_1)$ we denote by $\hat\Psi:= \text{\rm
SOT-}\lim_{r\to 1}\Psi(rS_1,\ldots, rS_n)$, the boundary function of
$\Psi$ with respect to $S_1,\ldots, S_n$.  Note that, due to
Proposition \ref{prop-Tl}, we have $\hat \Psi=\Psi(S_1,\ldots,
S_n)$.

In what follows, we show that the automorphisms of the unit ball
$\BB_n$ coincide with the noncommutative Poisson transforms of the
free holomorphic automorphisms of $[B(\cH)^n]_1$ at the the elements
of $\BB_n$, and $Aut(B(\cH)^n_1)\simeq Aut(\BB_n)$. More precisely,
we can prove the following result.

\begin{theorem}  The map
 $\Gamma :Aut(B(\cH)^n_1)\to  Aut(\BB_n)$, defined by
 $$[\Gamma(\Psi)](z):=(P_z\otimes \text{\rm id})[\hat \Psi], \qquad
 z\in \BB_n,
 $$
 is a group isomorphism, where $\hat \Psi$ is the boundary
 function of $\Psi$ with respect to $S_1,\ldots, S_n$, and
  $P_z$ is the noncommutative Poisson transform at $z$.
\end{theorem}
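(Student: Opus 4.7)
The plan is to identify $[\Gamma(\Psi)](z)$ with the scalar evaluation $\Psi(z_1,\ldots,z_n)$ for $z\in\BB_n$, and then leverage the decomposition $\Psi=\Phi_U\circ\Psi_\lambda$ from Theorem \ref{automorph}. Since the noncommutative Poisson transform $P_z$ acts on $F_n^\infty$ by $P_z[S_\alpha]=z_\alpha$ (and is multiplicative there), each component $\hat\Psi_i=\sum_\alpha c_\alpha^{(i)} S_\alpha$ of $\hat\Psi=(\hat\Psi_1,\ldots,\hat\Psi_n)$ satisfies $P_z[\hat\Psi_i]=\sum_\alpha c_\alpha^{(i)} z_\alpha=\Psi_i(z_1,\ldots,z_n)$. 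Because $\Psi$ has scalar coefficients and is a self-map of $[B(\cH)^n]_1$, evaluation at the scalar tuple $(z_1 I_\cH,\ldots,z_n I_\cH)$ produces a scalar tuple that lies in $\BB_n$. Thus
\[
[\Gamma(\Psi)](z)=\Psi(z_1,\ldots,z_n),\qquad z\in\BB_n,
\]
is a well-defined holomorphic self-map of $\BB_n$.

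The homomorphism property now follows from Theorem \ref{compo} together with the identification above: for $\Psi,\Phi\in Aut(B(\cH)^n_1)$ and $z\in\BB_n$,
\[
[\Gamma(\Psi\circ\Phi)](z)=(\Psi\circ\Phi)(z)=\Psi(\Phi(z))=[\Gamma(\Psi)]([\Gamma(\Phi)](z)).
\]
Since $\Gamma(\text{id})=\text{id}$, we deduce $\Gamma(\Psi^{-1})=\Gamma(\Psi)^{-1}$, and hence $\Gamma(\Psi)\in Aut(\BB_n)$. A direct calculation also shows $\Gamma(\Phi_U)(z)=zU$ for any unitary $U$ on $\CC^n$, realizing every unitary element of $Aut(\BB_n)$ as an image under $\Gamma$.

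For injectivity, suppose $\Gamma(\Psi)=\text{id}$. Then $\Psi(0)=0$, so Theorem \ref{aut1} gives $\Psi=\Phi_U$ for some unitary $U$ on $\CC^n$; the relation $zU=z$ for all $z\in\BB_n$ forces $U=I_{\CC^n}$ and hence $\Psi=\text{id}$.

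For surjectivity, let $\varphi\in Aut(\BB_n)$ and set $b:=\varphi(0)\in\BB_n$. If $b=0$, then $\varphi\in Aut(\BB_n)$ fixes the origin, hence (by the classical Cartan uniqueness theorem for $\BB_n$) is a linear unitary map $z\mapsto zV$, i.e.\ equals $\Gamma(\Phi_V)$. If $b\neq 0$, by the preceding steps $\Gamma(\Psi_b)\in Aut(\BB_n)$ and $\Gamma(\Psi_b)(0)=\Psi_b(0)=b$; moreover, by Theorem \ref{prop-cara}(iii), $\Gamma(\Psi_b)$ is involutive. Consequently $\Gamma(\Psi_b)\circ\varphi$ maps $0\mapsto \Psi_b(b)=0$ and is an automorphism of $\BB_n$, so classical Cartan uniqueness again yields $\Gamma(\Psi_b)\circ\varphi=\Gamma(\Phi_V)$ for some unitary $V$, whence $\varphi=\Gamma(\Psi_b)\circ\Gamma(\Phi_V)=\Gamma(\Psi_b\circ\Phi_V)$. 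The key step connecting the noncommutative and classical pictures is the identification $[\Gamma(\Psi)](z)=\Psi(z)$ via the Poisson transform at scalar $n$-tuples in $\BB_n$; once this is in hand, everything else is a clean group-theoretic argument combining Theorem \ref{automorph}, Theorem \ref{aut1}, and the classical structure of $Aut(\BB_n)$.
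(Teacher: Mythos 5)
Your proof is correct, and it reaches the conclusion by a genuinely different route than the paper. Both arguments hinge on the same central identity $[\Gamma(\Psi)](z)=\Psi(z)$ for $z\in \BB_n$ (the paper obtains it as a by-product of an explicit computation; you derive it directly from $P_z[S_\alpha]=z_\alpha$ and the norm continuity of the Poisson transform, which is legitimate since $\hat\Psi_i$ lies in $\cA_n$ and in fact its series converges in norm because $\Psi_\lambda$ is free holomorphic on a strictly larger ball). Where you diverge is in showing that $\Gamma$ lands in $Aut(\BB_n)$ and is onto: the paper writes $\Psi=\Phi_U\circ\Psi_\lambda$ via Theorem \ref{automorph}, pushes the boundary function $\hat\Psi_\lambda U$ through the Poisson kernel $K_z$, and matches the result term by term with the explicit M\"obius formula \eqref{auto}, so that membership in $Aut(\BB_n)$, surjectivity, and injectivity (solving $\varphi_\lambda(z)U=z$, which forces $\lambda=0$ and $U=-I$) all drop out of one computation. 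You instead get membership in $Aut(\BB_n)$ structurally, from the homomorphism property and the existence of the holomorphic inverse $\Gamma(\Psi^{-1})$; injectivity from the noncommutative Theorem \ref{aut1}; and surjectivity from the classical Cartan fact that origin-fixing automorphisms of $\BB_n$ are unitary, combined with the involutions $\Gamma(\Psi_b)$ moving $0$ to $b$. The paper's approach buys the explicit formula $[\Gamma(\Phi_U\circ\Psi_\lambda)](z)=\varphi_\lambda(z)U$, which is used elsewhere; yours buys a cleaner, computation-free argument that would transfer to settings where no closed form for $\varphi_a$ is available, at the cost of invoking the classical structure theory of $Aut(\BB_n)$ (Rudin) at the surjectivity step, which the paper also implicitly relies on when it declares the general form of an automorphism of $\BB_n$ to be $\omega\circ\varphi_a$. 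No gaps.
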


\begin{proof} Let $\Psi\in Aut(B(\cH)^n_1)$ and  $\lambda=\Psi^{-1}(0)\in \BB_n$.
 Then, due to Theorem \ref{automorph},  there exists
a    unitary operator $U\in B(\CC^n)$ such that $\Psi=\Phi_U\circ
\Psi_\lambda$. According  to Proposition \ref{prop-Tl}, the boundary
function
$$\hat \Psi_\lambda=
 \lambda-\Delta_{
\lambda}\left(I_{F^2(H_n)}-\sum_{i=1}^n \bar{{ \lambda}}_i
S_i\right)^{-1} [S_1,\ldots, S_n] \Delta_{{\lambda}^*}
$$
is in  $\cA_n\otimes M_{1\otimes n}$. On the other hand, it is clear
that $\widehat {\Phi_U\circ \Psi_\lambda}=\hat \Psi_\lambda U$. Note
that if $z=(z_1,\ldots, z_n)\in \BB_n$, then the Poisson kernel
$K_z:\CC\to F^2(H_n)$ is an isometry and $z_i=K_z^* S_i K_z$ for
$i=1,\ldots, n$. Hence, using the continuity of the noncommutative
Poisson transform in the operator norm topology and relation
\eqref{auto}, we deduce that
\begin{equation*}
\begin{split}
[\Gamma(\Psi)](z):&= (P_z\otimes \text{\rm id})[\widehat
{\Phi_U\circ \Psi_\lambda}]=
 K_z^*(\hat \Psi_\lambda U)( K_z\otimes I_{\CC^n})\\
&= \left[\lambda-\Delta_{ \lambda}\left(I_{F^2(H_n)}-\sum_{i=1}^n
\bar{{ \lambda}}_i z_i\right)^{-1} [z_1,\ldots, z_n]
\Delta_{{\lambda}^*}\right]U\\
&=\varphi_\lambda(z)U
\end{split}
\end{equation*}
for any $z\in \BB_n$, where $\varphi_\lambda$ is defined by
\eqref{auto}. Therefore, $\Gamma(\Psi)\in Aut(\BB_n)$. Moreover, we
have $[\Gamma(\Psi)](z)=\Psi(z)$, $z\in \BB_n$, which clearly
implies that $\Gamma$ is a homomorphism. Since the surjectivity of
$\Gamma$ was already proved, we assume that $\Gamma(\Psi)=\text{\rm
id}$, where $\Psi=\Phi_U\circ \Psi_\lambda$. Using the calculations
above, we have $\varphi_\lambda(z)U=z$ for any $z\in \BB_n$. Hence,
we deduce that $\lambda=0$ and $U=-I$, which implies $\Psi=\text{\rm
id}$. This completes the proof.
\end{proof}

\bigskip

\section{The noncommutative Poisson transform under the action of
$Aut(B(\cH)^n_1)$}

In this section, we show that the noncommutative Poisson transform
commutes with the action of the automorphism group
$Aut(B(\cH)^n_1)$. This leads to a characterization
 of the unitarily implemented automorphisms  of the
 Cuntz-Toeplitz algebra $C^*(S_1,\ldots, S_n)$ which leave invariant
 the noncommutative disc algebra $\cA_n$, and provides new insight
  into   Voiculescu's group of automorphisms (see \cite{Vo})  of the Cuntz-Toeplitz
 algebra.

We also show that   the unitarily implemented automorphisms of the
noncommutative  disc algebra $\cA_n$   and the noncommutative
analytic Toeplitz algebra $F_n^\infty$, respectively, are determined
by the free holomorphic automorphisms of $[B(\cH)^n]_1$, via the
noncommutative Poisson transforms. We also  provide new proofs  of
some results obtained by Davidson and Pitts \cite{DP2}.

\begin{theorem}
\label{Poisson-auto} Let  $T \in   B(\cK)^n $ and let $\Psi\in
Aut(B(\cH)^n_1)$.   Then the noncommutative Poisson transform has
the following  properties:
\begin{enumerate}
\item[(i)] If $T$ is a row contraction, then
$$ P_{\Psi(T)}[g]=P_T[P_{\hat \Psi}[g] ]\quad \text{ for any }\
 g\in C^*(S_1,\ldots, S_n),
$$ where  $\hat \Psi\in \cA_n\otimes M_{1\times n}$ is the
boundary function of $\Psi$ with respect to $S_1,\ldots, S_n$.
\item[(ii)]
  If $T$ is a completely non-coisometric  row contraction,
 then so is $\Psi(T)$ and
 $$
P_{\Psi(T)}[f]=P_T[P_{\hat \Psi}[f] ]\quad \text{ for any }\
 f\in  F_n^\infty.
$$
\end{enumerate}
\end{theorem}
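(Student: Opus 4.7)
The plan is to verify part (i) on the generating monomials $S_\alpha S_\beta^*$ of $C^*(S_1,\ldots,S_n)$ and extend by linearity and norm-continuity; for part (ii), to first establish that $\Psi(T)$ inherits c.n.c.-ness from $T$ via a structural argument about coisometric reducing subspaces, and then identify the two sides of the identity by uniqueness of the $F_n^\infty$-functional calculus.

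For part (i), Theorem~2.5 gives $\Psi=\Phi_U\circ\Psi_\lambda$ with $\lambda=\Psi^{-1}(0)$; since $\Psi_\lambda$ is continuous on $[B(\cH)^n]_1^-$ by Theorem~2.3(v), each component $\hat\Psi_i=\Psi_i(S_1,\ldots,S_n)$ lies in $\cA_n$, whence $\hat\Psi_\alpha\in\cA_n$ for every $\alpha\in\FF_n^+$. The main tool is the bimodule identity
$$P_T[A\,g\,B^*]=P_T[A]\,P_T[g]\,P_T[B]^*,\qquad A,B\in\cA_n,\ g\in C^*(S_1,\ldots,S_n),$$
which is a consequence of the intertwining relation $K_T^*(I_{\cD_T}\otimes A)=A(T)K_T^*$ of the noncommutative Poisson kernel (see \cite{Po-poisson}). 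Combined with the $\cA_n$-functional calculus $S_i\mapsto T_i$, which yields $P_T[\hat\Psi_\gamma]=\Psi_\gamma(T)=\Psi(T)_\gamma$ for every $\gamma\in\FF_n^+$, evaluation at $g=S_\alpha S_\beta^*$ gives
$$P_T\bigl[P_{\hat\Psi}[S_\alpha S_\beta^*]\bigr]=P_T[\hat\Psi_\alpha\hat\Psi_\beta^*]=P_T[\hat\Psi_\alpha]\,P_T[\hat\Psi_\beta]^*=\Psi(T)_\alpha\Psi(T)_\beta^*=P_{\Psi(T)}[S_\alpha S_\beta^*].$$
Linearity and norm-continuity of the Poisson transforms propagate the identity to all of $C^*(S_1,\ldots,S_n)$.

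For part (ii), the first hurdle is to show that $\Psi(T)$ is c.n.c.\ whenever $T$ is; this is the main obstacle. Recall that $T$ being c.n.c.\ is equivalent to having no non-zero reducing subspace on which $T$ restricts to a row coisometry. Suppose $\cK_0\subset\cK$ is reducing for $T$ with $T|_{\cK_0}$ a row coisometry. Since $\Psi_j(T)=\sum_\alpha a_\alpha^{(j)}T_\alpha$ converges in operator norm and $\cK_0$ is reducing for every $T_\alpha$, $\cK_0$ is also reducing for $\Psi(T)$, with $\Psi(T)|_{\cK_0}=\Psi(T|_{\cK_0})$. The factorization identity of Theorem~2.5 with $Y=X$,
$$I-\Psi(X)\Psi(X)^* = \Delta_\lambda(I-X\lambda^*)^{-1}(I-XX^*)(I-\lambda X^*)^{-1}\Delta_\lambda,$$
extended to $[B(\cK_0)^n]_1^-$ by continuity and applied at $X=T|_{\cK_0}$ with $XX^*=I$, then forces $\Psi(X)\Psi(X)^*=I$, so $\Psi(T)|_{\cK_0}$ is also a row coisometry. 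Thus any coisometric reducing part of $T$ produces one for $\Psi(T)$; applying the same argument with $\Psi$ replaced by $\Psi^{-1}\in Aut(B(\cH)^n_1)$ yields the converse implication, so c.n.c.-ness is preserved in both directions.

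With c.n.c.\ of $\Psi(T)$ in hand, both $P_{\Psi(T)}$ and $P_T\circ P_{\hat\Psi}$ define completely contractive unital homomorphisms $F_n^\infty\to B(\cK)$ that are SOT-continuous on bounded sets; here one uses that $\hat\Psi$, being itself a pure row isometry (via the $S$-analogue of Proposition~2.4 applied to $\Psi_\lambda$ and preserved under the unitary right-multiplication by $U$), is c.n.c., so $P_{\hat\Psi}$ extends to $F_n^\infty$ as a homomorphism. By part (i) the two maps agree on $\cA_n$, which is SOT-dense in $F_n^\infty$ on bounded sets; hence they agree on all of $F_n^\infty$, completing the proof.
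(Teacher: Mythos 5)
Your part (i) is correct and follows the paper's own route: evaluate on the monomials $S_\alpha S_\beta^*$, use $P_{\hat\Psi}[S_\alpha S_\beta^*]=\hat\Psi_\alpha\hat\Psi_\beta^*$ together with $P_T[fg^*]=P_T[f]P_T[g]^*$ for $f,g\in\cA_n$ (your stronger bimodule identity is not needed), and extend by norm continuity. The closing functional-calculus argument in part (ii) -- agreement on $\cA_n$ plus SOT-continuity on bounded sets for c.n.c.\ row contractions -- also matches the paper.

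The gap is in your c.n.c.-preservation step. You assert that $T$ is completely non-coisometric if and only if it has no non-zero \emph{reducing} subspace on which it restricts to a row coisometry. Only one direction of this is true: a reducing coisometric part does obstruct c.n.c.-ness, but the converse fails already for $n=1$. For example, on $\cK=\CC\oplus\ell^2$ let $T(m,h)=(\epsilon h_0, S^*h)$ with $0<\epsilon<1$, where $S$ is the unilateral shift. One checks that $T$ is a contraction, that $\cN_T=\{h\in\cK:\ \|T^{*k}h\|=\|h\|\ \text{for all }k\}=\{0\}\oplus\ell^2\neq\{0\}$ (so $T$ is not c.n.c.), yet any reducing subspace on which $T$ restricts to a coisometry must be contained in $\cN_T$, be invariant under both $S$ and $S^*$, and avoid $e_0$, hence is $\{0\}$. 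So non-c.n.c.-ness does not produce a reducing coisometric part, and your argument cannot conclude that $\Psi(T)$ is c.n.c.\ from the absence of such parts. The set $\cN_T$ is in general only invariant under $T_1^*,\ldots,T_n^*$, not under the $T_i$ themselves.

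The paper's proof repairs exactly this point: it works with the co-invariant subspace $\cN_T$ directly, using that compression to a subspace invariant under $T_1^*,\ldots,T_n^*$ satisfies $\Psi(P_\cM T|_\cM)=P_\cM\Psi(T)|_{\cM^{(n)}}$ (because the components of $\Psi$ are norm limits of polynomials in $T_1,\ldots,T_n$), and then applies the factorization identity of Theorem 2.5(ii) to the compression $T'=P_{\cN_T}T|_{\cN_T}$ to conclude that $P_{\cN_T}\Psi_\lambda(T)|_{\cN_T^{(n)}}$ is a coisometry, whence $\cN_T\subseteq\cN_{\Psi_\lambda(T)}$; the involution $\Psi_\lambda\circ\Psi_\lambda=\mathrm{id}$ gives the reverse inclusion. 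Your use of the factorization identity is the right instinct, but it must be applied on the co-invariant subspace $\cN_T$, not on a hypothetical reducing one.
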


\begin{proof}
Let $T:=[T_1,\ldots, T_n]\in [B(\cK)^n]_1^-$ and
$\Psi=(\Psi_1,\ldots, \Psi_n)\in Aut(B(\cH)^n_1)$. Due to Theorem
\ref{automorph} and Proposition \ref{prop-Tl}, $\Psi(T)$ is in
$[B(\cK)^n]_1^-$ and the boundary function $\hat \Psi:=\text{\rm
SOT-}\lim_{r\to 1} \Psi(rS_1,\ldots, rS_n)$ is  a row contraction
with entries in the noncommutative disc algebra  $\cA_n$. Denote
$A=(A_1,\ldots, A_n):=\Psi(T)$ and  let
$\hat\Psi=(\hat\Psi_1,\ldots, \hat \Psi_n)$. Note   that
$A_\alpha=\Psi_\alpha(T)=P_T[\hat  \Psi_\alpha]$ for  $\alpha\in
\FF_n^+$. We recall that the  noncommutative Poisson transform
$P_T:C^*(S_1,\ldots, S_n)\to B(\cK)$  is a unital completely
contractive map  such that
$$
P_T[fg^*]=(P_T[f]) (P_T[g])^*, \qquad f, g\in \cA_n,
$$
and $P_T|_{\cA_n}$ is a unital  homomorphism from $\cA_n$ to
$B(\cK)$. Now, it is easy to see that
\begin{equation*}
\begin{split}
P_T[P_{\hat \Psi}[S_\alpha S_\beta^*] ]&= P_T[\hat\Psi_\alpha \hat
\Psi_\beta^*]= P_T[\hat\Psi_\alpha] P_T[\hat
\Psi_\beta]^*\\
&=\Psi_\alpha (T) \Psi_\beta(T)^*=A_\alpha
A_\beta^*=P_{\Psi(T)}[S_\alpha S_\beta^*]
\end{split}
\end{equation*}
for any  $\alpha, \beta\in \FF_n^+$.  Since the polynomials in
$S_\alpha S_\beta^*$, $\alpha, \beta\in \FF_n^+$ are dense in
$C^*(S_1,\ldots, S_n)$ and the noncommutative Poisson transform is
continuous in the operator norm topology, we deduce that $P_{\hat
\Psi}[g]$ is in $ C^*(S_1,\ldots, S_n)$ for any $g\in
C^*(S_1,\ldots, S_n)$ and that (i) holds.

Now, we prove part (ii). First note that if $T:=(T_1,\ldots, T_n)\in
[B(\cK)^n]_1^-$ and $\cM\subset\cK$ is an invariant subspace under
$T_1^*,\ldots, T_n^*$, then
$$
\Psi(P_\cM T_1|_{\cM},\ldots, P_\cM T_n|_{\cM})=
P_\cM\Psi(T_1,\ldots, T_n)|_{\cM^{(n)}}
$$
for any $\Psi\in Aut(B(\cH)^n_1)$. This is due to the structure of
free holomorphic automorphisms (see Theorem \ref{automorph} and
Proposition \ref{prop-Tl}). Denote
$$
\cN_T:=\left\{ h\in \cK:\ \sum_{|\alpha|=k} \|T_\alpha^*
h\|^2=\|h\|^2,\ k=1,2,\ldots\right\}.
$$
According to \cite{Po-isometric}, $\cN_T$ is an invariant subspace
under $T_1^*,\ldots, T_n^*$, and $T':=[P_{\cN_T}
T_1|_{\cN_T},\ldots,P_{\cN_T} T_1|_{\cN_T}]$ is a co-isometry acting
from $\cN_T^{(n)}$ to $\cN_T$.

 Consider the case
$\Psi=\Psi_\lambda$, $\lambda\in \BB_n$. Applying part (ii) of
Theorem \ref{prop-cara}  when $X=Y=T'$, we deduce that
$\Psi_\lambda(T')=P_{\cN_T}\Psi_\lambda(T)|_{\cN_T^{(n)}}$ is a
co-isometry, which shows that $\cN_T \subseteq
\cN_{\Psi_\lambda(T)}$. The same argument applied to
$\Psi_\lambda(T)$ yields $\cN_{\Psi_\lambda(T)}\subseteq
\cN_{\Psi_\lambda(\Psi_\lambda(T))}$. Since
$\Psi_\lambda(\Psi_\lambda(T))=T$, we have
$\cN_{\Psi_\lambda(T)}\subseteq \cN_{T} $. Therefore,
$\cN_T=\cN_{\Psi_\lambda(T)}$, which implies that $T$ is a c.n.c.
row contraction, i.e., $\cN_T=\{0\}$, if and only if
$\Psi_\lambda(T)$  is c.n.c. The case $\Psi=\Phi_U$, where $U$ is a
unitary operator on $\CC^n$ can be treated in a similar manner.
Since, due to Theorem \ref{automorph}, any  free holomorphic
automorphism of $[B(\cH)^n]_1$ is  of the form $\Psi=\Phi_U\circ
\Psi_\lambda$, the first part of (ii) follows.

Let $f\in F_n^\infty$ have the Fourier representation
$\sum_{\alpha\in \FF_n^+} a_\alpha S_\alpha$ and set
\begin{equation}
\label{fr} f_r(S_1,\ldots, S_n):= \sum_{k=0}^\infty
\sum_{|\alpha|=k} a_\alpha r^{|\alpha|} S_\alpha, \quad r\in[0,1),
\end{equation}
 where the convergence is in the operator
norm topology. Since $\Psi(T)$  is c.n.c.,  we can use the
$F_n^\infty$-functional calculus for row contractions to deduce that
\begin{equation*}
\begin{split}
P_{\Psi(T)}[f]&= \text{\rm SOT-}\lim_{r\to 1}K_{A,r} (I_\cH\otimes f
) K_{A,r}\\
&=\text{\rm SOT-}\lim_{r\to 1}\sum_{k=0}^\infty \sum_{|\alpha|=k}
a_\alpha r^{|\alpha|} A_\alpha,
\end{split}
\end{equation*}
where  $A=[A_1,\ldots, A_n]:=\Psi(T)$. On the other hand, since the
boundary function $\hat \Psi=(\hat \Psi_1,\ldots, \hat \Psi_n)$ is a
pure row contraction, we have
\begin{equation*}
\begin{split}
P_{\hat \Psi}[f]&=K_{\hat \Psi}^* (I\otimes f) K_{\hat \Psi}\\
&=\text{\rm SOT-}\lim_{r\to 1}K_{\hat \Psi}^* (I\otimes f_r) K_{\hat
\Psi}\\
&=\text{\rm SOT-}\lim_{r\to 1}\sum_{k=0}^\infty \sum_{|\alpha|=k}
a_\alpha r^{|\alpha|} \hat\Psi_\alpha.
\end{split}
\end{equation*}
Now, since $T$ is c.n.c., the Poisson transform $P_T:F_n^\infty\to
B(\cH)$ is SOT-continuous on bounded sets, and it coincides with the
$F_n^\infty$-functional calculus. Hence,   using the calculations
above and the fact that $P_T[\hat\Psi_\alpha]=A_\alpha$ for
$\alpha\in \FF_n^+$,  we deduce that
\begin{equation*}
\begin{split}
P_T[P_{\hat \Psi}[f]]&=\text{\rm SOT-}\lim_{r\to
1}P_T\left[\sum_{k=0}^\infty \sum_{|\alpha|=k} a_\alpha
r^{|\alpha|} \hat\Psi_\alpha\right]\\
&=\text{\rm SOT-}\lim_{r\to 1}\sum_{k=0}^\infty \sum_{|\alpha|=k}
a_\alpha r^{|\alpha|} A_\alpha\\
&=P_{\Psi(T)}[f]
\end{split}
\end{equation*}
for any $f\in F_n^\infty$.
 This completes the proof.
\end{proof}

A closer look at the proof of Theorem \ref{Poisson-auto}, reveals
the following.

\begin{remark}\label{ext}
Part (i) of Theorem \ref{Poisson-auto} remains valid  if $\Psi $ is
replaced by any contractive free holomorphic function
$\Phi:[B(\cH)^n]_1\to [B(\cH)^n]_1^-$ with the boundary function
$\hat \Phi$ in $\cA_n\otimes M_{1\times n}$. Part (ii) of Theorem
\ref{Poisson-auto} holds true if $T\in [B(\cK)^n]_1$ and  $\Psi $ is
replaced by  any free holomorphic function $\Phi:[B(\cH)^n]_1\to
[B(\cH)^n]_1$ with the boundary function $\hat \Phi \in \cA_n\otimes
M_{1\times n}$  a pure row contraction.
\end{remark}

A remarcable consequence of Thorem \ref{Poisson-auto} is the
following.

\begin{corollary}\label{compo-Pois}
 If $\Psi, \Phi\in Aut(B(\cH)^n_1)$, then
$$
P_{\widehat{\Psi\circ \Phi}}[g]=(P_{\hat \Phi} P_{\hat \Psi})[g]
$$
for any $g$ in the Cuntz-Toeplitz algebra $ C^*(S_1,\ldots, S_n)$,
or $g$ in the noncommutative analytic Toeplitz algebra $F_n^\infty$.
\end{corollary}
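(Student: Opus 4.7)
The plan is straightforward: apply Theorem \ref{Poisson-auto} with $T := \hat\Phi$. Its conclusion (i) then reads $P_{\Psi(\hat\Phi)}[g] = P_{\hat\Phi}[P_{\hat\Psi}[g]]$ for every $g \in C^*(S_1,\ldots,S_n)$, so to obtain the corollary it suffices to identify the row contraction $\Psi(\hat\Phi)$ with $\widehat{\Psi\circ\Phi}$. For this identification I would invoke the observation noted right before the theorem in Section 2 that $\hat\Psi = \Psi(S_1,\ldots,S_n)$ holds as a direct evaluation (not merely as an SOT limit), thanks to Proposition \ref{prop-Tl}: $\Psi_\lambda$ is free holomorphic on the strictly larger ball $[B(\cH)^n]_{1/\|\lambda\|_2}$ (so norm convergence of the power series at $(S_1,\ldots,S_n)$ is automatic), and $\Phi_U$ is defined everywhere. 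Combining with Theorem \ref{automorph} we get $\widehat{\Psi\circ\Phi} = (\Psi\circ\Phi)(S_1,\ldots,S_n) = \Psi(\Phi(S_1,\ldots,S_n)) = \Psi(\hat\Phi)$, as required.

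Before invoking Theorem \ref{Poisson-auto} the hypotheses on $\hat\Phi$, viewed as a row contraction on $F^2(H_n)$, must be verified. Writing $\Phi = \Phi_V \circ \Psi_\mu$ via Theorem \ref{automorph}, one has $\hat\Phi = \hat\Psi_\mu V$, a right-unitary modification of $\hat\Psi_\mu$. By Proposition \ref{prop-Tl}(iv), $\hat\Psi_\mu$ is a pure row contraction (unitarily equivalent to $[R_1,\ldots,R_n]$), and a direct computation using unitarity of $V$ shows that $\sum_{|\alpha|=k} (\hat\Phi)_\alpha (\hat\Phi)_\alpha^* = \sum_{|\alpha|=k} (\hat\Psi_\mu)_\alpha (\hat\Psi_\mu)_\alpha^*$ for all $k$, so purity passes to $\hat\Phi$; since pure implies completely non-coisometric, $\hat\Phi$ is c.n.c. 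Its entries lie in $\cA_n$ by Proposition \ref{prop-Tl}(iii) together with the fact that $\cA_n$ is closed under right multiplication by scalar matrices.

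With these checks in place, part (i) of Theorem \ref{Poisson-auto} applied to the row contraction $T := \hat\Phi$ and the automorphism $\Psi$ yields
\[
P_{\widehat{\Psi\circ\Phi}}[g] \;=\; P_{\Psi(\hat\Phi)}[g] \;=\; P_{\hat\Phi}\bigl[P_{\hat\Psi}[g]\bigr] \;=\; (P_{\hat\Phi}\, P_{\hat\Psi})[g]
\]
for every $g \in C^*(S_1,\ldots,S_n)$, which is the first half of the claim. The same substitution in part (ii) of Theorem \ref{Poisson-auto}, whose c.n.c.\ hypothesis is exactly what we verified above, yields the identical formula on $F_n^\infty$. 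No genuine obstacle arises: the corollary is essentially the composition rule for Poisson transforms packaged from Theorem \ref{Poisson-auto}, with the only substantive point being the identification $\widehat{\Psi\circ\Phi} = \Psi(\hat\Phi)$ furnished by Proposition \ref{prop-Tl}.
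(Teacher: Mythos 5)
Your proposal is correct and follows exactly the paper's route: the paper's own proof is the one-line observation that $\widehat{\Psi\circ\Phi}=(\Psi\circ\Phi)(S_1,\ldots,S_n)=\Psi(\hat\Phi)$ followed by taking $T=\hat\Phi$ in Theorem \ref{Poisson-auto}. Your additional verifications (that $\hat\Phi$ is a pure, hence c.n.c., row contraction with entries in $\cA_n$) are accurate and simply make explicit the hypotheses the paper leaves implicit.
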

\begin{proof} Note that $\widehat{\Psi\circ \Phi}=(\Psi\circ \Phi )(S_1,\ldots,
S_n)=\Psi(\hat \Phi)$. Taking $T=\hat \Phi$ in  Theorem
\ref{Poisson-auto}, the result follows.
\end{proof}

In what follows we characterize the unitarily implemented
automorphisms  of the
 Cuntz-Toeplitz algebra $C^*(S_1,\ldots, S_n)$ which leave invariant
 the noncommutative disc algebra $\cA_n$. We mention that the first
 part of  the theorem is due to Voiculescu \cite{Vo}. Our
 approach is quite different, using noncommutative Poisson
 transforms.
 \begin{theorem}
 \label{Voicu}
 If $\Psi\in Aut([B(\cH)^n]_1)$ and $\hat\Psi$ is its boundary function in $\cA_n\otimes M_{1\times n}$,
 then the noncommutative Poisson
 transform $P_{\hat\Psi}$   is a  unitarily implemented automorphism
 of the Cuntz-Toeplitz algebra $C^*(S_1,\ldots, S_n)$ which leaves invariant
 the noncommutative disc algebra $\cA_n$.

 Conversely, if $\Phi\in Aut_u(C^*(S_1,\ldots, S_n))$ and
 $\Phi(\cA_n)\subset \cA_n$, then there is  $\Psi\in
 Aut([B(\cH)^n]_1$ such that $\Phi=P_{\hat\Psi}$.
Moreover, in this case
 $$\Phi(g)=K_{\hat \Psi}^* ( I_{\cD_{ {\hat\Psi}}}\otimes g) K_{\hat \Psi}, \qquad g\in C^*(S_1,\ldots,
 S_n),
 $$
  and the noncommutative Poisson kernel $K_{\hat \Psi}$ is a
  unitary operator.
\end{theorem}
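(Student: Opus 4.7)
The plan is to treat the two directions of the equivalence separately.  For the forward direction, starting from $\Psi \in Aut(B(\cH)^n_1)$, I would first show that the boundary function $\hat\Psi$ is a pure row isometry on $F^2(H_n)$ with rank-one defect projection $I_{F^2(H_n)} - \hat\Psi\hat\Psi^*$.  By Theorem \ref{automorph} we may write $\Psi = \Phi_U \circ \Psi_\lambda$, so $\hat\Psi = \hat\Psi_\lambda U$.  Repeating the arguments of Proposition \ref{prop-Tl}, with $S_i$ in the role of $R_i$, shows that $\hat\Psi_\lambda = -\Theta_\lambda(S_1,\ldots,S_n)$ is inner, pure, and has rank-one defect; right multiplication by the unitary matrix $U$ on $\CC^n$ preserves all three properties since $(\hat\Psi_\lambda U)^*(\hat\Psi_\lambda U) = U^*U = I$ and $(\hat\Psi_\lambda U)(\hat\Psi_\lambda U)^* = \hat\Psi_\lambda \hat\Psi_\lambda^*$.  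Popescu's noncommutative Wold decomposition for row isometries \cite{Po-isometric} then identifies $\hat\Psi$, up to unitary equivalence, with $[S_1,\ldots,S_n]$ itself.  After the identification $\cD_{\hat\Psi} \otimes F^2(H_n) \simeq F^2(H_n)$, legitimate because $\dim \cD_{\hat\Psi} = 1$, the Poisson kernel $K_{\hat\Psi}$ realizes this unitary equivalence and is therefore a unitary operator on $F^2(H_n)$.  The formula $P_{\hat\Psi}(g) = K_{\hat\Psi}^*(I_{\cD_{\hat\Psi}} \otimes g) K_{\hat\Psi}$, valid because $\hat\Psi$ is pure, then exhibits $P_{\hat\Psi}$ as a unitarily implemented $*$-automorphism of $B(F^2(H_n))$; its restriction to $C^*(S_1,\ldots,S_n)$ is self-mapping by Theorem \ref{Poisson-auto}(i), and $\cA_n$ is preserved because $P_{\hat\Psi}(S_\alpha) = \hat\Psi_\alpha \in \cA_n$ for every $\alpha \in \FF_n^+$.

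For the converse, assume $\Phi \in Aut_u(C^*(S_1,\ldots,S_n))$ preserves $\cA_n$ and set $T_i := \Phi(S_i) \in \cA_n$.  Since $\Phi$ is a unital $*$-isomorphism, $T_i^* T_j = \delta_{ij} I$, so $T := [T_1,\ldots,T_n]$ is a row isometry, and $I - \sum_i T_i T_i^* = \Phi(I - \sum_i S_i S_i^*)$ is a rank-one projection.  Each $T_i \in \cA_n$ is the boundary function of a free holomorphic function $\Psi_i$ on $[B(\cH)^n]_1$, and I put $\Psi := (\Psi_1,\ldots,\Psi_n)$.  The key task is to verify $\Psi \in Aut(B(\cH)^n_1)$.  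For $z \in \BB_n$, the scalar evaluation $\varphi(z) := (P_z(T_1),\ldots,P_z(T_n))$ coincides by continuity of the Poisson transform with the restriction $\Psi|_{\BB_n}$; the row-isometry and rank-one-defect properties of $T$ and the analogous data extracted from $\Phi^{-1}$ on the $S_i$'s force $\varphi$ to be a bijection of $\BB_n$, hence an element of $Aut(\BB_n)$.  The group isomorphism $\Gamma : Aut(B(\cH)^n_1) \to Aut(\BB_n)$ from Section 2 then produces a unique $\widetilde\Psi \in Aut(B(\cH)^n_1)$ with $\Gamma(\widetilde\Psi) = \varphi$, so that $\widehat{\widetilde\Psi}$ and $T$ agree as elements of $\cA_n \otimes M_{1\times n}$, whence $\Psi = \widetilde\Psi$.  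Finally $\Phi = P_{\hat\Psi}$ because both are $*$-homomorphisms on $C^*(S_1,\ldots,S_n)$ that agree on the generators $S_i$, and the explicit formula $\Phi(g) = K_{\hat\Psi}^*(I_{\cD_{\hat\Psi}} \otimes g) K_{\hat\Psi}$ with $K_{\hat\Psi}$ unitary follows from the forward direction applied to this $\Psi$.

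The main obstacle I anticipate is in the converse, specifically in showing that $\varphi$ is surjective onto $\BB_n$.  The hypothesis $\Phi(\cA_n) \subseteq \cA_n$ is only an inclusion, so a priori $\Phi^{-1}$ need not send $\cA_n$ back into $\cA_n$, and one cannot simply read off an inverse of $\varphi$ from $\Phi^{-1}(S_i)$.  A clean workaround is structural: the unit vector $\eta \in F^2(H_n)$ spanning the range of $I - \sum_i T_i T_i^*$ together with the Fourier coefficients of the $T_i$ at the vacuum determine a specific $\lambda \in \BB_n$, while the first-order linear coefficients of $T$ assemble into a matrix $U$ on $\CC^n$ that turns out to be unitary; one then verifies $T = \widehat{\Phi_U \circ \Psi_\lambda}$ directly by matching Fourier expansions, producing $\Psi = \Phi_U \circ \Psi_\lambda \in Aut(B(\cH)^n_1)$ without invoking $\Phi^{-1}$ at all.
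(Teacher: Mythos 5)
Your forward direction is essentially the paper's argument: $\hat\Psi=\hat\Psi_\lambda U$ is a pure inner row isometry with rank-one defect, the Poisson kernel $K_{\hat\Psi}$ is unitary, and $P_{\hat\Psi}[g]=K_{\hat\Psi}^*(I\otimes g)K_{\hat\Psi}$ does the rest. Two small points you leave implicit: the unitarity of $K_{\hat\Psi}$ is most cleanly obtained not from the Wold decomposition but from the identity $I-\tilde\Theta_{\hat\Psi}\tilde\Theta_{\hat\Psi}^*=K_{\hat\Psi}K_{\hat\Psi}^*$ together with $\tilde\Theta_{\hat\Psi}=0$ (since $\hat\Psi^*\hat\Psi=I$); and showing that $P_{\hat\Psi}$ maps $C^*(S_1,\ldots,S_n)$ \emph{onto} itself (not merely into itself) requires the composition identity $P_{\widehat{\Psi^{-1}}}P_{\hat\Psi}=\text{\rm id}$ of Corollary \ref{compo-Pois}.

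The converse is where your proposal breaks down. Your primary route concludes from $\Gamma(\widetilde\Psi)=\varphi$ that $\widehat{\widetilde\Psi}$ and $T:=[\Phi(S_1),\ldots,\Phi(S_n)]$ ``agree as elements of $\cA_n\otimes M_{1\times n}$,'' but equality of the scalar evaluations $P_z[\widehat{\widetilde\Psi}_i]=P_z[T_i]$ for all $z\in\BB_n$ only determines $T_i$ modulo the kernel of the scalar functional calculus (which contains the whole commutator ideal of $\cA_n$); it does not yield $\widehat{\widetilde\Psi}=T$. Moreover the bijectivity of $\varphi$ on $\BB_n$ is asserted, not proved, and --- as you yourself observe --- you cannot extract it from $\Phi^{-1}$, which need not preserve $\cA_n$. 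Your proposed workaround does not repair this: the linear coefficient matrix of $\widehat{\Phi_U\circ\Psi_\lambda}$ is $-\Delta_\lambda\Delta_{\lambda^*}U$, which is \emph{not} unitary unless $\lambda=0$ (since $\Delta_{\lambda^*}=(I-\lambda^*\lambda)^{1/2}$ has an eigenvalue $(1-\|\lambda\|_2^2)^{1/2}<1$), so the first-order coefficients of $T$ do not ``assemble into a unitary $U$''; and ``matching Fourier expansions'' presupposes exactly what must be proved, namely that a row isometry in $\cA_n\otimes M_{1\times n}$ with rank-one defect is of the form $\tilde\Theta_\lambda U$. The missing structural step is the one the paper actually uses: because the $T_i$ commute with the right creation operators, $\cM:=\oplus_{i=1}^n T_i(F^2(H_n))$ is an $R_1,\ldots,R_n$-invariant subspace of codimension one, so $\cM^\perp=\CC z_\lambda$ for a unique $\lambda\in\BB_n$ by \cite{ArPo}; then $\cM=\tilde\Theta_\lambda(F^2(H_n)^{(n)})$ via the identity $I-\tilde\Theta_\lambda\tilde\Theta_\lambda^*=K_\lambda K_\lambda^*$, and the uniqueness part of the Beurling-type theorem produces the unitary $U$ with $T=\tilde\Theta_\lambda U$. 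Without this (or an equivalent classification of rank-one-defect row isometries over $\cA_n$), the converse is not established.
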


\begin{proof} Due  to Proposition \ref{prop-Tl}, Theorem   \ref{prop-cara}, and
Theorem \ref{automorph}, the boundary function $\hat \Psi=(\hat
\Psi_1,\ldots, \hat \Psi_n)$ is a pure inner  multi-analytic
operator with respect to $S_1,\ldots, S_n$,  with the property that
$\text{\rm rank}\,(I-\hat \Psi\hat \Psi^*)=1$. Consequently, $\hat
\Psi^* \hat \Psi=I$, and $\Delta_{\hat \Psi}=(I-\hat \Psi\hat
\Psi^*)^{1/2}$ is a rank one orthogonal projection. Since $\hat
\Psi$ is a pure row contraction, the noncommutative Poisson
transform $P_{\hat \Psi}:C^*(S_1,\ldots, S_n)\to B(F^2(H_n))$ is
defined by
$$
P_{\hat \Psi}[g]:=K_{\hat \Psi}^*( I_{\cD_{\hat \Psi}}\otimes g)
K_{\hat \Psi},
$$
where the Poisson kernel $K_{\hat \Psi}:F^2(H_n)\to \cD_{\hat
\Psi}\otimes  F^2(H_n)$ is an isometry. On the other hand, since
$\hat \Psi^* \hat \Psi=I$, the characteristic function $\tilde
\Theta_{\hat \Psi}=0$. According to \eqref{fa}, we have
$I-\tilde{\Theta}_{\hat \Psi} \tilde{\Theta}_{\hat \Psi}^*=K_{\hat
\Psi}K_{\hat \Psi}^*$. Consequently, $K_{\hat \Psi}K_{\hat
\Psi}^*=I$, which implies that $K_{\hat \Psi}$ is a unitary
operator. Since $P_{\hat \Psi}(S_\alpha S_\beta^*)=\hat \Psi_\alpha
\hat \Psi_\beta^*$ for $\alpha, \beta\in \FF_n^+$ and $\hat
\Psi_i\in \cA_n$ for $i=1,\ldots, n$, we deduce that the range of
$P_{\hat \Psi}$ is in $C^*(S_1,\ldots, S_n)$. Therefore the
noncommutative Poisson transform  $P_{\hat \Psi}:C^*(S_1,\ldots,
S_n)\to C^*(S_1,\ldots, S_n)$ is a $*$-representation  such that
$P_{\hat \Psi}(\cA_n)\subset \cA_n$. Due to Corollary
\ref{compo-Pois}, we have $ (P_{\widehat {\Psi^{-1}}} P_{\hat
\Psi})(g)=g$ for any $g\in C^*(S_1,\ldots, S_n)$. This shows that
$P_{\hat \Psi}$ is an automorphism of the Cuntz-Toeplitz algebra
$C^*(S_1,\ldots, S_n)$. Since  $\Delta_{\hat \Psi}$ is a rank one
orthogonal projection and  we can identify the defect space
$\cD_{{\hat \Psi}}$ with $\CC$. Moreover, under this identification,
$P_{\hat \Psi}$ is a unitarily implemented automorphism. Indeed, due
to Theorem \ref{automorph}, $ \Psi=\Phi_U\circ \Psi_\lambda$ for
some unitary operator $U\in B(\CC^n)$ and
$\lambda:=(\lambda_1,\ldots, \lambda)\in \BB_n$. When
$X=Y=[S_1,\ldots, S_n]$ in Theorem \ref{prop-cara}, we deduce that
$$\Delta_{\hat\Psi_\lambda}^2=\Delta_\lambda \left(I-\sum_{i=1}^n \bar
\lambda S_i\right)^{-1} P_\CC \left(I-\sum_{i=1}^n \lambda_i
S_i^*\right)^{-1} \Delta_\lambda.
$$
Therefore, there is a unitary operator $U_\lambda:\cD_{{\hat
\Psi_\lambda}}\to \CC$ defined by
\begin{equation*}
\begin{split}
U_\lambda\Delta_{\hat \Psi_\lambda} f&:=(1-\|\lambda\|_2^2)^{1/2}
P_\CC
\left(I-\sum_{i=1}^n \lambda_i S_i^*\right)^{-1} f\\
&=(1-\|\lambda\|_2^2)^{1/2} f(\lambda)
\end{split}
\end{equation*}
for any $ f\in F^2(H_n)$. Since $\Delta_{\hat \Psi}=\Delta_{\hat
\Psi_\lambda}$, our assertion follows.

Conversely, assume that $\Phi\in Aut_u(C^*(S_1,\ldots, S_n))$ such
that $\Phi(\cA_n)\subset \cA_n$ and let $\varphi_i:=\Phi(S_i)\in
\cA_n$ for $i=1,\ldots, n$. Consequently, there is a unitary
operator $U\in B(F^2(H_n))$ such that  $\Phi(Y)=U^*Y U$ for any
$Y\in C^*(S_1,\ldots, S_n)$. Since $\varphi_i=U^* S_i U$,
$i=1,\ldots,n$, it is easy to see that $\varphi_1,\ldots, \varphi_n$
are isometries with orthogonal ranges and
$$
I-\sum_{i=1}^n\varphi_i\varphi_i^* =U^*\left(I-\sum_{i=1}^n
S_iS_i^*\right)U=U^*P_\CC U
$$
is a rank one projection. Since $\varphi_i\in \cA_n$ it is clear
that the subspace $\cM:=\oplus_{i=1}^n \varphi_i (F^2(H_n))\subset
F^2(H_n)$ is invariant under each  right creation operator
$R_1,\ldots, R_n$, and has codimension one. According  to
\cite{ArPo}, $\cM^\perp =\CC z_\lambda$ for a unique $\lambda\in
\BB_n$, where
$$
z_{\lambda}:=\sum_{\alpha\in \FF_n^+} \overline {\lambda}_{\alpha}
e_\alpha.
$$
Note that $\|z_\lambda\|=\frac{1}{\sqrt{1-\|\lambda\|_2^2}}$ and
denote $u_\lambda:=\frac{z_\lambda}{\|z_\lambda\|}$.  Since
$\lambda=(\lambda_1,\ldots, \lambda_n)\in \BB_n$ is a  pure row
contraction, the noncommutative Poisson kernel $K_\lambda:\CC\to
F^2(H_n)$ satisfies the equation
$$
K_\lambda(1)=\sum_{\alpha\in \FF_n^+}
(1-\|\lambda\|_2^2)^{1/2}\bar\lambda_\alpha\otimes e_\alpha
=u_\lambda.
$$
Consequently $K_\lambda K_\lambda^*$ is the projection of $F^2(H_n)$
onto $\CC u_\lambda$. On the  other hand, due to  relation
\eqref{fa}, we have $I-\tilde\Theta_\lambda
\tilde\Theta_\lambda^*=K_\lambda K_\lambda^*$. Consequently
$\cM=\tilde\Theta_\lambda\left(F^2(H_n)^{(n)}\right)$. Using the
uniqueness, up to a unitary equivalence,  in  the Beurling type
theorem characterizing the invariant subspaces of the right creation
operators (see \cite{Po-charact}), we find a unitary operator $U\in
B(\CC^n)$ such that $\varphi:=[\varphi_1,\ldots, \varphi_n]=
\tilde\Theta_\lambda U$. Setting $\Psi:=\Phi_U\circ \Theta_\lambda$,
we have $\Psi\in Aut(B(\cH)^n_1)$ and $\hat \Psi=\varphi$. Using the
first part of the proof, we deduce that $P_{\hat\Psi}$ is a
unitarily implemented automorphism of $C^*(S_1,\ldots, S_n)$ with
$P_{\hat\Psi}(\cA_n)\subset \cA_n$. Moreover, since
$P_{\hat\Psi}(S_i)=\varphi_i= \Phi(S_i)$ for any $i=1,\ldots, n$,
and $C^*(S_1,\ldots, S_n)$ is generated by the left creation
operators,  the continuity of $P_{\hat\Psi}$ and $\Phi$ in the
operator norm topology implies   $P_{\hat\Psi}=\Phi$. This completes
the proof.
\end{proof}

We recall (see  \cite{ArPo}) that $
S_i^*z_{\lambda}=\overline{\lambda}_{i} z_{\lambda}$ for
$i=1,\ldots, n.$ In this case, we have $\lambda_i=K_\lambda^* S_i
K_\lambda$, $i=1,\ldots, n$, and the one dimensional co-invariant
subspaces $\cM\subset F^2(H_n)$ under $S_1,\ldots, S_n$ are of the
form $\cM=\CC u_\lambda$, $\lambda\in \BB_n$.
This shows that the unit ball $\BB_n$ coincides with the
compressions $(P_\cM S_1|_\cM,\ldots, P_\cM S_n|_\cM)$ of the left
creation operators to the one dimensional co-invariant subspaces
under $S_1,\ldots, S_n$.  This  gives another indication  why there
is such a close connection between the the noncommutative ball
$[B(\cH)^n]_1$ (resp.~free holomorphic functions) and the unit  ball
$\BB_n$ (resp.~analytic functions).

In what follows we show that the unitarily implemented automorphisms
of the noncommutative  disc algebra $\cA_n$   and the noncommutative
analytic Toeplitz algebra $F_n^\infty$, respectively, are determined
by the free holomorphic automorphisms of $[B(\cH)^n]_1$, via the
noncommutative Poisson transforms.

 We  mention  that Davidson and Pitts  showed  in \cite{DP2} that the
 subgroup $Aut_u(F_n^\infty)$ of  unitarily implemented
automorphisms of $F_n^\infty$  is isomorphic with $Aut(\BB_n)$. In
what follows (see the next theorem and its corollaries) we  obtain a
new proof of their result, using
 noncommutative Poisson transforms,  which extends
to  the noncommutative disc algebra $\cA_n$.

 \begin{theorem}
 \label{disc-auto}
 If $\Psi\in Aut(B(\cH)^n_1)$ and $\hat\Psi$ is its boundary function,
 then the noncommutative Poisson
 transform $P_{\hat\Psi}|_{\cA_n}$   is a  unitarily implemented automorphism
 of  the noncommutative disc algebra $\cA_n$.

 Conversely, if $\Phi\in Aut_u( \cA_n)$,  then there is   $\Psi\in
 Aut([B(\cH)^n]_1$ such that $\Phi=P_{\hat\Psi}|_{\cA_n}$.
Moreover, in this case
 $$\Phi(f)=K_{\hat \Psi}^* ( I_{\cD_{ {\hat\Psi}}}\otimes f) K_{\hat \Psi}, \qquad f\in  \cA_n,
 $$
  and the noncommutative Poisson kernel $K_{\hat \Psi}$ is a
  unitary operator.
\end{theorem}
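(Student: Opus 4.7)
The proof runs parallel to Theorem \ref{Voicu}. For the forward direction, given $\Psi \in Aut(B(\cH)^n_1)$, Theorem \ref{Voicu} already produces a unitary $K_{\hat\Psi}$ on $F^2(H_n)$ implementing $P_{\hat\Psi}$ as a $*$-automorphism of $C^*(S_1,\ldots,S_n)$ which maps $\cA_n$ into $\cA_n$. Restricting yields a unital norm-continuous homomorphism $P_{\hat\Psi}|_{\cA_n} : \cA_n \to \cA_n$, still implemented by the same unitary. To upgrade this endomorphism to an automorphism I would apply Corollary \ref{compo-Pois} to $\Psi$ and $\Psi^{-1}$: since $\widehat{\Psi \circ \Psi^{-1}} = [S_1,\ldots,S_n]$ and its Poisson transform is the identity, we obtain $P_{\widehat{\Psi^{-1}}} \circ P_{\hat\Psi} = \mathrm{id}$ and $P_{\hat\Psi} \circ P_{\widehat{\Psi^{-1}}} = \mathrm{id}$ on $\cA_n$. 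Both factors preserve $\cA_n$, so these mutual-inversion relations place $P_{\hat\Psi}|_{\cA_n}$ in $Aut_u(\cA_n)$.

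For the converse I would mimic the second half of the proof of Theorem \ref{Voicu}. Let $\Phi \in Aut_u(\cA_n)$ and pick a unitary $U$ on $F^2(H_n)$ with $\Phi(f) = U^* f U$ for every $f \in \cA_n$. Set $\varphi_i := \Phi(S_i) = U^* S_i U \in \cA_n$. Even though $\Phi$ is not asserted to act on the full $C^*$-algebra, the identities
\begin{equation*}
\varphi_i^* \varphi_j = U^* S_i^* S_j U = \delta_{ij} I, \qquad I - \sum_{i=1}^n \varphi_i \varphi_i^* = U^*\Bigl(I - \sum_{i=1}^n S_i S_i^*\Bigr) U = U^* P_\CC U,
\end{equation*}
are operator identities in $B(F^2(H_n))$, so $\varphi_1,\ldots,\varphi_n$ are isometries with pairwise orthogonal ranges and $\cM := \bigoplus_{i=1}^n \varphi_i(F^2(H_n))$ is a codimension-one subspace of $F^2(H_n)$. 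Since each $\varphi_i \in \cA_n$ commutes with the right creation operators $R_1,\ldots,R_n$, the subspace $\cM$ is $R_j$-invariant. From this point the argument is identical to the corresponding portion of the proof of Theorem \ref{Voicu}: the Arveson-Popescu description of codimension-one $R$-invariant subspaces gives $\cM^\perp = \CC z_\lambda$ for a unique $\lambda \in \BB_n$; relation \eqref{fa} identifies $\cM$ with $\tilde\Theta_\lambda(F^2(H_n)^{(n)})$; and the Beurling-type uniqueness theorem for $\cR_n$-invariant subspaces yields a unitary $V \in B(\CC^n)$ with $[\varphi_1,\ldots,\varphi_n] = \tilde\Theta_\lambda V$.

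Setting $\Psi := \Phi_V \circ \Theta_\lambda$, Theorem \ref{automorph} gives $\Psi \in Aut(B(\cH)^n_1)$ with $\hat\Psi = [\varphi_1,\ldots,\varphi_n]$. By the forward direction $P_{\hat\Psi}|_{\cA_n} \in Aut_u(\cA_n)$; the equalities $P_{\hat\Psi}(S_i) = \varphi_i = \Phi(S_i)$, combined with norm-continuity of both maps and the fact that $\cA_n$ is the norm closure of polynomials in $I,S_1,\ldots,S_n$, force $P_{\hat\Psi}|_{\cA_n} = \Phi$. The formula $\Phi(f) = K_{\hat\Psi}^*(I_{\cD_{\hat\Psi}} \otimes f) K_{\hat\Psi}$ with $K_{\hat\Psi}$ unitary is inherited by restriction from the $C^*$-level formula established in Theorem \ref{Voicu}. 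The principal delicate point is that in the converse the unitary implementation is hypothesized only on the non-self-adjoint algebra $\cA_n$, not on $C^*(S_1,\ldots,S_n)$; however, Theorem \ref{Voicu} uses the $C^*$-level implementation only to extract the two operator identities above for $\varphi = [U^* S_1 U,\ldots, U^* S_n U]$, and these remain available in the present setting, so the rest of the Voiculescu-style argument transports unchanged.
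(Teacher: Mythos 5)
Your proof is correct and follows essentially the same route as the paper: the forward direction restricts the conclusion of Theorem \ref{Voicu} and invokes Corollary \ref{compo-Pois} for invertibility, and the converse reruns the second half of the proof of Theorem \ref{Voicu} (which the paper simply cites with ``as in the proof of Theorem \ref{Voicu}''). Your explicit observation that the Voiculescu-style argument needs only the operator identities $\varphi_i^*\varphi_j=\delta_{ij}I$ and $I-\sum_i\varphi_i\varphi_i^*=U^*P_\CC U$ --- which survive when the unitary implementation is hypothesized only on the non-self-adjoint algebra $\cA_n$ --- is a worthwhile clarification of a point the paper leaves implicit.
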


\begin{proof} Assume that $\Psi\in Aut(B(\cH)^n_1)$. According to
Theorem \ref{Voicu}, the noncommutative Poisson transform
$P_{\hat\Psi}$ is a unitarily implemented automorphism of the
Cuntz-Toeplitz algebra $C^*(S_1,\ldots, S_n)$ such that
$P_{\hat\Psi}(\cA_n)\subset \cA_n$. Due to Corollary
\ref{compo-Pois}, we have $ (P_{\widehat {\Psi^{-1}}} P_{\hat
\Psi})(f)=f$ for any $f\in \cA_n$. Hence, we deduce that
$P_{\hat\Psi}|_{\cA_n}$ is a unitarily implemented automorphism of
the noncommutative disc algebra $\cA_n$.

 Conversely, let $\Phi\in Aut_u(\cA_n)$. As in the proof of Theorem
 \ref{Voicu}, we find $\Psi\in Aut(B(\cH)^n_1)$ such that
 $\Phi=P_{\hat\Psi}|_{\cA_n}$.
 This completes the proof.
\end{proof}

\begin{corollary}
 \label{DP}
 If $\Psi\in Aut(B(\cH)^n_1)$ and $\hat\Psi$ is its boundary function,
 then the noncommutative Poisson
 transform $P_{\hat\Psi}$   is a  unitarily implemented automorphism
 of  $F_n^\infty$.

 Conversely, if $\Phi\in Aut_u( F_n^\infty)$,  then there is  $\Psi\in
 Aut([B(\cH)^n]_1$ such that $\Phi=P_{\hat\Psi}$.
Moreover, in this case
 $$\Phi(f)=K_{\hat \Psi}^* (I_{\cD_{ {\hat\Psi}}}\otimes f) K_{\hat \Psi}, \qquad f\in  F_n^\infty,
 $$
  and the noncommutative Poisson kernel $K_{\hat \Psi}$ is a
  unitary operator.
\end{corollary}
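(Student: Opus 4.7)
The plan is to reduce Corollary~\ref{DP} to Theorem~\ref{disc-auto} by extending the picture from the disc algebra $\cA_n$ to the analytic Toeplitz algebra $F_n^\infty$ through a bounded-SOT density argument, using that $\cA_n\subset F_n^\infty$ SOT-densely on bounded sets via the radial approximants \eqref{fr}.

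\textbf{Forward direction.} Given $\Psi\in Aut(B(\cH)^n_1)$, Theorem~\ref{Voicu} already provides a unitarily implemented $*$-automorphism $P_{\hat\Psi}$ of $C^*(S_1,\ldots,S_n)$, with the identification $\cD_{\hat\Psi}\simeq\CC$ making $K_{\hat\Psi}$ a unitary on $F^2(H_n)$. Hence the formula $P_{\hat\Psi}[f]=K_{\hat\Psi}^*(I_{\cD_{\hat\Psi}}\otimes f)K_{\hat\Psi}$ extends to all of $B(F^2(H_n))$ and is WOT-continuous on bounded sets. To show it preserves $F_n^\infty$, I fix $f\in F_n^\infty$ and consider the radial approximants $f_r:=f(rS_1,\ldots,rS_n)\in\cA_n$; by the $F_n^\infty$-functional calculus of \cite{Po-funct}, $f_r\to f$ in SOT with $\sup_r\|f_r\|\le\|f\|$. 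Theorem~\ref{disc-auto} places each $P_{\hat\Psi}(f_r)$ in $\cA_n$, and unitary conjugation preserves SOT convergence, so $P_{\hat\Psi}(f)\in F_n^\infty$ because $F_n^\infty$ is WOT-closed. The map $P_{\hat\Psi}|_{F_n^\infty}$ is then an automorphism of $F_n^\infty$: Corollary~\ref{compo-Pois} applied to $\Psi$ and $\Psi^{-1}$ yields $P_{\widehat{\Psi^{-1}}}\circ P_{\hat\Psi}=\mathrm{id}$.

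\textbf{Converse.} Let $\Phi\in Aut_u(F_n^\infty)$ be implemented by a unitary $U\in B(F^2(H_n))$ and set $\varphi_i:=\Phi(S_i)=U^*S_iU\in F_n^\infty$. A direct computation gives $\varphi_i^*\varphi_j=\delta_{ij}I$ and
\begin{equation*}
I-\sum_{i=1}^n\varphi_i\varphi_i^*=U^*\Bigl(I-\sum_{i=1}^n S_iS_i^*\Bigr)U=U^*P_\CC U,
\end{equation*}
which is a rank-one projection. The new input, compared with the $\cA_n$ case, is that every element of $F_n^\infty$ commutes with each $R_j$ (the analytic and right analytic Toeplitz algebras are mutual commutants, \cite{DP1},\cite{Po-analytic}); consequently $\cM:=\bigoplus_{i=1}^n\varphi_iF^2(H_n)$ is a codimension-one invariant subspace for $R_1,\ldots,R_n$. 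From here the argument of Theorem~\ref{Voicu} carries over verbatim: by \cite{ArPo}, $\cM^\perp=\CC z_\lambda$ for a unique $\lambda\in\BB_n$; the Beurling-type uniqueness in \cite{Po-charact} yields a unitary $W\in B(\CC^n)$ with $[\varphi_1,\ldots,\varphi_n]=\tilde\Theta_\lambda W$; and $\Psi:=\Phi_W\circ\Theta_\lambda\in Aut(B(\cH)^n_1)$ has $\hat\Psi=\varphi$.

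\textbf{Concluding step.} It remains to identify $\Phi$ with $P_{\hat\Psi}$ on $F_n^\infty$. Both are algebra homomorphisms $F_n^\infty\to F_n^\infty$ of the form $V^*(\,\cdot\,)V$ for unitaries $V$ on $F^2(H_n)$, hence WOT-continuous on bounded sets, and they agree on the generators $S_1,\ldots,S_n$; therefore they coincide on noncommutative polynomials, on $\cA_n$ by norm-continuity, and finally on all of $F_n^\infty$ via the bounded-SOT approximation $f_r\to f$ invoked above. The unitary implementation formula for $\Phi$ then reads off from Theorem~\ref{Voicu}. I expect the main obstacle to be this last bounded-approximation step: one must be careful that the net $\{f_r\}$ furnished by the $F_n^\infty$-functional calculus is uniformly bounded and SOT-convergent to $f$, so that WOT-continuity on bounded sets of two unitarily implemented maps suffices to pass equality from polynomials to all of $F_n^\infty$ without invoking any $*$-structure that $F_n^\infty$ lacks.
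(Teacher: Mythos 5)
Your argument is correct and takes essentially the same route as the paper: the forward direction rests on Theorem \ref{Voicu}, the uniformly bounded radial approximants $f_r\to f$ in SOT, and Corollary \ref{compo-Pois}, while the converse reruns the second half of the proof of Theorem \ref{Voicu}. Your explicit remark that the $R_1,\ldots,R_n$-invariance of $\cM$ now comes from $F_n^\infty$ commuting with the right creation operators (rather than from $\varphi_i\in\cA_n$) is precisely the point the paper leaves implicit when it writes ``as in the proof of Theorem \ref{Voicu}.''
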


\begin{proof} If  $\Psi\in Aut(B(\cH)^n_1)$, the $\hat \Psi\in
\cA_n\otimes M_{1\times n}$.  Since $\hat \Psi$ is a pure  row
contraction, it makes sense to consider the noncommutative Poisson
transform $P_{\hat \Psi}: F_n^\infty\to B(F^2(H_n))$  defined by
$$
P_{\hat \Psi}[f]:=K_{\hat \Psi}^* (I_{\cD_{ {\hat\Psi}}}\otimes f)
K_{\hat \Psi}, \qquad f\in  F_n^\infty.
 $$
We saw in the proof of Theorem \ref{Voicu} that $K_{\hat \Psi}$ is a
unitary operator. We recall that if $f\in F_n^\infty$, then $f_r\in
\cA_n$,  $\|f_r\|\leq \|f\|$, and $\text{\rm SOT-}\lim_{r\to 1}
f_r=f$. Since $ P_{\hat \Psi}(S_\alpha)=\hat \Psi_\alpha\in \cA_n$,
$\alpha\in \FF_n^+$,  and $F_n^\infty$ is the WOT-closed
non-selfadjoint algebra generated by $S_1,\ldots, S_n$ and the
identity, we deduce that $P_{\hat \Psi}(F_n^\infty)\subset
F_n^\infty$. On the other hand, due to Corollary \ref{compo-Pois},
we have $ (P_{\widehat {\Psi^{-1}}} P_{\hat \Psi})(f)=f$ for any
$g\in  F_n^\infty$. This shows that $P_{\hat
\Psi}(F_n^\infty)=F_n^\infty$  and therefore $P_{\hat \Psi}$ is a
unitarily implemented  automorphism of $F_n^\infty$.

Conversely, let $\Phi\in Aut_u( F_n^\infty)$. As in the proof of
Theorem
 \ref{Voicu}, we find $\Psi\in Aut(B(\cH)^n_1)$ such that
 $\Phi|_{\cA_n}=P_{\hat\Psi}|_{\cA_n}$.
 Since $\cA_n$ is $w^*$-dense in $F_n^\infty$ and both $\Psi$ and
 $P_{\hat\Psi}$  are unitarily implemented (therefore
 $w^*$-continuous), we deduce that $\Phi=P_{\hat\Psi}$.
This completes the proof.
\end{proof}

\begin{corollary}
$
 Aut(B(\cH)^n_1)\simeq  Aut_u(\cA_n)\simeq Aut_u(F_n^\infty)$.
\end{corollary}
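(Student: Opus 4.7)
The plan is to construct explicit group isomorphisms directly via the noncommutative Poisson transform, leveraging all the machinery built up in Sections 2 and 3. I define
$$
\Lambda_1:Aut(B(\cH)^n_1)\to Aut_u(\cA_n),\qquad \Lambda_1(\Psi):=P_{\widehat{\Psi^{-1}}}|_{\cA_n},
$$
and
$$
\Lambda_2:Aut(B(\cH)^n_1)\to Aut_u(F_n^\infty),\qquad \Lambda_2(\Psi):=P_{\widehat{\Psi^{-1}}}.
$$
The inversion in the argument is inserted so that the resulting maps are homomorphisms rather than anti-homomorphisms, given the order of composition appearing in Corollary \ref{compo-Pois}. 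Well-definedness is immediate from Theorem \ref{disc-auto} and Corollary \ref{DP}, which guarantee that for every $\Psi\in Aut(B(\cH)^n_1)$ the transform $P_{\hat\Psi}|_{\cA_n}$ lies in $Aut_u(\cA_n)$ and $P_{\hat\Psi}$ lies in $Aut_u(F_n^\infty)$; applying this to $\Psi^{-1}$ yields the same conclusion.

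To verify that $\Lambda_1$ and $\Lambda_2$ are group homomorphisms, I would apply Corollary \ref{compo-Pois} to the inverses, obtaining
$$
P_{\widehat{(\Psi\circ\Phi)^{-1}}}\;=\;P_{\widehat{\Phi^{-1}\circ \Psi^{-1}}}\;=\;P_{\widehat{\Psi^{-1}}}\circ P_{\widehat{\Phi^{-1}}},
$$
so that $\Lambda_i(\Psi\circ\Phi)=\Lambda_i(\Psi)\circ\Lambda_i(\Phi)$ for $i=1,2$. Surjectivity is precisely the converse direction in Theorem \ref{disc-auto} and Corollary \ref{DP}: given any $\Phi\in Aut_u(\cA_n)$ (respectively $Aut_u(F_n^\infty)$) one finds $\Psi\in Aut(B(\cH)^n_1)$ with $\Phi=P_{\hat\Psi}$, and then $\Psi^{-1}$ is a preimage under $\Lambda_i$.

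For injectivity, suppose $\Lambda_1(\Psi)=\mathrm{id}$ on $\cA_n$. Then $\widehat{\Psi^{-1}}_i=\Lambda_1(\Psi)(S_i)=S_i$ for $i=1,\dots,n$. By Proposition \ref{prop-Tl} one has $\widehat{\Psi^{-1}}=\Psi^{-1}(S_1,\dots,S_n)$, and the Fourier coefficients of this boundary function coincide with the coefficients of the power series representation of $\Psi^{-1}$; the uniqueness of these coefficients (established at the start of Section 1) then forces the $i$-th component of $\Psi^{-1}$ to be $Z_i$, so $\Psi^{-1}=\mathrm{id}$ and hence $\Psi=\mathrm{id}$. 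The same argument works for $\Lambda_2$, with the additional remark that every $F_n^\infty$-automorphism in the image is $w^*$-continuous (being unitarily implemented), so vanishing on $\cA_n$ propagates to $F_n^\infty$ by $w^*$-density. The middle isomorphism $Aut_u(\cA_n)\simeq Aut_u(F_n^\infty)$ is then the composition $\Lambda_2\circ\Lambda_1^{-1}$.

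There is no genuine obstacle here: every ingredient has already been assembled, and the argument reduces to bookkeeping on top of Theorem \ref{disc-auto}, Corollary \ref{DP}, Corollary \ref{compo-Pois}, and Proposition \ref{prop-Tl}. The one subtlety that I would flag is the order of composition in Corollary \ref{compo-Pois}, which is what forces the definition $\Psi\mapsto P_{\widehat{\Psi^{-1}}}$; without the inversion one obtains only an anti-isomorphism, which of course still yields a group isomorphism through the canonical identification $G\simeq G^{\mathrm{op}}$ via $g\mapsto g^{-1}$, but the present formulation is cleaner.
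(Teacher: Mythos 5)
Your proposal is correct and follows essentially the same route as the paper: the paper's map $\Gamma(\Psi):=P_{\widehat{\Psi^{-1}}}|_{\cA_n}$ is identical to your $\Lambda_1$, and the paper likewise invokes Theorem \ref{disc-auto} and Corollary \ref{DP} for well-definedness and surjectivity, Corollary \ref{compo-Pois} for the homomorphism property, and the evaluation $P_{\widehat{\Psi^{-1}}}(S_i)=S_i$ for injectivity. Your added remarks on the order of composition and the $w^*$-density step for $F_n^\infty$ are details the paper leaves implicit, but the argument is the same.
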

\begin{proof}
Let $\Gamma: Aut(B(\cH)^n_1)\to Aut_u(\cA_n)$ be defined by
$$
\Gamma(\Psi):=P_{\widehat {\Psi^{-1}}}|_{\cA_n},\qquad \Psi \in
Aut(B(\cH)^n_1).
$$
Due to Theorem  \ref{disc-auto}, the map $\Gamma$ is well-defined
and surjective.  On the other hand, if we assume that
$\Gamma(\Psi)=\text{\rm id}$, then $P_{\widehat
{\Psi^{-1}}}(S_i)=S_i $ for  $i=1,\ldots, n$ and therefore $\widehat
{\Psi^{-1}}=[S_1,\ldots, S_n]$. Consequently, $\Psi^{-1}(X)=X$ for
any $X\in [B(\cH)^n]_1$, which proves that $\Gamma$ is one-to-one.
Using Corollary \ref{compo-Pois},  one can deduce that $\Gamma $ is
a group homomorphism and   $Aut(B(\cH)^n_1)\simeq Aut_u(\cA_n)$.
 In a similar manner, using Corollary \ref{DP} and Corollary
 \ref{compo-Pois}, one can show that $Aut(B(\cH)^n_1)\simeq
 Aut_u(F_n^\infty)$.
This completes the proof.
\end{proof}

We say that a free holomorphic function $F$ on $[B(\cH)^n]_1$  is
bounded if
$$
\|F\|_\infty:=\sup  \|F(X_1,\ldots, X_n)\|<\infty,
$$
where the supremum is taken over all $n$-tuples  of operators
$[X_1,\ldots, X_n]\in [B(\cH)^n]_1$. Let $H^\infty(B(\cH)^n_1)$ be
the set of all bounded free holomorphic functions and   denote by
$A(B(\cH)^n_1)$   the set of all  elements $F$  in $Hol(B(\cH)^n_1)$
such that the mapping
$$[B(\cH)^n]_1\ni (X_1,\ldots, X_n)\mapsto F(X_1,\ldots, X_n)\in B(\cH)$$
 has a continuous extension to the closed unit ball $[B(\cH)^n]^-_1$.
We  showed in \cite{Po-holomorphic} that $H^\infty(B(\cH)^n_1)$  and
$A(B(\cH)^n_1)$ are Banach algebras under pointwise multiplication
and the norm $\|\cdot \|_\infty$, which can be identified with the
noncommutative analytic Toeplitz algebra $F_n^\infty$ and the
noncommutative disc algebra $\cA_n$, respectively.

\begin{lemma}
\label{comp-iso} Let $f :[B(\cH)^n]_1 \to B(\cH)$  and
$g:[B(\cH)^n]_1\to [B(\cH)^n]_1$ be free holomorphic functions. Then
the following properties hold.

 \begin{enumerate}
 \item[(i)] If $f $ and $g$ have continuous extension to the closed ball $[B(\cH)^n]_1$ , then
 $f\circ g\in A(B(\cH)^n_1)$.
\item[(ii)] If $f\in H^\infty(B(\cH)^n_1)$, then
$f\circ g\in H^\infty(B(\cH)^n_1)$   and $\|f\circ g\|_\infty\leq
\|f\|_\infty$.
\item[(iii)] If $f\in H^\infty(B(\cH)^n_1)$ and  $\hat g$ is  a pure row contraction
 with entries in $\cA_n$, then $$(f\circ
g)(X)=P_X\left[ P_{\hat g}\left[\hat f\right]\right]$$
 for any  $X\in [B(\cH)^n]_1$, where   $\hat f$ and $\hat g$ are the
  corresponding boundary
functions.
 \end{enumerate}
\end{lemma}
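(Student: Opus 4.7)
The proof rests on two ingredients already established: Theorem \ref{compo} on composition of free holomorphic functions, and the Poisson-transform composition rule contained in Remark \ref{ext}.

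For part (i), I would first apply Theorem \ref{compo} with $\gamma_1=\gamma_2=1$ to obtain that $f\circ g$ is free holomorphic on $[B(\cH)^n]_1$. By hypothesis $f$ and $g$ both admit continuous extensions to $[B(\cH)^n]_1^-$; since the extension of $g$ is norm continuous, its range is contained in $\overline{g([B(\cH)^n]_1)}\subseteq [B(\cH)^n]_1^-$, so composing with the norm-continuous extension of $f$ produces a continuous extension of $f\circ g$ to $[B(\cH)^n]_1^-$. Hence $f\circ g\in A(B(\cH)^n_1)$.

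For part (ii), Theorem \ref{compo} again supplies the free holomorphicity of $f\circ g$ on $[B(\cH)^n]_1$. The norm bound is then immediate: for every $X\in [B(\cH)^n]_1$ one has $g(X)\in [B(\cH)^n]_1$, and therefore $\|(f\circ g)(X)\|=\|f(g(X))\|\leq\|f\|_\infty$; taking the supremum over $X$ yields $\|f\circ g\|_\infty\leq\|f\|_\infty$.

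For part (iii), my plan is to combine the identification $H^\infty(B(\cH)^n_1)\simeq F_n^\infty$ via the boundary function $f\mapsto \hat f$ with the Poisson-transform extension recorded in Remark \ref{ext}. Every $X\in [B(\cH)^n]_1$ has $\|X\|<1$ and is therefore pure, hence completely non-coisometric, and similarly for $g(X)$. Under the $F_n^\infty$-functional calculus for c.n.c.\ row contractions we have $f(Y)=P_Y[\hat f]$ whenever $Y\in [B(\cK)^n]_1$; in particular $(f\circ g)(X)=f(g(X))=P_{g(X)}[\hat f]$. I would then invoke Remark \ref{ext} applied to $T=X\in [B(\cK)^n]_1$ and $\Phi=g$ (the hypotheses that $\hat g\in \cA_n\otimes M_{1\times n}$ be a pure row contraction are given) to conclude $P_{g(X)}[\hat f]=P_X[P_{\hat g}[\hat f]]$, which is the desired identity.

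\textbf{Main obstacle.} The delicate point is the reconciliation of the two ways of evaluating $f$: as a free holomorphic function on operator tuples through its power series, and as a Poisson transform $P_Y[\hat f]$ of its boundary function $\hat f\in F_n^\infty$. For strict row contractions this is essentially the content of the $F_n^\infty$-functional calculus together with the SOT-Abel summability formula \eqref{pois-sot}, but I would need to check carefully that the SOT-limit defining $P_X$ on $F_n^\infty$ yields the same $B(\cH)$-operator as the norm-convergent power-series evaluation of $f$ at $X$. Once this identification is in hand, parts (i) and (ii) are essentially formal consequences of Theorem \ref{compo}, and part (iii) reduces to a direct application of Remark \ref{ext}.
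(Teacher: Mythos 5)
Your proposal is correct and follows essentially the same route as the paper: Theorem \ref{compo} for free holomorphicity, the trivial supremum estimate for (ii), and Remark \ref{ext} together with the Poisson-transform realization $f(Y)=P_Y[\hat f]$ for (iii). The only difference is cosmetic: in (iii) the paper applies Remark \ref{ext} at $T=(rS_1,\ldots,rS_n)$ and passes to the SOT-limit to first identify the boundary function $\widehat{f\circ g}=P_{\hat g}[\hat f]$, whereas you apply it directly at $T=X\in[B(\cH)^n]_1$, which is legitimate under the stated hypotheses and slightly shorter.
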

\begin{proof} Due to Theorem  \ref{compo},
$f\circ g$ is a free holomorphic function on $[B(\cH)^n]_1$.  Part
(i) is obvious. To prove (ii), assume that  $f\in
H^\infty(B(\cH)^n_1)$.  In this case, we have
 $$
\|f\circ g\|_\infty= \sup_{X\in [B(\cH)^n]_1} \|f(g(X))\|\leq
\sup_{Y\in [B(\cH)^n]_1}\|f(Y)\|=\|f\|_\infty,
$$
which shows that $f\circ g\in H^\infty(B(\cH)^n_1)$.
 Now, assume that $f\in H^\infty(B(\cH)^n_1)$ and $g\in
A(B(\cH)^n_1)$. Due to the fact that  $\text{\rm range}\, g\subseteq
[B(\cH)^n]_1$, we have $\|g(rS_1,\ldots, rS_n)\|<1$ (see Lemma
\ref{range}). Since $f\circ g\in H^\infty(B(\cH)^n_1)$, its boundary
function $\widehat {f\circ g}$  exists and
\begin{equation} \label{fcirc} \widehat {f\circ g} = \text{\rm
SOT-}\lim_{r\to 1}f( g(rS_1,\ldots, rS_n)).
\end{equation}
Using the noncommutative Poisson transform  and Remark \ref{ext}, we
deduce that
\begin{equation}
\label{fP}
 f( g(rS_1,\ldots, rS_n))=P_{g(rS_1,\ldots, rS_n)}
[\hat f] =P_{[rS_1,\ldots, rS_n]}\left[ P_{\hat g}[\hat f]\right].
\end{equation}
We recall  that if $g\in F_n^\infty$, then $g=\text{\rm
SOT-}\lim_{r\to 1} g_r$ where  $g_r=P_{[rS_1,\ldots, rS_n]} [g]$.
Applying this result to  $ P_{\hat g}[\hat f]$, which is in
$F_n^\infty$, and using relations \eqref{fcirc} and \eqref{fP}, we
deduce that $\widehat {f\circ g}=P_{\hat g}[\hat f]$. Since $(f\circ
g)(X)=P_X[\widehat {f\circ g}]$, \ $X\in [B(\cH)^n]_1$, we complete
the proof.

\end{proof}

\begin{corollary}
\label{comp-iso2} Let $f\in Hol(B(\cH)^n_1)$ and  $\Psi\in
 Aut(B(\cH)^n_1)$. Then
 \begin{enumerate}
 \item[(i)] $f\circ \Psi\in A(B(\cH)^n_1)$ for $f\in A(B(\cH)^n_1)$;
\item[(ii)] $f\circ \Psi\in H^\infty(B(\cH)^n_1)$ for $f\in H^\infty(B(\cH)^n_1)$;
\item[(iii)] $\|f\circ \Psi\|_\infty=\|f\|_\infty$ for $f\in H^\infty(B(\cH)^n_1)$;
\item[(iv)] if $f\in H^\infty(B(\cH)^n_1)$, then $(f\circ \Psi)(X)
=P_X\left[ P_{\hat \Psi}\left[\hat f\right]\right]$
 for any  $X\in [B(\cH)^n]_1$, where $\hat \Psi$ and $\hat f$ are the
  corresponding boundary
functions.
 \end{enumerate}
\end{corollary}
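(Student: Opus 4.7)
The plan is to deduce everything directly from Lemma \ref{comp-iso} applied to $g := \Psi$, once we verify that $\Psi$ meets the hypotheses of each part. By Theorem \ref{prop-cara}(v) and Theorem \ref{automorph}, every $\Psi \in Aut(B(\cH)^n_1)$ is a homeomorphism of $[B(\cH)^n]_1^-$ onto itself, so in particular $\Psi \in A(B(\cH)^n_1)$ (viewed componentwise). Moreover, by Proposition \ref{prop-Tl}, the boundary function $\hat\Psi$ is a pure row contraction whose entries lie in $\cA_n$. With these two observations in hand, the hypotheses of Lemma \ref{comp-iso} are all met for $g = \Psi$.

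First I would dispose of (i) by invoking Lemma \ref{comp-iso}(i): both $f$ and $\Psi$ extend continuously to $[B(\cH)^n]_1^-$, so $f \circ \Psi \in A(B(\cH)^n_1)$. For (ii), I would apply Lemma \ref{comp-iso}(ii) verbatim, which yields $f \circ \Psi \in H^\infty(B(\cH)^n_1)$ together with the one-sided bound $\|f \circ \Psi\|_\infty \leq \|f\|_\infty$.

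For (iii), the subtle direction is the reverse inequality. Here I would use the fact that $\Psi^{-1} \in Aut(B(\cH)^n_1)$ as well, and apply part (ii) to the composition $(f \circ \Psi) \circ \Psi^{-1} = f$, yielding
\[
\|f\|_\infty = \|(f \circ \Psi) \circ \Psi^{-1}\|_\infty \leq \|f \circ \Psi\|_\infty.
\]
Combined with (ii) this gives equality. Finally, (iv) follows directly from Lemma \ref{comp-iso}(iii), since $\hat \Psi$ has been verified to be a pure row contraction with entries in $\cA_n$.

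There is really no substantial obstacle: the work has already been done in Proposition \ref{prop-Tl}, Theorem \ref{prop-cara}, Theorem \ref{automorph}, and Lemma \ref{comp-iso}. The only thing worth being careful about is (iii), where one must remember that $\Psi^{-1}$ is again a free holomorphic automorphism (hence belongs to $A(B(\cH)^n_1)$ with pure boundary row contraction) so Lemma \ref{comp-iso}(ii) really does apply to deliver the matching lower bound.
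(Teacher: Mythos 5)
Your proposal is correct and follows essentially the same route as the paper: both apply Lemma \ref{comp-iso} with $g=\Psi$, and both obtain the reverse inequality in (iii) by applying part (ii) to $f\circ\Psi$ and the automorphism $\Psi^{-1}$. The preliminary verifications you make (that $\Psi$ extends continuously to the closed ball and that $\hat\Psi$ is a pure row contraction with entries in $\cA_n$) are exactly the facts the paper relies on implicitly via Proposition \ref{prop-Tl} and Theorem \ref{automorph}.
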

\begin{proof} Apply Lemma \ref{comp-iso} in the particular case when
$g=\Psi\in
 Aut(B(\cH)^n_1)$.
Due to   part (ii) of the same  lemma, we get $\|f\circ
\Psi\|_\infty\leq\|f\|_\infty$. Applying the same inequality to
$f\circ \Psi$ and the automorphism $\Psi^{-1}$, we obtain
$\|f\|_\infty\leq \|f\circ \Psi\|_\infty$. Therefore, $\|f\circ
\Psi\|_\infty=\|f\|_\infty$, which completes the proof.
\end{proof}

We  remark that one can obtain  versions of Lemma \ref{comp-iso} and
Corollary \ref{comp-iso2} when $f$ has operator-valued coefficients,
i.e., $f :[B(\cH)^n]_1 \to B(\cH)\bar\otimes B(\cE, \cG)$. The proof
is basically the same.

We proved in \cite{Po-holomorphic} that there is a completely
isometric isomorphism
$$
A(B(\cH)^n_1)\ni f\mapsto \hat f:=\lim_{r\to 1} f(rS_1,\ldots,
rS_n)\in \cA_n,
$$
where the limit is in the operator norm topology,  whose inverse is
the noncommutative Poisson  transform $P$, i.e.,   $f(X)=P_X[\hat
f]$, $X\in [B(\cH)^n]_1$. If $\Phi: A(B(\cH)^n_1)\to A(B(\cH)^n_1)$
is a homomorphism, it induces a unique homomorphism $\hat
\Phi:\cA_n\to \cA_n$ such that the  diagram
\begin{equation*}
\begin{matrix}
\cA_n& \mapright{\hat\Phi}&
\cA_n\\
 \mapdown{P}& & \mapdown{P}\\
A(B(\cH)^n_1)&  \mapright{\Phi}& A(B(\cH)^n_1)
\end{matrix}
\end{equation*}
is commutative, i.e., $\Phi P=P\hat \Phi$. The homomorphism $\Phi$
and $\hat \Phi$ uniquely determine each other by the formulas:
\begin{equation*}
\begin{split}
(\Phi f)(X)&=P_X[\hat \Phi(\hat f)], \qquad  f\in A(B(\cH)^n_1),\
X\in [B(\cH)^n]_1, \quad \text{ and }\\
\hat \Phi(\hat f)&=\widehat{\Phi(f)}, \qquad \hat f\in \cA_n.
\end{split}
\end{equation*}
Similar results hold for the Hardy algebra $H^\infty(B(\cH)^n_1)$
and the noncommutative analytic algebra $F_n^\infty$, when $\hat
f:=\text{\rm SOT-}\lim_{r\to 1} f(rS_1,\ldots, rS_n)$ for $f\in
H^\infty(B(\cH)^n_1)$.

Using a  {\it gliding bump}  argument as in \cite{DP2} (see Lemma
4.2), one can  show that any automorphism $\hat \Phi$ of the
noncommutative disc algebra $\cA_n$ is continuous. We sketch the
proof for completeness.

 Indeed, suppose that $\hat \Phi$ is not
continuous.  Let  $M:=\max\{\|\psi_1\|, \|\psi_2\|, 1\}$,  where
$\psi_j:=\hat\Phi^{-1}(S_j)\in \cA_n$, $j=1,2$. Then one can find  a
sequence $\{f_k\}$ of elements in $\cA_n$ such that $\|f_k\|\leq
\frac{1}{(2M)^k}$ and $\|\hat \Phi (f_k)\|\geq k$ for any $k\in
\NN$.
Since $f:=\sum_{k=1}^\infty \psi_1^k\psi_2 f_k$  is convergent in
the operator norm, it is an element in $\cA_n$. For each $m\in \NN$,
we have $\hat\Phi(f)=\sum_{k=1}^m S_1^k S_2\hat \Phi(f_k)+ S_1^{m+1}
\hat\Phi(g_m)$ for some $g_m\in \cA_n$. Using the fact that $S_1$,
$S_2$ are isometries with orthogonal ranges, we have
$$
\|\hat \Phi(f)\|\geq \|S_2^* {S_1^*}^m \hat \Phi(f)\|=\|\hat
\Phi(f_m)\|\geq m \quad  \text{ for any } \ m\in \NN,
$$
  which is a contradiction. Therefore $\hat\Phi$ is
continuous.

  We recall that Davidson and Pitts \cite{DP2}
  proved that the  group of completely isometric  automorphisms of
  $F_n^\infty$ can be identified with $Aut(\BB_n)$. It is easy to see that their result
  extends to $A(B(\cH)^n_1)$.
In what follows, we  obtain a new proof of their result and new
characterizations of the   completely isometric automorphisms of the
noncommutative disc algebra
  $A(B(\cH)^n_1)$.

 \begin{theorem}
 \label{auto-disk}
 Let  $\Phi:A(B(\cH)^n_1)\to A(B(\cH)^n_1)$ be an automorphism
 of
  $A(B(\cH)^n_1)$. Then the following statements are equivalent:
  \begin{enumerate}
  \item[(i)]
$\Phi$ is a
 completely isometric  automorphism of
  $A(B(\cH)^n_1)$;
    \item[(ii)]
 there is a   $\Psi\in
 Aut(B(\cH)^n_1)$ such that
 $$
 \Phi(f)=f\circ \Psi,\quad f\in A(B(\cH)^n_1);
 $$
 \item[(ii)] $[\hat \Phi(S_1),\ldots, \hat \Phi(S_n)]$ and
  $[\hat\Phi^{-1}(S_1),\ldots, \hat \Phi^{-1}(S_n)]$ are
  row contractions.
\end{enumerate}
\end{theorem}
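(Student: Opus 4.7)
The plan is to prove the cycle $(ii)\Rightarrow(i)\Rightarrow(iii)\Rightarrow(ii)$. The first two implications are short. For $(ii)\Rightarrow(i)$, Corollary \ref{comp-iso2}(iii) already gives $\|\Phi(f)\|_\infty=\|f\|_\infty$, and the identical argument applied entrywise to matrix-valued free holomorphic functions (using the operator-valued version of Corollary \ref{comp-iso2} noted after its proof, together with the fact that $\Psi$ is a bijection of $[B(\cH)^n]_1$) yields complete isometry. For $(i)\Rightarrow(iii)$, the induced automorphism $\hat\Phi$ of $\cA_n$ is completely isometric, so preserves the norm in $M_{1,n}(\cA_n)$; consequently $\|[\hat\Phi(S_1),\ldots,\hat\Phi(S_n)]\|=\|[S_1,\ldots,S_n]\|=1$ is a row contraction, and similarly for $\hat\Phi^{-1}$.

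The heart of the proof is $(iii)\Rightarrow(ii)$. Write $\varphi_i:=\hat\Phi(S_i)$ and $\psi_i:=\hat\Phi^{-1}(S_i)$, so that $\varphi:=[\varphi_1,\ldots,\varphi_n]$ and $\psi:=[\psi_1,\ldots,\psi_n]$ are row contractions with entries in $\cA_n$. By the gliding-bump argument sketched just before the theorem, $\hat\Phi$ is norm continuous. The noncommutative von Neumann inequality supplies a completely contractive homomorphism $P_\varphi|_{\cA_n}:\cA_n\to\cA_n$ which agrees with $\hat\Phi$ on the polynomial subalgebra $\CC[S_1,\ldots,S_n]$, so density and continuity give $\hat\Phi=P_\varphi|_{\cA_n}$, and symmetrically $\hat\Phi^{-1}=P_\psi|_{\cA_n}$.

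Next I would extend $P_\varphi, P_\psi$ to UCP maps on the Cuntz--Toeplitz algebra $C^*(S_1,\ldots,S_n)$ and show that they are mutually inverse on the full $C^*$-algebra. On a generating monomial, $(P_\varphi\circ P_\psi)(S_\alpha S_\beta^*)=P_\varphi(\psi_\alpha\psi_\beta^*)$; approximating $\psi_\alpha,\psi_\beta$ in norm by polynomials in $S_1,\ldots,S_n$ and using norm continuity of $P_\varphi$ together with $P_\varphi(\psi_\gamma)=\hat\Phi(\psi_\gamma)=S_\gamma$ yields $S_\alpha S_\beta^*$, and density of $\text{span}\{S_\alpha S_\beta^*:\alpha,\beta\in\FF_n^+\}$ in $C^*(S_1,\ldots,S_n)$ gives $P_\varphi\circ P_\psi=P_\psi\circ P_\varphi=\text{id}$. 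A UCP map with UCP inverse is automatically a $*$-homomorphism, via the standard multiplicative-domain argument: the Schwarz inequality yields $P_\varphi(a)^*P_\varphi(a)\leq P_\varphi(a^*a)$, and applying $P_\psi$ forces equality. Hence $P_\varphi$ is a $*$-automorphism of $C^*(S_1,\ldots,S_n)$.

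The main obstacle is promoting this $*$-automorphism to a \emph{unitarily implemented} one. The compact ideal $\cK=\cK(F^2(H_n))\subset C^*(S_1,\ldots,S_n)$ is preserved by every $*$-automorphism (it is the unique nontrivial closed two-sided ideal when $n\geq 2$, and the commutator ideal when $n=1$), so $P_\varphi|_\cK$ is a $*$-automorphism of $\cK$, hence of the form $\text{Ad}(U)$ for some unitary $U$ on $F^2(H_n)$; essentiality of $\cK$ in $C^*(S_1,\ldots,S_n)$ promotes this to $P_\varphi=\text{Ad}(U)$ globally. Then Theorem \ref{Voicu}, applied to $P_\varphi\in Aut_u(C^*(S_1,\ldots,S_n))$ with $P_\varphi(\cA_n)\subset\cA_n$, produces $\Psi\in Aut(B(\cH)^n_1)$ with $P_\varphi=P_{\hat\Psi}$, and Corollary \ref{comp-iso2}(iv) (whose purity hypothesis on $\hat\Psi$ is supplied by Proposition \ref{prop-Tl}(iv)) converts the identity $\hat\Phi=P_{\hat\Psi}|_{\cA_n}$ into the desired formula $\Phi(f)=f\circ\Psi$ for all $f\in A(B(\cH)^n_1)$.
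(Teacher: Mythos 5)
Your proof is correct, but the route you take for the substantive implication is genuinely different from the paper's. The paper proves $(i)\Leftrightarrow(ii)$ and $(iii)\Rightarrow(i)$, putting all the weight on $(i)\Rightarrow(ii)$: there the automorphism $\Psi$ is built \emph{function-theoretically} --- one shows $\sum_j\hat\psi_j\hat\psi_j^*\neq I$ by a reducing-subspace argument for $R_1,\ldots,R_n$, deduces $\lambda:=(\hat\psi_1(0),\ldots,\hat\psi_n(0))\in\BB_n$ and that $\hat\psi$ is a \emph{pure} row contraction, composes with $\Psi_\lambda$ and applies the Schwarz lemma to see that $\Psi$ maps the open ball into the open ball, and finally verifies $\varphi\circ\Psi=\Psi\circ\varphi=\mathrm{id}$ via Poisson transforms; the paper's $(iii)\Rightarrow(i)$ is then a two-line application of the von Neumann inequality plus the automatic continuity of $\hat\Phi$. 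You instead put the weight on $(iii)\Rightarrow(ii)$ and argue \emph{operator-algebraically}: identify $\hat\Phi$ with $P_\varphi|_{\cA_n}$ by density and continuity, extend to mutually inverse UCP maps of $C^*(S_1,\ldots,S_n)$ (using $P_T[fg^*]=P_T[f]P_T[g]^*$ for $f,g\in\cA_n$), conclude multiplicativity from the multiplicative-domain/Schwarz argument, force unitary implementation from the fact that the compact ideal is canonical (unique proper ideal for $n\geq 2$, commutator ideal for $n=1$) and essential, and then delegate the construction of $\Psi$ entirely to Theorem \ref{Voicu}. Each step of yours checks out (injectivity of $P_\psi$ is what lets you pass from $P_\psi\bigl(P_\varphi(a^*a)-P_\varphi(a)^*P_\varphi(a)\bigr)=0$ to equality, and you should note that the same argument applied to $a^*$ is needed to land in the two-sided multiplicative domain). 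What your approach buys is a clean reuse of the Voiculescu-type classification already proved, at the cost of importing standard $C^*$-machinery; what the paper's approach buys is a direct, self-contained construction of $\Psi$ that also exposes the purity of the boundary function and the role of the Schwarz lemma, facts it reuses verbatim in the $H^\infty$ version (Theorem \ref{auto-hardy}).
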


\begin{proof} First, we prove that $(i)\leftrightarrow (ii)$. Assume
that  (i) holds. Let $\hat\psi_i:=\hat \Phi(S_i)\in \cA_n\subset
F_n^\infty$, $i=1,\ldots, n$, and note that $\hat \psi:=[\hat
\psi_1,\ldots, \hat \psi_n]$ is a  a row  isometry.   The subspaces
$\hat \psi_1(F^2(H_n))$ and $\oplus_{j=2}^n \hat \psi_j(F^2(H_n))$
are invariant under the right creation operators $R_1,\ldots, R_n$.
If we  suppose that
\begin{equation}
\label{Cuntz} \sum_{j=1}^n \hat \psi_j \hat \psi_j^*=I,
\end{equation}
then $\hat \psi_1(F^2(H_n))$ is a reducing subspace  for
$R_1,\ldots, R_n$. Since $R_1,\ldots, R_n$ have no nontrivial
reducing subspaces, we deduce that \eqref{Cuntz} doesn't hold if
$n\geq 2$. If $n=1$, then $\psi_1$ is a unitary operator and,
consequently, a constant which is a contradiction with $\hat
\Phi(I)=I$ and the fact that $\hat \Phi$ is one-to-one. Therefore,
since $\sum_{j=1}^n \hat \psi_j \hat \psi_j^*\neq I$, we deduce that
$1\notin \oplus_{j=1}^n \psi_j (F^2(H_n))$, which implies
$$
\sum_{j=1}^m |\hat\psi_j(0)|^2= \left< \left(\sum_{j=1}^m \hat\psi_j
\hat\psi_j^*\right)1,1\right><1.
$$
Setting $\lambda:=(\hat \psi_1(0),\ldots, \hat \psi_n(0))\in \BB_n$,
one can prove  that $\hat \psi:=[\hat \psi_1,\ldots, \hat \psi_n]$
is a pure row contraction. The proof is similar to the proof of the
fact that $\tilde \Theta_\lambda$ is pure  in Theorem \ref{prop-Tl}.

Now, let $\Psi:=(\psi_1,\ldots, \psi_n)$ be the unique contractive
free holomorphic function  on $[B(\cH)^n]_1$ having the boundary
function $\hat \psi$. Due to Theorem \ref{prop-Tl} and Theorem
\ref{prop-cara},  we deduce that the map $\Gamma:=\Psi_\lambda
\circ\Psi:[B(\cH)^n]_1\to [B(\cH)^n]_1^-$ is a free holomorphic
function with $\Gamma(0)=0$, where $\Psi_\lambda\in
Aut(B(\cH)^n_1)$. Applying the noncommutative Schwartz type lemma
from \cite{Po-holomorphic} (see Section 5 for a stronger version),
we deduce that $\|\Gamma(X)\|\leq \|X\|$ for any $X\in
[B(\cH)^n]_1$. Now, by  Theorem \ref{prop-cara}, we obtain
$$
\|\Psi(X)\|=\|\Psi_\lambda(\Gamma (X))\|\leq \|\Gamma(X)\|\leq \|X\|
$$
for $X\in [B(\cH)^n]_1$. Hence, $\Psi:[B(\cH)^n]_1\to [B(\cK)^n]_1$
is a free holomorphic function.

Similarly, setting $\hat \varphi_i:=\hat\Phi^{-1}(S_i)$,
$i=1,\ldots,n$, and letting  $\varphi=(\varphi_1,\ldots, \varphi_n)$
be the free holomorphic function on $[B(\cH)^n]_1$ with boundary
function $(\hat\varphi_1,\ldots, \hat\varphi_n)$, one can prove that
$\varphi:[B(\cH)^n]_1\to [B(\cH)^n]_1$ is a free holomorphic
function.

Since $\hat \varphi_i\in \cA_n$, let $p_k(S_1,\ldots, S_n)$ be a
sequence of polynomials in $S_1,\ldots, S_n$ such that $$\hat
\varphi_i=\lim_{k\to \infty} p_k(S_1,\ldots, S_n)$$ in the operator
norm topology. Due to the continuity in norm of $\Phi$ and the
Poisson transform $P_{[\hat \psi_1,\ldots, \hat \psi_n]}$, we have
\begin{equation*}
\begin{split}
S_i&=\hat\Phi(\hat\varphi_i)\\
&=\lim_{k\to \infty} p_k(\hat\Phi(S_1),\ldots,
\hat\Phi(S_n))\\
&=\lim_{k\to \infty} P_{[\hat \psi_1,\ldots, \hat \psi_n]}
[p_k(S_1,\ldots, S_n)]\\
&=P_{[\hat \psi_1,\ldots, \hat \psi_n]}[\hat \varphi_i]
\end{split}
\end{equation*}
for any $i=1,\ldots, n$.  On the other hand, since $\hat \Psi=[\hat
\psi_1,\ldots, \hat \psi_n]$ is a pure row contraction, we can use
Remark \ref{ext} to deduce that
\begin{equation*}
\begin{split}
(\varphi_i\circ \Psi)(X)= P_X\left[P_{\hat \psi}[\hat
\varphi_i]\right] =P_X[S_i]=X_i, \quad i=1,\ldots,n,
\end{split}
\end{equation*}
for any $X=[X_1,\ldots, X_n]\in [B(\cH)^n]_1$. Consequently, $
\varphi\circ \Psi=\text{\rm id}$. Similar arguments imply   $
 \Psi\circ \varphi=\text{\rm id}$. Therefore, $\Psi\in
Aut(B(\cH)^n_1)$.

If $p(X)=\sum_{|\alpha|\leq k} a_\alpha X_\alpha$, then $p\circ
\psi=\sum_{|\alpha|\leq k} a_\alpha \psi_\alpha=\Phi(p)$. Due to
Corollary \ref{comp-iso2}, the map $f\mapsto f\circ \Psi$ is
continuous on $H^\infty(B(\cH)^n_1)$.  Since the noncommutative
polynomials are dense in $H^\infty(B(\cH)^n_1)$ and $\Phi$ is also
continuous, we deduce that $\Phi(f)=f\circ \Psi$ for any $f\in
H^\infty(B(\cH)^n_1)$. This completes the proof of the implication
$(i)\implies (ii)$.

Now, assume that  $\Psi\in
 Aut(B(\cH)^n_1)$  and
 $
 \Phi(f)=f\circ \Psi,\quad f\in A(B(\cH)^n_1)$.
Note that Proposition \ref{prop-Tl} and Theorem  \ref{automorph}
imply that the boundary function $\hat \Psi:=(\hat \psi_1,\ldots,
\hat \psi_n)$ is a row isometry with $\hat\psi_i\in \cA_n$. Using
Corollary \ref{comp-iso2}, we have $(f\circ \Psi)(X)=P_X\left[
P_{\hat \Psi}[\hat f]\right]$ for $X\in [B(\cH)^n]_1$. Since the
noncommutative Poisson transform $P_{\hat \Psi}$ is continuous in
norm, $\hat \Psi, \hat f\in \cA_n$, and the polynomials in
$S_1,\ldots, S_n$  are  norm dense in $\cA_n$, we deduce that
$P_{\hat \Psi}[\hat f]$ is in $\cA_n$. Consequently, $\Phi(f)$ is a
well-defined homomorphism. As in the proof of Lemma \ref{comp-iso},
one can show that $\Phi$ is completely isometric. A similar result
can be obtained for  the map $\Lambda(f):=f\circ\Psi^{-1}$, $f\in
A(B(\cH)^n_1)$. Since $\Phi\circ \Lambda=\Lambda\circ \Phi=id$, we
deduce that $\Phi$ is a completely isometric automorphism of
$A(B(\cH)^n_1)$. Therefore $(i)\leftrightarrow (ii)$.

Since the implication $(i)\implies (iii)$ is obvious, it remains to
prove that $(iii)\implies (i)$. Assume that $[\hat \Phi(S_1),\ldots,
\hat \Phi(S_n)]$  is a row contraction. Due to the noncommutative
von Neumann inequality \cite{Po-von}, we have
$$
\|[\hat\Phi(p_{ij}(S_1,\ldots, S_n))]_{k\times k}\|=
\|[p_{ij}(\hat\Phi(S_1),\ldots,\hat\Phi( S_n)]_{k\times k}\| \leq
\|[ p_{ij}(S_1,\ldots, S_n)]_{k\times k}\|
$$
for any operator matrix  $[p_{ij}(S_1,\ldots, S_n)]_{k\times
k}\in\cA_n\otimes M_{k\times k}$. Since $\hat \Phi$ is continuous on
$\cA_n$ (see the remarks preceding Theorem \ref{auto-disk}), which
is the norm closed algebra generated by $S_1,\ldots, S_n$ and the
identity, we deduce that $\hat\Phi:\cA_n\to \cA_n$ is a completely
contractive homomorphism. Similarly, if $[\hat\Phi^{-1}(S_1),\ldots,
\hat \Phi^{-1}(S_n)]$ is
 a row contraction , we can prove that $\hat \Phi^{-1}$ is
 completely contractive. Therefore, $\hat\Phi$ is a complete
 isometry and (i) holds.
The proof is complete.
\end{proof}

Using the ideas from the proof of Theorem \ref{auto-disk}, we can
prove the following result.

 \begin{theorem}
 \label{auto-hardy}
 Let  $\Phi: H^\infty(B(\cH)^n_1)\to H^\infty(B(\cH)^n_1)$  be a
 WOT-continuous   homeomorphism  of the noncommutative  Hardy algebra
 $H^\infty(B(\cH)^n_1)$.
  Then the following statements are equivalent:
  \begin{enumerate}
  \item[(i)]
$\Phi$ is a
 completely isometric  automorphism of
   $H^\infty(B(\cH)^n_1)$;
    \item[(ii)]
 there is a   $\Psi\in
 Aut(B(\cH)^n_1)$ such that
 $$
 \Phi(f)=f\circ \Psi,\quad f\in H^\infty(B(\cH)^n_1);
 $$
 \item[(iii)] $[\hat \Phi(S_1),\ldots, \hat \Phi(S_n)]$ and
  $[\hat\Phi^{-1}(S_1),\ldots, \hat \Phi^{-1}(S_n)]$ are
   row contractions.
\end{enumerate}
\end{theorem}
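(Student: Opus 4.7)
The plan is to parallel the proof of Theorem \ref{auto-disk}, with the WOT-continuity hypothesis on $\Phi$ and $\Phi^{-1}$ playing the role of the automatic norm-continuity of automorphisms of $\cA_n$ that was supplied earlier by the gliding-bump argument.

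For $(i)\Rightarrow(ii)$, set $\hat\psi_i:=\hat\Phi(S_i)\in F_n^\infty$, so that $\hat\psi:=[\hat\psi_1,\ldots,\hat\psi_n]$ is a row isometry. Exactly as in Theorem \ref{auto-disk}, the absence of nontrivial reducing subspaces for $R_1,\ldots,R_n$ forces $\sum_j\hat\psi_j\hat\psi_j^*\neq I$ (with $n=1$ handled via $\hat\Phi(I)=I$ and injectivity), so $\lambda:=(\hat\psi_1(0),\ldots,\hat\psi_n(0))\in\BB_n$, and the estimate used for $\tilde\Theta_\lambda$ in Proposition \ref{prop-Tl} shows that $\hat\psi$ is pure. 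Let $\Psi=(\psi_1,\ldots,\psi_n)$ be the bounded free holomorphic function with boundary $\hat\psi$; composition with $\Psi_\lambda$ and the noncommutative Schwarz lemma of \cite{Po-holomorphic} give $\|\Psi(X)\|\leq\|X\|$ for $X\in[B(\cH)^n]_1$. The symmetric construction applied to $\hat\Phi^{-1}(S_i)$ produces a second free holomorphic $\varphi:[B(\cH)^n]_1\to[B(\cH)^n]_1$.

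The heart of the proof is establishing $\varphi\circ\Psi=\Psi\circ\varphi=\text{\rm id}$ and $\Phi(f)=f\circ\Psi$ for all $f\in H^\infty(B(\cH)^n_1)$. By Theorem \ref{compo} and Lemma \ref{comp-iso}, $f\circ\Psi\in H^\infty(B(\cH)^n_1)$; using the SOT-continuity on bounded sets of the $F_n^\infty$-functional calculus together with the convergence $\Psi(rS_1,\ldots,rS_n)\to\hat\psi$ in the SOT, its boundary function equals $P_{\hat\psi}[\hat f]$, where $P_{\hat\psi}$ is well-defined on $F_n^\infty$ via \eqref{pois-sot} since $\hat\psi$ is pure. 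For any noncommutative polynomial $p$, the fact that $\hat\Phi$ is a homomorphism with $\hat\Phi(S_i)=\hat\psi_i$ yields $\hat\Phi(p(S))=p(\hat\psi)=P_{\hat\psi}[p(S)]$. WOT-continuity of $\hat\Phi$ (by hypothesis), WOT-continuity of $P_{\hat\psi}$ on bounded sets (via \eqref{pois-sot}), and WOT-density of polynomials on bounded subsets of $F_n^\infty$ extend this to $\hat\Phi(g)=P_{\hat\psi}[g]$ for every $g\in F_n^\infty$. Taking $g=\hat\varphi_i$ gives $S_i=\hat\Phi(\hat\varphi_i)=\widehat{\varphi_i\circ\Psi}$, so $\varphi\circ\Psi=\text{\rm id}$; the symmetric calculation yields $\Psi\circ\varphi=\text{\rm id}$, and hence $\Psi\in Aut(B(\cH)^n_1)$. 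Taking $g=\hat f$ for arbitrary $f$ gives $\Phi(f)=f\circ\Psi$, finishing $(ii)$.

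The implication $(ii)\Rightarrow(iii)$ is immediate from Proposition \ref{prop-Tl} and Theorem \ref{automorph}, since both $\hat\Psi$ and $\widehat{\Psi^{-1}}$ are row isometries. For $(iii)\Rightarrow(i)$, the noncommutative von Neumann inequality \cite{Po-von} makes $\hat\Phi$ completely contractive on the polynomial subalgebra of $F_n^\infty$; WOT-continuity of $\hat\Phi$ and bounded WOT-density of polynomials extend complete contractivity to all of $F_n^\infty$, and the symmetric argument for $\hat\Phi^{-1}$ (WOT-continuous because $\Phi$ is a WOT-homeomorphism) upgrades this to a complete isometry. The main obstacle in $(i)\Rightarrow(ii)$ is the extension from polynomials to $F_n^\infty$: in the disc-algebra case of Theorem \ref{auto-disk}, Proposition \ref{prop-Tl} placed $\hat\psi_i$ in $\cA_n$ a priori and Remark \ref{ext} was directly applicable, whereas here $\hat\psi_i$ lies only in $F_n^\infty$, forcing one to exploit purity of $\hat\psi$ together with the bounded WOT-continuity of both $\hat\Phi$ and $P_{\hat\psi}$ to transfer identities from the polynomial subalgebra to all of $F_n^\infty$.
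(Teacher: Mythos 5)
Your proof is correct, and the implications $(i)\Rightarrow(ii)$ and $(ii)\Rightarrow(iii)$ track the paper's argument (which simply adapts Theorem \ref{auto-disk} using purity of $\hat\psi$, the identification of $P_{\hat\psi}$ with the WOT-continuous $F_n^\infty$-functional calculus, and the bounded-WOT density of polynomials). Where you genuinely diverge is $(iii)\Rightarrow(i)$. The paper first produces the WOT-continuous multiplicative functional $\chi(f)=\left<\hat\Phi(f)1,1\right>$, invokes the Davidson--Pitts classification of such characters to locate $\lambda=(\hat\psi_1(0),\ldots,\hat\psi_n(0))\in\BB_n$, proves that $\hat\Psi=[\hat\psi_1,\ldots,\hat\psi_n]$ is a \emph{pure} row contraction, and only then extends the von Neumann estimate from polynomials to $F_n^\infty$ via the WOT-continuity of the Poisson transform $P_{\hat\Psi}$ at a pure row contraction. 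You instead extend the polynomial estimate directly: approximate $f\in F_n^\infty$ by polynomials $q_k\to f$ in WOT with $\limsup\|q_k\|\le\|f\|$ (via $f_r$ and its norm-convergent partial sums), apply the von Neumann inequality to $\hat\Phi(q_k)=q_k(\hat\Phi(S_1),\ldots,\hat\Phi(S_n))$, and conclude by WOT-lower-semicontinuity of the norm; the same works for matrices and for $\hat\Phi^{-1}$. This is more elementary --- it avoids the character argument, the reference to \cite{DP2}, and the purity step entirely --- at the cost of not producing the point $\lambda$ and the Poisson-transform representation of $\hat\Phi$ along the way, data the paper's route yields for free and reuses elsewhere. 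Both arguments are sound; your cyclic structure $(i)\Rightarrow(ii)\Rightarrow(iii)\Rightarrow(i)$ is a legitimate replacement for the paper's $(i)\Leftrightarrow(ii)$, $(i)\Rightarrow(iii)$, $(iii)\Rightarrow(i)$.
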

\begin{proof} The proof follows the lines of the proof of Theorem \ref{auto-disk}.
We only mention the differences.  The proof of the implication
$(i)\implies (ii)$ is the same but uses, in addition, the fact that
$F_n^\infty$ is the WOT closed algebra generated by $S_1,\ldots,
S_n$ and the identity, and that the noncommutative Poisson transform
$P_{[\hat \psi_1,\ldots, \hat \psi_n]}$ coincide with the
$F_n^\infty$-functional calculus for pure row contractions and,
therefore,  is  WOT continuous. For the proof of the implication
$(ii)\implies (i)$, note, in addition,  that  $P_{\hat\Psi}[\hat f]$
is in $F_n^\infty$ for any $\hat f\in F_n^\infty$, and  and the
Poisson transform $P_{\hat \Psi}$ is WOT-continuous. Since the
implication $(i)\implies (iii)$ is obvious, it remains to prove that
$(iii)\implies (i)$. Assume that $[\hat \Phi(S_1),\ldots, \hat
\Phi(S_n)]$  is a row contraction. Note that the map
$$\chi(f):=\left<\hat\Phi(f)1,1\right>,\qquad f\in F_n^\infty,
$$
is a  nonzero WOT-continuous multiplicative functional. According to
\cite{DP2}, there exists $\lambda\in \BB_n$ such that
$\chi(f)=\left< f u_\lambda, u_\lambda\right>=P_\lambda[f]$, \ $f\in
F_n^\infty$. Therefore, setting $\hat\psi_i:=\hat \Phi(S_i)\in
F_n^\infty$, $i=1,\ldots, n$,  we have
$$
(\hat \psi_1(0),\ldots, \hat \psi_n(0))=(\lambda_1,\ldots,
\lambda_n)\in \BB_n.
$$
As in the proof of the implication $(i)\implies (ii)$ of Theorem
\ref{auto-disk},  one can prove  that $\hat \Psi:=[\hat
\psi_1,\ldots, \hat \psi_n]$ is a pure row contraction. Now, using
the fact that the noncommutative Poisson transform at a pure row
contraction is WOT-continuous, we can show, as in the proof of
Theorem \ref{auto-disk}, that $\hat\Phi:F_n^\infty\to F_n^\infty$ is
a completely contractive  homomorphism. Similarly, one can prove
that $\hat\Phi^{-1}:F_n^\infty\to F_n^\infty$ is a completely
contractive homomorphism, which shows that $\hat \Phi$ is  a
completely isometric automorphism of $F_n^\infty$. This completes
the proof.
\end{proof}

We mention that  Theorem \ref{auto-disk} and Theorem
\ref{auto-hardy} imply, in the particular case when $n=1$,    the
classical results \cite{H} concerning the conformal automorphisms of
the disc algebra and Hardy algebra $H^\infty$, respectively.

 We  also remark that Davidson
and Pitts proved in \cite{DP2} that any automorphism of $F_n^\infty$
is WOT continuous. Due to their result, we can remove the WOT
continuity from Theorem \ref{auto-hardy}. Moreover, according to
\cite{DP2}, any contractive automorphism of $F_n^\infty$ is
completely isometric. If we combine this with Theorem
\ref{auto-hardy}, we can also deduce the following result.
\begin{corollary} If $\hat \Phi\in Aut(F_n^\infty)$ such that
$[\hat \Phi(S_1),\ldots, \hat \Phi(S_n)]$ is a row contraction then
$\hat \Phi$ is a contractive automorphism of $F_n^\infty$. Moreover,
$$
Aut(B(\cH)^n_1)\simeq Aut_{c}(H^\infty(B(\cH)^n_1)). $$
\end{corollary}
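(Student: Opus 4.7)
The plan is to treat the corollary as two separate claims: first, that an automorphism $\hat\Phi$ of $F_n^\infty$ sending the row $[S_1,\ldots,S_n]$ to a row contraction is necessarily contractive; and second, that the isomorphism $Aut(B(\cH)^n_1)\simeq Aut_c(H^\infty(B(\cH)^n_1))$ follows by combining this with the Davidson--Pitts theorem that every contractive automorphism of $F_n^\infty$ is completely isometric, together with Theorem \ref{auto-hardy}.

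For the contractivity assertion, I would combine the noncommutative von Neumann inequality of \cite{Po-von} with Cesaro approximation and the Davidson--Pitts fact that every element of $Aut(F_n^\infty)$ is automatically WOT continuous. Since $\hat\Phi$ is a homomorphism and $[\hat\Phi(S_1),\ldots,\hat\Phi(S_n)]$ is a row contraction, the noncommutative von Neumann inequality gives
\begin{equation*}
\|\hat\Phi(p(S_1,\ldots,S_n))\|=\|p(\hat\Phi(S_1),\ldots,\hat\Phi(S_n))\|\leq\|p(S_1,\ldots,S_n)\|
\end{equation*}
for every noncommutative polynomial $p$. Given $f=\sum_\alpha a_\alpha S_\alpha\in F_n^\infty$, let $\sigma_N(f):=\sum_{k=0}^N (1-\frac{k}{N+1})\sum_{|\alpha|=k}a_\alpha S_\alpha$ be its Cesaro means, which are polynomials satisfying $\|\sigma_N(f)\|\leq\|f\|$ and $\sigma_N(f)\to f$ in SOT. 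WOT continuity of $\hat\Phi$ yields $\hat\Phi(\sigma_N(f))\to\hat\Phi(f)$ in WOT, and WOT lower semicontinuity of the operator norm then produces
\begin{equation*}
\|\hat\Phi(f)\|\leq\liminf_N\|\hat\Phi(\sigma_N(f))\|\leq\liminf_N\|\sigma_N(f)\|\leq\|f\|.
\end{equation*}

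For the isomorphism, identify $H^\infty(B(\cH)^n_1)$ with $F_n^\infty$ via $F\mapsto\hat F$ as in the excerpt and define $\Gamma:Aut(B(\cH)^n_1)\to Aut_c(H^\infty(B(\cH)^n_1))$ by $\Gamma(\Psi)(f):=f\circ\Psi$. Corollary \ref{comp-iso2} shows that $\Gamma(\Psi)$ is completely isometric (hence contractive), and the group-homomorphism and injectivity properties are routine. For surjectivity, take $\Phi\in Aut_c(H^\infty(B(\cH)^n_1))$; by Davidson--Pitts the induced automorphism $\hat\Phi$ of $F_n^\infty$ is completely isometric, so applying $\hat\Phi$ and $\hat\Phi^{-1}$ to the row $[S_1,\ldots,S_n]\in M_{1\times n}(F_n^\infty)$ (of norm one) shows that both $[\hat\Phi(S_1),\ldots,\hat\Phi(S_n)]$ and $[\hat\Phi^{-1}(S_1),\ldots,\hat\Phi^{-1}(S_n)]$ are row contractions. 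Condition (iii) of Theorem \ref{auto-hardy} is then satisfied and produces $\Psi\in Aut(B(\cH)^n_1)$ with $\Phi=\Gamma(\Psi)$.

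The main obstacle is handling the passage from polynomials to $F_n^\infty$: one needs the norm bound $\|\sigma_N(f)\|\leq\|f\|$ for the Cesaro means (a standard consequence of Popescu's $F_n^\infty$-functional calculus) together with the Davidson--Pitts WOT continuity, since neither the von Neumann inequality alone nor norm continuity is directly available on all of $F_n^\infty$. Once these two tools are invoked, the remainder is a bookkeeping assembly of Theorem \ref{auto-hardy}, Corollary \ref{comp-iso2}, and the Davidson--Pitts rigidity result.
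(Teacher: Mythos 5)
Your proposal is correct, and its second half (the isomorphism $Aut(B(\cH)^n_1)\simeq Aut_c(H^\infty(B(\cH)^n_1))$) is assembled exactly as the paper intends: Davidson--Pitts rigidity (contractive $\Rightarrow$ completely isometric, plus automatic WOT continuity) reduces everything to the equivalence $(ii)\Leftrightarrow(iii)$ of Theorem \ref{auto-hardy} and to Corollary \ref{comp-iso2}. Where you genuinely diverge is in the first assertion. The paper gives no separate argument for the corollary at all; it is meant to fall out of the proof of the implication $(iii)\Rightarrow(i)$ in Theorem \ref{auto-hardy}, where the single hypothesis that $[\hat\Phi(S_1),\ldots,\hat\Phi(S_n)]$ is a row contraction is shown (via the WOT-continuous multiplicative functional $f\mapsto\langle\hat\Phi(f)1,1\rangle$ and the purity of $\hat\Psi$) to force $\hat\Phi=P_{\hat\Psi}$ on $F_n^\infty$, so that $\hat\Phi$ is completely contractive because the Poisson transform at a pure row contraction is. You instead run a direct approximation argument: von Neumann's inequality on polynomials, Cesaro means with $\|\sigma_N(f)\|\le\|f\|$ and SOT convergence, Davidson--Pitts WOT continuity, and WOT lower semicontinuity of the norm. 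This is a legitimate and more elementary route; what it buys is that you never need to identify $\hat\Phi$ with a Poisson transform or establish purity of $[\hat\Phi(S_1),\ldots,\hat\Phi(S_n)]$, at the modest cost of obtaining only contractivity rather than complete contractivity (which is all the corollary claims, and which Davidson--Pitts upgrades anyway). Two small bookkeeping points: $\Gamma(\Psi)(f):=f\circ\Psi$ is a group \emph{anti}-homomorphism, so one should either compose with $\Psi\mapsto\Psi^{-1}$ (as the paper does in its analogous Corollary for $\cA_n$) or note that an anti-isomorphism of groups yields an isomorphism; and the WOT continuity you invoke is really weak-$*$ continuity applied on the bounded set $\{\sigma_N(f)\}$, which is where it coincides with WOT.
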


\bigskip
\section{ Model theory  for row contractions  and  the automorphism group
$Aut(B(\cH)^n_1)$  }

This section  deals with the dilation and model theory of row
contractions under the action of the  free holomorphic automorphisms
of $[B(\cH)^n]_1^-$.  We show that if $T\in [B(\cK)^n]_1^-$ and
$\Psi \in Aut(B(\cH)^n_1)$,
  then  the    characteristic function  of
  $\Theta_{\Psi(T)}$
  {\it coincides} with  $\Theta_T\circ\Psi^{-1}$.
  This enables us to  obtain  some results concerning the behavior of  the curvature  and the
Euler characteristic of  a  row contraction under the automorphism
group $Aut(B(\cH)^n_1)$.

If $\lambda=(\lambda_1,\ldots, \lambda_n)\in \BB_n$   and
$X=Y=T:=[T_1,\ldots, T_n]\in [B(\cK)^n]_1^-$ in Theorem
\ref{prop-cara}, we deduce the identities
\begin{equation}
\label{identities}
\begin{split}
 I_{\cK}-\Psi_\lambda(T)\Psi_\lambda(T)^*&=
\Delta_{\lambda}(I- T \lambda^*)^{-1}\Delta_T^2(I-  \lambda
T^*)^{-1} \Delta_{\lambda}, \text{ and }\\
I_{\cK^{(n)}}-\Psi_\lambda(T)^*\Psi_\lambda(T)&=
\Delta_{\lambda^*}(I-{T}^*\lambda)^{-1}\Delta_{T^*}^2(I-{\lambda}^*
T)^{-1} \Delta_{\lambda^*},
\end{split}
\end{equation}
where $\Psi_\lambda$ is the    involutive automorphism of the
noncommutative ball $[B(\cH)^n]_1$,  defined by
\begin{equation}
\label{Phi}
 \Psi_\lambda(X):=-\Theta_\lambda(X)=\lambda-
\Delta_\lambda (I-X \lambda^*)^{-1} X \Delta_{\lambda^*}, \qquad
X\in [B(\cH)^n]_1.
\end{equation}
Consider  the operators $\Omega:\cD_{\Psi_\lambda(T)}\to \cD_T$ and
$\Omega_*:\cD_{\Psi_\lambda(T)^*}\to \cD_{T^*}$  given by
\begin{equation}
\label{OM} \begin{split} \Omega \Delta_{\Psi_\lambda(T)}h&=\Delta_T
(I-\lambda T^*)^{-1} \Delta_\lambda h,\qquad h\in \cK, \text{ and}\\
\Omega_* \Delta_{\Psi_\lambda(T)^*}y&=\Delta_{T^*} (I-\lambda^*
T)^{-1} \Delta_{\lambda^*} y,\qquad y\in \cK^{(n)},
\end{split}
\end{equation}
respectively. Due to the identities above,  $\Omega$ and $\Omega_*$
are unitary maps. We remark that, in particular,  if $T\in
[B(\cK)^n]_1$, then $\Psi_\lambda(T)\in [B(\cK)^n]_1$ and,
consequently, $\Delta_{\Psi_\lambda(T)}$ and
$\Delta_{\Psi_\lambda(T)^*}$ are invertible operators, which implies
$\cD_{\Psi_\lambda(T)}=\cK$ and $\cD_{\Psi_\lambda(T)^*}=\cK^{(n)}$.
In this case  $\Omega:\cK\to \cK$ and $\Omega_*:\cK^{(n)}\to
\cK^{(n)}$.

Let $U=[u_{ij}]_{n\times n}$ be  a unitary operator in $\CC^n$ and
consider the free holomorphic  automorphism $\Phi_U$  of the
noncommutative ball $[B(\cH)^n]_1$, defined by
$$
\Phi_U(X)=XU:=\left[\sum_{i=1}^n a_{i1} X_i,\cdots,\sum_{i=1}^n
a_{in} X_i\right], \qquad X:=[X_1,\ldots, X_n]\in [B(\cH)^n]_1,
$$
 We also use the
notation  $\Phi_U(X)=X{\bf U}$, where ${\bf
U}:=[u_{ij}I_\cH]_{n\times n}$. It is easy to see that  if $T\in
[B(\cK)^n]_1^-$, then
\begin{equation}
\label{Delta2}
\begin{split}
\Delta_{\Phi_U(T)}&=\Delta_T;\qquad\qquad  \cD_{\Phi_U(T)}=\cD_T,  \ \text{ and }\\
\Delta_{\Phi_U(T)^*}&={\bf U^*} \Delta_{T^*} {\bf U};\qquad
\cD_{\Phi_U(T)^*}={\bf U}^* \cD_{T^*},
\end{split}
\end{equation}
where ${\bf U}$ is seen as  the operator $[u_{ij}I_\cK]_{n\times
n}$.

Let $\Psi :=\Phi_U \circ \Psi_\lambda$ be an  free holomorphic
automorphism and let $T\in [B(\cK)^n]_1$.  Since
\begin{equation}
\label{Delta3} \Delta_{\Psi(T)}=\Delta_{\Psi_\lambda(T)}\quad \text{
and } \quad \Delta_{\Psi(T)^*}={\bf
U}^*\Delta_{\Psi_\lambda(T)^*}{\bf U}, \end{equation}
 one can easily
see that the operators $\Omega:\cD_{\Psi(T)}\to \cD_{T}$ and
$\Omega_* {\bf U}:\cD_{\Psi(T)^*}\to \cD_{T^*}$ are unitary
operators identifying the corresponding defect spaces.

According to the considerations above,
  the following lemma  is a simple consequence of Theorem
\ref{prop-cara} and Theorem \ref{automorph}.

\begin{lemma}\label{propre}
Let $X:=[X_1,\ldots, X_n]\in [B(\cK)^n]_1^-$ and let  $\Psi\in
Aut(B(\cH)^n_1)$. Then
\begin{enumerate}
\item[(i)] $\left\|\sum_{i=1}^n X_i X_i^*\right\|\leq 1$ if and only if
$\|\Psi(X)\|\leq 1$;
\item[(ii)] $\left\|\sum_{i=1}^n X_i X_i^*\right\|<1$ if and only if
$\|\Psi(X)\|<1$;
\item[(iii)]
 $X$ is an isometry if and only if $\Psi(X)$ is an isometry;
\item[(iv)] $X$ is a coisometry if and only if $\Psi(X)$ is a
coisometry.
\end{enumerate}
\end{lemma}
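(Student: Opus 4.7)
The plan is to reduce to the case $\Psi = \Psi_\lambda$ by invoking Theorem \ref{automorph}, which expresses every $\Psi \in Aut(B(\cH)^n_1)$ as $\Psi = \Phi_U \circ \Psi_\lambda$ with $\lambda := \Psi^{-1}(0) \in \BB_n$ and $U$ unitary on $\CC^n$. The factor $\Phi_U$ acts on $[X_1,\ldots,X_n]$ as right multiplication by the unitary ${\bf U} = [u_{ij}I_\cK]_{n\times n}$, so $\|\Phi_U(Y)\| = \|Y\|$ for any row $Y$, and $\Phi_U$ manifestly preserves both isometry and coisometry properties. Hence each of the four equivalences for $\Psi$ reduces to the corresponding equivalence for $\Psi_\lambda$.

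For (i) and (ii), I would appeal directly to Theorem \ref{prop-cara}(iv)--(v): $\Psi_\lambda$ is a free holomorphic automorphism of $[B(\cH)^n]_1$ that extends to a homeomorphism of $[B(\cH)^n]_1^-$ onto itself, and it is its own inverse, so both directions of the strict and non-strict inequalities are immediate.

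The heart of the proof is (iii) and (iv), which I would derive from the two identities in \eqref{identities}. For (iv), $X$ is a row coisometry precisely when $\Delta_X^2 := I - \sum X_i X_i^* = 0$, and the first identity gives
\begin{equation*}
I_\cK - \Psi_\lambda(X)\Psi_\lambda(X)^* = \Delta_\lambda (I - X\lambda^*)^{-1}\,\Delta_X^2\,(I - \lambda X^*)^{-1} \Delta_\lambda.
\end{equation*}
Symmetrically for (iii), $X$ is a row isometry iff $\Delta_{X^*}^2 := I - X^*X = 0$, and the second identity gives
\begin{equation*}
I_{\cK^{(n)}} - \Psi_\lambda(X)^*\Psi_\lambda(X) = \Delta_{\lambda^*}(I - X^*\lambda)^{-1}\,\Delta_{X^*}^2\,(I - \lambda^* X)^{-1} \Delta_{\lambda^*}.
\end{equation*}

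The only point to verify is that the outer sandwich factors are invertible, which is the (mild) main obstacle but is entirely routine. Indeed, $\Delta_\lambda = (1-\|\lambda\|_2^2)^{1/2} I_\cK$ is a positive multiple of the identity because $\|\lambda\|_2 < 1$; $\Delta_{\lambda^*}$ has spectrum $\{1, 1-\|\lambda\|_2^2\}$ and is therefore invertible on $\cK^{(n)}$; and $I - X\lambda^*$, $I - \lambda X^*$, $I - X^*\lambda$, $I - \lambda^* X$ are invertible because their norm estimates are bounded by $\|X\|\|\lambda\|_2 \leq \|\lambda\|_2 < 1$. With these invertibilities in hand, the two displayed identities show that $I - \Psi_\lambda(X)\Psi_\lambda(X)^* = 0$ iff $\Delta_X^2 = 0$, and $I - \Psi_\lambda(X)^*\Psi_\lambda(X) = 0$ iff $\Delta_{X^*}^2 = 0$, completing (iv) and (iii) respectively.
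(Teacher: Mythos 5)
Your proof is correct and follows essentially the same route as the paper: the paper states the lemma as "a simple consequence" of the decomposition $\Psi=\Phi_U\circ\Psi_\lambda$ from Theorem \ref{automorph} together with the identities \eqref{identities} and \eqref{Delta2}, which is precisely the reduction and the sandwich argument you carry out. You have merely made explicit the invertibility checks that the paper leaves to the reader.
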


Now we can prove some results concerning the minimal isometric
dilation of a row contraction  and the Wold decomposition of a row
isometry  (\cite{Po-isometric})  under the action of
$Aut(B(\cH)^n_1)$.

\begin{proposition}\label{propre1}
Let $X:=[X_1,\ldots, X_n]\in [B(\cK)^n]_1^-$ and let  $\Psi\in
Aut(B(\cH)^n_1)$. Then
\begin{enumerate}
\item[(i)]  $V:=[V_1,\ldots, V_n]$, $V_i\in B(\cG)$, is the
minimal isometric dilation of  $X:=[X_1,\ldots, X_n]$ on a Hilbert
space $\cG\supset \cK$, if and only if $\Psi(V)$ is the minimal
isometric dilation of
 $\Psi(X)$;
 \item[(ii)] If \, $V:=[V_1,\ldots, V_n]$ is a row isometry and
 $(S\otimes I_\cM)\oplus W$ is the  noncommutative Wold decomposition of $V$,
 where $S:=[S_1,\ldots, S_n]$ is $n$-tuple of left creation operators
  and $W:=[W_1,\ldots, W_n]$ is the corresponding Cuntz
 isometry, i.e., $W_1W_1^*+\cdots + W_nW_n^*=I$,
 then $(\Psi(S)\otimes I_\cM)\oplus \Phi(W)$ is
 the noncommutative Wold decomposition  of the row isometry $\Phi(V)$.
\end{enumerate}
\end{proposition}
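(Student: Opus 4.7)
The plan is to exploit the structural decomposition $\Psi = \Phi_U \circ \Psi_\lambda$ from Theorem \ref{automorph} together with three properties of free holomorphic automorphisms: (a) each entry of $\Psi(V)$ lies in the norm-closed unital algebra generated by $V_1,\ldots,V_n$, (b) $\Psi$ respects compressions to coinvariant subspaces, and (c) $\Psi$ respects orthogonal direct sums. Property (a) holds because the boundary function $\hat\Psi$ sits in $\cA_n\otimes M_{1\times n}$ (Proposition \ref{prop-Tl} combined with Theorem \ref{automorph}), and the noncommutative Poisson transform at the row contraction $V$ restricts to a norm-continuous homomorphism $\cA_n\to B(\cG)$ whose image is contained in the norm-closed unital algebra generated by $V_1,\ldots,V_n$; property (b) is the identity $\Psi(P_\cK V|_\cK)=P_\cK \Psi(V)|_{\cK^{(n)}}$ that was noted in the proof of Theorem \ref{Poisson-auto}; property (c) is immediate from the power-series (and resolvent) representation of $\Psi_\lambda$, since each term decomposes block-diagonally.

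For part (i), recall that $V$ is an isometric dilation of $X$ exactly when $\cK$ is coinvariant for every $V_i$ with $V_i^*|_\cK = X_i^*$, and the dilation is minimal iff $\cG=\bigvee_{\alpha\in\FF_n^+} V_\alpha \cK$. Using (a), the invariant subspace $\cK^\perp$ of the $V_i$'s is also invariant for every $\Psi(V)_i$, so $\cK$ is coinvariant for $\Psi(V)$; property (b) then yields $P_\cK \Psi(V)_i|_\cK=\Psi(X)_i$, and Lemma \ref{propre}(iii) ensures $\Psi(V)$ is a row isometry. Thus $\Psi(V)$ dilates $\Psi(X)$. For minimality, applying (a) to $\Psi^{-1}$ shows that each $V_i$ lies in the norm-closed unital algebra generated by $\Psi(V)_1,\ldots,\Psi(V)_n$; hence $\bigvee_\alpha V_\alpha \cK = \bigvee_\alpha \Psi(V)_\alpha \cK$, and minimality of one dilation transfers to the other. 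Both implications in (i) then follow from the invertibility of $\Psi$.

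For part (ii), I first apply (c) to the Wold decomposition $V = (S\otimes I_\cM)\oplus W$, and use the identity $\Psi(S\otimes I_\cM)=\Psi(S)\otimes I_\cM$ (which again follows term-by-term from the power-series representation), to obtain the direct sum decomposition $\Psi(V) = (\Psi(S)\otimes I_\cM)\oplus \Psi(W)$. It remains to verify that this is indeed the Wold decomposition of $\Psi(V)$, i.e., that the first summand is a pure row isometry and the second is a Cuntz row isometry. Arguing as in the proof of Theorem \ref{Voicu} (with the general $\Psi$ replacing $\Psi_\lambda$), Proposition \ref{prop-Tl} shows that $\Psi(S)=\hat\Psi$ is a pure row isometry, and pureness is preserved by tensoring with $I_\cM$. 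For the second summand, Lemma \ref{propre}(iii)--(iv) implies that $\Psi(W)$ is simultaneously a row isometry and a row coisometry, hence a Cuntz isometry. Uniqueness of the noncommutative Wold decomposition (\cite{Po-isometric}) then identifies $(\Psi(S)\otimes I_\cM)\oplus \Psi(W)$ as the Wold decomposition of $\Psi(V)$; note that $\Psi(S)$ is unitarily equivalent to $[S_1,\ldots,S_n]$ because $I-\Psi(S)\Psi(S)^*$ has rank one, so the multiplicity space of the pure part is the original $\cM$.

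The main technical point I anticipate is item (a): it is crucial that $\Psi(V)_i$ lies in the \emph{non-self-adjoint} norm-closed algebra generated by the $V_j$'s and the identity, not merely in the $C^*$-algebra, since the co-invariance argument in part (i) and the direct-sum analysis in part (ii) both rely on avoiding adjoints of the $V_i$'s. This is precisely where one uses that the boundary function $\hat\Psi$ lives in $\cA_n\otimes M_{1\times n}$ rather than in the larger Cuntz--Toeplitz algebra, together with the norm-continuity of the noncommutative Poisson transform on $\cA_n$.
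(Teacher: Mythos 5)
Your proposal is correct and follows essentially the same route as the paper: both rest on the fact that the entries of $\Psi(V)$ lie in the norm-closed non-selfadjoint algebra generated by $V_1,\ldots,V_n$ and the identity (so coinvariance of $\cK$ and the joint invariant subspace lattice are preserved, giving both the dilation property and the equality $\bigvee_\alpha V_\alpha\cK=\bigvee_\alpha \Psi(V)_\alpha\cK$ via $\Psi^{-1}$), together with Lemma \ref{propre} for the isometry/Cuntz properties, Proposition \ref{prop-Tl} for purity of $\Psi(S)$, compatibility of $\Psi$ with direct sums, and uniqueness of the Wold decomposition. The only cosmetic difference is that the paper first reduces to the two cases $\Psi=\Psi_\lambda$ and $\Psi=\Phi_U$ via Theorem \ref{automorph}, whereas you carry the general $\Psi=\Phi_U\circ\Psi_\lambda$ through directly.
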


\begin{proof} Due to Theorem \ref{automorph}, it is enough to
consider the cases when $\Psi=\Psi_\lambda$ and $\Psi=\Phi_U$,
respectively. First, assume that  $\Psi=\Psi_\lambda$ for some
$\lambda\in \BB_n$. Let $V:=[V_1,\ldots, V_n]$, $V_i\in B(\cG)$, be
the minimal isometric dilation of $X:=[X_1,\ldots, X_n]$, on a
Hilbert space $\cG\supset \cK$, i.e., $V_i^*|_{\cK}=X_i^*$ for any
$i=1,\ldots, n$, and $\cG=\bigvee_{\alpha\in \FF_n^+} V_\alpha \cK$.
Due to Proposition \ref{prop-Tl}, $\Psi_\lambda(X)$ is in the norm
closed non-selfadjoint algebra generated by $X_1,\ldots, X_n$ and
the identity. A similar result holds for $\Psi_\lambda(V)$.
Consequently, we deduce that
$\Psi_\lambda(V)^*|_{\cK}=\Psi_\lambda(X)^*$. Moreover, setting
$[W_1,\ldots, W_n]:=\Psi_\lambda(V)$,  we deduce that
$\bigvee_{\alpha\in \FF_n^+} W_\alpha \cK\subseteq
\bigvee_{\alpha\in \FF_n^+} V_\alpha \cK$. Since $\Psi_\lambda
(W)=V$, we also deduce that $\bigvee_{\alpha\in \FF_n^+} V_\alpha
\cK \subseteq \bigvee_{\alpha\in \FF_n^+} W_\alpha \cK$, which
proves that $\Psi_\lambda (V)$ is the minimal isometric dilation  of
$\Psi_\lambda (X)$. The converse  can be proved id a similar manner
and using  the fact that $\Psi_\lambda(\Psi_\lambda(X))=X$.

To prove (ii), note first that, due to Proposition \ref{prop-Tl},
$\Psi_\lambda(S)$ is a pure row isometry which is unitarily
equivalent to $S:=[S_1,\ldots, S_n]$. On the other hand, due to
Lemma \ref{propre}, $W:=[W_1,\ldots, W_n]$ is  a Cuntz
 isometry if and only if  $\Psi_\lambda(W)$ has the same property.
 Since the Wold decomposition of a row isometry is unique up to a
 unitary equivalence (see \cite{Po-isometric}) and
 $$
 \Psi_\lambda((S\otimes I_\cM)\oplus W)=(\Psi_\lambda(S)\otimes
 I_\cM)\oplus \Psi_\lambda (W),
 $$
the result follows. The case when $\Psi=\Phi_U$  follows in a
similar manner. The proof is complete.
\end{proof}

\begin{corollary}\label{propre2}
Let $X:=[X_1,\ldots, X_n]\in [B(\cK)^n]_1^-$ and let   $\Psi\in
Aut(B(\cH)^n_1)$. Then
\begin{enumerate}
\item[(i)]  $X$ is  pure   if and only if $\Psi(X)$ is   pure;
\item[(ii)]
if $V:=[V_1,\ldots, V_n]$  is a pure row isometry, then $V$ is
unitary equivalent to $\Psi(V)$;
\item[(iii)]  $X$ is c.n.c. if and only if $\Psi(X)$ is
c.n.c.
\end{enumerate}

\end{corollary}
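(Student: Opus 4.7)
For parts (i) and (iii) the strategy is to reduce to results already established in the paper. For (i), I will pass to the minimal isometric dilation $V$ of $X$ on some space $\cG\supset \cK$ and apply Proposition~\ref{propre1}: $\Psi(V)$ is then the minimal isometric dilation of $\Psi(X)$, and its noncommutative Wold decomposition is $(\Psi(S)\otimes I_\cM)\oplus \Psi(W)$, inherited from $V=(S\otimes I_\cM)\oplus W$. Since a row contraction is pure precisely when the Cuntz summand of its minimal isometric dilation acts on the zero Hilbert space, and $\Psi(W)$ is defined on the same underlying space as $W$, purity of $X$ is equivalent to purity of $\Psi(X)$.

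For (iii), the required equivalence was already extracted inside the proof of Theorem~\ref{Poisson-auto}(ii): using the involutive identity $\Psi_\lambda\circ \Psi_\lambda=\mathrm{id}$, one obtains $\cN_T=\cN_{\Psi_\lambda(T)}$, so the c.n.c.\ condition $\cN_T=\{0\}$ is preserved by $\Psi_\lambda$. The case $\Psi=\Phi_U$ is immediate from the unitary action on $\CC^n$, and Theorem~\ref{automorph} reduces the general case to these two.

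For (ii), suppose $V=[V_1,\ldots,V_n]$ is a pure row isometry. By the noncommutative Wold decomposition, $V\cong [S_1,\ldots,S_n]\otimes I_\cM$ for some Hilbert space $\cM$. I next verify the ampliation identity $\Psi(S\otimes I_\cM)=\Psi(S)\otimes I_\cM$: this follows by expanding $\Psi$ term by term in its defining power series and using $(S\otimes I_\cM)_\alpha=S_\alpha\otimes I_\cM$, provided the series converges in operator norm at $S\otimes I_\cM$. The latter is guaranteed because $\Psi_\lambda$ is a free holomorphic function on the ball of radius $1/\|\lambda\|_2>1$ by Proposition~\ref{prop-Tl}(i) (and $\Phi_U$ is linear). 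Finally, Proposition~\ref{prop-Tl}(iii)--(v) shows that $\hat\Psi_\lambda$ is a pure row isometry with rank-one co-defect, so by uniqueness in the Wold decomposition $\Psi_\lambda(S)\cong [S_1,\ldots,S_n]$; post-composing with $\Phi_U$ preserves both purity and the rank of the co-defect, so $\Psi(S)\cong S$. Hence $\Psi(V)\cong \Psi(S)\otimes I_\cM\cong S\otimes I_\cM\cong V$.

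The only real technical point is the ampliation identity $\Psi(S\otimes I_\cM)=\Psi(S)\otimes I_\cM$ in (ii); once one has operator-norm convergence of the defining power series at the row isometry $S\otimes I_\cM$, the identity is immediate termwise. The remaining steps are direct applications of Proposition~\ref{propre1}, Proposition~\ref{prop-Tl}, Theorem~\ref{automorph}, and the argument already present inside the proof of Theorem~\ref{Poisson-auto}.
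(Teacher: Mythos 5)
Your proposal is correct and follows essentially the same route as the paper: reduce to the two generators $\Psi_\lambda$ and $\Phi_U$ via Theorem \ref{automorph}, use Proposition \ref{propre1} together with the Wold decomposition and the fact that $\Psi_\lambda(S)$ is a pure row isometry unitarily equivalent to $S$ for parts (i) and (ii), and invoke the $\cN_T=\cN_{\Psi_\lambda(T)}$ argument from the proof of Theorem \ref{Poisson-auto} for part (iii). The only difference is that you make explicit the ampliation identity $\Psi(S\otimes I_\cM)=\Psi(S)\otimes I_\cM$ and the rank-one co-defect argument, which the paper uses implicitly.
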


\begin{proof} The case when $\Psi=\Phi_U$ for some unitary operator
on $\CC^n$ is straightforward.  Assume that $\Psi=\Psi_\lambda$,
where $\lambda\in \BB_n$. Since $\Psi_\lambda(\Psi_\lambda(X))=X$,
it is enough to prove the direct implication. We recall  (see
\cite{Po-isometric} that a row contraction $X$ is pure if and only
if its minimal isometric dilation $V:=[V_1,\ldots, V_n]$ is a pure
row isometry. Consequently $[V_1,\ldots, V_n]$ is unitarily
equivalent to $[S_1\otimes I_\cM,\ldots, S_n\otimes \cM]$ for some
Hilbert space $\cM$. Applying Proposition \ref{propre}, we deduce
that $\Psi_\lambda (V)=\Psi_\lambda(S)\otimes I_\cM$ is the minimal
isometric  dilation of $\Psi_\lambda(X)$. Since $\Psi_\lambda(S)$ is
a pure row isometry, we conclude that $\Psi_\lambda(X)$ is a pure
row contraction.

We proved in Proposition \ref{propre} that  $\Psi_\lambda(S)$ is a
pure row isometry which is unitarily equivalent to $S:=[S_1,\ldots,
S_n]$.   Due to  the Wold decomposition for row isometries,  $V$ is
unitarily equivalent to $S\otimes I_\cM$ for some Hilbert space
$\cM$.  Hence, we deduce hat $\Psi_\lambda(V)$ is unitarily
equivalent to $V$. Part (iii) was considered  in  Theorem
\ref{Poisson-auto}. This completes the proof.
\end{proof}

We recall (see Section 2) that the characteristic function  of a row
contraction $T:=[T_1,\ldots, T_n]$, \ $T_i\in B(\cK)$,  generates a
bounded free holomorphic function $\Theta_T$ with operator-valued
coefficients in $B(\cD_{T^*}, \cD_T)$ which satisfies the equation
$$\Theta_T(X)={\bf P}_X[\hat \Theta_T]=\tilde T-\Delta_{\tilde T}
(I-\hat X \tilde T^*)^{-1}\hat X \Delta_{\tilde T^*}
$$
 for any $X:=[X_1,\ldots, X_n]\in [B(\cH)^n]_1$,   where we use the
notations $\hat X:=[X_1\otimes I_\cK,\ldots, X_n\otimes I_\cK]$,
$\tilde T:=[I_\cH\otimes T_1,\ldots, I_\cH\otimes T_n]$, and $\hat
\Theta_T:=\text{\rm SOT-}\lim_{r\to 1} \Theta_T(rS_1,\ldots, rS_n)$.
We should add that $${\bf P}_X: F_n^\infty\bar\otimes B(\cD_T,
\cD_{T^*})\to B(\cH)\bar\otimes B(\cD_T, \cD_{T^*})$$
 is the
noncommutative Poisson transform with operator-valued coefficients
defined by
$$
{\bf P}_X[G]:=\left(K_X^*\otimes I_{\cD_{T^*}}\right)\left(
I_{\cD_X}\otimes G\right) \left( K_X\otimes I_{\cD_T}\right)
$$
for any $G\in F_m^\infty\bar \otimes B(\cD_T, \cD_{T^*})$ and
$K_X:\cD_X\otimes F^2(H_n)$ is the Poisson kernel (see Section 2).

The next result concerns the behavior of the characteristic function
under the free holomorphic automorphisms of the noncommutative ball
$[B(\cH)^n]_1$.

\begin{theorem}
\label{cara} Let $T\in [B(\cK)^n]_1^-$ and  let $\Psi :=\Phi_U \circ
\Psi_\lambda$ be an  free holomorphic automorphism of
$[B(\cH)^n]_1$, where $U$ is a unitary operator on $\CC^n$ and
$\lambda\in \BB_n$.
  Then the  standard characteristic function has
the property that
$$
\Theta_{\Psi(T)}(X)=-(I_\cH\otimes \Omega^*)(\Theta_T\circ
\Psi^{-1})(X) (I_\cH \otimes \Omega_* {\bf U}), \qquad X\in
[B(\cH)^n]_1,
$$
 where $\Omega$ and $\Omega_*$ are the
unitary operators defined by \eqref{OM}. Moreover,
$$
\hat \Theta_{\Psi(T)}=-(K_{\widehat {\Psi^{-1}}}^*\otimes \Omega^*)(
I_{\cD_{\widehat{\Psi^{-1}}}}\otimes \hat
\Theta_T)(K_{\widehat{\Psi^{-1}}}\otimes \Omega_* {\bf U} ),
$$
where $K_{\widehat {\Psi^{-1}}}$ is the noncommutative Poisson
kernel of \ $\widehat {\Psi^{-1}}:=\Psi^{-1}(S_1,\ldots, S_n)$ and
it is a unitary operator.
\end{theorem}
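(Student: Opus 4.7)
The plan is to reduce the theorem to two basic cases via Theorem \ref{automorph}, which tells us that every $\Psi \in Aut(B(\cH)^n_1)$ factors as $\Psi = \Phi_U \circ \Psi_\lambda$ with $\lambda = \Psi^{-1}(0)$ and $U$ a unitary on $\CC^n$. I would prove the formula separately for $\Psi = \Phi_U$ and for $\Psi = \Psi_\lambda$, then compose. Composition is tracked through the defect spaces using \eqref{Delta3}: $\cD_{\Psi(T)} = \cD_{\Psi_\lambda(T)}$ with the same intertwiner $\Omega$, while $\cD_{\Psi(T)^*} = \mathbf{U}^*\cD_{\Psi_\lambda(T)^*}$, so that $\Omega_*\mathbf{U}$ is the genuine unitary from $\cD_{\Psi(T)^*}$ onto $\cD_{T^*}$.

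The case $\Psi = \Phi_U$ is essentially bookkeeping. Using \eqref{Delta2}, namely $\Delta_{\Phi_U(T)} = \Delta_T$ and $\Delta_{\Phi_U(T)^*} = \mathbf{U}^*\Delta_{T^*}\mathbf{U}$, together with $\widetilde{\Phi_U(T)} = \tilde T\,\widetilde{\mathbf{U}}$ and $\hat X\,\widetilde{\mathbf{U}} = \widehat{X\mathbf{U}} = \widehat{\Phi_U(X)}$, one substitutes directly into the defining formula for $\Theta_{\Phi_U(T)}$ and collects terms to recognize $\Theta_T(\Phi_{U^*}(X))(I\otimes \mathbf{U})$. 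The sign and the identification of $\Omega$ with the identity on $\cD_T$ fall out of the computation.

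The main case is $\Psi = \Psi_\lambda$, and this is where the substantive work lies. I would start from
$$\Theta_{\Psi_\lambda(T)}(X) = -I\otimes \Psi_\lambda(T) + (I\otimes \Delta_{\Psi_\lambda(T)})\bigl(I - \hat X\,\widetilde{\Psi_\lambda(T)}^*\bigr)^{-1}\hat X\,(I\otimes \Delta_{\Psi_\lambda(T)^*}),$$
substitute $\Psi_\lambda(T) = \lambda - \Delta_\lambda(I - T\lambda^*)^{-1}T\Delta_{\lambda^*}$ from \eqref{Phi}, and use \eqref{identities} together with the defining relations \eqref{OM} for $\Omega$, $\Omega_*$. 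The expected obstacle is the algebraic manipulation of the resolvent $(I - \hat X\,\widetilde{\Psi_\lambda(T)}^*)^{-1}$ into a form involving $(I - \widehat{\Psi_\lambda(X)}\,\tilde T^*)^{-1}$, which appears in $\Theta_T(\Psi_\lambda(X))$. The key identities to exploit are the intertwining relations \eqref{defect} ($\Delta_\lambda\lambda = \lambda\Delta_{\lambda^*}$ and $\lambda^*\Delta_\lambda = \Delta_{\lambda^*}\lambda^*$) and the inversion lemma $(I - \Psi_\lambda(X)\lambda^*)^{-1} = \Delta_\lambda^{-1}(I - X\lambda^*)\Delta_\lambda^{-1}$ already established within the proof of Theorem \ref{prop-cara}(iii), applied now in the ampliated setting with $T$ in place of $\lambda$. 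After this Schur-type manipulation and careful bookkeeping with defect operators, the expression collapses to $-(I\otimes \Omega^*)\Theta_T(\Psi_\lambda(X))(I\otimes \Omega_*)$.

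Finally, the boundary identity is obtained by letting $X = (rS_1,\ldots,rS_n)$ and $r\to 1$ in the pointwise formula. On the left we get $\hat\Theta_{\Psi(T)}$. On the right, $(\Theta_T\circ\Psi^{-1})(rS)$ is the noncommutative Poisson transform with operator-valued coefficients of $\hat\Theta_T$ at the row contraction $\widehat{\Psi^{-1}}$ (as in Remark \ref{ext} applied to $\Theta_T \in H^\infty(B(\cH)^n_1)\bar\otimes B(\cD_{T^*},\cD_T)$), so it equals
$$\bigl(K_{\widehat{\Psi^{-1}}}^*\otimes I_{\cD_T}\bigr)\bigl(I_{\cD_{\widehat{\Psi^{-1}}}}\otimes \hat\Theta_T\bigr)\bigl(K_{\widehat{\Psi^{-1}}}\otimes I_{\cD_{T^*}}\bigr).$$
Theorem \ref{Voicu} applied to $\Psi^{-1}\in Aut(B(\cH)^n_1)$ guarantees that $K_{\widehat{\Psi^{-1}}}$ is unitary, completing the proof.
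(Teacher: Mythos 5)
Your proposal is correct and follows essentially the same route as the paper: split $\Psi=\Phi_U\circ\Psi_\lambda$ via Theorem \ref{automorph}, dispose of the $\Phi_U$ case by direct substitution using \eqref{Delta2}, handle the $\Psi_\lambda$ case by the resolvent identity derived from Theorem \ref{prop-cara}(ii) (the paper's identity \eqref{4Delta} is exactly the Schur-type manipulation you anticipate), and obtain the boundary formula by applying the operator-valued Poisson transform at $\widehat{\Psi^{-1}}$ and letting $r\to 1$, with unitarity of $K_{\widehat{\Psi^{-1}}}$ coming from Theorem \ref{Voicu}. No gaps worth noting.
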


\begin{proof}

First, we consider  the case when $\Psi=\Psi_\lambda$. Note that
\begin{equation}
\label{DEDE} \Psi_\lambda(\tilde
T)\Delta_{\lambda^*}=\Delta_\lambda(I-\tilde T \lambda^*)^{-1}
(\lambda-\tilde T).
\end{equation}
Indeed, due to \eqref{Phi} and the fact that $\lambda
\Delta_{\lambda^*}=\Delta_\lambda \lambda$, we have
\begin{equation*}
\begin{split}
\Psi_\lambda(\tilde T)\Delta_{\lambda^*}&= \left[\lambda-
\Delta_\lambda
(I-\tilde T \lambda^*)^{-1} \tilde T \Delta_{\lambda^*}\right]\Delta_{\lambda^*}\\
&= \Delta_\lambda\left[\lambda-(I-\tilde T \lambda^*)^{-1} \tilde T
\Delta_{\lambda^*}^2\right]\\
&= \Delta_\lambda (I-\tilde T \lambda^*)^{-1}\left[ (1-\tilde T
\lambda^*)\lambda-\tilde T(1-\lambda^* \lambda)\right]\\
&=\Delta_\lambda (I-\tilde T \lambda^*)^{-1}(\lambda-\tilde T).
\end{split}
\end{equation*}
Now, we prove that
\begin{equation}
\label{4Delta} \Delta_{\tilde T}[I-\Psi_\lambda(\hat X)\tilde
T^*]^{-1} \Delta_\lambda (I-\hat X \lambda^*)^{-1}= \Delta_{\tilde
T} (I-\lambda \tilde T^*)^{-1} \Delta_\lambda [I-\hat X \Psi_\lambda
(\tilde T)^*]^{-1}.
\end{equation}
First note that  due to the identities of Theorem \ref{prop-cara}
part (ii) and the fact that $\Psi_\lambda (\Psi_\lambda(\tilde
T))=\tilde T$, we have
\begin{equation}
\label{ident1}
\begin{split}
I-\Psi_\lambda(\hat X) \tilde T^*&= (I-\Psi_\lambda(\hat X)
\Psi_\lambda (\Psi_\lambda(\tilde T))^*\\
&= \Delta_\lambda (I-\hat X \lambda^*)^{-1} [I-\hat X
\Psi_\lambda(\tilde T)^*](I-\lambda \Psi_\lambda (\tilde T)^*)^{-1}
\Delta_\lambda.
\end{split}
\end{equation}
Using  again the  identities of Theorem \ref{prop-cara} and  the
fact that $\Psi_\lambda(0)=\lambda$, we have
\begin{equation}
\label{ident2} I-\lambda\Psi_\lambda(\tilde T)^*=I-\Psi_\lambda(0)
\Psi_\lambda(\tilde T)^*=\Delta_\lambda(I-\lambda \tilde T^*)
\Delta_\lambda.
\end{equation}
Due  to \eqref{ident1} and  \eqref{ident2}, we deduce that
\begin{equation*}
\begin{split}
\Delta_{\tilde T}[I-\Psi_\lambda(\hat X)\tilde T^*]^{-1}&
\Delta_\lambda (I-\hat X \lambda^*)^{-1} \\
&=
 \Delta_{\tilde T}\Delta_\lambda^{-1}(I-\lambda \Psi_\lambda (\tilde T)^*)
 [I-\hat X
\Psi_\lambda(\tilde T)^*]^{-1}(I-\hat X
\lambda^*)\Delta_\lambda^{-1}
\Delta_\lambda (I-\hat X \lambda^*)^{-1}\\
&=\Delta_{\tilde T}  \Delta_\lambda^{-1} \Delta_\lambda (I-\lambda
\tilde T^*)^{-1} \Delta_\lambda [I-\hat X \Psi_\lambda (\tilde
T)^*]^{-1}\\
&=\Delta_{\tilde T} (I-\lambda \tilde T^*)^{-1} \Delta_\lambda
[I-\hat X \Psi_\lambda (\tilde T)^*]^{-1},
\end{split}
\end{equation*}
which proves  \eqref{4Delta}.

Since $\psi_\lambda(T)$ is a row contraction, its characteristic
function generates a unique  free holomorphic function
$\Theta_{\psi_\lambda(T)}:[B(\cH)^n]_1\to B(\cH)\bar\otimes
B(\cD_{\psi_\lambda(T)^*}, \cD_{\psi_\lambda(T)})$ with
operator-valued coefficients, which,  due to the fact that
$\Psi_\lambda(\tilde T)=I\otimes \Psi_\lambda(T)$, satisfies the
equation
$$
\Theta_{\psi_\lambda(T)}(X)=-\Psi_\lambda(\tilde T)+[I-\hat X
\Psi_\lambda(\tilde T)^*]^{-1} \hat X \Delta_{\Psi_\lambda(\tilde
T)^*}, \qquad X\in [B(\cH)^n]_1.
$$

Using the the  relation $\Psi_\lambda(\tilde
T)\Delta_{\Psi_\lambda(\tilde T)^*}=\Delta_{\Psi_\lambda(\tilde
T)}\Psi_\lambda(\tilde T)$, the definition of $\Omega$,  and
relation  \eqref{4Delta}, we obtain that
\begin{equation*}
\begin{split}
&(I\otimes \Omega)\Theta_{\psi_\lambda(T)}(X)
\Delta_{\Psi_\lambda(\tilde T)^*}\\
&=
(I\otimes \Omega)\Delta_{\Psi_\lambda(\tilde T)}
\left[-\Psi_\lambda(\tilde T)+[I-\hat X \Psi_\lambda(\tilde
T)^*]^{-1} \hat X \Delta_{\Psi_\lambda(\tilde T)^*}^2\right]\\
&= \Delta_{\tilde T} (I-\lambda \tilde T^*)^{-1} \Delta_\lambda
\left[-\Psi_\lambda(\tilde T)+[I-\hat X \Psi_\lambda(\tilde
T)^*]^{-1} \hat X \Delta_{\Psi_\lambda(\tilde T)^*}^2\right]\\
&= \Delta_{\tilde T}[I-\Psi_\lambda(\hat X)\tilde T^*]^{-1}
\Delta_\lambda (I-\hat X \lambda^*)^{-1}[I-\hat X \Psi_\lambda
(\tilde T)^*]\left[-\Psi_\lambda(\tilde T)+[I-\hat X
\Psi_\lambda(\tilde T)^*]^{-1} \hat X \Delta_{\Psi_\lambda(\tilde
T)^*}^2\right]\\
&=-\Delta_{\tilde T}[I-\Psi_\lambda(\hat X)\tilde T^*]^{-1} \left\{
\Delta_\lambda (I-\hat X \lambda^*)^{-1}[I-\hat X \Psi_\lambda
(\tilde T)^*]\Psi_\lambda(\tilde T)-\Delta_\lambda (I-\hat X
\lambda^*)^{-1} \hat X \Delta_{\Psi_\lambda(\tilde T)^*}^2\right\}\\
&=-\Delta_{\tilde T}[I-\Psi_\lambda(\hat X)\tilde T^*]^{-1}
\Delta_\lambda (I-\hat X \lambda^*)^{-1} [\Psi_\lambda(\tilde
T)-X]\\
&= -\Delta_{\tilde T}[I-\Psi_\lambda(\hat X)\tilde T^*]^{-1}
\left\{\Delta_\lambda (I-\hat X \lambda^*)^{-1}(\lambda-\hat
X)+\Delta_\lambda (I-\hat X \lambda^*)^{-1}[\Psi_\lambda(\tilde
T)-\lambda]\right\}\\
&= -\Delta_{\tilde T}[I-\Psi_\lambda(\hat X)\tilde T^*]^{-1} \left[
\Psi_\lambda(\hat X) \Delta_{\lambda^*}-\Delta_\lambda(I-\hat X
\lambda^*)^{-1} \Delta_\lambda (I-\tilde T \lambda^*)^{-1} \tilde T
\Delta_{\lambda^*}\right],
\end{split}
\end{equation*}
where the latter equality is due to the identity \eqref{DEDE}, where
we  replace $\tilde T$ by $\hat X$.

On the other hand, we have
\begin{equation}
\label{eq1}
\begin{split}
-& (\Theta_T\circ \Psi_\lambda)(X) (I\otimes \Omega_*)\Delta_{\Psi_\lambda(\tilde T)^*}\\
&=
\left[ \tilde T-\Delta_{\tilde T} [I-\Psi_\lambda(\hat X) \tilde
T^*]^{-1}\Psi_\lambda(\hat X) \Delta_{\tilde T^*}\right]
\Delta_{\tilde T^*} (I-\lambda^* \tilde T)^{-1} \Delta_{\lambda^*}\\
&= \tilde T\Delta_{\tilde T^*} (I-\lambda^* \tilde T)^{-1}
\Delta_{\lambda^*}-\Delta_{\tilde T} [I-\Psi_\lambda(\hat X) \tilde
T^*]^{-1}  \Psi_\lambda(\hat X)(I-\lambda^*
\tilde T)^{-1} \Delta_{\lambda^*}\\
&\qquad  \qquad \qquad+\Delta_{\tilde T} [I-\Psi_\lambda(\hat X)
\tilde T^*]^{-1}  \Psi_\lambda(\hat X)\tilde T^* \tilde
T(I-\lambda^* \tilde T)^{-1} \Delta_{\lambda^*}.
\end{split}
\end{equation}
Note that the latter term in  the sum above  is equal to
\begin{equation*}
\begin{split}
\Delta_{\tilde T} &\left\{[I-\Psi_\lambda(\hat X) \tilde
T^*]^{-1}-I\right\}\tilde T(I-\lambda^* \tilde T)^{-1}
\Delta_{\lambda^*}\\
&= \Delta_{\tilde T}  [I-\Psi_\lambda(\hat X) \tilde T^*]^{-1}\tilde
T(I-\lambda^* \tilde T)^{-1} \Delta_{\lambda^*}- \Delta_{\tilde T}
\tilde T(I-\lambda^* \tilde T)^{-1} \Delta_{\lambda^*}\\
&= \Delta_{\tilde T}  [I-\Psi_\lambda(\hat X) \tilde T^*]^{-1}\tilde
T(I-\lambda^* \tilde T)^{-1} \Delta_{\lambda^*}- \tilde
T\Delta_{\tilde T^*} (I-\lambda^* \tilde T)^{-1} \Delta_{\lambda^*}.
\end{split}
\end{equation*}
 Hence,  going back to  \eqref{eq1}, we obtain
\begin{equation*}
\begin{split}
-&(I\otimes \Omega)\Theta_{\psi_\lambda(T)}(X)
\Delta_{\Psi_\lambda(\tilde T)^*}\\
&=\Delta_{\tilde T}  [I-\Psi_\lambda(\hat X) \tilde
T^*]^{-1}\left[\tilde T(I-\lambda^* \tilde T)^{-1}
\Delta_{\lambda^*} -\Psi_\lambda(\hat X) (I-\lambda^*\tilde T)^{-1}
\Delta_{\lambda^*}\right]\\
&= \Delta_{\tilde T}  [I-\Psi_\lambda(\hat X) \tilde
T^*]^{-1}\left[\tilde T(I-\lambda^* \tilde T)^{-1}
\Delta_{\lambda^*} -\Psi_\lambda(\hat X)\Delta_{\lambda^*} -
\Psi_\lambda(\hat X) \lambda^*\tilde T (I-\lambda^*\tilde T)^{-1}
\Delta_{\lambda^*}\right]\\
&=
 \Delta_{\tilde T}  [I-\Psi_\lambda(\hat X) \tilde
T^*]^{-1}\left[ -\Psi_\lambda(\hat X)\Delta_{\lambda^*}
+[I-\Psi_\lambda(\hat X) \lambda^*] \tilde T (I-\lambda^*\tilde
T)^{-1}
\Delta_{\lambda^*}\right]\\
&= \Delta_{\tilde T}  [I-\Psi_\lambda(\hat X) \tilde T^*]^{-1}\left[
-\Psi_\lambda(\hat X)\Delta_{\lambda^*} + \Delta_\lambda (I-\hat X
\lambda^*)^{-1} \Delta_\lambda \tilde T (I-\lambda^*\tilde T)^{-1}
\Delta_{\lambda^*}\right],
\end{split}
\end{equation*}
where the latter equality is due to the identity
$$
I-\Psi_\lambda(\hat X) \lambda^*=I-\Psi_\lambda(\hat X)\Psi_\lambda
(0) =\Delta_\lambda (I-\hat X \lambda^*)^{-1} \Delta_\lambda,
$$
which follows from  the identities of Theorem \ref{prop-cara} part
(ii) and the fact that $\Psi_\lambda (0)=\lambda$. Now, using the
identities above and  relation
$$
(I-\tilde T \lambda^*)^{-1} \tilde T=\tilde T (I- \lambda^*\tilde
T)^{-1},
$$
we  deduce that
\begin{equation}
\label{case1} \Theta_{\Psi_\lambda(T)}(X)=-(I_\cH\otimes
\Omega^*)(\Theta_T\circ \Psi_\lambda)(X) (I_\cH \otimes \Omega_*),
\qquad X\in [B(\cH)^n]_1.
\end{equation}

Now let us consider the case when $\Psi=\Phi_U$ for some unitary
operator $U$ on $\CC^n$. Using  relation \eqref{Delta2} and the
definition of the characteristic function for a row contraction, one
can easily deduce that
\begin{equation}
\label{case2}
 \Theta_{\Phi_U(T)}(X)=(\Theta_T\circ
\Phi_U^{-1})(X)(I_\cH\otimes {\bf U}), \qquad X\in [B(\cH)^n]_1.
\end{equation}

 Now  we consider the general case when $\Psi :=\Phi_U \circ
\Psi_\lambda$. Combining relations \eqref{case1} and
 \eqref{case2}, we obtain
$$
\Theta_{\Psi(T)}(X)=-(I_\cH\otimes \Omega^*)(\Theta_T\circ
\Psi^{-1})(X) (I_\cH \otimes \Omega_* {\bf U}), \qquad X\in
[B(\cH)^n]_1,
$$
which proves the first part of the theorem.

To prove the second part,  note first that, due to Proposition
\ref{prop-Tl}, $\hat \Psi_\lambda=\Psi(S_1,\ldots, S_n)$ is  a pure
row contraction. By Corollary \ref{propre2},  $\hat \Psi$ has the
same property. Applying the operator-valued versions of Lemma
\ref{comp-iso} and Remark \ref{ext} to the bounded free holomorphic
function $\Theta_T:[B(\cH)^n]_1\to [B(\cH)^n]_1\bar \otimes B(\cD_T,
\cD_{T^*})$ with operator-valued coefficients, we deduce that
$$
\Theta_T(\Psi^{-1}(rS_1,\ldots, rS_n))= {\bf P}_{[rS_1,\ldots,
rS_n]}\left[ {\bf P}_{\widehat{\Psi^{-1}}}\left[\hat
\Theta_T\right]\right]
$$
where   $\hat \Theta_T$ and $\widehat{\Psi^{-1}}$ are the
  corresponding boundary
functions, and ${\bf P}$ is the corresponding noncommutative Poisson
transform  with operator-valued coefficients. On the other hand,
since ${\bf P}_{\widehat {\Psi^{-1}}}\left[\hat \Theta_T\right]$ is
in $F_n^\infty \bar \otimes  B(\cD_T, \cD_{T^*})$, the generalized
$F_n^\infty$ functional calculus implies
$$
\text{\rm SOT-}\lim_{r\to 1} {\bf P}_{[rS_1,\ldots, rS_n]}\left[
{\bf P}_{\widehat {\Psi^{-1}}}\left[\hat \Theta_T\right]\right]={\bf
P}_{\widehat {\Psi^{-1}}}\left[\hat \Theta_T\right].
$$
Therefore, we obtain
$$
\text{\rm SOT-}\lim_{r\to 1}\Theta_T(\Psi^{-1}(rS_1,\ldots,
rS_n))={\bf P}_{\widehat {\Psi^{-1}}}\left[\hat \Theta_T\right],
$$
which implies
$$
\hat \Theta_{\Psi(T)}=-(K_{\widehat {\Psi^{-1}}}^*\otimes \Omega^*)(
I_{\cD_{\widehat{\Psi^{-1}}}}\otimes \hat \Theta_T)(K_{\widehat
{\Psi^{-1}}}\otimes \Omega_* {\bf U}).
$$
The fact that  the noncommutative Poisson kernel $K_{\widehat
{\Psi^{-1}}}$ is a unitary operator was proved in Theorem
\ref{Voicu}. The proof is complete.
\end{proof}

We  recall  from the proof of Theorem \ref{Voicu} that there is a
unitary operator $U_\lambda:\cD_{{\hat \Psi_\lambda}}\to \CC$
defined by
\begin{equation*}
\begin{split}
U_\lambda\Delta_{\hat \Psi_\lambda} f =(1-\|\lambda\|_2^2)^{1/2}
f(\lambda),\qquad f\in F^2(H_n).
\end{split}
\end{equation*}
 Consequently, Theorem \ref{cara} implies
$$
\hat \Theta_{\Psi_\lambda(T)}=-( W_\lambda^*\otimes \Omega^*)\hat
\Theta_T(W_\lambda\otimes \Omega_*),
$$
where $W_\lambda:=(U_\lambda\otimes
I_{F^2(H_n)})K_{\hat\Psi_\lambda}$ is a unitary operator on the full
Fock space $F^2(H_n)$. A similar observation can be made in the case
of an arbitrary  free  holomorphic automorphism of $[B(\cH)^n]_1$.

\begin{corollary}\label{formula} If $\lambda,\mu\in \BB_n$, then
$$
(\Psi_\mu \circ \Psi_\lambda) (X)=-(I_\cH\otimes \Omega) \Psi
_{\Psi_\lambda(\mu)}(X) (I_\cH\otimes \Omega_*^*), \qquad X\in
[B(\cH)^n]_1,
$$
 where $\Omega\in B(\CC)$ and $\Omega_*\in B(\CC^n)$ are the
unitary operators defined by \eqref{OM},  when $\cK=\CC$ and
$T=\mu\in \BB_n$.
\end{corollary}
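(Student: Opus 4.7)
The plan is to obtain the corollary as a direct specialization of Theorem~\ref{cara} to the case $T=\mu\in\BB_n$ (viewed as a row contraction from $\CC^n$ to $\CC$, so $\cK=\CC$) and $\Psi=\Psi_\lambda$ (so the unitary factor is $U=I$). In that setting the defect spaces of $\mu$ are $\cD_\mu=\CC$ and $\cD_{\mu^*}=\CC^n$, and the operators $\Omega\in B(\CC)$ and $\Omega_*\in B(\CC^n)$ are precisely those described by \eqref{OM} with $T=\mu$; these are the same objects appearing in the statement of the corollary.

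First I would recall from Theorem~\ref{prop-cara}(iii) that $\Psi_\lambda$ is an involution, so $\Psi_\lambda^{-1}=\Psi_\lambda$. Next, from the definition \eqref{Phi} we have $\Psi_\nu=-\Theta_\nu$ for every $\nu\in\BB_n$; in particular, both $\Theta_\mu\circ\Psi_\lambda$ and $\Theta_{\Psi_\lambda(\mu)}$ differ from $\Psi_\mu\circ\Psi_\lambda$ and $\Psi_{\Psi_\lambda(\mu)}$, respectively, by an overall sign. Applying the first part of Theorem~\ref{cara} with $T=\mu$, $\Psi=\Psi_\lambda$, and ${\bf U}=I$ gives
\begin{equation*}
\Theta_{\Psi_\lambda(\mu)}(X)=-(I_\cH\otimes\Omega^*)(\Theta_\mu\circ\Psi_\lambda^{-1})(X)(I_\cH\otimes\Omega_*),\qquad X\in[B(\cH)^n]_1.
\end{equation*}

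Substituting $\Psi_\lambda^{-1}=\Psi_\lambda$ and $\Theta_\nu=-\Psi_\nu$ on both sides converts this identity into
\begin{equation*}
-\Psi_{\Psi_\lambda(\mu)}(X)=(I_\cH\otimes\Omega^*)(\Psi_\mu\circ\Psi_\lambda)(X)(I_\cH\otimes\Omega_*).
\end{equation*}
Since $\Omega$ and $\Omega_*$ are unitary (as noted after \eqref{OM}), multiplying on the left by $(I_\cH\otimes\Omega)$ and on the right by $(I_\cH\otimes\Omega_*^*)$, and then multiplying through by $-1$, yields the displayed formula.

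There is essentially no obstacle once Theorem~\ref{cara} is in hand; the only point to be careful about is bookkeeping with the sign conventions $\Psi_\nu=-\Theta_\nu$ and the fact that in this scalar case the unitary ${\bf U}$ coming from the general decomposition $\Psi=\Phi_U\circ\Psi_\lambda$ reduces to the identity, so that the factor $\Omega_*{\bf U}$ in Theorem~\ref{cara} simplifies to $\Omega_*$.
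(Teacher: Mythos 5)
Your proof is correct and follows exactly the route the paper intends: the corollary is the specialization of Theorem~\ref{cara} to $T=\mu$, $\Psi=\Psi_\lambda$ (so ${\bf U}=I$), combined with $\Psi_\lambda^{-1}=\Psi_\lambda$ and the sign convention $\Psi_\nu=-\Theta_\nu$. The sign bookkeeping and the identification of $\Omega$, $\Omega_*$ with the unitaries of \eqref{OM} for $\cK=\CC$ are handled correctly.
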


 We   showed in \cite{Po-holomorphic} that Arveson's  curvature  $K(T)$ (see \cite{Arv2})  associated with
  a commutative row contraction  $T:=[T_1,\ldots, T_n]$, i.e., $T_iT_j=T_jT_i$,
   \ $ i,j=1,\ldots,
  n$,
with  $\rank \Delta_T<\infty$ can be expressed in terms     of the
constrained  characteristic function  (also denoted by $\Theta_T$)
given by
$$
\Theta_{T}(z):= -T+\Delta_T(I-z_1T_1^*-\cdots -z_nT_n^*)^{-1}
[z_1I_\cH,\ldots, z_nI_\cH]\Delta_{T^*},\quad z\in \BB_n.
$$
More precisely, we proved that
$$
K(T)=\int_{\partial \BB_n}\lim_{r\to 1}\text{\rm trace}\,
 [I_{\cD_T}-\Theta_{T}(r\xi)\Theta_{T}(r\xi)^*] d\sigma(\xi),
 $$
where $\sigma$ is the rotation-invariant probability measure on
$\partial \BB_n$.
 The proof of the following lemma is
straightforward.

\begin{lemma} Let $T:=[T_1,\ldots, T_n]\in B(\cK)^n$ be an   $n$-tuple of operators and let
  $\Psi \in  Aut(B(\cH)^n_1)$. Then the following statements hold:
\begin{enumerate}
\item[(i)] if $T$ is a row contraction, then
$\rank \Delta_T=\rank \Delta_{\Psi(T)}$;
 \item[(ii)]
 $T$ is a
commuting row contraction if and only if $\Psi(T)$ is a commuting
row contraction;
\item[(iii)] $A\in \{T\}'$ if and only if $A\in \{\Psi
(T)\}'$, where $'$ denote the commutant;
\item[(iv)]  $X,Y\in [B(\cK)^n]_1$ are  unitarily
equivalent  if and only if $\Psi(X)$ and $\Psi(Y)$ have the same
property.
\end{enumerate}
\end{lemma}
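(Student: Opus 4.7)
The plan is to reduce all four assertions to the structural decomposition $\Psi = \Phi_U \circ \Psi_\lambda$ furnished by Theorem \ref{automorph}, where $U$ is unitary on $\CC^n$ and $\lambda := \Psi^{-1}(0) \in \BB_n$, and then to exploit the explicit formula \eqref{Phi} for $\Psi_\lambda$ together with the identities in Theorem \ref{prop-cara}. Since $\Psi^{-1}$ is again an element of $Aut(B(\cH)^n_1)$, it suffices in each case to prove the forward implication.

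For part (i), I would take $X = Y = T$ in the identity of Theorem \ref{automorph}, which yields
\begin{equation*}
I_\cK - \Psi(T)\Psi(T)^* = \Delta_\lambda (I - T\lambda^*)^{-1} \Delta_T^2 (I - \lambda T^*)^{-1} \Delta_\lambda.
\end{equation*}
Since $\lambda \in \BB_n$, the scalar $\Delta_\lambda = (1-\|\lambda\|_2^2)^{1/2}$ is nonzero, and $\|T\lambda^*\| \leq \|T\|\|\lambda\|_2 < 1$ guarantees the invertibility of $I - T\lambda^*$ and $I - \lambda T^*$ on $\cK$. Thus the right-hand side has the same rank as $\Delta_T^2$, and therefore $\rank\Delta_{\Psi(T)} = \rank\Delta_T$.

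For parts (ii) and (iii), the key observation is that each component of $\Psi_\lambda(T) = \lambda - \Delta_\lambda (I - T\lambda^*)^{-1} T\Delta_{\lambda^*}$ lies in the norm-closed algebra generated by $T_1,\ldots,T_n$ and $I_\cK$: indeed $T\lambda^* = \sum_i \bar\lambda_i T_i$ is a norm contraction, so $(I - T\lambda^*)^{-1}$ expands as a norm-convergent Neumann series in the $T_i$'s, and the entries of the constant matrix $\Delta_{\lambda^*}$ merely produce scalar linear combinations. Consequently if $T_iT_j = T_jT_i$ for all $i,j$, then all components of $\Psi_\lambda(T)$ commute, and similarly any $A \in \{T\}'$ automatically commutes with every entry of $\Psi_\lambda(T)$. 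The case $\Psi = \Phi_U$ is trivial since $\Phi_U(T)_j = \sum_i u_{ij}T_i$ is a scalar linear combination of the $T_i$'s; composing the two cases handles the general $\Psi$. The converses follow by applying the same argument to $\Psi^{-1} \in Aut(B(\cH)^n_1)$.

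For part (iv), suppose $V: \cK \to \cK$ is unitary with $V X_i V^* = Y_i$ for $i = 1,\ldots,n$. Since $V$ commutes with every scalar and intertwines $X$ with $Y$ entrywise, direct substitution in \eqref{Phi} gives
\begin{equation*}
V \Psi_\lambda(X) V^* = \lambda - \Delta_\lambda (I - Y\lambda^*)^{-1} Y \Delta_{\lambda^*} = \Psi_\lambda(Y),
\end{equation*}
and analogously $V \Phi_U(X) V^* = \Phi_U(Y)$; hence $V \Psi(X) V^* = \Psi(Y)$. The converse is obtained by applying this to $\Psi^{-1}$. None of these steps presents any genuine obstacle; the only point requiring mild care is ensuring that all inverses and Neumann series appearing in the formula for $\Psi_\lambda(T)$ are well-defined, which is precisely guaranteed by $T \in [B(\cK)^n]_1^-$ and $\lambda \in \BB_n$.
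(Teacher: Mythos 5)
Your proof is correct and uses exactly the ingredients the paper intends: the paper itself offers no argument (it declares the lemma ``straightforward''), but the decomposition $\Psi=\Phi_U\circ\Psi_\lambda$, the defect identity of Theorem \ref{automorph} specialized at $X=Y=T$, and the observation that the entries of $\Psi_\lambda(T)$ lie in the norm-closed unital algebra generated by $T_1,\ldots,T_n$ (via the Neumann series for $(I-T\lambda^*)^{-1}$) are precisely the facts the author establishes and invokes in the neighboring results, e.g.\ in the proof of Proposition \ref{propre1}. All four parts check out, including the reduction of the converses to $\Psi^{-1}\in Aut(B(\cH)^n_1)$.
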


Now, using Theorem \ref{cara}, one can  deduce the following result.

\begin{theorem}
Let $T:=[T_1,\ldots, T_n]$, \ $T_i\in B(\cK)$, be a commutative  row
contraction  with $\rank \Delta_T<\infty$. Then   Arveson's
curvature  satisfies the equation
$$K(\Psi(T))= \int_{\partial \BB_n}\lim_{r\to 1}\text{\rm trace}\,
 [I_{\cD_T}-\Theta_{T}(\Psi^{-1}(r\xi))\Theta_{T}(\Psi^{-1}(r\xi))^*] d\sigma(\xi)
$$
for any  $\Psi\in Aut(B(\cH)^n_1)$.
\end{theorem}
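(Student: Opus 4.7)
The plan is to reduce to the known curvature formula applied to $\Psi(T)$ and then to invoke Theorem~\ref{cara} to rewrite the characteristic function. First I would check that the hypotheses of the curvature formula transfer to $\Psi(T)$: by the preceding lemma, $\Psi(T)$ is again a commuting row contraction with $\rank\Delta_{\Psi(T)}=\rank\Delta_T<\infty$, so the formula from \cite{Po-holomorphic} recalled just above yields
\[
K(\Psi(T))=\int_{\partial \BB_n}\lim_{r\to 1}\text{\rm trace}\,[I_{\cD_{\Psi(T)}}-\Theta_{\Psi(T)}(r\xi)\Theta_{\Psi(T)}(r\xi)^*]\,d\sigma(\xi).
\]

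Next, for each $z\in \BB_n$ I would specialize the identity of Theorem~\ref{cara} at the commuting scalar tuple $X=(z_1I,\ldots,z_nI)$. By Theorem~\ref{automorph} and the explicit form of $\Psi_\lambda$, the automorphism $\Psi^{-1}$ sends commuting scalar tuples to commuting scalar tuples and maps $\BB_n$ into $\BB_n$, so $\Psi^{-1}(z)\in\BB_n$. Taking the auxiliary Hilbert space in Theorem~\ref{cara} to be $\CC$, the free formula reduces to the constrained identity
\[
\Theta_{\Psi(T)}(z)=-\Omega^*\,\Theta_T(\Psi^{-1}(z))\,(\Omega_*{\bf U}),
\]
with $\Omega:\cD_{\Psi(T)}\to\cD_T$ and $\Omega_*{\bf U}:\cD_{\Psi(T)^*}\to\cD_{T^*}$ the unitary intertwiners from \eqref{OM} and \eqref{Delta3}. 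Using $(\Omega_*{\bf U})(\Omega_*{\bf U})^*=I$ and $\Omega^*\Omega=I_{\cD_{\Psi(T)}}$, a short computation gives
\[
I_{\cD_{\Psi(T)}}-\Theta_{\Psi(T)}(z)\Theta_{\Psi(T)}(z)^*=\Omega^*\bigl[I_{\cD_T}-\Theta_T(\Psi^{-1}(z))\Theta_T(\Psi^{-1}(z))^*\bigr]\Omega.
\]
Taking the trace, applying its cyclic property, and using $\Omega\Omega^*=I_{\cD_T}$, the conjugation by $\Omega$ disappears and we obtain
\[
\text{\rm trace}\,[I_{\cD_{\Psi(T)}}-\Theta_{\Psi(T)}(z)\Theta_{\Psi(T)}(z)^*]=\text{\rm trace}\,[I_{\cD_T}-\Theta_T(\Psi^{-1}(z))\Theta_T(\Psi^{-1}(z))^*].
\]
Substituting $z=r\xi$, passing to the limit $r\to 1$ and integrating over $\partial\BB_n$ against $d\sigma$ finishes the argument.

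The only point that will require genuine care is the second step: one must verify that the free identity of Theorem~\ref{cara} specializes cleanly to the constrained characteristic function $\Theta_T(z)$ used in Arveson's formula, i.e.\ that plugging $X=(z_1I,\ldots,z_nI)$ into the free formula reproduces the scalar formula displayed in Section~2, and that the intertwiners $\Omega$, $\Omega_*{\bf U}$ remain the same unitaries under this specialization. Both are routine consequences of the explicit expressions in \eqref{OM}, \eqref{Delta2}, \eqref{Delta3} and of Theorem~\ref{automorph}. Everything else is formal trace bookkeeping with the defect-space unitaries supplied by Theorem~\ref{cara}.
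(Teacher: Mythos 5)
Your proposal is correct and follows exactly the route the paper intends: the paper states this theorem with no written proof beyond the phrase ``using Theorem \ref{cara}, one can deduce,'' and your argument — transfer the hypotheses via the preceding lemma, apply the recalled curvature formula to $\Psi(T)$, specialize the identity of Theorem \ref{cara} at scalar points, and cancel the defect-space unitaries $\Omega$ and $\Omega_*{\bf U}$ under the trace — is precisely the intended filling-in of that deduction. The sign in $\Theta_{\Psi(T)}(z)=-\Omega^*\Theta_T(\Psi^{-1}(z))(\Omega_*{\bf U})$ is harmless since only $\Theta\Theta^*$ enters, and your remaining care point (that the free formula specializes to the constrained one at $X=(z_1I,\ldots,z_nI)$) is settled by the paper's own definition of the constrained characteristic function as exactly that scalar evaluation.
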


In the noncommutative case, the notions of curvature and  Euler
characteristic of a  row contraction  were introduced in
\cite{Po-curvature} and \cite{Kr}.  We showed in \cite{Po-varieties}
that the curvature and the Euler characteristic of an arbitrary  row
contraction   can be expressed only in terms of the standard
characteristic function. More precisely, we  proved that if
$T:=[T_1,\ldots, T_n]$, \ $T_i\in B(\cK)$, is a row contraction with
$\rank \Delta_T<\infty$, and $\text{\rm curv}\,(T)$ and $\chi(T)$
denote its curvature and Euler characteristic, respectively, then
$$
\text{\rm curv}\,(T)=\rank \Delta_T-\lim_{m\to\infty} \frac
{\text{\rm trace}\, [\hat\Theta_T\hat \Theta_T^*(P_m\otimes I)]}
{n^m}
$$
and
$$
\chi(T)= \lim_{m\to\infty} \frac {\rank[(I-\hat\Theta_T
\hat\Theta_T^*)(P_{\leq m}\otimes I)]} {1+n+\cdots+n^{m-1}},
$$
where $P_m$ $($resp. $P_{\leq m}$$)$ is the orthogonal projection of
the full Fock space $F^2(H_n)$ onto the subspace of all homogeneous
polynomials of degree $m$ $($resp. polynomials of degree $\leq
m$$)$. Using the result of  Theorem \ref{cara}, we can express the
curvature (resp. Euler characteristic) of $\Psi(T)$ in terms of  the
boundary function of the free holomorphic function $\Theta\circ
\Psi^{-1}$.

\begin{proposition}
\label{curv-auto} If $V:=[V_1,\ldots, V_n]$ is a pure isometry with
$\rank \Delta_V<\infty$, then
$$
\text{\rm curv}\,(V)=\text{\rm curv}\,(\Psi(V))=\rank \Delta_V,\quad
\Psi\in Aut(B(\cH)^n_1).
$$
If $T\in B(\cK)$ is a pure contraction, then $$ \text{\rm
curv}\,(T)=\text{\rm curv}\,(\Psi(T)),\quad \Psi\in Aut(B(\cH)_1).
$$
\end{proposition}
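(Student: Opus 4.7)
The plan is to handle the two assertions separately, both ultimately using the results of this section on row isometries and on the transformation law for the characteristic function. For the first assertion, the key observation will be that $\hat\Theta_V=0$ for any row isometry $V=[V_1,\ldots,V_n]$: since $V^*V=I_{\cK^{(n)}}$, we have $\Delta_{V^*}=0$ and $\cD_{V^*}=\{0\}$, so the trailing factor $I\otimes\Delta_{V^*}$ in the defining formula of $\Theta_V$ makes the second term vanish, while the first term $-I\otimes V$ is viewed as acting on the zero space $\cD_{V^*}$. Feeding $\hat\Theta_V=0$ into the curvature formula recalled above yields immediately
$$\text{\rm curv}\,(V)=\rank\Delta_V-\lim_{m\to\infty}\frac{\text{\rm trace}\,[0]}{n^m}=\rank\Delta_V.$$
By Lemma \ref{propre}(iii) and Corollary \ref{propre2}(ii), $\Psi(V)$ is again a pure row isometry, and Lemma \ref{propre}(i) gives $\rank\Delta_{\Psi(V)}=\rank\Delta_V$, so the same computation applied to $\Psi(V)$ gives $\text{\rm curv}\,(\Psi(V))=\rank\Delta_V$, completing the first assertion.

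For the second assertion I would exploit the transformation formula of Theorem \ref{cara} specialised to $n=1$. In this setting $\cD_{\widehat{\Psi^{-1}}}=\CC$ and $K:=K_{\widehat{\Psi^{-1}}}$ is a unitary operator on $F^2(H_1)$, so Theorem \ref{cara} reduces to the unitary equivalence
$$\hat\Theta_{\Psi(T)}\hat\Theta_{\Psi(T)}^*=(K^*\otimes\Omega^*)(\hat\Theta_T\hat\Theta_T^*)(K\otimes\Omega),$$
with $\Omega:\cD_{\Psi(T)}\to\cD_T$ unitary. Combined with $\rank\Delta_T=\rank\Delta_{\Psi(T)}$ from Lemma \ref{propre}(i), this reduces the desired identity $\text{\rm curv}\,(T)=\text{\rm curv}\,(\Psi(T))$ to showing the trace identity
$$\lim_{m\to\infty}\text{\rm trace}\,[\hat\Theta_T\hat\Theta_T^*(KP_mK^*\otimes I_{\cD_T})]=\lim_{m\to\infty}\text{\rm trace}\,[\hat\Theta_T\hat\Theta_T^*(P_m\otimes I_{\cD_T})].$$

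The main obstacle will be this trace identity, because the unitary $K$ (essentially the rotation of $H^2$ induced by the Moebius map $\varphi_\lambda$) does not preserve the one-dimensional spectral projections $P_m$ pointwise, so one cannot cycle it naively under the trace. My plan here is to bypass a direct analysis of $K$ by passing to the minimal isometric dilation: for pure $T$ the minimal isometric dilation $V$ is a pure isometry, and by Proposition \ref{propre1} together with Corollary \ref{propre2}(ii) the minimal isometric dilation of $\Psi(T)$ is $\Psi(V)$, which is unitarily equivalent to $V$. In the $n=1$ case, Sz.-Nagy--Foias theory, combined with $\hat\Theta_V=0$ and the Beurling-type factorisation of $\hat\Theta_T$ induced by the dilation, shows that the curvature of a pure contraction is an invariant of its minimal isometric dilation; thus the unitary equivalence $V\cong\Psi(V)$ together with the first assertion applied to $V$ yields
$$\text{\rm curv}\,(T)=\text{\rm curv}\,(V)=\text{\rm curv}\,(\Psi(V))=\text{\rm curv}\,(\Psi(T)).$$
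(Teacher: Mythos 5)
Your first assertion is fine and essentially matches the paper: the paper gets $\text{\rm curv}\,(V)=\rank\Delta_V$ from the Wold decomposition $V\cong [S_1\otimes I_\cM,\ldots,S_n\otimes I_\cM]$ with $\dim\cM=\rank\Delta_V$, whereas you get it from $\hat\Theta_V=0$ plugged into the trace formula; both are valid, and the invariance under $\Psi$ follows in both cases from Corollary \ref{propre2}(ii) ($V$ is unitarily equivalent to $\Psi(V)$).

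The second assertion, however, has a genuine gap. Your pivotal claim that ``the curvature of a pure contraction is an invariant of its minimal isometric dilation'' is false. For $n=1$ the minimal isometric dilation of any pure contraction $T$ with $\rank\Delta_T=d$ is the unilateral shift of multiplicity $d$; in particular the scalar contraction $T=\lambda\in\DD$ (acting on $\CC$) and the shift $S$ itself both have minimal isometric dilation the multiplicity-one shift, yet $\text{\rm curv}\,(\lambda)=\rank\Delta_\lambda-\rank\Delta_{\lambda^*}=1-1=0$ while $\text{\rm curv}\,(S)=1-0=1$. So the chain $\text{\rm curv}\,(T)=\text{\rm curv}\,(V)$ breaks down, and the unitary equivalence $V\cong\Psi(V)$ carries no information about $\text{\rm curv}\,(T)$ versus $\text{\rm curv}\,(\Psi(T))$. (Your instinct to abandon the trace identity involving $K P_m K^*$ was sound --- that route is indeed hard --- but the substitute does not work.) The paper's argument is much more direct: it invokes Parrott's formula $\text{\rm curv}\,(T)=\rank\Delta_T-\rank\Delta_{T^*}$ for a single pure contraction and observes that the operators $\Omega:\cD_{\Psi(T)}\to\cD_T$ and $\Omega_*:\cD_{\Psi(T)^*}\to\cD_{T^*}$ of \eqref{OM} are unitary, so both defect ranks are preserved under $\Psi$ and the two curvatures agree. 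You should replace your dilation argument by this rank computation (or by any other argument that genuinely controls $\rank\Delta_{\Psi(T)^*}$, not just $\rank\Delta_{\Psi(T)}$).
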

\begin{proof}

According to Proposition \ref{propre2}, if $V:=[V_1,\ldots, V_n]$ is
a pure row isometry, then $V$ is unitary equivalent to $\Psi(V)$.
Consequently, $\text{\rm curv}\,(V)=\text{\rm curv}\,\Psi(V)$. The
second equality is due to the fact that $V$ is unitarily equivalent
to $[S_1\otimes \cM,\ldots, S_n\otimes I_\cM]$, where $\cM$ is a
Hilbert space of dimension equal to $\rank \Delta_V$.

To prove the second part of this proposition, we recall (see
\cite{Par}) that $ \text{\rm curv}\,(T)=\rank \Delta_T-\rank
\Delta_{T^*}. $ The operators $\Omega $ and $\Omega_*$ defined by
relation \eqref{OM} are unitaries. Using this result when $n=1$, we
have $\rank \Delta_T=\rank \Delta_{\Psi(T)}$ and $\rank
\Delta_{T^*}=\rank \Delta_{\Psi(T)^*}$. The proof is complete.
\end{proof}
It will be interesting to know if $ \text{\rm curv}\,(T)=\text{\rm
curv}\,(\Psi(T))$ for any pure row contraction $T$ and any  $\Psi\in
Aut(B(\cH)^n_1)$.

 \bigskip

\section{ Maximum principle and Schwartz type results   for free  holomorphic   functions
   }

In this section we  prove a maximum principle for free holomorphic
functions and obtain    noncommutative versions of Schwarz lemma
from  complex analysis.

We proved in \cite{Po-holomorphic} the following     analogue of
Schwartz lemma   for free holomorphic functions. Let
$F(X)=\sum_{\alpha\in \FF_n^+} X_\alpha\otimes A_{(\alpha)}$, \
$A_{(\alpha)}\in B(\cE, \cG)$,  be a free holomorphic function on
$[B(\cH)^n]_1$ with $\|F\|_\infty\leq 1$ and $F(0)=0$. Then $$
\|F(X)\| \leq \|X\|,\qquad \text{ for any } \   X\in [B(\cH)^n]_1.
$$

In what follows we use this result and the free holomorphic
automorphisms of the noncommutative ball $[B(\cH)^n]_1$ to  prove a
maximum principle for free holomorphic functions.
\begin{theorem}
\label{max-princ} If $F:[B(\cH)^n]_1\to B(\cH)$ is a free
holomorphic function  and there exists $X_0\in [B(\cH)^n]_1$ such
that
$$\|F(X_0)\|\geq \|F(X)\| \quad \text{ for all }\ X\in [B(\cH)^n]_1,
$$
 then $F$ must be a constant.
\end{theorem}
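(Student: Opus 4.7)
The plan is to reduce the claim to the classical scalar maximum modulus principle, in two passes. Write $c := \|F(X_0)\|$ and assume $c > 0$ (otherwise $F \equiv 0$). Since $F$ is free holomorphic with scalar coefficients, $F(X) = \sum_{\alpha \in \FF_n^+} a_\alpha X_\alpha$ with $a_\alpha \in \CC$; in particular $F(0) = a_0 I_\cH$ where $a_0 := a_{g_0}$. The key observation is that although a direct operator-valued maximum modulus principle fails, the scalar-coefficient hypothesis forces $F(0)$ to be a multiple of the identity, which lets us recover the full function via scalar matrix coefficients.

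First I would transfer the maximum from $X_0$ to the origin. Choose unit vectors $h, k \in \cH$ with $|\langle F(X_0)h, k\rangle| = c$ (possible by the definition of the operator norm), and consider the scalar holomorphic function $\phi(z) := \langle F(zX_0) h, k\rangle$, defined on the disk $\{z \in \CC : |z| < 1/\|X_0\|\}$ (or on all of $\CC$ if $X_0 = 0$). Since $\|zX_0\| < 1$ throughout this disk we have $|\phi(z)| \le \|F(zX_0)\| \le c$, while $|\phi(1)| = c$ and $z = 1$ is interior to the domain (because $\|X_0\| < 1$). The classical maximum modulus principle then forces $\phi$ to be constant, and evaluating at $z = 0$ gives $|a_0 \langle h, k\rangle| = c$. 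Combined with $|a_0| \le \|F(0)\| \le c$ and $|\langle h, k\rangle| \le 1$, this forces $|a_0| = c$; thus the origin is also a maximizer of $\|F\|$.

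Second, for any $X \in [B(\cH)^n]_1$ and any unit vector $h \in \cH$, I would apply the same trick to $\phi_{X,h}(z) := \langle F(zX) h, h\rangle$, which is scalar holomorphic on $\{|z| < 1/\|X\|\}$, satisfies $|\phi_{X,h}(z)| \le \|F(zX)\| \le c$, and has $\phi_{X,h}(0) = \langle a_0 I_\cH h, h\rangle = a_0$ of modulus $c$. Maximum modulus again forces $\phi_{X,h} \equiv a_0$, so evaluating at $z = 1$ yields $\langle F(X) h, h\rangle = a_0 = \langle a_0 I_\cH h, h\rangle$ for every unit vector $h \in \cH$. Polarization in the complex Hilbert space $\cH$ then gives $F(X) = a_0 I_\cH$ for every $X \in [B(\cH)^n]_1$, so $F$ is constant.

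The main obstacle, and the reason the statement is delicate, is that operator-valued maximum modulus fails in general (for instance $z \mapsto \mathrm{diag}(1, z)$ on $\DD$ has constant operator norm without being constant). The argument therefore crucially uses the scalar-coefficient hypothesis twice: once to guarantee that $F(0)$ is a scalar multiple of the identity, and then through the diagonal functional $h \mapsto \langle F(X)h, h\rangle$ which, by polarization in a complex Hilbert space, is enough to recover $F(X)$ pointwise from its diagonal entries.
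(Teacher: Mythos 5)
Your overall strategy is sound and genuinely different from the paper's: the paper first invokes a Bohr--Wiener type coefficient inequality from \cite{PPoS} (namely $\bigl(\sum_{|\alpha|=k}|a_\alpha|^2\bigr)^{1/2}\le 1-|F(0)|^2$) to dispose of the case $|F(0)|=\|F\|_\infty$, and otherwise composes with the involutive automorphism $\Psi_{F(0)}$ and applies the noncommutative Schwarz lemma together with Lemma \ref{propre} to reach a contradiction. You instead reduce everything to the classical scalar maximum modulus principle through matrix coefficients $z\mapsto\langle F(zX)h,k\rangle$, exploiting that $1$ lies strictly inside the disc of radius $1/\|X\|$; your second pass (diagonal coefficients plus polarization in a complex Hilbert space to recover $F(X)=a_0I_\cH$) is correct and elegant, and the whole argument avoids both the automorphism group and the coefficient inequality.

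There is, however, one genuine gap in the first pass: you assert that there exist unit vectors $h,k$ with $|\langle F(X_0)h,k\rangle|=\|F(X_0)\|$ ``by the definition of the operator norm.'' Since $\cH$ is infinite dimensional (as the paper assumes throughout), the supremum defining the operator norm need not be attained --- think of a diagonal operator with entries $1-1/m$ --- and you cannot appeal to the (yet unproved) conclusion that $F(X_0)$ is a scalar multiple of the identity to guarantee attainment. The step can be repaired: take unit vectors $h_j,k_j$ with $|\langle F(X_0)h_j,k_j\rangle|\to c$, form $\phi_j(z):=\langle F(zX_0)h_j,k_j\rangle$ on the disc $\{|z|<1/\|X_0\|\}$, note $|\phi_j|\le c$ there, extract a locally uniformly convergent subsequence by Montel's theorem, and apply the maximum modulus principle to the limit $\phi$ (which satisfies $|\phi(1)|=c$ at the interior point $1$) to conclude $|\phi(0)|=c$ and hence $|a_0|\ge c$; alternatively, a Schwarz--Pick estimate applied to each $\phi_j/c$ yields the same conclusion quantitatively. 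With that repair, the rest of your argument goes through.
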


\begin{proof}
Without loss of generality, we can assume that
$\|F(X_0)\|=\|F\|_\infty=1$. Let $F$ have  the representation
$F(X)=\sum_{k=0}^\infty \sum_{|\alpha|=k} a_\alpha X_\alpha $.
According to \cite{PPoS}, we have
$$
\left( \sum_{|\alpha|=k} |a_\alpha|^2\right)^{1/2}\leq
1-|F(0)|^2\qquad  \text{ for }  k=1,2,\ldots.
$$
Hence, if $|F(0)|=1$, then $a_\alpha=0$ for all $\alpha\in \FF_n^+$
with $|\alpha|\geq 1$. Therefore $F=F(0)$ is a constant. Now set
$\lambda:= F(0)$  and assume that $|\lambda|<1$. Note that
$G:=\Psi_\lambda \circ F$ is a free holomorphic function such that
$G(0)=0$ and $\|G\|_\infty\leq 1$. Due to the noncommutative Schwarz
type lemma for  bounded free holomorphic functions, we have
$\|G(X)\|\leq \|X\|$ for any $X\in [B(\cH)^n]_1$. Therefore, we have
\begin{equation}
\label{FF}
 \|\Psi_\lambda(F(X_0))\|\leq \|X_0\|<1.
\end{equation}
On the other hand, since $\|F(X_0)\|=1$,  Lemma \ref{propre} implies
$\|\Psi_\lambda(F(X_0))\|=1$, which contradicts \eqref{FF}.
Therefore, $F$ must be a constant. The proof is complete.
\end{proof}

We remark that if $F:[B(\cH)^n]_1\to B(\cH)\bar\otimes B(\cK)$ is a
free holomorphic function with coefficients in $B(\cK)$ and $\dim
\cK\geq 2$, the maximum principle of Theorem \ref{max-princ} fails.
Indeed, take $\cK=\CC^2$      and $$F(X_1,\ldots, X_n)=
I\otimes \left[\begin{matrix} 1& 0\\
0&0\end{matrix} \right] + X_1\otimes \left[\begin{matrix} 0& 0\\
0&1\end{matrix} \right]
$$
for $(X_1,\ldots, X_n)\in  [B(\cH)^n]_1$, and note that
$\|F\|_\infty=1=\|F(0)\|$ and $F(0)$ is a projection.  We also
mention that,  when $\cK$ is an arbitrary Hilbert space,
$\|F\|_\infty\leq 1$, and $F(0)$ is an isometry, then $F$ must be a
constant. Indeed, if $F$ has the representation
$f(X)=\sum_{k=0}^\infty \sum_{|\alpha|=k} X_\alpha \otimes
A_{(\alpha)}$, then, due to \cite{Po-Bohr}, we have $$
\sum_{|\alpha|=k}A_{(\alpha)}^*A_{(\alpha)}\leq I-F(0)^* F(0) \quad
\text{ for }  k=1,2,\ldots. $$
 Hence, we deduce our assertion.

\begin{proposition}\label{f=0} Let $F:[B(\cH)^n]_1\to B(\cH)^m$ be  a  bounded free
holomorphic function such that $\|F(0)\|\neq \|F\|_\infty$. Then
there in no $X_0\in [B(\cH)^n]_1$ such that
$\|F(X_0)\|=\|F\|_\infty$.
\end{proposition}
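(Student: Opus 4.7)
The plan is to argue by contradiction, combining the involutive free holomorphic automorphisms $\Psi_b$ constructed in Section~2 with the noncommutative Schwarz lemma recalled at the opening of this section. After dividing by $\|F\|_\infty$ I may assume $\|F\|_\infty = 1$, so $F$ maps $[B(\cH)^n]_1$ into $[B(\cH)^m]_1^-$. Writing $F = (F_1,\ldots, F_m)$ with each $F_i$ a scalar-coefficient free holomorphic function, the evaluation $F(0)$ is the row $[F_1(0)I_\cH,\ldots,F_m(0)I_\cH]$ whose operator norm equals $\|b\|_2$ for $b:=(F_1(0),\ldots,F_m(0))\in\CC^m$. Hence the hypothesis $\|F(0)\|<\|F\|_\infty=1$ is precisely $b\in\BB_m$, which puts us in the setting where the involutive automorphism $\Psi_b$ from Theorem~\ref{prop-cara} is available. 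Now suppose, for contradiction, that some $X_0\in[B(\cH)^n]_1$ satisfies $\|F(X_0)\|=1$.

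The next step is to form the composition $G:=\Psi_b\circ F$ and verify its basic properties. By Theorem~\ref{prop-cara}(i), $\Psi_b$ is a free holomorphic function on the larger ball $[B(\cH)^m]_\gamma$ with $\gamma:=1/\|b\|_2>1$; since $\|F(X)\|\leq 1$ throughout $[B(\cH)^n]_1$, the range of $F$ is contained in any intermediate open ball $[B(\cH)^m]_{\gamma'}$ with $1<\gamma'<\gamma$, so Theorem~\ref{compo} produces a free holomorphic $G$ on $[B(\cH)^n]_1$. Theorem~\ref{prop-cara}(i) gives $G(0)=\Psi_b(b)=0$, and Lemma~\ref{propre}(i) yields $\|G(X)\|\leq 1$ for every $X\in[B(\cH)^n]_1$, so $\|G\|_\infty\leq 1$.

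Finally, I would apply the noncommutative Schwarz lemma stated at the start of this section, taking the operator-valued coefficients of $G$ to live in $B(\CC^m,\CC)$ so that the $B(\cH)^m$-valued setting is covered, to conclude $\|G(X)\|\leq\|X\|$ for all $X\in[B(\cH)^n]_1$. In particular $\|G(X_0)\|\leq\|X_0\|<1$, since elements of the open ball have norm strictly less than $1$. On the other hand, Lemma~\ref{propre} guarantees that $\Psi_b$ sends norm-one elements of $[B(\cH)^m]_1^-$ to norm-one elements (the equivalences $\|Y\|<1\Leftrightarrow\|\Psi_b(Y)\|<1$ and $\|Y\|\leq 1\Leftrightarrow\|\Psi_b(Y)\|\leq 1$ force this), so $\|G(X_0)\|=\|\Psi_b(F(X_0))\|=1$, contradicting the Schwarz estimate. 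The main---and really only---delicate point is checking that $\Psi_b\circ F$ fits the hypotheses of Theorem~\ref{compo}, which demands that the range of $F$ lie in an \emph{open} ball inside the domain of $\Psi_b$; this is handled cleanly by slotting in the intermediate radius $\gamma'\in(1,\gamma)$, and everything else is a mechanical assembly of the results already established earlier in the paper.
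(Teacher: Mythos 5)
Your proof is correct and follows essentially the same route as the paper: normalize $\|F\|_\infty=1$, note $b:=F(0)\in\BB_m$, form $G:=\Psi_b\circ F$, apply the noncommutative Schwarz lemma to get $\|G(X_0)\|\leq\|X_0\|<1$, and contradict this with Lemma \ref{propre}, which forces $\|\Psi_b(F(X_0))\|=1$. The extra care you take in justifying the hypotheses of Theorem \ref{compo} via an intermediate radius $\gamma'\in(1,1/\|b\|_2)$ is a detail the paper leaves implicit, and it is handled correctly.
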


\begin{proof} Suppose that there is $X_0\in [B(\cH)^n]_1$ such that
$\|F(X_0)\|=\|F\|_\infty$. Without loss of generality, we can assume
that $\|F\|_\infty=1$. Set $\lambda:= F(0)\in \BB_n$  and   note
that  $G:=\Psi_\lambda \circ F$ is a free holomorphic function such
that $G(0)=0$ and $\|G\|_\infty\leq 1$. The rest of the proof is
similar to that of Theorem \ref{max-princ}.
\end{proof}

What happens if $\|F(0)\|= \|F\|_\infty$, in Proposition \ref{f=0} ?
Does this  condition imply that $F$ is a constant ?    We already
proved  that the answer is positive if $m=1$.

The classical Schwarz's lemma  states that if $f:\DD\to \CC$ is  a
bounded analytic function with $f(0)=0$ and $|f(z)|\leq 1$  for
$z\in \DD$, then $|f'(0)|\leq 1$ and $|f(z)|\leq |z|$ for $z\in
\DD$. Moreover, if $|f'(0)|= 1$ or if $|f(z)|=|z|$ for some $z\neq
0$, then there is a constant $c$ with $|c|=1$ such that $f(w)=cw$
for any $w\in \DD$.

 Let $F_1,\ldots, F_m$ be free  holomorphic functions  on $[B(\cH)^n]_{\gamma_1}$  with
 scalar coefficients. Then the map $F:[B(\cH)^n]_{\gamma_1}\to B(\cH)^m$ defined by $F:=(F_1,\ldots,
 F_m)$ is a free holomorphic function.  We define $F'(0)$ as the
 linear operator from $\CC^n$ to $\CC^m$  having the matrix $\left[\left( \frac{\partial F_i}
{\partial Z_j}\right)(0)\right]_{m\times n}$. In what follows, we
obtain   a new proof of the  Schwarz type lemma   for free
holomorphic functions \cite{Po-holomorphic}  and new results
concerning the uniqueness.

\begin{theorem}
\label{Schw1} Let $F:[B(\cH)^n]_{1}\to [B(\cH)^m]_{1}$ be a free
holomorphic function.  Then
\begin{enumerate}
\item[(i)]  the free holomorphic function
$$\varphi(X_1,\ldots, X_n)= [X_1,\ldots, X_n] F'(0)^t, \qquad (X_1,\ldots,
X_n)\in [B(\cH)^n]_{1},
$$
 maps $[B(\cH)^n]_{1}$ into
$[B(\cH)^m]_{1}$, where ${}^t$ denotes the transpose. In particular,
$\|F'(0)\|\leq 1$;

\item[(ii)] if $F(0)=0$ then  $\|F(X)\|\leq \|X\|$ for any $X\in [B(\cH)^n]_1$.
\end{enumerate}
\end{theorem}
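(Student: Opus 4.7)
For part (ii), I plan to reduce the statement to the classical one-variable Schwarz lemma along scalar complex lines through the origin. Given $X\in[B(\cH)^n]_{1}$ with $X\neq 0$, set $Y:=X/\|X\|$, so that $\lambda Y\in[B(\cH)^n]_{1}$ for every $\lambda\in\DD$, and consider the $B(\cH)^m$-valued map $h(\lambda):=F(\lambda Y)$. Because $F$ is free holomorphic and $F(0)=0$, this is a norm-convergent power series $h(\lambda)=\sum_{k\geq 1}\lambda^{k}H_{k}(Y)$ on $\DD$, where $H_{k}$ denotes the degree-$k$ homogeneous part of $F$; and since $F$ takes values in $[B(\cH)^m]_{1}$ we have $\|h(\lambda)\|<1$ and $h(0)=0$. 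Testing $h$ against arbitrary unit vectors $\xi\in\cH^{(m)}$ and $\eta\in\cH$ gives a scalar holomorphic function $\phi(\lambda):=\langle h(\lambda)\xi,\eta\rangle$ on $\DD$ with $\phi(0)=0$ and $|\phi|\leq 1$; the classical Schwarz lemma yields $|\phi(\lambda)|\leq|\lambda|$, and a supremum over $\xi,\eta$ produces $\|h(\lambda)\|\leq|\lambda|$. Evaluating at $\lambda=\|X\|\in\DD$ gives $\|F(X)\|=\|h(\|X\|)\|\leq\|X\|$, and the case $X=0$ is trivial.

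For part (i), the strategy will be to realize $\varphi$ as a circle average of $F$. Expanding $F$ as the sum of its homogeneous parts and integrating termwise (justified by the uniform-in-$\theta$ norm convergence of $\sum_{k\geq 0}e^{ik\theta}H_{k}(X)$ on the fixed point $X\in[B(\cH)^n]_{1}$), every term vanishes except the $k=1$ one, which is exactly $H_{1}(X)=[X_{1},\ldots,X_{n}]F'(0)^{t}=\varphi(X)$; this gives
\begin{equation*}
\varphi(X)=\int_{0}^{2\pi}e^{-i\theta}F(e^{i\theta}X)\,\frac{d\theta}{2\pi}.
\end{equation*}
Since $e^{i\theta}X\in[B(\cH)^n]_{1}$ for every real $\theta$, and since multiplication of a row by a unimodular scalar preserves its row norm, each integrand lies in the open unit ball of $B(\cH)^m$ regarded as the Banach space $B(\cH^{(m)},\cH)$ under the row-contraction norm. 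The closed unit ball of this Banach space is convex and norm-closed, hence stable under Bochner integration of continuous curves, so $\|\varphi(X)\|\leq 1$.

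To promote this to the strict inequality that $\varphi(X)\in[B(\cH)^m]_{1}$ demands, I will use the linearity (hence degree-one homogeneity) of $\varphi$: for $X\in[B(\cH)^n]_{1}\setminus\{0\}$ choose any $\rho\in(1,1/\|X\|)$, so that $\rho X\in[B(\cH)^n]_{1}$ and therefore $\rho\|\varphi(X)\|=\|\varphi(\rho X)\|\leq 1$, giving $\|\varphi(X)\|\leq 1/\rho<1$. The estimate $\|F'(0)\|\leq 1$ will then be read off by specializing to scalar tuples $X=(z_{1}I_{\cH},\ldots,z_{n}I_{\cH})$ with $z\in\BB_{n}$: a direct computation identifies $\varphi(X)$ with the row $((F'(0)z)_{1}I_{\cH},\ldots,(F'(0)z)_{m}I_{\cH})$, whose row norm equals $\|F'(0)z\|_{\CC^{m}}$. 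The only genuinely delicate step is the justification of the Bochner-integral argument in (i), which rests on the norm-convergence of the homogeneous expansion and the closedness of the row-contraction ball; once those are in place, both parts follow cleanly, with (ii) essentially a scalar-testing consequence of the classical Schwarz lemma.
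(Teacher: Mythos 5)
Your argument is correct. For part (ii) you do essentially what the paper does: restrict $F$ to a complex line through the origin (you use $\lambda\mapsto F(\lambda X/\|X\|)$, the paper uses $\lambda\mapsto F(\lambda X/t)$ with $\|X\|<t<1$), test against unit vectors to obtain scalar self-maps of $\DD$ vanishing at $0$, and invoke the classical Schwarz lemma. For part (i), however, you take a genuinely different route. The paper stays with the same sliced scalar functions $g_{xy}(\lambda)=\langle F(\lambda X/t)x,y\rangle$ and reads off $\|[X_1,\ldots,X_n]F'(0)^t\|\leq t<1$ from the derivative bound $|g_{xy}'(0)|\leq 1$, obtaining strictness by ruling out the rotation case; since $F(0)$ need not vanish in part (i), that derivative bound is really the Schwarz--Pick estimate $|g'(0)|\leq 1-|g(0)|^2$ rather than the Schwarz lemma proper. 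Your averaging argument sidesteps this entirely: you extract the linear part via
$\varphi(X)=\int_{0}^{2\pi}e^{-i\theta}F(e^{i\theta}X)\,\frac{d\theta}{2\pi}$,
use convexity and norm-closedness of the row-contraction ball to get $\|\varphi(X)\|\leq 1$, and upgrade to the strict inequality by the scaling identity $\rho\|\varphi(X)\|=\|\varphi(\rho X)\|\leq 1$. The one point needing care, which you flag, is the termwise integration; it is justified because $\sum_{k}\|H_k(X)\|<\infty$ for each fixed $X$ in the open ball, by the Abel-type convergence $\sum_k r^k\bigl\|\sum_{|\alpha|=k}A_{(\alpha)}^*A_{(\alpha)}\bigr\|^{1/2}<\infty$ recalled in Section~1 combined with the noncommutative von Neumann inequality. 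Both proofs are sound; the paper's is more unified (one family of scalar slices handles (i) and (ii) together), while yours trades that for a cleaner, purely convexity-based proof of (i) that avoids any equality analysis in the one-variable lemma.
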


\begin{proof} Let $X:=(X_1,\ldots, X_n)\in [B(\cH)^n]_1$ and let
$t\in (0,1)$ such that $\|X\|< t$. For each $x\in \cH^{(n)}$ and
$y\in \cH$ with $\|x\|\leq 1$ and $\|y\|\leq 1$, define
$g_{xy}:\DD\to \CC$ by stting
$$
g_{xy} (\lambda):=\left<
F\left(\frac{\lambda}{t}X_1,\ldots,\frac{\lambda}{t}X_n\right)x,
y\right>,\qquad \lambda\in \DD.
$$
Since $F$ is  a free holomorphic function on $[B(\cH)^n]_1$, the map
$g_{xy}$ is analytic on the open unit disc $\DD$ and
\begin{equation}
\label{gxy} g_{xy}'(0)=\left<\frac{1}{t}[X_1,\ldots, X_n] F'(0)^t
x,y\right>.
\end{equation}
Assume that there exists $c_{xy}\in \CC$ with $|c_{xy}|=1$ such that
$g_{xy}(\lambda)=c_{xy} \lambda$ for  $\lambda\in \DD$.
 Due to the continuity of $F$ on
$[B(\cH)^n]_1$ and taking $\lambda\to 1$ in the equality
$|g_{xy}(\lambda)|=|\lambda|$, we deduce that
$$
\left|\left< F\left(\frac{1}{t}X_1,\ldots,\frac{1}{t}X_n\right)x,
y\right>\right|=1,
$$
which contradicts the fact that
$\left\||F\left(\frac{1}{t}X_1,\ldots,\frac{1}{t}X_n\right)\right\|<1.
$ The classical Schwarz lemma  applied  to $g_{xy}$ implies
$|g_{xy}'(0)|<1$ for any
 $x\in \cH^{(n)}$ and
$y\in \cH$ with $\|x\|\leq 1$ and $\|y\|\leq 1$. By \eqref{gxy}, we
get $
 \left\|[X_1,\ldots, X_n] F'(0)^t\right\|\leq t<1
$ for any $(X_1,\ldots, X_n)\in [B(\cH)^n]_1$, which  also implies
that $\|F'(0)\|\leq 1$ and proves part (i).

To prove (ii), assume that $F(0)=0$. Once again  Schwartz lemma
implies $|g_{xy}(\lambda)|<|\lambda|$ for any $\lambda\in
\DD\backslash\{0\}$, except when there exists $c_{xy}\in \CC$ with
$|c_{xy}|=1$ such that $g_{xy}(\lambda)=c_{xy} \lambda$, \
$\lambda\in \DD$. As we showed above, the latter condition does not
hold. Therefore, we  must have

$$
\left|\left<
F\left(\frac{\lambda}{t}X_1,\ldots,\frac{\lambda}{t}X_n\right)x,
y\right>\right|<|\lambda|,\qquad \lambda\in \DD\backslash\{0\}.
$$
Hence, considering $\lambda\in \DD$ with $|\lambda|=\|X\|<1$ and
taking  $t\to \|X\|$, we deduce that $|\left< F(X) x, y\right> |\leq
\|X\|$ for any
 $x\in \cH^{(n)}$ and
$y\in \cH$ with $\|x\|\leq 1$ and $\|y\|\leq 1$. Consequently,
$\|F(X)\|\leq \|X\|$, which proves part (ii). The proof is complete.
\end{proof}

We remark that one can extend  Theorem \ref{Schw1}  to free
holomorphic functions  $F:[B(\cH)^n]_{\gamma_1}\to
[B(\cH)^m]_{\gamma_2}$.  In this case, one can similarly show that
the free holomorphic function
$$\varphi(X_1,\ldots, X_n)= [X_1,\ldots, X_n] F'(0)^t, \qquad (X_1,\ldots,
X_n)\in [B(\cH)^n]_{\gamma_1},
$$
 maps $[B(\cH)^n]_{\gamma_1}$ into
$[B(\cH)^m]_{\gamma_2}$  and, if $F(0)=0$, then  $F$ maps \
$r[B(\cH)^n]_{\gamma_1}$ into \  $r[B(\cH)^m]_{\gamma_2}$ for any
$r\in (0, 1]$.

\begin{theorem}
\label{Schw}
 Let $F:[B(\cH)^n]_{1}\to [B(\cH)^n]_{1}$ be a free
holomorphic function such that $F'(0)$ is a unitary operator on
$\CC^n$. Then $F$ is a  free holomorphic automorphism  of
$[B(\cH)^n]_{1}$ and
$$
F(X_1,\ldots, X_n)= [X_1,\ldots, X_n] [F'(0)]^t, \qquad (X_1,\ldots,
X_n)\in [B(\cH)^n]_{1}.
$$
\end{theorem}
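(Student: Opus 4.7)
The plan is to reduce to Cartan's uniqueness theorem (Theorem 1.3) in two stages: first to show $F(0) = 0$, then to use a unitary row-rotation $\Phi_V$ to normalize the derivative at the origin to $I_n$.

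\emph{Stage 1: $F(0) = 0$.} Set $a := F(0) \in \BB_n$ and $H := \Psi_a \circ F$. By Theorem 2.8 and the composition result (Theorem 1.2), $H$ is a free holomorphic self-map of $[B(\cH)^n]_1$ with $H(0) = \Psi_a(a) = 0$, so the Schwarz lemma of Theorem 5.3(i) gives $\|H'(0)\| \leq 1$. Using the involution identity $\Psi_a \circ \Psi_a = \mathrm{id}$ (Theorem 2.8(iii)) we have $F = \Psi_a \circ H$; the chain rule for free holomorphic functions (matching degree-one coefficients in the composed power series, available via Theorem 1.2) then yields $F'(0) = \Psi_a'(H(0)) \cdot H'(0) = \Psi_a'(0) \cdot H'(0)$. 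Extracting the linear part of $\Psi_a(X) = a - \Delta_a(I - Xa^*)^{-1} X \Delta_{a^*}$ at $X = 0$ identifies the map $X \mapsto -\sqrt{1-\|a\|_2^2}\, X \Delta_{a^*}$, so $\Psi_a'(0) = -\sqrt{1-\|a\|_2^2}\, \Delta_{a^*}^t$, whose operator norm is at most $\sqrt{1-\|a\|_2^2}$ since $\|\Delta_{a^*}\| \leq 1$. Hence $\|F'(0)\| \leq \|\Psi_a'(0)\|\, \|H'(0)\| \leq \sqrt{1-\|a\|_2^2}$. But $F'(0)$ is unitary, so $\|F'(0)\| = 1$, which forces $\|a\|_2 = 0$.

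\emph{Stage 2: applying Cartan.} With $F(0) = 0$, set $U := F'(0)$ and $V := \bar U = (U^*)^t$, both unitary on $\CC^n$. Then $G := \Phi_V \circ F$ is a free holomorphic self-map of $[B(\cH)^n]_1$ (since $\Phi_V$ is an automorphism by Theorem 1.4) with $G(0) = 0$, and because $\Phi_V$ is linear with $\Phi_V'(0) = V^t$, the chain rule gives $G'(0) = V^t U = U^* U = I_n$. Cartan's uniqueness theorem (Theorem 1.3) now yields $G(X) = X$ on $[B(\cH)^n]_1$, i.e., $F(X)\, V = X$. Therefore $F(X) = X V^* = X U^t = X(F'(0))^t$, and by Theorem 1.4, $F = \Phi_{(F'(0))^t}$ is a free holomorphic automorphism of $[B(\cH)^n]_1$.

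\emph{Main obstacle.} The technical heart of the argument is Stage 1, namely the Schwarz-Pick-type inequality $\|F'(0)\| \leq \sqrt{1-\|F(0)\|_2^2}$ obtained by conjugating $F$ with the involution $\Psi_a$; once this is in hand, the unitarity of $F'(0)$ forces $F(0) = 0$ automatically. The chain rule and the identification of the linear part of $\Psi_a$ at $0$ are elementary, and Stage 2 is a routine application of Cartan once the origin has been secured as a fixed point.
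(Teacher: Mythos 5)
Your proof is correct, and Stage~2 is essentially the paper's argument (normalize the derivative by a unitary and invoke the Cartan uniqueness theorem), but Stage~1 reaches $F(0)=0$ by a genuinely different route. The paper writes $F(X)=A_0+\sum_i X_i\otimes A_i+\cdots$ and quotes the multivariable Bohr-type coefficient inequality $\sum_{i=1}^n A_i^*A_i\le I-F(0)^*F(0)$ from \cite{Po-Bohr}; since unitarity of $F'(0)$ gives $\sum_i A_i^*A_i=I$, this forces $F(0)=0$ in one line. You instead conjugate by the involution $\Psi_a$ with $a=F(0)$, apply Theorem \ref{Schw1}(i) to $H=\Psi_a\circ F$, and use the chain rule for the degree-one coefficients (which is legitimate here: by Theorem \ref{compo} the composition is free holomorphic, and when the inner function vanishes at $0$ its linear coefficient is the product of the linear parts, exactly as in the computation inside the proof of Theorem \ref{Cartan2}) together with $\|\Psi_a'(0)\|=(1-\|a\|_2^2)^{1/2}\|\Delta_{a^*}\|\le (1-\|a\|_2^2)^{1/2}$ to get the Schwarz--Pick estimate $\|F'(0)\|\le (1-\|F(0)\|_2^2)^{1/2}$, which forces $a=0$. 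What your route buys is self-containedness --- it uses only the automorphisms $\Psi_a$ and results already proved in this paper, avoiding the external reference to \cite{Po-Bohr} --- and it yields the derivative bound $\|F'(0)\|\le (1-\|F(0)\|_2^2)^{1/2}$ as a by-product, which sharpens Theorem \ref{Schw1}(i). What the paper's route buys is brevity and a stronger conclusion at the coefficient level (the full inequality $\sum_i A_i^*A_i\le I-F(0)^*F(0)$ rather than just its norm consequence). Both are valid.
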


\begin{proof} Assume that $F$ has the representation
$$
F(X_1,\ldots, X_n)=A_0+\sum_{i=1}^n X_i\otimes A_i+\sum_{k=2}^\infty
\sum_{|\alpha|=k} X_\alpha\otimes A_{(\alpha)},
$$
where $A_0, A_i, A_{(\alpha)}\in \BB_n$ are written   as row
operators with entries in $\CC$. Note that $F'(0)=[A_1^t \ \cdots \
A_n^t]$, where ${}^t$ denotes the transpose. Since $F'(0)$ is a
co-isometry we  have $\sum_{i=1}^n A_i^* A_i=I$. On the other hand,
 since $F$ is a bounded free holomorphic  function with $\|F\|_\infty\leq 1$, we can use
 Theorem 2.9
  from   \cite{Po-Bohr} to deduce that
  $
  \sum_{i=1}^n A_i^* A_i\leq I-F(0)^* F(0).
  $
 Now, it is clear that $F(0)=0$. Therefore,
we have
\begin{equation}
\label{FFF}
 F(X_1,\ldots, X_n)=[X_1,\ldots, X_n]\left[\begin{matrix}
A_1\\\vdots\\ A_n\end{matrix} \right]+ \sum_{k=2}^\infty
\sum_{|\alpha|=k} X_\alpha\otimes A_{(\alpha)}.
\end{equation}
Since $F'(0)$ is an isometry, we have $ \left[\begin{matrix}
A_1\\\vdots\\ A_n\end{matrix} \right] [A_1^*\ \cdots\ A_n^*]=I. $
Multiplying  relation \eqref{FFF} to the right by the operator
matrix $[A_1^*\ \cdots\ A_n^*]$, we deduce that
\begin{equation}
\label{H=F}
 H(X):=F(X)[A_1^*\ \cdots\ A_n^*]=[X_1,\ldots,
X_n]+\cdots
\end{equation}
Therefore, $H$ is a free holomorphic function  with $H(0)=0$ and
$H'(0)=I_n$. Applying Theorem \ref{cartan1} to $H$, we deduce that
$H(X)=X$ for any  $X\in [B(\cH)^n]_1$. Multiplying  relation
\eqref{H=F}  to the right by $\left[\begin{matrix} A_1\\\vdots\\
A_n\end{matrix} \right]$ and using  the fact that  $F'(0)$ is a
co-isometry, we deduce that
$$
F(X_1,\ldots, X_n)= [X_1,\ldots, X_n] [F'(0)]^t, \qquad (X_1,\ldots,
X_n)\in [B(\cH)^n]_{1},
$$
where  $F'(0)=[A_1^t \ \cdots \ A_n^t]$.  This  completes the proof.
\end{proof}
 A slight extension of Theorem \ref{Schw} is the following.

\begin{proposition}
\label{Schw2} Let $F:[B(\cH)^n]_{1}\to [B(\cH)^m]_{1}$ be a free
holomorphic function such that $F'(0)$ is an isometry of $\CC^n$
into $\CC^m$. Then  $F$ is left invertible and $\Phi_L\circ
F=\text{\rm id}$, where $L:= [F'(0)^*]^t$.
\end{proposition}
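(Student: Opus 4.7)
The plan is to follow the strategy of Theorem \ref{Schw}, substituting the canonical left inverse of an isometry for the inverse of a unitary. Expand
$$
F(X)=A_0+\sum_{i=1}^n X_i\otimes A_i+\sum_{k\ge 2}\sum_{|\alpha|=k}X_\alpha\otimes A_{(\alpha)},
$$
with $A_0, A_i$ rows of length $m$ of scalars, so that $F'(0)=[A_1^t\ \cdots\ A_n^t]$ is an $m\times n$ matrix. A direct matrix manipulation yields $L^*L=(F'(0)^*F'(0))^t=I_n$, so $L:\CC^n\to\CC^m$ is a (tall) isometry of norm one. In particular $\Phi_L(Y):=YL$ is a contractive linear map, and since $\|\Phi_L(F(X))\|\le\|F(X)\|<1$ for every $X\in[B(\cH)^n]_1$, Theorem \ref{compo} tells us that $H:=\Phi_L\circ F$ is a free holomorphic function from $[B(\cH)^n]_1$ into itself.

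Next I would verify the hypotheses of Cartan's theorem (Theorem \ref{cartan1}) for $H$. The linear part of $F(X)$ is $XF'(0)^t$, so the linear part of $H(X)=F(X)L$ is
$$
XF'(0)^tL=X[F'(0)^*F'(0)]^t=X,
$$
whence $H'(0)=I_n$. The main obstacle is to establish $H(0)=F(0)L=0$; unlike in Theorem \ref{Schw}, when $m>n$ this is weaker than $F(0)=0$. I invoke Theorem~2.9 of \cite{Po-Bohr}, which gives
$$
\sum_{i=1}^n A_i^*A_i\le I_m-F(0)^*F(0).
$$
A brief calculation shows $\sum_i A_i^*A_i=\overline{F'(0)F'(0)^*}$, and since $F'(0)$ is an isometry this is a rank-$n$ orthogonal projection on $\CC^m$. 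Therefore $F(0)^*$ lies in the range of the complementary projection $I_m-\overline{F'(0)F'(0)^*}$; equivalently, after taking complex conjugates, $F(0)^t$ is orthogonal to the range of $F'(0)$. The $s$-th column of $F'(0)$ is $A_s^t$ and the $s$-th entry of the row $F(0)L$ equals $\langle F(0)^t,A_s^t\rangle$, so $F(0)L=0$.

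With $H:[B(\cH)^n]_1\to[B(\cH)^n]_1$ a free holomorphic function satisfying $H(0)=0$ and $H'(0)=I_n$, Theorem \ref{cartan1} forces $H=\text{\rm id}$, i.e., $\Phi_L\circ F=\text{\rm id}$. Thus $F$ is left invertible with the asserted left inverse $\Phi_L$.
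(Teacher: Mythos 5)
Your proof is correct, and it reaches the conclusion by a route that differs from the paper's in one essential step. Both arguments share the same skeleton: form $H:=\Phi_L\circ F$ with $L=[F'(0)^*]^t$, check that $H$ maps $[B(\cH)^n]_1$ into itself with $H'(0)=I_n$ and $H(0)=0$, and invoke the noncommutative Cartan uniqueness theorem (Theorem \ref{cartan1}). The difference lies in how $H(0)=0$ is obtained. The paper passes to the scalar representation $f:\BB_n\to\BB_m$ and cites Rudin's Theorem 8.1.3 to conclude that $f(0)=0$, hence $F(0)=0$, and then proceeds exactly as in Theorem \ref{Schw}. You instead adapt the Bohr-inequality argument of Theorem \ref{Schw} itself: since $\sum_i A_i^*A_i=\overline{F'(0)F'(0)^*}$ is only a rank-$n$ orthogonal projection when $m>n$, the inequality $\sum_i A_i^*A_i\le I_m-F(0)^*F(0)$ no longer forces $F(0)=0$, but it does force $F(0)$ to annihilate the range of that projection, which is precisely the identity $F(0)L=0$ (your verification via $A_0P=0$ for $P=LL^*$ and $L^*L=I_n$ is sound), and this weaker statement is all that Cartan's theorem requires. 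The trade-off is clear: the paper's appeal to Rudin yields the stronger fact $F(0)=0$ but imports a classical several-complex-variables result, whereas your argument stays entirely within the noncommutative framework already deployed for Theorem \ref{Schw}, at the cost of establishing only what is needed for the stated conclusion. Both are valid; yours is arguably the more self-contained of the two.
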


\begin{proof}
The scalar representation of $F$ is  an analytic function
$f:\BB_n\to \BB_n$ defined by $f(\lambda):= F(\lambda)$, $\lambda\in
\BB_n$. It is easy to see that $F'(0)=f'(0)$, where
$f'(0)=\left[\frac{\partial f_i}{\partial
\lambda_j}(0)\right]_{m\times n}$. According to \cite{Ru} (see
Theorem 8.1.3), since $f'(0)$ is an isometry, we must have $f(0)=0$,
and therefore $F(0)=f(0)=0$. As in the proof of Theorem \ref{Schw},
we can use the fact that $F'(0)$ is an isometry to show that $
F(X)[F'(0)^*]^t=X$, $X\in [B(\cH)^n]_1$. The proof is complete.
\end{proof}

Here is another  extension of Schwarz  lemma, for bounded free
holomorphic functions.

\begin{theorem}
\label{Schw3} Let $F:[B(\cH)^n]_1\to [B(\cH)^m]_1$ be a free
holomorphic function, $a\in \BB_n$, and $b:=F(a)\in \BB_m$. Then
$$
\left\|\Psi_b(F(X))\right\|\leq \|\Psi_a(X)\|,\quad X\in
[B(\cH)^n]_1,
$$
where $\Psi_a$ and $\Psi_b$ are the corresponding free holomorphic
automorphisms.
\end{theorem}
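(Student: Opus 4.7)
The strategy is the familiar Schwarz-lemma reduction: conjugate $F$ by the involutive automorphisms $\Psi_a$ and $\Psi_b$ to produce a free holomorphic map that fixes the origin, then invoke the zero-preserving Schwarz lemma of Theorem \ref{Schw1}(ii).

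Concretely, I would set
$$
G := \Psi_b \circ F \circ \Psi_a.
$$
By Theorem \ref{prop-cara}(iv), $\Psi_a \in Aut(B(\cH)^n_1)$ maps $[B(\cH)^n]_1$ onto itself, and similarly $\Psi_b \in Aut(B(\cH)^m_1)$ maps $[B(\cH)^m]_1$ onto itself; since $F$ sends $[B(\cH)^n]_1$ into $[B(\cH)^m]_1$ by hypothesis, $G$ is a well-defined map $[B(\cH)^n]_1 \to [B(\cH)^m]_1$. That $G$ is free holomorphic follows by applying the composition result (Theorem \ref{compo}) componentwise: each scalar component $(\Psi_b)_i$ of $\Psi_b$ may be composed with the $\CC^m$-valued free holomorphic function $F$, and the result is then composed with $\Psi_a$. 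Using $\Psi_a(0)=a$, $\Psi_b(b)=0$ (Theorem \ref{prop-cara}(i)), we compute
$$
G(0) = \Psi_b\bigl(F(\Psi_a(0))\bigr) = \Psi_b(F(a)) = \Psi_b(b) = 0.
$$
Applying Theorem \ref{Schw1}(ii) to $G$ yields $\|G(Y)\|\le\|Y\|$ for every $Y\in[B(\cH)^n]_1$. Given $X\in[B(\cH)^n]_1$, put $Y:=\Psi_a(X)\in[B(\cH)^n]_1$; then by the involution identity $\Psi_a\circ\Psi_a=\mathrm{id}$ (Theorem \ref{prop-cara}(iii)) we have $G(\Psi_a(X)) = \Psi_b(F(X))$, and hence
$$
\|\Psi_b(F(X))\| = \|G(\Psi_a(X))\| \le \|\Psi_a(X)\|,
$$
which is the desired inequality.

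The only point that requires a small comment is the assertion that $G$ is free holomorphic when $n\neq m$. Theorem \ref{compo} is formally stated with equal numbers of variables, but its proof uses only (a) range control of the inner map and (b) scalar-valued coordinate functions of the inner map, so it applies unchanged to the mixed-dimension case; alternatively one can cite the operator-valued version of Lemma \ref{comp-iso} noted in the remark after Corollary \ref{comp-iso2}. Apart from this bookkeeping, no genuine obstacle arises: the argument rests entirely on properties of $\Psi_a$ and $\Psi_b$ already established in Section 2 together with the Schwarz lemma Theorem \ref{Schw1}(ii).
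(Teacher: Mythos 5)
Your proposal is correct and follows essentially the same route as the paper: form $G:=\Psi_b\circ F\circ\Psi_a$, observe $G(0)=0$, apply Theorem \ref{Schw1}(ii), and substitute $Y=\Psi_a(X)$ using $\Psi_a\circ\Psi_a=\mathrm{id}$. Your added remark on composing maps with different numbers of variables is a reasonable clarification of a point the paper passes over silently, but it does not change the argument.
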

\begin{proof}
Note that $\Psi_a\in Aut(B(\cH)^n]_1$ and $\Psi_b\in
Aut(B(\cH)^m]_1$. Due to Theorem \ref{compo} and Theorem
\ref{prop-cara}, the map $G:=\Psi_b\circ F\circ
\Psi_b:[B(\cH)^n]_1\to [B(\cH)^m]_1$ is  a free holomorphic function
with $G(0)=0$. Applying Theorem \ref{Schw1} part (ii), we deduce
that $\|(\Psi_b\circ F\circ \Psi_b)(Y)\|\leq \|Y\|$ for $Y\in
[B(\cH)^n]_1$. Setting $Y=\Psi_a(X)$ and using the fact that
$\Psi_a\circ \Psi_a=\text{\rm id}$, we complete the proof.
\end{proof}

\begin{corollary}\label{Gleason}
Let $a\in \BB_n$ and  and let $F:[B(\cH)^n]_1\to  B(\cH)$ be a free
holomorphic function. Then  $F-F(a)$  has a factorization of the
form
$$
F(X)-F(a)=\Psi_a(X) (H\circ\Psi_a)(X), \qquad X\in [B(\cH)^n]_1,
$$
 where   $H:[B(\cH)^n]_1\to
B(\cH)\otimes M_{n\times 1}$  is a free holomorphic function.
\end{corollary}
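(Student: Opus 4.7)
The plan is to reduce the statement to the simpler assertion that any free holomorphic function vanishing at the origin factors through $[X_1,\ldots,X_n]$ on the left, and then transport this factorization back along the involution $\Psi_a$.

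First, I would form the auxiliary map $G := F\circ \Psi_a - F(a)$, defined on $[B(\cH)^n]_1$. Because $\Psi_a$ is a free holomorphic automorphism of the noncommutative ball (Theorem \ref{prop-cara}, Theorem \ref{automorph}) and $F$ is free holomorphic, Theorem \ref{compo} guarantees that $F\circ \Psi_a$, and hence $G$, is free holomorphic on $[B(\cH)^n]_1$. Since $\Psi_a(0)=a$, we have $G(0)=F(a)-F(a)=0$.

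Next, I would produce the column factorization $G(Y) = [Y_1,\ldots,Y_n]\,H(Y)$ with $H$ a free holomorphic function into $B(\cH)\bar\otimes M_{n\times 1}$. Expand $G$ as a convergent power series
\[
G(Y)=\sum_{k\geq 1}\sum_{|\alpha|=k} c_\alpha\, Y_\alpha.
\]
Each word $\alpha\in\FF_n^+$ of length $\geq 1$ factors uniquely as $\alpha=g_i\beta$ with $i\in\{1,\ldots,n\}$ and $\beta\in\FF_n^+$, so regrouping by the leading letter gives
\[
G(Y)=\sum_{i=1}^n Y_i\Bigl(\sum_{\beta\in\FF_n^+}c_{g_i\beta}\,Y_\beta\Bigr)=[Y_1,\ldots,Y_n]\,H(Y),\qquad H(Y):=\begin{bmatrix}H_1(Y)\\ \vdots\\ H_n(Y)\end{bmatrix},
\]
where $H_i(Y):=\sum_{\beta}c_{g_i\beta}Y_\beta$. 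The Hadamard-type growth estimate
\[
\Bigl(\sum_{|\beta|=k}|c_{g_i\beta}|^2\Bigr)^{1/(2k)}\leq \Bigl(\sum_{|\gamma|=k+1}|c_\gamma|^2\Bigr)^{1/(2k)}
\]
together with the fact that $G$ has radius of convergence at least $1$, shows each $H_i$ (and therefore the column $H$) is free holomorphic on $[B(\cH)^n]_1$. This is the only place where one has to check something nontrivial, and it is routine from the definition of free holomorphic function recalled in Section 1 (applied in the operator-valued setting for $H$).

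Finally, substitute $Y=\Psi_a(X)$ and use Theorem \ref{prop-cara}(iii), namely $\Psi_a\circ\Psi_a=\mathrm{id}$, to obtain
\[
F(X)-F(a)=G(\Psi_a(X))=[\Psi_a(X)_1,\ldots,\Psi_a(X)_n]\,H(\Psi_a(X))=\Psi_a(X)\,(H\circ\Psi_a)(X),
\]
for every $X\in[B(\cH)^n]_1$, which is exactly the required factorization. The composition $H\circ\Psi_a$ is free holomorphic on $[B(\cH)^n]_1$ by Theorem \ref{compo} (in its operator-valued version, already used implicitly earlier in the paper), completing the proof. The main obstacle, if any, is keeping track of the operator-valued version of Theorem \ref{compo} for the column-valued function $H$, but this is just a direct transcription of the scalar argument.
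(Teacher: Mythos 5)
Your proposal is correct and follows essentially the same route as the paper: compose with the involution $\Psi_a$ to reduce to a free holomorphic function vanishing at $0$, peel off the leading letter of each word to get the column factorization $G(Y)=[Y_1,\ldots,Y_n]H(Y)$, and then substitute $Y=\Psi_a(X)$ using $\Psi_a\circ\Psi_a=\mathrm{id}$. The only (immaterial) difference is how you certify that each $H_i$ is free holomorphic --- you use the Hadamard radius-of-convergence estimate on the coefficients, while the paper reads off $H_i(rS_1,\ldots,rS_n)=\tfrac{1}{r}S_i^*(G\circ\Psi_a)(rS_1,\ldots,rS_n)$ from the boundary function on the Fock space; both checks are valid.
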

\begin{proof}
Note that $G:[B(\cH)^n]_1\to  B(\cH)$  defined by $G(X):=F(X)-F(a)$
is  a free holomorphic function with $G(a)=0$. Then $G\circ \Psi_a$
is a free holomorphic function with $(G\circ \Psi_a)(0)=0$.
 Therefore, it has a
representation $(G\circ \Psi_a)(X)=\sum_{k=1}^\infty
\sum_{|\alpha|=k} a_\alpha X_\alpha$, where the convergence is in
the operator norm topology. Hence, we deduce that
$$
(G\circ \Psi_a)(rS_1,\ldots, rS_n)=\sum_{k=1}^\infty
\sum_{|\alpha|=k} a_\alpha  r^{|\alpha|} S_\alpha=\sum_{i=1}^n rS_i
H_i(rS_1,\ldots, rS_n) \quad \text{ for any } \ r\in (0,1),
$$
where $H_i(rS_1,\ldots, rS_n):=\frac{1}{r} S_i^*(G\circ
\Psi_a)(rS_1,\ldots, rS_n)$ and the corresponding series
representation for $H_i(rS_1,\ldots, rS_n)$ is convergent in the
norm operator topology for any $r\in (0,1)$. Consequently, we deduce
that  each $H_i$ is a free holomorphic function on $[B(\cH)^n]_1$
and
$$
(G\circ \Psi_a)(X_1,\ldots, X_n)=\sum_{k=1}^\infty \sum_{|\alpha|=k}
a_\alpha   X_\alpha=\sum_{i=1}^n  X_i H_i(X_1,\ldots, X_n)
$$
  for any  $  X:=[X_1,\ldots, X_n]\in [B(\cH)^n]_1$.
  Setting $H:=\left[\begin{matrix} H_1\\\vdots\\
  H_n\end{matrix}\right]$, we obtain $(G\circ \Psi_a)(X)=X H(X)$.
  Since $\Psi_a\in Aut(B(\cH)^n_1)$ and $\Psi_a\circ \Psi_a
  =\text{\rm id}$, we deduce that
  $
  G (X)=\Psi_a(X) (H\circ\Psi_a)(X)$ for $ X\in [B(\cH)^n]_1,
$ which completes the proof.
\end{proof}

We remark that, setting $\Psi_a=(\psi_1,\ldots, \psi_n)$, one can
deduce that $F-F(a)$  is in the right ideal  generated by
$\psi_1,\ldots, \psi_n$ in  $Hol([B(\cH)^n]_1)$, the algebra of all
free holomorphic functions on $[B(\cH)^n]_1$.

\bigskip

       %


\begin{thebibliography}{99}


\bibitem{ArPo} {\sc  A.~Arias and G.~Popescu},
Factorization and reflexivity on Fock spaces, {\it  Integr. Equat.
Oper. Th.}
 %
 {\bf  23} (1995),  268--286.




\bibitem{Arv2} {\sc W.B.~Arveson},
  The curvature invariant of a Hilbert module  over $\CC [z_1,\ldots, z_n]$,
 {\it J. Reine Angew. Math.}
 {\bf 522} (2000),  173--236.





\bibitem{BeTi2}
{\sc C.~Benhida and D.~Timotin},
 Some automorphism invariance
properties for multicontractions, {\it  Indiana Univ. Math. J.} {\bf
56} (2007), no. 1, 481--499.






\bibitem{Bu} {\sc J.~W.~Bunce},
 Models for n-tuples of noncommuting operators, {\it J. Funct. Anal.}
  {\bf 57}(1984), 21--30.

\bibitem{Ca} {\sc H.~Cartan},
Les fonctions de deux variables complexes et le probl\` eme de la
repr\'esentation analytique, {\it J. de Math. Pures et Appl.} {\bf
96} (1931), 1--114.


\bibitem{Co} {\sc J.B.~Conway},
{\em Functions of one complex variable. I.} Second Edition. Graduate
Texts in Mathematics  {\bf 159}. { Springer-Verlag, New York}, 1995.



\bibitem{Cu} {\sc J.~Cuntz},
 Simple $C^*$--algebras generated by isometries,
 {\it  Commun. Math. Phys.}
 {\bf 57} (1977), 173--185.



\bibitem{DKP}  {\sc K.~R.~Davidson, E.~Katsoulis, and D.R.~Pitts},
  The structure of free semigroup algebras,
 {\it J. Reine Angew. Math.}
  {\bf 533} (2001), 99--125.








\bibitem{DP2} {\sc K.~R.~Davidson and D.R.~Pitts},
%
 The algebraic structure of  non-commutative
  analytic Toeplitz algebras,
{\it  Math. Ann.}
   {\bf 311} (1998),  275--303.



\bibitem{DP1}  {\sc K.~R.~Davidson and D.R.~Pitts},
 Invariant subspaces and hyper-reflexivity for free semigroup algebras,
{\it  Proc. London Math. Soc.}
 {\bf 78} (1999),  401--430.




 \bibitem{F} {\sc  A.~E.~Frazho},
 Models for noncommuting operators, {\it J. Funct. Anal.}
  {\bf 48} (1982), 1--11.


\bibitem{H} {\sc K.~Hoffman},  {\em Banach Spaces of Analytic Functions},
Englewood Cliffs: Prentice-Hall, 1962.



\bibitem{Kr} {\sc D.~Kribs},
  The curvature invariant of a  non-commuting $N$-tuple,
 {\it Integral Equations Operator Theory}  {\bf 41} (2001), no.\,4, 426--454.



\bibitem{MKS} {\sc W.~Magnus, A.~Karrass, and D.~Solitar},
{\em Combinatorial group theory. Presentations of groups in terms of
generators and relations.}
 Reprint of the 1976 second edition. { Dover Publications, Inc., Mineola, NY}, 2004.



\bibitem{MuSo1}  {\sc P.S.~Muhly and  B.~Solel},
 Tensor algebras over $C^*$-correspondences: representations,
dilations, and $C^*$-envelopes, {\it J. Funct. Anal.}
 {\bf 158} (1998),  389--457.




  \bibitem{MuSo2}  {\sc P.S.~Muhly and  B.~Solel},
Hardy algebras, $W^*$-correspondences and interpolation theory, {\it
Math. Ann.} {\bf 330} (2004),  353--415.

\bibitem{MuSo3}  {\sc P.S.~Muhly and  B.~Solel},
Schur Class Operator Functions and Automorphisms of Hardy Algebras,
preprint.

\bibitem{Par} {\sc S.~Parrott}, The curvature of a single operator
on a Hilbert space, preprint.


      \bibitem{PPoS}  {\sc V.I.~Paulsen, G.~Popescu,  and D.~Singh},
         On Bohr's inequality,
        {\it Proc. London Math. Soc.}
        {\bf 85} (2002), 493--512.



\bibitem{Po-isometric} {\sc G.~Popescu}, Isometric dilations for infinite
sequences of noncommuting operators, {\it Trans. Amer. Math. Soc.}
{\bf 316} (1989), 523--536.

\bibitem{Po-charact} {\sc G.~Popescu}, Characteristic functions for infinite
sequences of noncommuting operators, {\it J. Operator Theory}
{\bf 22} (1989), 51--71.



      \bibitem{Po-von} {\sc G.~Popescu},
{Von Neumann inequality for $(B(H)^n)_1$,}
      {\it Math.  Scand.} {\bf 68} (1991), 292--304.




      \bibitem{Po-funct} {\sc G.~Popescu},
      {Functional calculus for noncommuting operators,}
       {\it Michigan Math. J.} {\bf 42} (1995), 345--356.



      \bibitem{Po-analytic} {\sc G.~Popescu},
      {Multi-analytic operators on Fock spaces,}
      {\it Math. Ann.} {\bf 303} (1995), 31--46.





      \bibitem{Po-poisson} {\sc G.~Popescu},
     {Poisson transforms on some $C^*$-algebras generated by isometries,}
       {\it J. Funct. Anal.} {\bf 161} (1999),  27--61.



\bibitem{Po-curvature} {\sc  G.~Popescu},
  Curvature invariant for Hilbert modules over free semigroup algebras,
   {\it Adv. Math.}
 {\bf 158} (2001), 264--309.



      \bibitem{Po-holomorphic} {\sc G.~Popescu},
      {Free holomorphic functions on the unit ball of $B(\cH)^n$},
      {\it J. Funct. Anal.}  {\bf 241} (2006), 268--333.



\bibitem{Po-varieties} {\sc  G.~Popescu},
Operator theory on noncommutative varieties, {\it Indiana Univ.
Math.~J.} {\bf 55} (2) (2006), 389--442.


   \bibitem{Po-Bohr} {\sc G.~Popescu},
     Multivariable Bohr inequalities,
   {\it Trans. Amer.  Math. Soc.},
   {\bf 359} (2007), 5283--5317.

      \bibitem{Po-unitary} {\sc G.~Popescu},
      {Unitary invariants in multivariable operator theory},
      {\it Mem. Amer. Math. Soc.}, to appear.




\bibitem{Po-free-hol-interp} {\sc G.~Popescu},
{ Free holomorphic functions and interpolation}, {\it Math. Ann.},
to appear.


\bibitem{Po-pluriharmonic} {\sc G.~Popescu},
{Noncommutative transforms and free pluriharmonic functions},
preprint 2007.

\bibitem{PS} {\sc S.C.~Power  and   B.~Solel}
{Operator algebras associated with unitary commutation relations},
preprint 2007.



 \bibitem{R} {\sc W.~Rudin},
{\em Real and Complex Analysis}, { McGraw-Hill Book Co.} (1966)


\bibitem{Ru} {\sc W.~Rudin},
{\em Function theory in the unit ball of \,$\CC^n$}, {
Springer-verlag, New-York/Berlin}, 1980.



\bibitem{SzF-book} {\sc B.~Sz.-Nagy and C.~Foia\c{s}}, {\em Harmonic
Analysis of Operators on Hilbert Space}, North Holland, New York
1970.


\bibitem{Vo} {\sc D.~ Voiculescu},  Symmetries of some reduced free product $C\sp
*$-algebras,
 {\it Lecture Notes in Math.} {\bf 1132}, 556--588, Springer Verlag, New York,
 1985.





 \bibitem{von}  {\sc J.~von Neumann},
      {Eine Spectraltheorie f\"ur allgemeine Operatoren eines unit\"aren
      Raumes,}
      {\it Math. Nachr.} {\bf 4} (1951), 258--281.


       \end{thebibliography}
      \end{document}